\theoremstyle{plain}
\newtheorem{theorem}{Theorem}[section]
\newtheorem{proposition}[theorem]{Proposition}
\newtheorem{lemma}[theorem]{Lemma}
\newtheorem{corollary}[theorem]{Corollary}
\newtheorem{conjecture}[theorem]{Conjecture}
\newtheorem{claim}[theorem]{Claim}
\theoremstyle{definition}
\newtheorem{example}{Example}[section]
\newtheorem{remark}{Remark}[section]
\numberwithin{equation}{section}
 \DeclareMathOperator{\Hom}{Hom}
 \DeclareMathOperator{\STD}{Std}
\DeclareMathOperator{\RAD}{rad} 
\DeclareMathOperator{\SHAPE}{Shape}
\DeclareMathOperator{\RES}{Res}
\newcommand{\nc}{\newcommand}
\nc{\dten}{10} \nc{\deleven}{11} \nc{\dtwelve}{12}
\nc{\dthirteen}{13} \nc{\dfourteen}{14} \nc{\dfifteen}{15}
\nc{\dsixteen}{16}
\begin{document}
\title[Brauer and B--M--W Algebras]
{Specht Modules and Semisimplicity Criteria for Brauer and Birman--Murakami--Wenzl Algebras}

\author[J. Enyang]{John Enyang}


\keywords{Birman--Murakami--Wenzl algebra; Brauer algebra; Specht
module; cellular algebra; Jucys--Murphy operators.}

\begin{abstract}
A construction of bases for cell modules of the Birman--Murakami--Wenzl (or B--M--W) algebra $B_n(q,r)$ by lifting bases for cell modules of $B_{n-1}(q,r)$ is given. By iterating this procedure, we produce cellular bases for B--M--W algebras on which a large abelian subalgebra, generated by elements which generalise the Jucys--Murphy elements from the representation theory of the Iwahori--Hecke algebra of the symmetric group, acts triangularly. The triangular action of this abelian subalgebra is used to provide explicit criteria, in terms of the defining parameters $q$ and $r$, for B--M--W algebras to be semisimple. The aforementioned constructions provide generalisations, to the algebras under consideration here, of certain results from the Specht module theory of the Iwahori--Hecke algebra of the symmetric group.
\end{abstract}

\maketitle

\section{Introduction}
Using a recursive procedure which lifts bases of $B_{i-1}(q,r)$ to bases for $B_i(q,r)$, for $i=1,2,\dots,n$, we obtain new cellular bases (in the sense of~\cite{grahamlehrer}) for the B--M--W algebra $B_n(q,r)$, indexed by paths in an appropriate Bratteli diagram, whereby 
\begin{enumerate}
\item each cell module for $B_n(q,r)$ admits a filtration by cell modules for $B_{n-1}(q,r)$, and 
\item certain commuting elements in $B_n(q,r)$, which generalise the Jucys--Murphy elements in the Iwahori--Hecke algebra of the symmetric group, act triangularly on each cell module for the algebra $B_n(q,r)$. 
\end{enumerate}
The triangular action of the generalised Jucys--Murphy elements, combined with the machinery of cellular algebras from~\cite{grahamlehrer}, allows us to obtain explicit criteria, in terms of defining parameters, for any given B--M--W algebra to be semisimple. The aforementioned provide generalisations of classical results from the representation theory of the Iwahori--Hecke algebra of the symmetric group to the algebras under investigation here.

The contents of this article are presented as follows. 
\begin{enumerate}
    \item Definitions concerning partitions and tableaux, along with standard facts from the representation theory of the Iwahori--Hecke algebra of the symmetric group are stated in Section~\ref{prelsec}.
    \item In Section~\ref{b-m-w-a}, we define a generic version of the B--M--W algebras and restate in a more transparent notation the main results of~\cite{saru} on cellular bases of the same algebras.
    \item In Section~\ref{afield}, we state for reference some
    consequences following from the statements in Section~\ref{b-m-w-a}
    and the theory of cellular algebras given in~\cite{grahamlehrer}.
    \item In Section~\ref{resf}, an explicit
    description of the behaviour of the cell modules for
    generic B--M--W algebras under restriction is obtained.
    \item In Section~\ref{newbasis}, the results of
    Section~\ref{resf} are used to construct new bases for
    B--M--W algebras, indexed by pairs of paths in the
    Bratteli diagram associated with B--M--W algebras and
    generalising Murphy's construction~\cite{murphy} of bases
    for the Iwahori--Hecke algebras of the symmetric group. A demonstration of 
    the iterative procedure is given in detail in Examples~\ref{bas-ex:1} and~\ref{b-m-w-murphex:2}.
    \item Certain results of R.~Dipper and G.~James on the Jucys--Murphy operators of the Iwahori-Hecke algebra of the symmetric group is extended to generic B--M--W algebras in Theorem~\ref{utrangular:2}.
    \item Theorems~\ref{ss:1} and~\ref{d-w:analogue} use the above mentioned results to give sufficient criteria for the B--M--W algebras over a field to be semisimple.
    \item Theorem~\ref{br:utran} shows that the Jucys--Murphy elements act triangularly on each cell module of the Brauer algebra, while the semisimplicity criterion of Theorem~\ref{ss:3} is a weak version of a result of H.~Rui~\cite{rui:brss}. 
    \item Some conjectures on the semisimplicity of the Brauer algebras are given in Section~\ref{farce}.
\end{enumerate}

The author is indebted to B.~Srinivasan for guidance, to A.~Ram for remarks on a previous version of this paper, and to I.~Terada for discussions during the period this work was undertaken. The author is grateful to T.~Shoji and H.~Miyachi for comments and thanks the referees for numerous suggestions and corrections.
\section{Preliminaries}\label{prelsec}
\subsection{Combinatorics and Tableaux}
Throughout, $n$ will denote a positive integer and $\mathfrak{S}_n$ will be the symmetric group acting on $\{1,\dots,n\}$ on the right. For $i$ an integer, $1\le i<n$, let $s_i$ denote the transposition $(i,i+1)$. Then $\mathfrak{S}_n$ is generated as a Coxeter group by $s_1,s_2,\dots,s_{n-1}$, which satisfy the defining relations 
\begin{align*}
&s_i^2=1&&\text{for $1\le i<n$;}\\
&s_is_{i+1}s_i=s_{i+1}s_is_{i+1}&&\text{for $1\le i<n-1$;}\\
&s_is_j=s_js_i&&\text{for $2\le|i-j|$.}
\end{align*}
An expression $w=s_{i_1}s_{i_2}\cdots s_{i_k}$ in which $k$ is
minimal is called a \emph{reduced expression} for $w$, and
$\ell(w)=k$ is the \emph{length} of $w$.

Let $f$ be an integer, $0\le f\le [n/2]$. If $n-2f>0$, a
\emph{partition} of $n-2f$ is a non--increasing sequence
$\lambda=(\lambda_1,\dots,\lambda_k)$ of integers, $\lambda_i\ge0$,
such that $\sum_{i=1}^k\lambda_i=n-2f$; otherwise, if $n-2f=0$, write
$\lambda=\varnothing$ for the empty partition. The fact that $\lambda$ is
a partition of $n-2f$ will be denoted by $\lambda\vdash n-2f$.
We will also write $|\lambda|=\sum_{i\ge 1}\lambda_i$. The
integers $\{\lambda_i:\text{for $i\ge1$}\}$ are the \emph{parts}
of $\lambda$. If $\lambda$ is a partition of $n-2f$, the
\emph{Young diagram} of $\lambda$ is the set
\begin{align*}
[\lambda]=\{(i,j)\,:\,\text{$\lambda_i\ge j\ge1$ and $i\ge
1$}\,\}\subseteq \mathbb{N}\times\mathbb{N}.
\end{align*}
The elements of $[\lambda]$ are the \emph{nodes} of $\lambda$ and
more generally a node is a pair
$(i,j)\in\mathbb{N}\times\mathbb{N}$. The diagram $[\lambda]$ is
traditionally represented as an array of boxes with $\lambda_i$
boxes on the $i$--th row. For example, if $\lambda=(3,2)$, then
$[\lambda]=\text{\tiny\Yvcentermath1$\yng(3,2)$}$\,. Let
$[\lambda]$ be the diagram of a partition. A node $(i,j)$ is an
\emph{addable} node of $[\lambda]$ if $(i,j)\not\in[\lambda]$ and
$[\mu]=[\lambda] \cup\{(i,j)\}$ is the diagram of a partition; in
this case $(i,j)$ is also referred to as a \emph{removable} node
of $[\mu]$.

For our purposes, a \emph{dominance order} on partitions is
defined as follows: if $\lambda$ and $\mu$ are partitions, then
$\lambda\unrhd\mu$ if either
\begin{enumerate}
\item $|\mu|>|\lambda|$ or \item
 $|\mu|=|\lambda|$ and
$\sum_{i=1}^k\lambda_i\ge\sum_{i=1}^k\mu_i$ for all $k>0$.
\end{enumerate}
We will write $\lambda\rhd\mu$ to mean that $\lambda\unrhd\mu$ and
$\lambda\ne\mu$. Although the definition of the dominance order on
partitions employed here differs from the conventional
definition~\cite{macdonald} of the dominance order on partitions,
when restricted to the partitions of the odd integers
$\{1,3,\dots,n\}$ or to partitions of the even integers
$\{0,2,\dots,n\}$, depending as $n$ is odd or even, the order
$\unrhd$ as defined above is compatible with a cellular structure
of the Birman--Murakami--Wenzl and Brauer algebras, as shown
in~\cite{saru}, \cite{grahamlehrer} and~\cite{xi:qheredity}.

Let $f$ be an integer, $0\le f\le [n/2]$, and $\lambda$ be a
partition of $n-2f$. A $\lambda$--tableau labeled by
$\{2f+1,2f+2,\dots,n\}$ is a bijection $\mathfrak{t}$ from the nodes
of the diagram $[\lambda]$ to the integers $\{2f+1,2f+2,\dots,n\}$.
A given $\lambda$--tableau
$\mathfrak{t}:[\lambda]\to\{2f+1,2f+2,\dots,n\}$ can be visualised
by labeling the nodes of the diagram $[\lambda]$ with the integers
$2f+1,2f+2,\dots,n$. For example, if $n=10$, $f=2$ and
$\lambda=(3,2,1)$,
\begin{align}\label{tabex0.0}
\mathfrak{t}=\text{\tiny\Yvcentermath1$\young(58\dten,67,9)$}
\end{align}
represents a $\lambda$--tableau. A $\lambda$--tableau $\mathfrak{t}$
labeled by $\{2f+1,2f+2,\dots,n\}$ is said to be \emph{standard} if
\begin{align*}
\mathfrak{t}(i_1,j_1)\ge\mathfrak{t}(i_2,j_2),&&\text{whenever
$i_1\ge i_2$ and $j_1\ge j_2$.}
\end{align*}
If $\lambda$ is a partition of $n-2f$, write $\STD_{n}(\lambda)$ for
the set of standard $\lambda$--tableaux labeled by the integers
$\{2f+1,2f+2,\dots,n\}$. We let $\mathfrak{t}^\lambda$ denote the
element of $\STD_{n}(\lambda)$ in which $2f+1,2f+2,\dots,n$ are
entered in increasing order from left to right along the rows of
$[\lambda]$. Thus in the above example where $n=10$, $f=2$ and
$\lambda=(3,2,1)$,
\begin{align}\label{tabex1}
\mathfrak{t}^\lambda=\text{\tiny\Yvcentermath1$\young(567,89,\dten)$}\,.
\end{align}
The tableau $\mathfrak{t}^\lambda$ is referred to as the
\emph{superstandard tableau} in $\STD_{n}(\lambda)$. If
$\mathfrak{t}\in\STD_{n}(\lambda)$, we will write
$\lambda=\SHAPE(\mathfrak{t})$ and, abiding by the convention used
in the literature, $\STD(\lambda)$ will be used to denote the set of
standard tableaux
$\mathfrak{t}:[\lambda]\to\{1,2,\dots,|\lambda|\}$; we will refer to
elements of $\STD(\lambda)$ simply as standard $\lambda$--tableaux.
If $\mathfrak{s}\in\STD_{n}(\lambda)$, we will write
$\hat{\mathfrak{s}}$ for the tableau in $\STD(\lambda)$ which is
obtained by relabelling the nodes of $\mathfrak{s}$ by the map
$i\mapsto i-2f$.

If $\mathfrak{t}\in\STD_{n}(\lambda)$ and $i$ is an integer $2f<i\le
n$, define $\mathfrak{t}|_{i}$ to be the tableau obtained by
deleting each entry $k$ of $\mathfrak{t}$ with $k>i$ (compare
Example~\ref{btenex1} below). The set $\STD_{n}(\lambda)$ admits an
order $\unrhd$ wherein $\mathfrak{s}\unrhd\mathfrak{t}$ if
$\SHAPE(\mathfrak{s}|_i)\unrhd\SHAPE(\mathfrak{t}|_i)$ for each
integer $i$ with $2f< i\le n$. We adopt the usual convention of
writing $\mathfrak{s}\rhd\mathfrak{t}$ to mean that
$\mathfrak{s}\unrhd\mathfrak{t}$ and $\mathfrak{s}\ne\mathfrak{t}$.

The subgroup $\mathfrak{S}_{n-2f}=\langle
s_i:2f<i<n\rangle\subset\mathfrak{S}_n$ acts on the set of
$\lambda$--tableaux on the right in the usual manner, by permuting
the integer labels of the nodes of $[\lambda]$. For example,
\begin{align}\label{tabex0}
\text{\Yvcentermath1$\young(567,89,\dten)$}\,(6,8)(7,10,9)\,
=\text{\Yvcentermath1$\young(58\dten,67,9)$} \,.
\end{align}
If $\lambda$ is a partition of $n-2f$, then for our purposes the
\emph{Young subgroup} $\mathfrak{S}_\lambda$ is defined to be the
row stabiliser of $\mathfrak{t}^\lambda$ in $\mathfrak{S}_{n-2f}$.
For instance, when $n=10$, $f=2$ and $\lambda=(3,2,1)$, as
in~\eqref{tabex1} above, then $\mathfrak{S}_\lambda=\langle
s_5,s_6,s_8\rangle$. To each $\lambda$--tableau $\mathfrak{t}$,
associate a unique permutation $d(\mathfrak{t})\in\mathfrak{S}_{n-2f}$ by
the condition $\mathfrak{t}=\mathfrak{t}^\lambda d(\mathfrak{t})$.
If we refer to the tableau $\mathfrak{t}$ in~\eqref{tabex0.0} above
for instance, then $d(\mathfrak{t})=(6,8)(7,10,9)$
by~\eqref{tabex0}.
\subsection{The Iwahori--Hecke Algebra of the Symmetric Group}\label{ihsec}
For the purposes of this section, let $R$ denote an integral domain
and $q$ be a unit in $R$. The Iwahori--Hecke algebra (over $R$) of
the symmetric group is the unital associative $R$--algebra
$\mathscr{H}_n(q^2)$ with generators $X_1,X_2,\dots, X_{n-1},$ which
satisfy the defining relations
\begin{align*}
&(X_i-q)(X_i+q^{-1})=0&&\text{for $1\le i<n$;}\\
&X_iX_{i+1}X_i=X_{i+1}X_iX_{i+1}&&\text{for $1\le i<n-1$;}\\
&X_iX_j=X_jX_i&&\text{for $2\le|i-j|$.}
\end{align*}
If $w\in\mathfrak{S}_n$ and $s_{i_1}s_{i_2}\cdots s_{i_k}$ is a
reduced expression for $w$, then
\begin{align*}
X_w=X_{i_1}X_{i_2}\cdots X_{i_k}
\end{align*}
is a well defined element of $\mathscr{H}_n(q^2)$ and the set
$\{X_w\,:\,w\in\mathfrak{S}_n\}$ freely generates
$\mathscr{H}_n(q^2)$ as an $R$--module (theorems~1.8 and 1.13
of~\cite{mathas:ih}).

Below we state for later reference standard facts from the
representation theory of the Iwahori--Hecke algebra of the symmetric
group, of which details can be found in~\cite{mathas:ih}
or~\cite{murphy}. If $\mu$ is a partition of $n$, define the element
\begin{align*}
c_\mu=\sum_{w\in\mathfrak{S}_\mu} q^{l(w)}X_w.
\end{align*}
In this section, let $*$ denote the algebra anti--involution of
$\mathscr{H}_{n}(q^2)$ mapping $X_w \mapsto X_{w^{-1}}$. If $\lambda$ is a partition of $n$, $\check{\mathscr{H}}^\lambda_n$ is defined to be the two--sided ideal in $\mathscr{H}_n(q^2)$ generated by
\begin{align*}
\big\{c_\mathfrak{uv}=X_{d(\mathfrak{u})}^*c_\mu
X_{d(\mathfrak{v})}:\text{$\mathfrak{u},\mathfrak{v}\in\STD(\mu)$,
where $\mu\rhd\lambda$ }\big\}.
\end{align*}
The next statement is due to E.~Murphy in~\cite{murphy}.
\begin{theorem}\label{murphybasis}
The Iwahori--Hecke algebra $\mathscr{H}_n(q^2)$ is free as an
$R$--module with basis
\begin{align*}
\mathscr{M}=\left\{c_\mathfrak{uv}= X_{d(\mathfrak{u})}^*c_\lambda
X_{d(\mathfrak{v})}\,\bigg|\,
\begin{matrix}
\text{ for $\mathfrak{u},\mathfrak{v}\in\STD(\lambda)$ and }\\
\text{$\lambda$ a partition of $n$}
\end{matrix}
\right\}.
\end{align*}
Moreover, the following statements hold.
\begin{enumerate}
\item The $R$--linear anti--involution $*$ satisfies $*:c_{\mathfrak{st}}\mapsto c_\mathfrak{ts}$ for all $\mathfrak{s},\mathfrak{t}\in\STD(\lambda)$.
\item Suppose that $h\in\mathscr{H}_n(q^2)$,
and that $\mathfrak{s}$ is a standard $\lambda$--tableau. Then
there exist $a_\mathfrak{u}\in R$, for
$\mathfrak{u}\in\STD(\lambda)$, such that for all
$\mathfrak{v}\in\STD(\lambda)$,
\begin{align}\label{tsu}
c_{\mathfrak{vs}}h\equiv
\sum_{\mathfrak{u}\in\STD(\lambda)}a_\mathfrak{u}
c_{\mathfrak{vu}} \mod \check{\mathscr{H}}^\lambda_n.
\end{align}
\end{enumerate}
\end{theorem}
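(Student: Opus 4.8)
The plan is to follow Murphy's original route~\cite{murphy}: reduce the entire statement, by a counting argument and a descending induction on dominance, to a single straightening lemma. Write $\mathscr{H}_n=\mathscr{H}_n(q^2)$. Part~(1) costs nothing: since $w\mapsto w^{-1}$ is a length-preserving involution of the Young subgroup, $c_\mu^{*}=c_\mu$ for every partition $\mu$, so $c_{\mathfrak{st}}^{*}=\bigl(X_{d(\mathfrak{s})}^{*}c_\lambda X_{d(\mathfrak{t})}\bigr)^{*}=X_{d(\mathfrak{t})}^{*}c_\lambda X_{d(\mathfrak{s})}=c_{\mathfrak{ts}}$. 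I would next record two elementary facts: a short computation with the quadratic relation shows $X_ic_\mu=qc_\mu=c_\mu X_i$ whenever $s_i\in\mathfrak{S}_\mu$, hence $X_\sigma c_\mu=q^{\ell(\sigma)}c_\mu=c_\mu X_\sigma$ for all $\sigma\in\mathfrak{S}_\mu$; factoring $w\in\mathfrak{S}_n$ as $w=\sigma d$ with $\sigma\in\mathfrak{S}_\mu$ and $d$ of minimal length in the coset $\mathfrak{S}_\mu w$ then gives $c_\mu X_w=q^{\ell(\sigma)}c_\mu X_{d}$. Since $\{X_w:w\in\mathfrak{S}_n\}$ is an $R$-basis of $\mathscr{H}_n$ of size $n!$, since $\sum_{\mu\vdash n}|\STD(\mu)|^{2}=n!$ by the Robinson--Schensted correspondence, and since a spanning set whose cardinality equals the rank of a finitely generated free module over an integral domain is automatically a basis (extend scalars to the fraction field and compare dimensions), it will be enough to prove that $\mathscr{M}$ spans $\mathscr{H}_n$ over $R$.

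Everything then reduces to the straightening lemma
\begin{align}\label{straighteq}
c_\mu X_{d(\mathfrak{t})}\ \equiv\ \sum_{\mathfrak{v}\in\STD(\mu)}a_{\mathfrak{v}}\,c_\mu X_{d(\mathfrak{v})}\pmod{\check{\mathscr{H}}^{\mu}_n},\qquad a_{\mathfrak{v}}\in R,
\end{align}
valid for every row-standard $\mu$-tableau $\mathfrak{t}$; by the reduction just made it then holds with $X_{d(\mathfrak{t})}$ replaced by $X_w$ for arbitrary $w\in\mathfrak{S}_n$. Granting \eqref{straighteq}, I would apply $*$ to it --- using $c_\mu^{*}=c_\mu$ and that $\check{\mathscr{H}}^{\mu}_n$ is a two-sided ideal --- to get the left-handed form $X_{d(\mathfrak{t})}^{*}c_\mu\equiv\sum_{\mathfrak{u}\in\STD(\mu)}b_{\mathfrak{u}}\,X_{d(\mathfrak{u})}^{*}c_\mu$, and then induct downwards on dominance on partitions of $n$, with hypothesis $P(\mu)$: $\mathscr{H}_nc_\mu\mathscr{H}_n\subseteq\operatorname{span}_R\{c_{\mathfrak{uv}}:\mathfrak{u},\mathfrak{v}\in\STD(\nu),\ \nu\unrhd\mu\}$. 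Since $c_\nu=c_{\mathfrak{t}^\nu\mathfrak{t}^\nu}$, the two-sided ideal generated by $\{c_\nu:\nu\rhd\mu\}$ equals $\check{\mathscr{H}}^{\mu}_n$, so the hypotheses $P(\nu)$ for $\nu\rhd\mu$ identify $\check{\mathscr{H}}^{\mu}_n$ with $\operatorname{span}_R\{c_{\mathfrak{uv}}:\mathfrak{u},\mathfrak{v}\in\STD(\nu),\ \nu\rhd\mu\}$; then, writing a generator of $\mathscr{H}_nc_\mu\mathscr{H}_n$ in the form $X_u^{*}c_\mu X_w$ (legitimate because $*$ permutes the basis $\{X_w\}$), straightening on the right by \eqref{straighteq}, straightening on the left by its $*$-image, and absorbing every error term into $\check{\mathscr{H}}^{\mu}_n$, one gets $P(\mu)$. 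At the foot of the order $\mu=(1^n)$ one has $c_{(1^n)}=1$, whence $\mathscr{H}_n=\mathscr{H}_nc_{(1^n)}\mathscr{H}_n=\operatorname{span}_R(\mathscr{M})$, which is the required spanning statement. Part~(2) drops out of the same manipulation applied to $c_{\mathfrak{vs}}h=X_{d(\mathfrak{v})}^{*}c_\lambda\bigl(X_{d(\mathfrak{s})}h\bigr)$: expanding $X_{d(\mathfrak{s})}h$ in the basis $\{X_w\}$ and straightening each $c_\lambda X_w$ by \eqref{straighteq} gives $c_\lambda\bigl(X_{d(\mathfrak{s})}h\bigr)\equiv\sum_{\mathfrak{u}\in\STD(\lambda)}a_{\mathfrak{u}}c_\lambda X_{d(\mathfrak{u})}$ modulo $\check{\mathscr{H}}^{\lambda}_n$ with scalars $a_{\mathfrak{u}}$ depending only on $\mathfrak{s}$ and $h$ and not on $\mathfrak{v}$, and left-multiplying by $X_{d(\mathfrak{v})}^{*}$ (which preserves $\check{\mathscr{H}}^{\lambda}_n$) yields \eqref{tsu}.

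What is left --- and where I expect the real difficulty to lie --- is \eqref{straighteq} itself; all of the above is formal bookkeeping once it is in hand. I would prove it by induction on $\mathfrak{t}$ with respect to a well-founded order on row-standard $\mu$-tableaux refining the dominance order, an order for which the standard tableaux are precisely the terminal elements: if $\mathfrak{t}\in\STD(\mu)$ there is nothing to prove, and otherwise some column of $\mathfrak{t}$ carries a descent, to which one attaches the corresponding Garnir element of $\mathfrak{S}_n$. The Hecke-algebra incarnation of the classical Garnir relation --- worked out in~\cite{murphy}, see also~\cite{mathas:ih} --- then rewrites $c_\mu X_{d(\mathfrak{t})}$, modulo $\check{\mathscr{H}}^{\mu}_n$, as an $R$-linear combination of terms $c_\mu X_{d(\mathfrak{s})}$ with $\mathfrak{s}$ row-standard and strictly earlier than $\mathfrak{t}$ in the chosen order, the contributions coming from tableaux of strictly larger shape lying patently inside the ideal generated by the $c_\nu$ with $\nu\rhd\mu$; iterating and invoking well-foundedness delivers \eqref{straighteq}. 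The fiddly ingredients here --- choosing the order so the straightening terminates, tracking the powers of $q$ it generates, and checking that the off-shape remainders genuinely lie in $\check{\mathscr{H}}^{\mu}_n$ --- are exactly Murphy's contribution in~\cite{murphy}, which one would reproduce for a fully self-contained proof.
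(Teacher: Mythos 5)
The paper does not prove this theorem; it simply cites Murphy~\cite{murphy} (``The next statement is due to E.~Murphy''). Your outline correctly reconstructs the architecture of Murphy's argument: part~(1) falls out of $c_\mu^*=c_\mu$; the whole of~(2) and the basis claim reduce to the single straightening lemma you display; the Robinson--Schensted count $\sum_\lambda|\STD(\lambda)|^2=n!$ together with the fraction-field comparison upgrades the spanning set $\mathscr{M}$ to a basis (legitimately, since in Section~\ref{ihsec} the ring $R$ is assumed to be an integral domain); and the downward dominance induction with base case $\mu=(n)$ (where $\mathscr{H}_nc_{(n)}\mathscr{H}_n=Rc_{(n)}$) and termination at $\mu=(1^n)$ (where $c_{(1^n)}=1$) gives the spanning. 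The derivation of~(2) is also right: expanding $X_{d(\mathfrak{s})}h$ in the $\{X_w\}$ basis before straightening makes the coefficients $a_\mathfrak{u}$ visibly independent of $\mathfrak{v}$. You explicitly defer the straightening lemma itself to Murphy's Garnir-relation analysis, which is indeed where essentially all of the technical content lives; since the paper under review likewise defers the entire theorem to~\cite{murphy}, that is a fair stopping point.
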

The basis $\mathscr{M}$ is cellular in the sense
of~\cite{grahamlehrer}. If $\lambda$ is a partition of $n$, the
cell (or Specht) module $C^\lambda$ for $\mathscr{H}_n(q^2)$ is
the $R$--module freely generated by
\begin{align}\label{specht1}
\{c_{\mathfrak{s}}=c_\lambda X_{d(\mathfrak{s})}+\check{\mathscr{H}}_n^\lambda\,:\,\mathfrak{s}\in
\STD(\lambda)\},
\end{align}
and given the right $\mathscr{H}_n(q^2)$--action
\begin{align*}
c_{\mathfrak{s}}h=
\sum_{\mathfrak{u}\in\STD(\lambda)}
a_{\mathfrak{u}}c_{\mathfrak{u}},&&\text{for
$h\in\mathscr{H}_n(q^2)$,}
\end{align*}
where the coefficients $a_\mathfrak{u}\in R$, for
$\mathfrak{u}\in\STD(\lambda)$, are determined by the
expression~\eqref{tsu}. The basis~\eqref{specht1} is referred to
as the Murphy basis for $C^\lambda$ and $\mathscr{M}$ is the
Murphy basis for $\mathscr{H}_n(q^2)$.
\begin{remark}
The $\mathscr{H}_n(q^2)$--module $C^\lambda$ is the contragradient dual of the Specht module defined in~\cite{dipper-james:1}. 
\end{remark}

Let $\lambda$ and $\mu$ be partitions of $n$. A $\lambda$--tableau
of type $\mu$ is a map $\mathsf{T}:[\lambda]\to \{1,2,\dots,d\}$
such that $\mu_i=|\{y\in[\lambda]\,:\,\mathsf{T}(y)=i\}|$ for
$i\ge 1$. A $\lambda$--tableau $\mathsf{T}$ of type $\mu$ is said
to be \emph{semistandard} if (i) the entries in each row of
$\mathsf{T}$ are non--decreasing, and (ii) the entries in each
column of $\mathsf{T}$ are strictly increasing. If $\mu$ is a
partition, the semistandard tableau $\mathsf{T}^\mu$ is defined to
be the tableau of type $\mu$ with $\mathsf{T}^\mu(i,j)=i$ for
$(i,j)\in[\mu]$.
\begin{example}\label{sstabex:1}
Let $\mu=(3,2,1)$. Then the semistandard tableaux of type $\mu$
are $\mathsf{T}^\mu=\text{\tiny\Yvcentermath1$\young(111,22,3)$}$,
$\text{\tiny\Yvcentermath1$\young(1112,2,3)$}$,
$\text{\tiny\Yvcentermath1$\young(111,223)$}$\,,
$\text{\tiny\Yvcentermath1$\young(1112,23)$}$\,,
$\text{\tiny\Yvcentermath1$\young(1113,22)$}$\,,
$\text{\tiny\Yvcentermath1$\young(11122,3)$}$\,,
$\text{\tiny\Yvcentermath1$\young(11123,2)$}$\,, and
$\text{\tiny\Yvcentermath1$\young(111223)$}$\,, as in Example~4.1
of~\cite{mathas:ih}. All the semistandard tableaux of type $\mu$
are obtainable from $\mathsf{T}^\mu$ by ``moving nodes up" in
$\mathsf{T}^\mu$.
\end{example}
If $\lambda$ and $\mu$ are partitions of $n$, the set of
semistandard $\lambda$--tableaux of type $\mu$ will be denoted by
$\mathcal{T}_0(\lambda,\mu)$. Further, given a $\lambda$--tableau
$\mathfrak{t}$ and a partition $\mu$ of $n$, then
$\mu(\mathfrak{t})$ is defined to be the $\lambda$--tableau of
type $\mu$ obtained from $\mathfrak{t}$ by replacing each entry
$i$ in $\mathfrak{t}$ with $k$ if $i$ appears in the $k$--th row
of the superstandard tableau $\mathfrak{t}^\mu\in\STD(\mu)$.
\begin{example}\label{sstabex:2}
Let $n=7$, and $\mu=(3,2,1,1)$, so that
$\mathfrak{t}^\mu=\text{\tiny\Yvcentermath1$\young(123,45,6,7)$}$\,.
If $\nu=(4,3)$ and
$\mathfrak{t}=\text{\tiny\Yvcentermath1$\young(1237,456)$}$\,,
then
$\mu(\mathfrak{t})=\text{\tiny\Yvcentermath1$\young(1114,223)$}$\,.
\end{example}
Let $\mu$ and $\nu$ be partitions of $n$. If $\mathsf{S}$ is a
semistandard $\nu$--tableau of type $\mu$, and $\mathfrak{t}$ is a
standard $\nu$--tableau, define in $\mathscr{H}_n(q^2)$ the
element
\begin{align}\label{some:1}
c_{\mathsf{S}\mathfrak{t}}=\sum_{\substack{\mathfrak{s}\in\STD(\nu)\\
\mu(\mathfrak{s})=\mathsf{S}}}
q^{\ell(d(\mathfrak{s}))}c_{\mathfrak{st}}.
\end{align}
Given a partition $\mu$ of $n$, let $M^\mu$ be the right
$\mathscr{H}_n(q^2)$--module generated by $c_\mu$. The next statement
is a special instance of a theorem of E.~Murphy (Theorem~4.9
of~\cite{mathas:ih}).
\begin{theorem}\label{permod}
Let $\mu$ be a partition of $n$. Then the collection
\begin{align*}
\{c_{\mathsf{S}\mathfrak{t}}: \mathsf{S}\in
\mathcal{T}_0(\nu,\mu),\mathfrak{t}\in\STD(\nu),\text{ for $\nu$ a
partition of $n$}\}
\end{align*}
freely generates $M^\mu$ as an $R$--module.
\end{theorem}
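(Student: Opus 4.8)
The plan is to deduce Theorem~\ref{permod} from the cellular (Murphy) basis $\mathscr{M}$ of Theorem~\ref{murphybasis} by carefully reorganising the basis elements $c_{\mathfrak{st}}$ according to their ``type''. First I would recall that $M^\mu = c_\mu \mathscr{H}_n(q^2)$, so a spanning set for $M^\mu$ is obtained by acting on the right on $c_\mu$ by the Murphy basis of $\mathscr{H}_n(q^2)$; but acting on $c_\mu$ with $c_{\mathfrak{uv}} = X_{d(\mathfrak{u})}^* c_\nu X_{d(\mathfrak{v})}$ is unhelpful unless we can rewrite $c_\mu X_{d(\mathfrak{u})}^* c_\nu$ in a controlled way. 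The cleaner route is to show that $c_\mu$ lies in the cell (Specht) filtration structure: namely, one first establishes that for each $\nu$, the collection of $c_{\mathfrak{st}}$ with $\SHAPE(\mathfrak{s})=\SHAPE(\mathfrak{t})=\nu$ and $\mathfrak{s}$ ranging subject to a fixed value of $\mu(\mathfrak{s})$ can be grouped: the sum in~\eqref{some:1} defining $c_{\mathsf{S}\mathfrak{t}}$ is precisely the right combination so that $\{c_{\mathsf{S}\mathfrak{t}}\}$, as $\mathsf{S}$ runs over $\mathcal{T}_0(\nu,\mu)$ and $\mathfrak{t}$ over $\STD(\nu)$ and $\nu$ over partitions of $n$, is obtained from the full Murphy basis $\mathscr{M}$ by an \emph{upper-unitriangular} change of basis with respect to the dominance order on types, restricted to those $\mathfrak{s}$ with $\mu(\mathfrak{s})$ a semistandard tableau — and the remaining $c_{\mathfrak{st}}$ (those with $\mu(\mathfrak{s})$ not semistandard, or equivalently lying ``below'' in an appropriate order) span a complement.

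Concretely, the key steps are: (1) Show $c_\mu = c_{\mathsf{T}^\mu \mathfrak{t}^\mu} + (\text{terms }c_{\mathfrak{st}}\text{ with }\SHAPE(\mathfrak{s}) \rhd \mu)$; this follows from expanding $c_\mu = \sum_{w \in \mathfrak{S}_\mu} q^{\ell(w)} X_w$ in the Murphy basis and identifying the leading term, using that $c_\mu = c_{\mathfrak{t}^\mu \mathfrak{t}^\mu}$ modulo $\check{\mathscr{H}}_n^\mu$ together with the definition~\eqref{some:1} with $\nu = \mu$, $\mathsf{S} = \mathsf{T}^\mu$. (2) Prove that $M^\mu = c_\mu \mathscr{H}_n(q^2)$ is spanned by $\{c_{\mathsf{S}\mathfrak{t}}\}$: since $c_\mu$ equals $c_{\mathsf{T}^\mu \mathfrak{t}^\mu}$ plus higher terms, and since by Theorem~\ref{murphybasis}(2) the right action of $\mathscr{H}_n(q^2)$ on any $c_{\mathfrak{vs}}$ stays within the span of $\{c_{\mathfrak{vu}} : \mathfrak{u} \in \STD(\SHAPE(\mathfrak{v}))\}$ modulo $\check{\mathscr{H}}_n^{\SHAPE(\mathfrak{v})}$, one shows inductively (downward on dominance of $\nu$) that $c_{\mathsf{S}\mathfrak{t}} h$ is a linear combination of $c_{\mathsf{S}'\mathfrak{t}'}$ with either the same $\nu$, or with $\nu' \rhd \nu$; the crucial point is that the left index $\mathsf{S}$ (the type datum) is preserved because acting on the right does not change the ``$\mathsf{S}$-part'', which is baked into the left factor. (3) Establish linear independence: the $c_{\mathsf{S}\mathfrak{t}}$ are images under an injective unitriangular transformation of a subset of $\mathscr{M}$, hence linearly independent over $R$; combined with the spanning statement this gives freeness with the claimed basis.

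The main obstacle I expect is step~(2) — showing that the right $\mathscr{H}_n(q^2)$-module generated by $c_\mu$ is contained in (not merely maps onto modulo two-sided ideals) the $R$-span of $\{c_{\mathsf{S}\mathfrak{t}}\}$. The difficulty is that Theorem~\ref{murphybasis}(2) only controls the action modulo the ideal $\check{\mathscr{H}}_n^\lambda$, so one must run an induction on the dominance order: the ``error terms'' landing in $\check{\mathscr{H}}_n^\lambda$ are spanned by $c_{\mathfrak{uv}}$ with $\SHAPE(\mathfrak{u}) \rhd \lambda$, and one needs to know these error terms, when they arise from $c_\mu \cdot h$, still have left-index types refining $\mu$ in the semistandard sense — this requires the combinatorial lemma that if $\mathfrak{s}$ has $\mu(\mathfrak{s})$ semistandard and $\SHAPE(\mathfrak{s}) \rhd \SHAPE(\mathfrak{s}_0)$ appropriately, the type is still a semistandard tableau, i.e.\ the semistandardisation is compatible with the dominance filtration. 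This is the technical heart and is where~\cite{murphy} and~\cite{mathas:ih} do the real work; I would cite Theorem~4.9 of~\cite{mathas:ih} for the general statement and merely indicate that our $M^\mu$, $c_\mu$, and the index sets $\mathcal{T}_0(\nu,\mu)$ are the specialisation of that theorem's setup to the trivial-coefficient (one-part) situation, so that no new argument is needed beyond matching notation.
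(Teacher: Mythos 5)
The paper does not prove this statement at all: it simply records that Theorem~\ref{permod} ``is a special instance of a theorem of E.~Murphy (Theorem~4.9 of~\cite{mathas:ih}),'' and your proposal, after sketching the outline, likewise concludes by deferring the technical heart of the matter to that same citation, so the approaches coincide. Your surrounding sketch is a reasonable account of how the proof in~\cite{mathas:ih} goes (linear independence is actually even simpler than a unitriangular change of basis, since distinct pairs $(\mathsf{S},\mathfrak{t})$ involve pairwise disjoint sets of Murphy basis elements $c_{\mathfrak{st}}$ with unit coefficients, and the parenthetical claim that the $c_{\mathfrak{st}}$ with $\mu(\mathfrak{s})$ not semistandard span a complement is not quite right), but these are side remarks: the core move in both your write-up and the paper is the citation.
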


If $\mu$ and $\lambda$ are partitions of $n-1$ and $n$ respectively,
for the purposes of the present Section~\ref{ihsec}, we write
$\mu\to\lambda$ to mean that the diagram $[\lambda]$ is obtained by
adding a node to the diagram $[\mu]$, as exemplified by the
truncated Bratteli diagram associated with $\mathscr{H}_n(q^2)$
displayed in~\eqref{h-bratteli} below (Section~4
of~\cite{ramleduc:rh}).
\begin{align}\label{h-bratteli}
\begin{matrix}
\xymatrix{ & & \varnothing \ar[d] & & \\
 & & \text{\tiny\Yvcentermath1$\yng(1)$}\ar[dl]\ar[dr] & & \\
 & \text{\tiny\Yvcentermath1$\yng(1,1)$}\ar[dl]\ar[dr]&  &\text{\tiny\Yvcentermath1$\yng(2)$}\ar[dl]\ar[dr]&\\
 \text{\tiny\Yvcentermath1$\yng(1,1,1)$}\ar[d]\ar[dr] & & \text{\tiny\Yvcentermath1$\yng(2,1)$}\ar[dl]\ar[d]\ar[dr]
 & &\text{\tiny\Yvcentermath1$\yng(3)$} \ar[d]\ar[dl]\\
 \text{\tiny\Yvcentermath1$\yng(1,1,1,1)$}&\text{\tiny\Yvcentermath1$\yng(2,1,1)$}
 &\text{\tiny\Yvcentermath1$\yng(2,2)$} & \text{\tiny\Yvcentermath1$\yng(3,1)$}& \text{\tiny\Yvcentermath1$\yng(4)$}}
\end{matrix}
\end{align}
If $\lambda$ is a partition of $n$ then, as in~\cite{ramleduc:rh},
define a \emph{path} of shape $\lambda$ in the Bratteli diagram
associated with $\mathscr{H}_{n}(q^2)$ to be a sequence of
partitions
\begin{align*}
\left(\lambda^{(0)},\lambda^{(1)},\dots,\lambda^{(n)}\right)
\end{align*}
satisfying the conditions that $\lambda^{(0)}=\varnothing$ is the empty
partition, $\lambda^{(n)}=\lambda$, and
$\lambda^{(i-1)}\to\lambda^{(i)}$, for $1\le i\le n$. As observed in Section~4 of~\cite{ramleduc:rh}, there is a natural correspondence between the paths in the Bratteli diagram associated with $\mathscr{H}_{n}(q^2)$ and the elements of $\STD(\lambda)$ whereby $\mathfrak{t}\mapsto(\lambda^{(0)},\lambda^{(1)},\dots,\lambda^{(n)})$  and $\lambda^{(i)}=\SHAPE(\mathfrak{t}|_i)$ for $1\le i \le n$.
\begin{example}\label{tabdiag}
Let $n=6$ and $\lambda=(3,2,1)$. Then the identification of standard
$\lambda$--tableau with paths of shape $\lambda$ in the Bratteli
diagram associated with $\mathscr{H}_n(q^2)$ maps
\begin{align*}
\mathfrak{t}=\text{\tiny\Yvcentermath1$\young(136,24,5)$}\,\mapsto
\left(\,\text{\tiny\Yvcentermath1$\yng(1)$}\,,\text{\tiny\Yvcentermath1$\yng(1,1)$}\,,
\text{\tiny\Yvcentermath1$\yng(2,1)$}\,,\text{\tiny\Yvcentermath1$\yng(2,2)$}\,,
\text{\tiny\Yvcentermath1$\yng(2,2,1)$}\,,\text{\tiny\Yvcentermath1$\yng(3,2,1)$}\,\right).
\end{align*}
\end{example}
Taking advantage of the bijection between the standard
$\lambda$--tableaux and the paths of shape $\lambda$ in the Bratteli
diagram of $\mathscr{H}_n(q^2)$, we will have occasion to write
\begin{align*}
\mathfrak{t}=\left(\lambda^{(0)},\lambda^{(1)},\dots,\lambda^{(n)}\right),
\end{align*}
explicitly identifying each standard $\lambda$--tableau
$\mathfrak{t}$ with a path of shape $\lambda$ in the Bratteli
diagram.

For each integer $i$ with $1\le i\le n$, consider
$\mathscr{H}_i(q^2)$ as the subalgebra of $\mathscr{H}_n(q^2)$
generated by the elements $X_1,X_2,\dots,X_{i-1}$, thereby obtaining
the tower of algebras
\begin{align}\label{tower:1}
R=\mathscr{H}_1(q^2)\subseteq\mathscr{H}_{2}(q^2)\subseteq\cdots
\subseteq\mathscr{H}_n(q^2).
\end{align}
Given a right $\mathscr{H}_{n}(q^2)$-module $V$, write $\RES(V)$ for
the restriction of $V$ to $\mathscr{H}_{n-1}(q^2)$ by the
identifications~\ref{tower:1}. Lemma~\ref{hres} below, which is a consequence of Theorem~7.2 of~\cite{murphy}, shows that the Bratteli diagram associated
with $\mathscr{H}_n(q^2)$ describes the behaviour of the cell modules
for $\mathscr{H}_n(q^2)$ under restriction to
$\mathscr{H}_{n-1}(q^2)$.
\begin{lemma}\label{hres}
Let $\lambda$ be a partition of $n$. For each partition $\mu$ of
$n-1$ with $\mu\to\lambda$, let $A^\mu$ denote the $R$--submodule of $C^\lambda$ freely generated by
\begin{align*}
\{\,c_\mathfrak{v}:\text{$\mathfrak{v}\in\STD(\lambda)$
and $\SHAPE(\mathfrak{v}|_{n-1})\unrhd\mu$}\,\}
\end{align*}
and write $\check{A}^\mu$ for the $R$--submodule of $S^\lambda$
freely generated by
\begin{align*}
\{\,c_\mathfrak{v}:\text{$\mathfrak{v}\in\STD(\lambda)$
and $\SHAPE(\mathfrak{v}|_{n-1})\rhd\mu$}\,\}.
\end{align*}
If $\mathfrak{v}\in\STD_n(\lambda)$ and $\mathfrak{v}|_{n-1}=\mathfrak{t}^\mu$, then the $R$--linear map determined on generators by 
\begin{align*}
c_{\mathfrak{v}} X_{d(\mathfrak{u})}+\check{A}^\mu\mapsto c_\mathfrak{u},&&\text{for $\mathfrak{u}\in\STD(\mu)$,}
\end{align*}
is an isomorphism $A^\mu/\check{A}^\mu\cong C^\mu$ of
$\mathscr{H}_{n-1}(q^2)$--modules.
\end{lemma}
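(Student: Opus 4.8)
The plan is to deduce Lemma~\ref{hres} directly from Murphy's basis theorem (Theorem~\ref{murphybasis}) together with the standard identification of restriction for cellular algebras. First I would check that $A^\mu$ and $\check A^\mu$ are genuinely $\mathscr{H}_{n-1}(q^2)$--submodules of $C^\lambda$: since $X_1,\dots,X_{n-2}$ lie in $\mathscr{H}_{n-1}(q^2)$, acting by such an element on $c_\mathfrak{v}$ rewrites (via~\eqref{tsu}) as a combination of $c_\mathfrak{u}$ with $\mathfrak{u}\in\STD(\lambda)$, and the point is that $\mathfrak{u}|_{n-1}$ can only move \emph{up} in dominance relative to $\mathfrak{v}|_{n-1}$. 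This is essentially Murphy's observation (Theorem~7.2 of~\cite{murphy}) that the action of $\mathscr{H}_{n-1}(q^2)$ on the Murphy basis of $C^\lambda$ respects the filtration of $C^\lambda$ by the shape of the $(n-1)$--restriction; I would state it as the key input and cite it rather than re-prove the triangularity. Hence both $A^\mu$ and $\check A^\mu=\sum_{\nu\rhd\mu}A^\nu$ are submodules, and $A^\mu/\check A^\mu$ has $R$--basis the images of those $c_\mathfrak{v}$ with $\SHAPE(\mathfrak{v}|_{n-1})=\mu$.

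Next I would set up the candidate isomorphism. Fix $\mathfrak{v}\in\STD_n(\lambda)$ with $\mathfrak{v}|_{n-1}=\mathfrak{t}^\mu$; concretely $\mathfrak{v}$ is $\mathfrak{t}^\mu$ with the node labelled $n$ added in the position dictated by $\mu\to\lambda$. Every standard $\lambda$--tableau $\mathfrak{w}$ with $\SHAPE(\mathfrak{w}|_{n-1})=\mu$ is then of the form $\mathfrak{v}\,d(\mathfrak{u})$ for a unique $\mathfrak{u}\in\STD(\mu)$, where $d(\mathfrak{u})\in\mathfrak{S}_{n-2f-1}$ is the permutation attached to $\hat{\mathfrak{u}}\in\STD(\mu)$ acting on the first $n-1$ labels — because deleting $n$ from $\mathfrak{v}d(\mathfrak{u})$ just applies $d(\mathfrak{u})$ to $\mathfrak{t}^\mu$. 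This gives a bijection between the distinguished basis of $A^\mu/\check A^\mu$ and the Murphy basis $\{c_\mathfrak{u}:\mathfrak{u}\in\STD(\mu)\}$ of $C^\mu$, so the map $c_\mathfrak{v}X_{d(\mathfrak{u})}+\check A^\mu\mapsto c_\mathfrak{u}$ is a well-defined $R$--module isomorphism. One small check here: $c_\mathfrak{v}X_{d(\mathfrak{u})}$ must reduce modulo $\check A^\mu$ to $c_{\mathfrak{v}d(\mathfrak{u})}$ plus terms in $\check A^\mu$, which follows from~\eqref{tsu} applied with $h=X_{d(\mathfrak{u})}$ together with the triangularity of the $(n-1)$--restriction just invoked.

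Finally I would verify $\mathscr{H}_{n-1}(q^2)$--equivariance. For $h\in\mathscr{H}_{n-1}(q^2)$ write $h=\sum_w a_w X_w$ with $w\in\mathfrak{S}_{n-1}$; it suffices to treat $h=X_i$ with $1\le i\le n-2$. On the $C^\mu$ side, $c_\mathfrak{u}X_i=\sum_\mathfrak{z}b_\mathfrak{z}c_\mathfrak{z}$ by the defining action~\eqref{tsu} inside $\mathscr{H}_{n-1}(q^2)$. On the $A^\mu/\check A^\mu$ side, $(c_\mathfrak{v}X_{d(\mathfrak{u})})X_i = c_\mathfrak{v}(X_{d(\mathfrak{u})}X_i)$, and I would expand $X_{d(\mathfrak{u})}X_i$ in $\mathscr{H}_{n-1}(q^2)$ using the Murphy basis relation, noting that the structure constants produced are \emph{the same} $b_\mathfrak{z}$ because the computation of $c_\lambda X_{d(\mathfrak{u})}X_i$ modulo $\check A^\mu$ only sees the $\mathscr{H}_{n-1}(q^2)$--structure — the extra node labelled $n$ sits in a fixed position untouched by $s_1,\dots,s_{n-2}$ and the superstandard tableau $\mathfrak{t}^\mu$ matches on the relevant entries. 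Thus both sides map to $\sum_\mathfrak{z}b_\mathfrak{z}c_\mathfrak{z}$, proving the map intertwines the actions.

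I expect the main obstacle to be the bookkeeping in this last step — making precise why the structure constants for the $\mathscr{H}_{n-1}(q^2)$--action on $\{c_\mathfrak{v}X_{d(\mathfrak{u})}\}$ inside $A^\mu/\check A^\mu$ coincide with those for the Murphy basis of $C^\mu$, rather than merely being triangularly related. The cleanest route is probably to observe that there is a surjection $C^\lambda\!\!\downarrow_{\mathscr{H}_{n-1}}\twoheadrightarrow A^\mu/\check A^\mu$ of the relevant subquotient onto a module which, by the cellular general nonsense of~\cite{grahamlehrer} applied to the cellular structure restricted to $\mathscr{H}_{n-1}(q^2)$, must be a direct sum of cell modules; then a dimension/highest-shape count forces it to be $C^\mu$, and tracking the distinguished generator $c_\mathfrak{v}$ (image of $c_\lambda$ scaled by the minimal coset representative taking $n$ to its slot) pins down the isomorphism on the nose. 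Everything else is routine verification that submodules are submodules and that bases match up.
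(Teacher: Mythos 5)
Your overall strategy — cite Murphy's Theorem~7.2 for the triangularity of the $\mathscr{H}_{n-1}(q^2)$--action on the Murphy basis, set up the bijection $\mathfrak{u}\mapsto\mathfrak{v}d(\mathfrak{u})$ between $\STD(\mu)$ and $\{\mathfrak{w}\in\STD(\lambda):\SHAPE(\mathfrak{w}|_{n-1})=\mu\}$, and then verify equivariance — is the natural one, and it is essentially what the paper itself does: the paper offers no proof of Lemma~\ref{hres} at all and simply records it as ``a consequence of Theorem~7.2 of~\cite{murphy}.'' Your first two paragraphs flesh that citation out correctly; in particular $d(\mathfrak{v}d(\mathfrak{u}))=d(\mathfrak{v})d(\mathfrak{u})$ with lengths adding, so $c_\mathfrak{v}X_{d(\mathfrak{u})}=c_{\mathfrak{v}d(\mathfrak{u})}$ on the nose, not merely modulo lower terms. (Minor slip: you wrote $\mathfrak{S}_{n-2f-1}$ where $\mathfrak{S}_{n-1}$ is meant; there is no $f$ in the Hecke-algebra setting.)

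The gap is in your last paragraph, which is the step you yourself flag as the difficulty. First, there is no surjection $C^\lambda{\downarrow}_{\mathscr{H}_{n-1}}\twoheadrightarrow A^\mu/\check A^\mu$: $A^\mu/\check A^\mu$ is a \emph{subquotient} of the restriction, and $A^\mu$ is a proper submodule whenever $\mu$ is not the unique maximal element of $\{\nu:\nu\to\lambda\}$, so the quotient map you invoke does not exist. Second, and more seriously, ``cellular general nonsense'' from~\cite{grahamlehrer} does \emph{not} say that subquotients of cell modules are direct sums of cell modules; that is a semisimple phenomenon, and the whole point of working over the generic ring $R=\mathbb{Z}[q^{\pm1},\dots]$ is that one is not assuming semisimplicity. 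What the general theory does give is a filtration of the algebra (or of a module with a cell-filtration) by cell modules, which is exactly what Murphy's Theorem~7.2 supplies in the restricted situation; but you cannot shortcut the equivariance check by appealing to a direct-sum decomposition. The correct way to close the argument is the direct one you sketched in the middle paragraph: writing $c_\mu X_{d(\mathfrak{u})}h=\sum_{\mathfrak{z}}b_\mathfrak{z}c_\mu X_{d(\mathfrak{z})}+H$ with $H\in\check{\mathscr{H}}^\mu_{n-1}$, one multiplies on the left by $X_{d(\mathfrak{v})}$ inside $\mathscr{H}_n(q^2)$; the term $c_\lambda X_{d(\mathfrak{v})}H$ then lands in $\check A^\mu+\check{\mathscr{H}}^\lambda_n$ by a second application of the triangularity statement, and the coefficients $b_\mathfrak{z}$ carry over unchanged because $X_{d(\mathfrak{v})}X_{d(\mathfrak{z})}=X_{d(\mathfrak{v})d(\mathfrak{z})}$ with $d(\mathfrak{v})d(\mathfrak{z})$ reduced. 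That argument is sound; the dimension/direct-sum shortcut is not.
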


The Jucys--Murphy operators $\tilde{D}_i$ in $\mathscr{H}_n(q^2)$
are usually defined (Section~3 of~\cite{mathas:ih}) by
$\tilde{D}_1=0$ and
\begin{align}\label{h-jmdef}
\tilde{D}_i=\sum_{k=1}^{i-1} X_{(k,i)},&&\text{for $i=1,\dots,n$}
\end{align}
As per an exercise in~\cite{mathas:ih}, we define $D_1=1$ and set
$D_i=X_{i-1}D_{i-1}X_{i-1}$. Since $D_i=1+(q-q^{-1})\tilde{D}_i$,
and the $\tilde{D}_i$ can be cumbersome, we work with the $D_i$
rather than the $\tilde{D}_i$. We also refer to the $D_i$ as
Jucys--Murphy elements; this should cause no confusion. The following proposition is well known.
\begin{proposition}\label{murphyop:1}
Let $i$ and $k$ be integers, $1\le i<n$ and $1\le k\le n$.
\begin{enumerate}
\item $X_i$ and $D_k$ commute if $i\ne k-1,k$. \item $D_i$ and
$D_k$ commute. \item $X_i$ commutes with $D_iD_{i+1}$ and
$D_i+D_{i+1}$.
\end{enumerate}
\end{proposition}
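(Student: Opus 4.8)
The plan is to verify each of the three commutation relations in Proposition~\ref{murphyop:1} by direct computation using the definitions $D_1 = 1$, $D_i = X_{i-1}D_{i-1}X_{i-1}$, together with the braid and quadratic relations of $\mathscr{H}_n(q^2)$. The key preliminary observation is the identity $D_i = 1 + (q-q^{-1})\tilde{D}_i$ with $\tilde{D}_i = \sum_{k=1}^{i-1}X_{(k,i)}$; although we work with the $D_i$, this identification lets one translate statements about $D_i$ into statements about the more familiar additive Jucys--Murphy operators $\tilde{D}_i$ when convenient, and in particular it makes clear that $D_i \in \mathscr{H}_i(q^2)$.

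First I would prove part (1). By induction on $k$: using $D_k = X_{k-1}D_{k-1}X_{k-1}$, if $i \neq k-1, k$ then either $i < k-1$, in which case $X_i$ commutes with $X_{k-1}$ (by the far-commutation relation $X_iX_j = X_jX_i$ for $|i-j|\ge 2$) and, since $i \ne (k-1)-1$ and $i \ne k-1$, with $D_{k-1}$ by the inductive hypothesis, so $X_i$ commutes with $D_k$; or $i > k$, in which case $X_i$ commutes with both $X_{k-1}$ and $D_{k-1} \in \mathscr{H}_{k-1}(q^2)$ trivially since the latter involves only $X_1, \dots, X_{k-2}$. The base case $D_1 = 1$ is immediate.

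Next, part (3). Here I would compute $X_i D_i D_{i+1}$ directly. Since $D_{i+1} = X_i D_i X_i$, we get $X_i D_i D_{i+1} = X_i D_i X_i D_i X_i = D_{i+1} D_i X_i$, and as $D_i, D_{i+1}$ commute (part (2), which I would prove before (3), or independently), this already gives $X_i D_i D_{i+1} = D_i D_{i+1} X_i$. For the relation $X_i(D_i + D_{i+1}) = (D_i + D_{i+1})X_i$, write $D_i + D_{i+1} = D_i + X_i D_i X_i$; using the quadratic relation in the form $X_i^2 = 1 + (q-q^{-1})X_i$, one computes $X_i(D_i + X_iD_iX_i) = X_iD_i + X_i^2 D_i X_i = X_i D_i + D_i X_i + (q-q^{-1})X_i D_i X_i$, and similarly from the right, and these expressions agree by a short rearrangement using part (1) (which tells us $D_{i-1}$-type terms commuting past $X_i$) — more precisely one uses that $X_i D_i X_i = X_i^{-1}D_i X_i \cdot$(unit adjustments) is symmetric, or more cleanly that $D_iD_{i+1}$ and $D_i + D_{i+1}$ are, respectively, the ``norm'' and ``trace'' of $D_i$ under conjugation by $X_i$, hence fixed by that conjugation up to the quadratic relation. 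The cleanest route is: conjugation by $X_i$ sends $D_i \mapsto D_{i+1}$; I claim it also sends $D_{i+1} \mapsto$ something whose sum and product with $D_{i+1}$ recovers symmetry — this requires knowing $X_i D_{i+1} X_i^{-1}$, which I would get from $D_{i+1} = X_i D_i X_i$ and $X_i^2 = 1 + (q-q^{-1})X_i$.

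Part (2), that $D_i$ and $D_k$ commute, is the main obstacle and I would prove it by induction, say on $\max(i,k)$; assume $i < k$. Using $D_k = X_{k-1}D_{k-1}X_{k-1}$, if $i < k-1$ then $D_i \in \mathscr{H}_{k-1}(q^2)$ commutes with $X_{k-1}$ by far-commutation (as $D_i$ only involves $X_1, \dots, X_{i-1}$ with $i - 1 \le k-3$), and with $D_{k-1}$ by induction, giving the result. The delicate case is $i = k-1$, i.e.\ showing $D_{k-1}$ commutes with $D_k = X_{k-1}D_{k-1}X_{k-1}$; equivalently $D_{k-1}X_{k-1}D_{k-1}X_{k-1} = X_{k-1}D_{k-1}X_{k-1}D_{k-1}$. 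Here I would pass to the additive form: $D_{k-1} = 1 + (q-q^{-1})\tilde D_{k-1}$, and it suffices to show $\tilde D_{k-1}$ commutes with $X_{k-1}\tilde D_{k-1}X_{k-1}$, or better, invoke the well-known fact that all the additive Jucys--Murphy elements $\tilde D_1, \dots, \tilde D_n$ pairwise commute (the standard proof: $\tilde D_{k-1} = \sum_{j<k-1}X_{(j,k-1)}$ and $X_{k-1}\tilde D_{k-1}X_{k-1} = \sum_{j<k-1}X_{k-1}X_{(j,k-1)}X_{k-1}$, and using $X_{k-1}X_{(j,k-1)}X_{k-1} = X_{(j,k)} + (q-q^{-1})(X_{(j,k-1)} - X_{(j,k)}X_{(j,k-1)})$-type Hecke conjugation identities one expresses everything in a form manifestly symmetric in the indices). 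Since this pairwise commutativity of the $\tilde D_i$ is genuinely classical and ``well known'' as the proposition's preamble states, in the write-up I would either cite it from~\cite{mathas:ih} and~\cite{murphy} or give the one-paragraph induction; the only real work is bookkeeping with the Hecke-conjugation formula $X_i X_{(j,i)} X_i = X_{(j,i+1)}$ modulo lower terms.
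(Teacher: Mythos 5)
The paper offers no proof of this proposition at all: it is stated with the preamble \textit{``The following proposition is well known''} and left to the standard references (\cite{mathas:ih}, \cite{murphy}). So you are supplying an argument where the paper supplies none; that is fine in principle, but your argument for part (1) has a genuine gap at $i = k-2$. You split the hypothesis $i \ne k-1, k$ into the cases $i < k-1$ and $i > k$, and in the first case you assert both that $X_i$ commutes with $X_{k-1}$ by far-commutation and that $X_i$ commutes with $D_{k-1}$ by the inductive hypothesis, ``since $i \ne (k-1)-1$ and $i \ne k-1$.'' But $i < k-1$ does not rule out $i = k-2 = (k-1)-1$. For that value, $|i - (k-1)| = 1$, so $X_i$ and $X_{k-1}$ satisfy the braid relation rather than commute, and the inductive hypothesis for $D_{k-1}$ is exactly what excludes $i = k-2$. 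You have silently assumed $i \le k-3$ and dropped one case.

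The case $i = k-2$ does in fact require a separate braid-relation computation. Unwinding once more, $D_k = X_{k-1}X_{k-2}D_{k-2}X_{k-2}X_{k-1}$; then
\begin{align*}
X_{k-2}D_k
&= X_{k-2}X_{k-1}X_{k-2}\,D_{k-2}\,X_{k-2}X_{k-1}
= X_{k-1}X_{k-2}X_{k-1}\,D_{k-2}\,X_{k-2}X_{k-1}\\
&= X_{k-1}X_{k-2}\,D_{k-2}\,X_{k-1}X_{k-2}X_{k-1}
= X_{k-1}X_{k-2}\,D_{k-2}\,X_{k-2}X_{k-1}X_{k-2}
= D_k X_{k-2},
\end{align*}
where the second and fourth equalities use $X_{k-2}X_{k-1}X_{k-2}=X_{k-1}X_{k-2}X_{k-1}$ and the third uses the inductive hypothesis to pass $X_{k-1}$ through $D_{k-2}$ (valid, since $k-1 \ne k-3, k-2$). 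Note that your induction for part (2) does not suffer the analogous defect: there you commute $D_i$ (which involves only $X_1,\dots,X_{i-1}$, indices at most $k-3$ when $i<k-1$) past $X_{k-1}$, so the index arithmetic closes. Parts (2) and (3) are otherwise sound, granting the classical pairwise commutativity of the additive Jucys--Murphy elements which you cite.
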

Let
$\mathfrak{t}=\left(\lambda^{(0)},\lambda^{(1)},\dots,\lambda^{(n)}\right)$
be a standard $\lambda$--tableau identified with the corresponding
path in the Bratteli diagram of $\mathscr{H}_n(q^2)$. For each
integer $k$ with $1\le k\le n$, define
\begin{align}\label{mon:1}
P_\mathfrak{t}(k)=q^{2(j-i)} &&\text{where
$[\lambda^{(k)}]=[\lambda^{(k-1)}]\cup \{(i,j) \}$}.
\end{align}
The next statement is due to R.~Dipper and G.~James (Theorem 3.32
of~\cite{mathas:ih}).
\begin{theorem}\label{utrangular:1}
Suppose that $\lambda$ is a partition of $n$ and let
$\mathfrak{s}$ be a standard $\lambda$--tableau. If $k$ is an
integer, $1\le k\le n$, then there exist $a_\mathfrak{v}\in R$,
for $\mathfrak{v}\rhd\mathfrak{s}$, such that
\begin{align*}
c_{\mathfrak{s}}\,D_k=
P_{\mathfrak{s}}(k)c_{\mathfrak{s}}+\sum_{\substack{\mathfrak{v}\in\STD(\lambda)\\
\mathfrak{v}\rhd\mathfrak{s}}}a_{\mathfrak{v}}
c_{\mathfrak{v}}.
\end{align*}
\end{theorem}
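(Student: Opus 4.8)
The plan is to induct on $n$, using the restriction Lemma~\ref{hres} to peel off the last node of $\mathfrak{s}$, together with Proposition~\ref{murphyop:1} to understand how $D_k$ interacts with the subalgebra $\mathscr{H}_{n-1}(q^2)$. Write $\mathfrak{s}=(\lambda^{(0)},\dots,\lambda^{(n)})$ and set $\mu=\lambda^{(n-1)}=\SHAPE(\mathfrak{s}|_{n-1})$, so $\mu\to\lambda$ and $[\lambda]=[\mu]\cup\{(i,j)\}$ for the node $(i,j)$ added at the last step; thus $P_{\mathfrak{s}}(n)=q^{2(j-i)}$. Let $\mathfrak{v}\in\STD_n(\lambda)$ be the tableau with $\mathfrak{v}|_{n-1}=\mathfrak{t}^\mu$ (i.e.\ $n$ sits in node $(i,j)$ and the superstandard filling occupies the rest), so that $c_{\mathfrak{s}}=c_{\mathfrak{v}}X_{d(\hat{\mathfrak{s}})}$ modulo $\check{A}^\mu$ under the notation of Lemma~\ref{hres}, where $\hat{\mathfrak{s}}\in\STD(\mu)$ is obtained by deleting $n$ and relabelling.

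First I would dispose of the case $1\le k\le n-1$. Here $D_k\in\mathscr{H}_{n-1}(q^2)$, and under the isomorphism $A^\mu/\check{A}^\mu\cong C^\mu$ of Lemma~\ref{hres} the element $c_{\mathfrak{s}}$ corresponds to the Murphy basis element $c_{\hat{\mathfrak{s}}}\in C^\mu$. By induction applied to $C^\mu$ (a cell module for $\mathscr{H}_{n-1}(q^2)$, $\mu\vdash n-1$), $c_{\hat{\mathfrak{s}}}D_k = P_{\hat{\mathfrak{s}}}(k)c_{\hat{\mathfrak{s}}} + \sum_{\hat{\mathfrak{w}}\rhd\hat{\mathfrak{s}}} a_{\hat{\mathfrak{w}}} c_{\hat{\mathfrak{w}}}$; lifting back through the filtration $\check{A}^\mu\subset A^\mu\subseteq C^\lambda$ gives the claim, because $P_{\hat{\mathfrak{s}}}(k)=P_{\mathfrak{s}}(k)$ for $k\le n-1$ (the first $n-1$ steps of the two paths agree), the tableaux $\mathfrak{w}\in\STD_n(\lambda)$ with $\mathfrak{w}|_{n-1}\rhd\mu$ contribute terms indexed by $\mathfrak{v}\rhd\mathfrak{s}$ (since $\mathfrak{w}|_{n-1}\rhd\mathfrak{s}|_{n-1}$ forces $\mathfrak{w}\rhd\mathfrak{s}$), and within the layer $A^\mu/\check{A}^\mu$ the correction terms $c_{\hat{\mathfrak{w}}}$ with $\hat{\mathfrak{w}}\rhd\hat{\mathfrak{s}}$ lift to $c_{\mathfrak{w}}$ with $\mathfrak{w}\rhd\mathfrak{s}$ (same last node, dominance on the truncations propagates). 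One must also check that the "lower-order" terms coming from $\check{A}^\mu$ are again of the form $a_{\mathfrak{v}}c_{\mathfrak{v}}$ with $\mathfrak{v}\rhd\mathfrak{s}$, which is exactly how $\check{A}^\mu$ was defined.

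The substantive case is $k=n$. Recall $D_n=X_{n-1}D_{n-1}X_{n-1}$, and $\mathscr{H}_n(q^2)=\mathscr{H}_{n-1}(q^2)\oplus\mathscr{H}_{n-1}(q^2)X_{n-1}\mathscr{H}_{n-1}(q^2)$ as a module, more precisely one uses the standard coset decomposition $c_{\mathfrak{s}}$ in terms of $c_{\mathfrak{v}}X_{d(\hat{\mathfrak{s}})}$. The idea is to compute $c_{\mathfrak{v}}D_n$ directly — this is the "base computation" where $\mathfrak{v}$ has $n$ in node $(i,j)$ — by an explicit manipulation with $c_\lambda$, $c_\mu$, the braid relations and the quadratic relation, showing $c_{\mathfrak{v}}D_n \equiv q^{2(j-i)}c_{\mathfrak{v}} \pmod{\check{A}^\mu + (\text{terms }c_{\mathfrak{w}},\ \mathfrak{w}\rhd\mathfrak{v})}$; this is essentially the content of the combinatorial identity that the Jucys–Murphy eigenvalue on a node is $q^{2(\mathrm{content})}$, and in the Hecke-algebra literature it is proved by a $c_\mu$-versus-$c_\lambda$ comparison using that the added node lies at the end of row $i$. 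Then for general $\mathfrak{s}$ one writes $c_{\mathfrak{s}}=c_{\mathfrak{v}}X_{d(\hat{\mathfrak{s}})}$ (mod $\check{A}^\mu$), commutes $X_{d(\hat{\mathfrak{s}})}$ past $D_n$ using Proposition~\ref{murphyop:1}(1) — since $d(\hat{\mathfrak{s}})\in\mathfrak{S}_{n-1}$ is a product of $X_i$ with $i\le n-2$, each commutes with $D_n$ — reducing to $c_{\mathfrak{v}}D_n X_{d(\hat{\mathfrak{s}})}$, and applies the base computation, using Theorem~\ref{murphybasis}(2) to re-expand $c_{\mathfrak{w}}X_{d(\hat{\mathfrak{s}})}$ in the Murphy basis with terms $\rhd$.

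The main obstacle I anticipate is the $k=n$ base computation $c_{\mathfrak{v}}D_n\equiv q^{2(j-i)}c_{\mathfrak{v}}$: controlling precisely which lower terms appear, and verifying they are all strictly $\rhd\mathfrak{v}$ in the tableau order on $\STD_n(\lambda)$ rather than merely $\not\lhd$. This requires a careful bookkeeping of the action of $X_{n-1}$ on $c_\lambda$ (equivalently, moving the entry $n$ one box and recognising the result via the straightening relations of Theorem~\ref{murphybasis}), and matching the resulting dominance statements on truncated shapes. A secondary subtlety is ensuring the induction hypothesis is applied to cell modules over $\mathscr{H}_{n-1}(q^2)$ with the correctly shifted labelling $\STD_{n-1}$ vs $\STD$ — harmless, but it must be stated cleanly. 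Everything else (the commuting relations, lifting through the cell filtration, re-expansion in the Murphy basis) is routine given the results already available in the excerpt.
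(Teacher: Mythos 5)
The paper does not prove Theorem~\ref{utrangular:1} at all; it quotes it as Theorem~3.32 of~\cite{mathas:ih}, attributed to Dipper and James, so there is strictly speaking no ``paper's own proof'' to compare against. The meaningful comparison is with the paper's proof of the B--M--W analogue, Theorem~\ref{utrangular:2}. For $1\le k<n$ your strategy is essentially that proof: restrict along the tower via Lemma~\ref{hres}, apply induction on $n$, and lift through the filtration. For $k=n$, however, the paper does something genuinely different and substantially cleaner than your proposed ``base computation.'' It never computes $c_{\mathfrak{v}}D_n$ (resp.\ $m_{\mathfrak{v}}L_n$) directly. Instead it observes that the full product of Jucys--Murphy elements is central (Proposition~\ref{murphyop:2}(4)) and hence, by the argument of Proposition~\ref{central}, acts on the cell module by the scalar $\prod_{k}P_{\mathfrak{t}^\lambda}(k)$; the $k=n$ case then follows formally from the identity $m_\mathfrak{t}L_n=\bigl(\prod_{k}P_{\mathfrak{t}}(k)\bigr)m_\mathfrak{t}(L_2\cdots L_{n-1})^{-1}$, since $L_2\cdots L_{n-1}$ is, by the cases $k<n$, triangular with invertible monomial diagonal. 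The same device is available in the Iwahori--Hecke setting: Proposition~\ref{murphyop:1}(1) and~(3) together imply that $D_2\cdots D_n$ commutes with each $X_i$ and is therefore central, and the analogue of Proposition~\ref{central} gives the scalar action on $C^\lambda$. This eliminates your ``main obstacle'' outright, so you should not spend effort on the explicit base computation.

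There is also a genuine gap in the $1\le k<n$ step as you have described it. You write that the lower-order terms coming from $\check{A}^\mu$ ``are again of the form $a_{\mathfrak{v}}c_{\mathfrak{v}}$ with $\mathfrak{v}\rhd\mathfrak{s}$, which is exactly how $\check{A}^\mu$ was defined.'' That is not how $\check{A}^\mu$ is defined: by Lemma~\ref{hres}, $\check{A}^\mu$ is the span of those $c_{\mathfrak{w}}$ with $\SHAPE(\mathfrak{w}|_{n-1})\rhd\mu$, a condition that constrains only the level--$(n-1)$ shape, whereas $\mathfrak{w}\rhd\mathfrak{s}$ demands $\SHAPE(\mathfrak{w}|_j)\unrhd\SHAPE(\mathfrak{s}|_j)$ for every $j$. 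The two conditions are not equivalent. For example, with $\lambda=(3,2)$ and
\begin{align*}
\mathfrak{s}=\text{\tiny\Yvcentermath1$\young(125,34)$}\,,\qquad
\mathfrak{w}=\text{\tiny\Yvcentermath1$\young(134,25)$}\,,
\end{align*}
one has $\SHAPE(\mathfrak{w}|_4)=(3,1)\rhd(2,2)=\SHAPE(\mathfrak{s}|_4)$, yet $\SHAPE(\mathfrak{w}|_2)=(1,1)\ntrianglerighteq(2)=\SHAPE(\mathfrak{s}|_2)$, so $\mathfrak{w}$ and $\mathfrak{s}$ are incomparable. The induction as you have sketched it only establishes triangularity with respect to the weaker order ``domination of restricted shapes at levels $j\ge k$,'' not the full dominance order on $\STD(\lambda)$ appearing in the statement. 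To be fair, the paper's own proof of Theorem~\ref{utrangular:2} passes over the same point in silence (it asserts $\check{N}^\mu\subseteq\operatorname{span}\{m_\mathfrak{u}:\mathfrak{u}\rhd\mathfrak{t}\}$ without justification), but that does not close the gap: either one needs a finer accounting of precisely which basis vectors in $\check{A}^\mu$ can actually occur, or one should revert to the direct combinatorial argument of the cited source.
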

One objective at hand is to provide an extension of
Lemma~\ref{hres} and Theorem~\ref{utrangular:1} to the Brauer and
Birman--Murakami--Wenzl algebras.
\section{The Birman--Murakami--Wenzl Algebras}\label{b-m-w-a}
Let $q,r$ be indeterminates over $\mathbb{Z}$ and
$R=\mathbb{Z}[q^{\pm1},r^{\pm1},(q-q^{-1})^{-1}]$. The
Birman--Murakami--Wenzl algebra $B_n(q,r)$ over $R$ is the unital
associative $R$--algebra generated by the elements
$T_1,T_2,\dots,T_{n-1}$, which satisfy the defining relations
\begin{align*}
&(T_i-q)(T_i+q^{-1})(T_i-r^{-1})=0&&\text{for $1\le i<n$;}\\
&T_iT_{i+1}T_i=T_{i+1}T_iT_{i+1}&&\text{for $1\le i\le n-2$;}\\
&T_iT_j=T_jT_i&&\text{for $2\le|i-j|$;}\\
&E_iT_{i-1}^{\pm1}E_i=r^{\pm1}E_i&&\text{for $2\le i\le n-1$;}\\
&E_iT_{i+1}^{\pm1}E_i=r^{\pm1}E_i&&\text{for $1\le i\le n-2$;}\\
&T_iE_i=E_iT_i=r^{-1}E_i&&\text{for $1\le i\le n-1$,}
\end{align*}
where $E_i$ is the element defined by the expression
\begin{align*}
(q-q^{-1})(1-E_i)=T_i-T_i^{-1}.
\end{align*}
Writing
\begin{align}\label{zdef}
z=\frac{(q+r)(qr-1)}{r(q+1)(q-1)},
\end{align}
then (Section~3 of~\cite{wenzlqg}) one derives additional
relations
\begin{align*}
&E_i^2=zE_i,\\
&E_iT_i^{\pm1}=r^{\mp1}E_i=T_i^{\pm1}E_i,\\
&T^2_i=1+(q-q^{-1})(T_i-r^{-1}E_i) \\
&E_{i\pm1}T_iT_{i\pm1}=T_iT_{i\pm1}E_i\\
&E_iT_{i\pm1}E_i=rE_i\\
&E_iT_{i\pm1}^{-1}E_i=r^{-1}E_i\\
&E_iE_{i\pm1}E_i=E_i\\
&E_iE_{i\pm1}=E_iT_{i\pm1}T_i=T_{i\pm1}T_iE_{i\pm1}.
\end{align*}
If $w\in\mathfrak{S}_n$ is a permutation and
$w=s_{i_1}s_{i_2}\cdots s_{i_k}$ is a reduced expression for $w$,
then
\begin{align*}
T_w=T_{i_1}T_{i_2}\cdots T_{i_k}
\end{align*}
is a well defined element of $B_n(q,r)$.
\begin{remark} The generator $T_i$ above
differs by a factor of $q$ from the generator used in~\cite{saru}
but coincides with the element $g_i$ of~\cite{ramleduc:rh}
and~\cite{wenzlqg}.
\end{remark}
If $f$ is an integer, $0\le f\le [n/2]$, define $B^{f}_n$ to be
the two sided ideal of $B_n(q,r)$ generated by the element
$E_1E_3\cdots E_{2f-1}$. Then
\begin{align}\label{filt1}
(0)\subseteq B^{[n/2]}_n\subseteq
B^{[n/2]-1}_n\subseteq\cdots\subseteq B^{1}_n\subseteq
B^{0}_n=B_{n}(q,r)
\end{align}
gives a filtration of $B_n(q,r)$. As in Theorem~4.1 of~\cite{saru}
(see also~\cite{xi:qheredity}), refining the
filtration~\eqref{filt1} gives the cell modules, in the sense
of~\cite{grahamlehrer}, for the algebra $B_n(q,r)$. If  $f$ is an
integer, $0\le f\le [n/2]$, and $\lambda$ is a partition of
$n-2f$, define the element
\begin{align*}
x_\lambda=\sum_{w\in\mathfrak{S}_\lambda} q^{\ell(w)}T_w,
\end{align*}
where $\mathfrak{S}_\lambda$ is row stabiliser in the subgroup
$\langle s_i:2f<i<n\rangle$ of the superstandard tableau
$\mathfrak{t}^\lambda\in\STD_{n}(\lambda)$ as defined in
Section~\ref{prelsec}; finally define
\begin{align*}
m_\lambda=E_1E_3\cdots E_{2f-1}x_\lambda
\end{align*}
which is the analogue to the element $c_\lambda$ in the
Iwahori-Hecke algebra of the symmetric group.
\begin{example}
Let $n=10$ and $\lambda=(3,2,1)$. From the $\lambda$--tableau
displayed in~\eqref{tabex1} comes the subgroup
$\mathfrak{S}_\lambda=\langle s_5,s_6, s_8\rangle$, and
\begin{align*}
m_\lambda&=E_1E_3\sum_{w\in\mathfrak{S}_\lambda}q^{\ell(w)}T_w\\
&=E_1E_3(1+qT_5)(1+qT_6+q^2T_6T_5)(1+qT_8).
\end{align*}
\end{example}
If $f$ is an integer, $0\le f\le [n/2]$, define
\begin{align*}
\mathscr{D}_{f,n}=\left\{v\in\mathfrak{S}_n \,\Bigg|\,
\begin{matrix}
\text{$(2i+1)v<(2j+1)v$ for $0\le i<j<f$;}\\
\text{$(2i+1)v < (2i+2)v$ for $0\le i<f$;}\\
\text{and $(i)v<(i+1)v$ for $2f<i<n$}
\end{matrix}
\right\}.
\end{align*}
As shown in Section~3 of~\cite{saru}, the collection
$\mathscr{D}_{f,n}$ is a complete set of right coset representatives
for the subgroup $\mathfrak{B}_f\times\mathfrak{S}_{n-2f}$ in
$\mathfrak{S}_n$, where  $\mathfrak{S}_{n-2f}$ is identified with
the subgroup $\langle s_i:2f<i<n\rangle$ of $\mathfrak{S}_n$
and  $\mathfrak{B}_0=\langle 1 \rangle$, $\mathfrak{B}_1=\langle s_1\rangle$ and, for $f>1$,
\begin{align*}
\mathfrak{B}_f=
\langle
s_{2i-1},s_{2i}s_{2i+1}s_{2i-1}s_{2i}: 1\le i \le f  
\rangle.
\end{align*}
Additionally, it is evident (Proposition~3.1 of~\cite{saru}) that
if $v$ is an element of $\mathscr{D}_{f,n}$, then
$\ell(uv)=\ell(u)+\ell(v)$ for all $u$ in $\langle s_i : 2f< i
<n\rangle$.
\begin{remark}
After fixing a choice of over and under crossings, there is a natural bijection between the coset representatives $\mathscr{D}_{f,n}$ and the $(n,n-2f)$--dangles of Definition~3.3
of~\cite{xi:qheredity}.
\end{remark}
For each partition $\lambda$ of $n-2f$, define
$\mathcal{I}_{n}(\lambda)$ to be the set of ordered pairs
\begin{align}\label{index:1}
{\mathcal{I}}_{n}(\lambda)=\left\{(\mathfrak{s},v):
\mathfrak{s}\in\STD_{n}(\lambda)\text{ and } v\in\mathscr{D}_{f,n},
\right\}
\end{align}
and define $B_n^\lambda$ to be the two--sided ideal in $B_n(q,r)$ generated by $m_\lambda$ and let
\begin{align*}
\check{B}_n^\lambda=\sum_{\mu\rhd\lambda}B_n^\mu
\end{align*}
so that $B^{f+1}_n\subseteq\check{B}_n^\lambda$, by the definition of the dominance order on partitions given in Section~\ref{prelsec}. Let $*$ be the algebra anti--involution of $B_n(q,r)$ which maps $T_w\mapsto T_{w^{-1}}$ and $E_i\mapsto E_i$.

That $B_n(q,r)$ is cellular in the sense of~\cite{grahamlehrer} was shown in~\cite{xi:qheredity}; the next statement which is Theorem~4.1 of~\cite{saru}, gives an explicit cellular basis for $B_n(q,r)$.
\begin{theorem}\label{saruthm}
The algebra $B_n(q,r)$ is freely generated as an $R$--module by
the collection
\begin{align*}
\left\{T^*_vT_{d(\mathfrak{s})}^*m_\lambda
T_{d(\mathfrak{t})}T_u\,\bigg|\,
\begin{matrix}
\text {$(\mathfrak{s},v),(\mathfrak{t},u)\in \mathcal{I}_{n}(\lambda)$, for $\lambda$ a partition} \\
\text{of $n-2f$, and $0\le f\le [n/2]$\,}
\end{matrix}
\right\}.
\end{align*}
Moreover, the following statements hold.
\begin{enumerate}
\item The algebra anti--involution $*$ satisfies
\begin{align*}
*:T^*_vT_{d(\mathfrak{s})}^*m_\lambda T_{d(\mathfrak{t})}T_u\mapsto
T^*_uT_{d(\mathfrak{t})}^*m_\lambda T_{d(\mathfrak{s})}T_v
\end{align*}
for all $(\mathfrak{s},v),(\mathfrak{t},u)\in\mathcal{I}_n(\lambda)$.
\item  Suppose that $b\in B_n(q,r)$ and let $f$ be an integer, $0\le f\le [n/2]$. If
$\lambda$ is a partition of $n-2f$ and
$(\mathfrak{t},u)\in\mathcal{I}_{n}(\lambda)$, then there exist
$a_{(\mathfrak{u},w)}\in R$, for
$(\mathfrak{u},w)\in\mathcal{I}_{n}(\lambda)$, such that for all
$(\mathfrak{s},v)\in\mathcal{I}_{n}(\lambda)$,
\begin{align}\label{btsu:1}
T^*_vT_{d(\mathfrak{s})}^*m_\lambda T_{d(\mathfrak{t})}T_u b\equiv
\sum_{(\mathfrak{u},w)}a_{(\mathfrak{u},w)}
T^*_vT_{d(\mathfrak{s})}^*m_\lambda T_{d(\mathfrak{u})}T_w \mod
\check{B}^\lambda_n.
\end{align}
\end{enumerate}
\end{theorem}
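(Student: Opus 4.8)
The plan is to derive Theorem~\ref{saruthm} as a translation, via the coset decomposition $\mathscr{D}_{f,n}$, of the cellular basis statement already established in~\cite{saru}, together with a bookkeeping argument that keeps track of the filtration layers $B_n^f/B_n^{f+1}$. First I would fix $f$ and a partition $\lambda$ of $n-2f$ and identify the subquotient $B_n^f/\check B_n^\lambda$ (or rather the cell module $C^\lambda$ for $B_n(q,r)$) as being built from the element $m_\lambda = E_1E_3\cdots E_{2f-1}x_\lambda$. The key structural input is that $\mathscr{D}_{f,n}$ is a complete set of right coset representatives for $\mathfrak{B}_f\times\mathfrak{S}_{n-2f}$ in $\mathfrak{S}_n$, with the length-additivity property $\ell(uv)=\ell(u)+\ell(v)$ for $u\in\langle s_i:2f<i<n\rangle$ and $v\in\mathscr{D}_{f,n}$. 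This lets me write an arbitrary element of the relevant layer in the normal form $T_v^* T_{d(\mathfrak{s})}^* m_\lambda T_{d(\mathfrak{t})} T_u$: the ``inner'' factors $T_{d(\mathfrak{s})}, T_{d(\mathfrak{t})}$ with $\mathfrak{s},\mathfrak{t}\in\STD_n(\lambda)$ handle the $\mathfrak{S}_{n-2f}$-part exactly as in Murphy's basis for the Hecke algebra (Theorem~\ref{murphybasis}), while the ``outer'' factors $T_v,T_u$ with $v,u\in\mathscr{D}_{f,n}$ record the dangle data, i.e.\ which points are joined by horizontal strands. Freeness as an $R$-module then follows by counting: the total number of such elements, summed over $f$ and $\lambda$, equals $\dim B_n(q,r)$, and one checks they span by reducing any monomial in the $T_i$ and $E_i$ to normal form using the B--M--W relations listed in Section~\ref{b-m-w-a}; alternatively, invoke the corresponding count and spanning argument from~\cite{saru} directly after adjusting for the factor-of-$q$ change in the generator $T_i$ noted in the Remark.

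For statement~(1), I would verify the behaviour of $*$ on generators. Since $*$ sends $T_w\mapsto T_{w^{-1}}$ and fixes each $E_i$, it fixes $m_\lambda$: indeed $*(x_\lambda)=x_\lambda$ because $\mathfrak{S}_\lambda$ is a group and $*(E_1E_3\cdots E_{2f-1})=E_{2f-1}\cdots E_3E_1=E_1E_3\cdots E_{2f-1}$ by the commutation of the $E_i$ with $|i-j|\ge 2$. Applying $*$ to $T_v^* T_{d(\mathfrak{s})}^* m_\lambda T_{d(\mathfrak{t})} T_u$ reverses the product, turning it into $T_u^* T_{d(\mathfrak{t})}^* m_\lambda T_{d(\mathfrak{s})} T_v$, which is exactly the claimed image; this is the formal analogue of part~(1) of Theorem~\ref{murphybasis}.

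Statement~(2) is the substantive point and where I expect the main obstacle. The assertion is that right multiplication by an arbitrary $b\in B_n(q,r)$ acts ``triangularly'' on the cellular basis modulo $\check B_n^\lambda$, affecting only the right-hand pair $(\mathfrak{t},u)$ and leaving $(\mathfrak{s},v)$ untouched. The plan is to reduce to $b$ one of the algebra generators $T_i^{\pm1}$ or $E_i$, and then analyse how $m_\lambda T_{d(\mathfrak{t})} T_u \cdot b$ rewrites. When $b$ acts on a region disjoint from the horizontal strands, one can push it through and appeal to the Hecke-algebra computation (Theorem~\ref{murphybasis}(2), or Theorem~4.9 of~\cite{mathas:ih}) applied inside $\mathfrak{S}_{n-2f}$, obtaining the required combination of $T_{d(\mathfrak{u})}$'s with $\mathfrak{u}\in\STD_n(\lambda)$. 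The delicate cases are those in which $b$ interacts with a cup or cap: there the relations $E_iT_{i\pm1}E_i=rE_i$, $E_iE_{i\pm1}E_i=E_i$, and $T_iE_i=r^{-1}E_i$ either collapse a pair of strands (raising $f$, hence landing in $B_n^{f+1}\subseteq\check B_n^\lambda$, so contributing zero modulo $\check B_n^\lambda$) or merely permute the dangle, replacing $T_u$ by some $T_w$ with $w\in\mathscr{D}_{f,n}$ after re-expressing $wu^{-1}$-type products in reduced form. The hard part is the careful case analysis of these crossing-versus-strand interactions and showing that every resulting term either vanishes modulo $\check B_n^\lambda$ or is already in the claimed normal form with the left pair $(\mathfrak{s},v)$ preserved; this is precisely the content worked out in Theorem~4.1 of~\cite{saru}, so in the write-up I would either cite that proof or reproduce its key reduction lemmas in the present notation.
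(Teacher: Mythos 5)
The paper itself does not prove this theorem; it explicitly cites it as Theorem~4.1 of~\cite{saru} and merely records the supporting ingredients (Lemmas~\ref{bquot}, \ref{gomi}, Corollary~\ref{twiso}) for later use. Your proposal correctly identifies the strategy of that proof---coset decomposition via $\mathscr{D}_{f,n}$ with length-additivity, the Murphy basis inside the Hecke quotient $C_f/I_f$, the involution calculation, and the reduction-to-generators argument where strand-collapsing terms land in $B_n^{f+1}\subseteq\check B_n^\lambda$---and, like the paper, ultimately defers the detailed case analysis for part~(2) to~\cite{saru}, so this is essentially the same approach.
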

As a consequence of the above theorem,  $\check{B}_n^\lambda$ is the $R$--module freely generated by the collection
\begin{align*}
\big\{T^*_vT_{d(\mathfrak{s})}^*m_\mu T_{d(\mathfrak{t})}T_u
:(\mathfrak{s},v),(\mathfrak{t},u)\in\mathcal{I}_{n}(\mu), \text
{ for }\mu\rhd\lambda\big\}.
\end{align*}
If $f$ is an integer, $0\le f\le [n/2]$, and $\lambda$ is a
partition of $n-2f$, the cell module $S^\lambda$ is defined to be
the $R$--module freely generated by
\begin{align}\label{bcell:1}
\left \{m_\lambda T_{d(\mathfrak{t})}T_u
+\check{B}_n^\lambda\,|\,(\mathfrak{t},u)\in
\mathcal{I}_{n}(\lambda)\right\}
\end{align}
and given the right $B_n(q,r)$ action
\begin{align*}
m_\lambda T_{d(\mathfrak{t})}T_u b+\check{B}_n^\lambda=
\sum_{(\mathfrak{u},w)}a_{(\mathfrak{u},w)} m_\lambda
T_{d(\mathfrak{u})}T_w +\check{B}_n^\lambda&&\text{for $b\in
B_n(q,r)$,}
\end{align*}
where the coefficients $a_{(\mathfrak{u},w)}\in R$, for
$(\mathfrak{u},w)$ in $\mathcal{I}_{n}(\lambda)$, are determined by
the expression~\eqref{btsu:1}.
\begin{example}\label{cellex:0}
Let $n=6$, $f=1$, and $\lambda=(3,1)$. If $i,j$ are integers with
$1\le i<j\le n$, write $v_{i,j}= s_2 s_3\cdots s_{j-1}
s_1s_2\cdots s_{i-1}$, so that
\begin{align*}
\mathscr{D}_{f,n}=\{v_{i,j}:1\le i<j\le n\}.
\end{align*}
Since
\begin{align*}
\STD_{n}(\lambda)=\left\{\mathfrak{t}^\lambda=\text{\tiny\Yvcentermath1$\young(345,6)$}\,\,,
\mathfrak{t}^\lambda
s_5=\text{\tiny\Yvcentermath1$\young(346,5)$}\,\,,
\mathfrak{t}^\lambda
s_5s_4=\text{\tiny\Yvcentermath1$\young(356,4)$}\,\right\}
\end{align*}
and $m_\lambda=E_1(1+qT_4)(1+qT_3+q^2T_3T_4)$, the basis for
$S^\lambda$, of the form displayed in~\eqref{bcell:1}, is
\begin{align*}
\big\{m_\lambda T_{d(\mathfrak{s})} T_{v_{i,j}}+\check{B}^\lambda_n
:\text{$\mathfrak{s}\in\STD_{n}(\lambda)$ and $1\le i<j\le
n$}\big\}.
\end{align*}
\end{example}
As in Proposition~2.4 of~\cite{grahamlehrer}, the cell module
$S^\lambda$ for $B_n(q,r)$ admits a symmetric associative bilinear
form $\langle\,\,,\,\rangle:S^\lambda\times S^\lambda\to R$
defined by
\begin{align}\label{formdef:1}
\langle m_\lambda T_{d(\mathfrak{u})}T_v,m_\lambda
T_{d(\mathfrak{v})}T_w \rangle m_\lambda\equiv m_\lambda
T_{d(\mathfrak{u})}T_v T_w^* T_{d(\mathfrak{v})}^*m_\lambda\mod
\check{B}^\lambda_n.
\end{align}
We return to the question of using the bilinear
form~\eqref{formdef:1} to extract explicit information about the
structure of the B--W--W algebras in Section~\ref{b-m-w-sc}, but
record the following example for later reference.
\begin{example}\label{bilinear:ex}
Let $n=3$ and $\lambda=(1)$ so that $\check{B}_n^\lambda=(0)$ and
$m_\lambda=E_1$. We order the basis~\eqref{bcell:1} for the module
$S^\lambda$ as $\mathbf{v}_1=E_1$, $\mathbf{v}_2=E_1T_2$ and
$\mathbf{v}_3=E_1T_2T_1$ and, with respect to this ordered basis,
the Gram matrix $\langle\mathbf{v}_i,\mathbf{v}_j\rangle$ of the
bilinear form~\eqref{formdef:1} is
\begin{align*}
\left[
\begin{matrix}
z &r &1\\
r &z+(q-q^{-1})(r-r^{-1})&r^{-1}\\
1 &r^{-1}& z
\end{matrix}
\right].
\end{align*}
The determinant of the Gram matrix given above is
\begin{align}\label{det3.1}
\frac{(r-1)^2(r+1)^2(q^3+r)(q^3r-1)}{r^3(q-1)^3(q+1)^3}.
\end{align}
\end{example}
\begin{remark}
(i) Let $\kappa$ be a field and
$\hat{r},\hat{q},(\hat{q}-\hat{q}^{-1})$ be units in $\kappa$. The
assignments $\varphi:r\mapsto \hat{r}$ and $\varphi:q\mapsto\hat{q}$
determine a homomorphism $R\to \kappa$, giving $\kappa$ an
$R$--module structure.  We refer to the specialisation
$B_n(\hat{q},\hat{r})=B_n(q,r)\otimes_R \kappa$ as a B--M--W algebra
over $\kappa$. If $0\le f\le[n/2]$ and $\lambda$ is a partition of
$n-2f$ then the cell module $S^\lambda\otimes_R\kappa$ for
$B_n(\hat{q},\hat{r})$ admits a symmetric associative bilinear form
which is related to the generic form~\eqref{formdef:1} in an obvious
way.

(ii) Whenever the context is clear and no possible confusion will
arise, the abbreviation $S^\lambda$ will be used for the
$B_n(\hat{q},\hat{r})$--module $S^\lambda\otimes_R\kappa$.
\end{remark}
The proof of Theorem~\ref{saruthm} given in~\cite{saru} rests upon
the following facts, respectively Proposition~3.2
of~\cite{wenzlqg} and Proposition~3.3 of~\cite{saru}, stated below
for later reference.
\begin{lemma}\label{bquot}
Let $f$ be an integer, $0\le f\le [n/2]$, write $C_f$ for the
subalgebra of $B_n(q,r)$ generated by the elements
$T_{2f+1},\dots,T_{n-1}$, and $I_f$ for the two sided ideal of
$C_f$ generated by the element $E_{2f+1}$. Then the map defined on algebra generators of $\mathscr{H}_{n-2f}(q^2)$ by
\begin{align*}
\phi:X_i\mapsto T_{2f+i}+I_f,&&\text{for $1\le i<n-2f$,}
\end{align*}
and extended to all of $\mathscr{H}_{n-2f}$ by $\phi(h_1h_2)=\phi(h_1)\phi(h_2)$ whenever $h_1,h_2\in\mathscr{H}_{n-2f}$, is an an algebra isomorphism $\mathscr{H}_{n-2f}(q^2)\cong
C_f/I_f$.
\end{lemma}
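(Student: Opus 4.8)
The plan is to show that $\phi$ is a well-defined surjective algebra homomorphism by a routine check of relations, and then to obtain injectivity from a rank count in the ambient B--M--W algebra.

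First I would verify that the elements $T_{2f+1}+I_f,\dots,T_{n-1}+I_f$ of $C_f/I_f$ satisfy the defining relations of $\mathscr{H}_{n-2f}(q^2)$. The braid relations $X_iX_{i+1}X_i=X_{i+1}X_iX_{i+1}$ and the commutation relations $X_iX_j=X_jX_i$ for $|i-j|\ge 2$ already hold among the corresponding $T_k$ in $B_n(q,r)$, hence a fortiori in $C_f/I_f$. For the quadratic relation I would use the identity $T_k^2=1+(q-q^{-1})(T_k-r^{-1}E_k)$ together with the fact that $E_{2f+1},E_{2f+2},\dots,E_{n-1}$ all lie in $I_f$: each $E_k$ with $2f+1\le k\le n-1$ lies in $C_f$ because $T_k^{-1}$ is a polynomial in $T_k$ (the cubic relation has unit constant term $r^{-1}$), so $E_k=1-(q-q^{-1})^{-1}(T_k-T_k^{-1})$ is a polynomial in $T_k$; and from the derived relation $E_{j+1}T_jT_{j+1}=T_jT_{j+1}E_j$ one gets $E_{j+1}=(T_jT_{j+1})E_j(T_jT_{j+1})^{-1}$ for $2f+1\le j\le n-2$, so inductively the ideal $I_f$ generated by $E_{2f+1}$ contains every such $E_k$. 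Consequently $T_{2f+i}^2\equiv 1+(q-q^{-1})T_{2f+i}\pmod{I_f}$, which is exactly $(X_i-q)(X_i+q^{-1})=0$ after the identification. Thus $\phi$ extends to an algebra homomorphism $\mathscr{H}_{n-2f}(q^2)\to C_f/I_f$, and it is surjective because $C_f/I_f$ is generated as an $R$--algebra by the images $T_{2f+i}+I_f=\phi(X_i)$.

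It remains to prove injectivity, which is the real content. By Theorem~1.13 of~\cite{mathas:ih}, $\mathscr{H}_{n-2f}(q^2)$ is $R$--free on the $(n-2f)!$ elements $\{X_w:w\in\mathfrak{S}_{n-2f}\}$, so surjectivity shows $\{\phi(X_w)\}$ spans $C_f/I_f$ over $R$; hence $\phi$ is an isomorphism precisely when these elements are $R$--linearly independent, and for this it suffices to prove $\operatorname{rank}_R(C_f/I_f)\ge (n-2f)!$. I would get this by producing a one-sided inverse. The same relation check as above shows $X_i\mapsto T_i+B_n^1$ defines a surjective homomorphism $\psi\colon\mathscr{H}_n(q^2)\to B_n(q,r)/B_n^1$, where $B_n^1$ is the two-sided ideal generated by $E_1$; and since all the $E_i$ are mutually conjugate in $B_n(q,r)$, the composite $C_f\hookrightarrow B_n(q,r)\twoheadrightarrow B_n(q,r)/B_n^1$ annihilates $E_{2f+1}$, hence factors through a map $\bar{\psi}\colon C_f/I_f\to B_n(q,r)/B_n^1$ with $\bar{\psi}(\phi(X_i))=X_{2f+i}+B_n^1$. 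Granting the known $R$--freeness of $B_n(q,r)$ of rank $(2n-1)!!$ established by Wenzl~\cite{wenzlqg}, one has $\operatorname{rank}_R(B_n^1)=(2n-1)!!-n!$ and $\psi$ is an isomorphism $\mathscr{H}_n(q^2)\cong B_n(q,r)/B_n^1$, so $\bar{\psi}\circ\phi$ is the standard unital embedding $\mathscr{H}_{n-2f}(q^2)\hookrightarrow\mathscr{H}_n(q^2)$, $X_i\mapsto X_{2f+i}$, which is visibly injective. Therefore $\phi$ is injective.

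The main obstacle is precisely this injectivity step and its reliance on the $R$--rank of the ambient B--M--W algebra; everything else is a mechanical verification of relations and a counting of generators. An honest alternative that avoids quoting $B_n(q,r)/B_n^1\cong\mathscr{H}_n(q^2)$ directly is to argue by downward induction on $f$ along the filtration~\eqref{filt1}, establishing at each stage that $\operatorname{rank}_R(B_n^{f}/B_n^{f+1})$ has the expected value, from which $\operatorname{rank}_R(C_f/I_f)=(n-2f)!$ drops out; either route ultimately rests on the same structural input for $B_n(q,r)$, which is why it is legitimate to invoke Proposition~3.2 of~\cite{wenzlqg} at this point rather than to reprove it here.
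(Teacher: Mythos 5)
The paper does not prove Lemma~\ref{bquot}; it is cited directly as Proposition~3.2 of~\cite{wenzlqg} and merely restated for reference, so there is no internal proof to compare your argument against. On its own terms, most of what you write is correct: the observation that $E_k\in C_f$ for $2f+1\le k<n$ (each being a polynomial in $T_k$), together with $E_{j+1}=T_jT_{j+1}E_j(T_jT_{j+1})^{-1}$, puts every such $E_k$ into $I_f$, whence $T_{2f+i}^2\equiv 1+(q-q^{-1})T_{2f+i}\pmod{I_f}$ and $\phi$ is a well-defined surjection; and your injectivity step, factoring the restriction of $B_n(q,r)\twoheadrightarrow B_n(q,r)/B_n^1$ through $C_f/I_f$ because $E_{2f+1}$ is a $B_n(q,r)$-conjugate of $E_1$, is a clean reduction of the general-$f$ case to the $f=0$ case, with the composite $\bar\psi\circ\phi$ landing on the visibly injective index-shifted copy of $\mathscr{H}_{n-2f}(q^2)$ inside $\mathscr{H}_n(q^2)$.

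The one place where the argument overreaches is the sentence claiming that the $R$-freeness of $B_n(q,r)$ in rank $(2n-1)!!$ already gives $\operatorname{rank}_R(B_n^1)=(2n-1)!!-n!$ and hence that $\psi$ is an isomorphism. The surjection $\psi\colon\mathscr{H}_n(q^2)\to B_n(q,r)/B_n^1$ only yields the upper bound $\operatorname{rank}_R\bigl(B_n(q,r)/B_n^1\bigr)\le n!$; knowing the rank of the whole algebra gives neither the reverse inequality nor that $B_n^1$ is a pure submodule, so the quotient being free of rank $n!$ does not follow. To close that step you must invoke the $f=0$ instance of Proposition~3.2 of~\cite{wenzlqg} itself (equivalently, a tangle basis for $B_n(q,r)$ in which the diagrams containing at least one horizontal edge span $B_n^1$). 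This is precisely the structural input you acknowledge at the end, so the argument is not circular, but that sentence should be presented as an appeal to Wenzl's result rather than as a deduction from the rank count alone.
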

\begin{lemma}\label{gomi}
Let $f$ be an integer, $0\le f<[n/2]$, and $C_f$ and $I_f$ be as
in Lemma~\ref{bquot} above. If $i$ is an integer, $2f< i<n$, and
$b\in C_f$, then
\begin{align*}
E_1E_3\cdots E_{2f-1} bE_i \equiv E_1E_3\cdots E_{2f-1}E_ib \equiv
0\mod B^{f+1}_n.
\end{align*}
\end{lemma}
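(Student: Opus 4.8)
The plan is to reduce both congruences to the single claim
\begin{align*}
e\,E_i\in B^{f+1}_n\qquad\text{for every $i$ with $2f<i<n$,}
\end{align*}
where I write $e=E_1E_3\cdots E_{2f-1}$ (so $e=1$ when $f=0$), and then to prove this claim by induction on $i$. Two preliminary facts are needed. First, the cubic relation $(T_j-q)(T_j+q^{-1})(T_j-r^{-1})=0$ has constant term $r^{-1}$, a unit of $R$, so each generator $T_j$ is invertible with $T_j^{-1}$ a polynomial in $T_j$; hence $E_j=1-(q-q^{-1})^{-1}(T_j-T_j^{-1})$ lies in the subalgebra $R[T_j]$. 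Second, $T_j$ — and therefore $R[T_j]$ — commutes with $T_k$ whenever $|j-k|\ge 2$. Since $e$ is a polynomial in $T_1,T_3,\dots,T_{2f-1}$, it follows that $e$ commutes with each generator $T_k$ $(2f+1\le k\le n-1)$ of $C_f$, hence with the whole subalgebra $C_f$, and that $e$ commutes with $E_i$ for every $i$ with $2f<i<n$.

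Assuming the displayed claim, the lemma is immediate: because $b\in C_f$ commutes with $e$,
\begin{align*}
e\,b\,E_i=b\,(e\,E_i)\in B^{f+1}_n\qquad\text{and}\qquad e\,E_i\,b=(e\,E_i)\,b\in B^{f+1}_n,
\end{align*}
both memberships holding since $B^{f+1}_n$ is a two--sided ideal of $B_n(q,r)$.

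It remains to prove $e\,E_i\in B^{f+1}_n$ for $2f<i<n$, by induction on $i$. For $i=2f+1$ one has $e\,E_{2f+1}=E_1E_3\cdots E_{2f+1}$, which is the defining generator of $B^{f+1}_n$. For $i>2f+1$, combine the relation $E_i=E_iE_{i-1}E_i$ with the commutation $e\,E_i=E_i\,e$ to get
\begin{align*}
e\,E_i=e\,E_iE_{i-1}E_i=E_i\,(e\,E_{i-1})\,E_i;
\end{align*}
here $2f<i-1<n$, so $e\,E_{i-1}\in B^{f+1}_n$ by the inductive hypothesis, and two--sidedness of $B^{f+1}_n$ completes the step.

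The argument is largely bookkeeping; the points requiring attention are the degenerate case $f=0$ (where $e$ is the empty product $1$ and $B^1_n$ is generated by $E_1$) and the index inequalities that license the commutations and the relation $E_i=E_iE_{i-1}E_i$. I anticipate no real obstacle: the only step invoking a non--trivial relation is the ``ladder'' identity $E_iE_{i-1}E_i=E_i$, whose iteration drags the lone factor $E_i$ to the right of $e$ downwards until it produces $E_{2f+1}$, forcing the term into the deeper ideal $B^{f+1}_n$.
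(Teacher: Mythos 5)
Your proof is correct. The paper does not prove this lemma; it cites it as Proposition~3.3 of~\cite{saru} and simply records the statement for later use, so there is no in-paper argument to compare against. Your reduction of both congruences to the single claim $eE_i\in B^{f+1}_n$ (with $e=E_1E_3\cdots E_{2f-1}$) is clean: the observation that each $E_j$ lies in $R[T_j]$ (because the cubic relation for $T_j$ has invertible constant term $r^{-1}$, so $T_j^{-1}\in R[T_j]$) together with the far-commutation $T_jT_k=T_kT_j$ for $|j-k|\ge 2$ shows $e$ centralises $C_f$ and each $E_i$ with $i>2f$; the base case $eE_{2f+1}=E_1E_3\cdots E_{2f+1}$ is the defining generator of $B^{f+1}_n$; and the ladder identity $E_i=E_iE_{i-1}E_i$ then drives the descending induction, with two-sidedness of the ideal closing each step. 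The index bookkeeping checks out (in particular $|2f-1-(2f+1)|=2$ licenses the first commutation, and $i-1>2f$ is available exactly when $i>2f+1$). This is a complete, self-contained proof of the cited proposition.
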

Since $\mathscr{H}_{n-2f}(q^2)\subseteq\mathscr{H}_n(q^2)$ is
generated by $\{X_j:1\le j<n-2f\}$, from
Lemmas~\ref{bquot} and~\ref{gomi} we obtain Corollary~\ref{twiso};
\emph{cf.} Section~3 of~\cite{saru}.
\begin{corollary}\label{twiso}
If $f$ is an integer, $0\le f<[n/2]$, then there is a well defined
$R$--module homomorphism $\vartheta_f:\mathscr{H}_{n-2f}(q^2)\to
B^f_n/B^{f+1}_n$, determined by
\begin{align*}
\vartheta_f:X_{\hat{v}} \to E_1E_3\cdots E_{2f-1}T_{v}+B^{f+1}_n,
\end{align*}
where $v=s_{i_1}s_{i_2}\cdots s_{i_d}$ is a permutation in
$\langle s_i : 2f<i<n \rangle$ and $\hat{w}$ is the permutation
$\hat{v}=s_{i_1-2f}s_{i_2-2f}\cdots s_{i_d-2f}$. Additionally, the map
$\vartheta_f$ satisfies the property
\begin{align}\label{nearh}
\vartheta_f(X_{\hat{v}} X_j)=\vartheta_f(X_{\hat{v}})T_{2f+j},
\end{align}
whenever $1\le j<n-2f$.
\end{corollary}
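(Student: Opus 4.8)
The plan is to realise $\vartheta_f$ as the composite of the algebra isomorphism $\phi:\mathscr{H}_{n-2f}(q^2)\cong C_f/I_f$ of Lemma~\ref{bquot} with an $R$--module map produced by left multiplication by $\epsilon_f:=E_1E_3\cdots E_{2f-1}$.

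\emph{Constructing the map.} First I would consider the $R$--linear map $\psi:C_f\to B_n^f/B_n^{f+1}$ defined by $c\mapsto\epsilon_f c+B_n^{f+1}$; its image lies in $B_n^f/B_n^{f+1}$ because $B_n^f$ is the two--sided ideal generated by $\epsilon_f$, while $B_n^{f+1}\subseteq B_n^f$ by~\eqref{filt1}. The crucial point is that $\psi$ annihilates $I_f$: the ideal $I_f$ is the $R$--span of the elements $c_1E_{2f+1}c_2$ with $c_1,c_2\in C_f$, and Lemma~\ref{gomi} applied with $b=c_1$ and $i=2f+1$ gives $\epsilon_f c_1E_{2f+1}\in B_n^{f+1}$, whence $\epsilon_f c_1E_{2f+1}c_2\in B_n^{f+1}$ since $B_n^{f+1}$ is a two--sided ideal. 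Consequently $\psi$ factors through a well defined $R$--module map $\bar\psi:C_f/I_f\to B_n^f/B_n^{f+1}$, and I set $\vartheta_f:=\bar\psi\circ\phi$.

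\emph{Checking the formulas.} If $v=s_{i_1}\cdots s_{i_d}\in\langle s_i:2f<i<n\rangle$ is reduced, then the relabelling $\hat v=s_{i_1-2f}\cdots s_{i_d-2f}$ is a reduced expression of the same length in $\mathfrak{S}_{n-2f}$, so $X_{\hat v}=X_{i_1-2f}\cdots X_{i_d-2f}$ and, applying the algebra homomorphism $\phi$,
\[
\phi(X_{\hat v})=(T_{i_1}+I_f)\cdots(T_{i_d}+I_f)=T_{i_1}\cdots T_{i_d}+I_f=T_v+I_f.
\]
Hence $\vartheta_f(X_{\hat v})=\bar\psi(T_v+I_f)=\epsilon_fT_v+B_n^{f+1}$, the stated value; and since the collection $\{X_{\hat v}:v\in\langle s_i:2f<i<n\rangle\}$ is precisely the standard $R$--basis $\{X_w:w\in\mathfrak{S}_{n-2f}\}$ of $\mathscr{H}_{n-2f}(q^2)$, this also confirms that $\vartheta_f$ is the unique $R$--module homomorphism with the prescribed values. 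For~\eqref{nearh}, given $1\le j<n-2f$ one has $\phi(X_{\hat v}X_j)=\phi(X_{\hat v})\phi(X_j)=(T_v+I_f)(T_{2f+j}+I_f)=T_vT_{2f+j}+I_f$, and therefore, using that $B_n^f/B_n^{f+1}$ is a right $B_n(q,r)$--module,
\[
\vartheta_f(X_{\hat v}X_j)=\epsilon_fT_vT_{2f+j}+B_n^{f+1}=\bigl(\epsilon_fT_v+B_n^{f+1}\bigr)T_{2f+j}=\vartheta_f(X_{\hat v})T_{2f+j}.
\]

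\emph{Where the work is.} The only step that is not formal bookkeeping is the vanishing $\psi(I_f)=0$, which is exactly Lemma~\ref{gomi}; everything else is the compatibility of reduced words under the shift $i\mapsto i-2f$ and the functoriality of $\phi$. As an alternative one can avoid $\phi$ and $\psi$ altogether, defining $\vartheta_f$ directly on the basis $\{X_{\hat v}\}$ by the stated formula (legitimate since $T_v$ is independent of the chosen reduced word) and verifying~\eqref{nearh} by cases according to whether $\ell(\hat vs_j)$ exceeds $\ell(\hat v)$; in the length--decreasing case one expands with the B--M--W quadratic relation $T_{2f+j}^2=1+(q-q^{-1})(T_{2f+j}-r^{-1}E_{2f+j})$ and discards the resulting $E_{2f+j}$ term modulo $B_n^{f+1}$ by Lemma~\ref{gomi}.
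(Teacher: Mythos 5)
Your argument is correct, and it is essentially the paper's intended one: the paper merely cites Lemmas~\ref{bquot} and~\ref{gomi} without an explicit proof, and your proposal realises exactly that sketch by factoring left multiplication by $E_1E_3\cdots E_{2f-1}$ through the isomorphism $\phi$ of Lemma~\ref{bquot}, using Lemma~\ref{gomi} (applied with $b=c_1$, $i=2f+1$, together with the fact that $B_n^{f+1}$ is a two--sided ideal) to show the induced map annihilates $I_f$.
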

\begin{remark}
The fact that $\vartheta_f$ is an isomorphism of $R$--modules was not used in the proof of Theorem~\ref{saruthm}; however it may be deduced from Theorem~\ref{saruthm} which implies that the dimension over $R$ of the image space of $\vartheta_f$ is equal to the dimension of $\mathscr{H}_{n-2f}(q^2)$ over $R$. 
\end{remark}
\begin{lemma}\label{ecor}
Let $f$ be an integer, $0< f \le [n/2]$. If $b\in B_n(q,r)$, $w\in\mathscr{D}_{f,n}$, and $1\le i<n$, then there exist $a_{u,v}$ in $R$, for $u$ in $\langle s_i : 2f < i <n\rangle$ and $v$ in $\mathscr{D}_{f,n}$, uniquely determined by
\begin{align}\label{blah}
E_1E_3\cdots E_{2f-1} T_wb\equiv \sum_{u,v} a_{u,v}E_1E_3\cdots
E_{2f-1}T_uT_v\mod{B^{f+1}_n}.
\end{align} 
\end{lemma}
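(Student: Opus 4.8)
The plan is to recast the statement as a spanning-plus-independence assertion and reduce everything to Theorem~\ref{saruthm}, Lemma~\ref{bquot} and Lemma~\ref{gomi}. Write \(E_{(f)}=E_1E_3\cdots E_{2f-1}\) and \(\mathscr{S}_{n-2f}=\langle s_i:2f<i<n\rangle\). Since \(E_{(f)}B_n(q,r)\subseteq B^f_n\), the displayed congruence says exactly that \(E_{(f)}T_wb+B^{f+1}_n\) lies in the \(R\)-submodule
\[
N_f=\sum_{u\in\mathscr{S}_{n-2f},\,v\in\mathscr{D}_{f,n}}R\bigl(E_{(f)}T_uT_v+B^{f+1}_n\bigr)\subseteq B^f_n/B^{f+1}_n,
\]
and uniqueness of the \(a_{u,v}\) is precisely linear independence of the indicated spanning set of \(N_f\). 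For the uniqueness part I first note that the row stabiliser of \(\mathfrak{t}^{(1^{n-2f})}\) is trivial, so \(x_{(1^{n-2f})}=1\) and hence \(E_{(f)}=m_{(1^{n-2f})}\). I claim \(\{\,T^*_wE_{(f)}T_uT_v+B^{f+1}_n:w,v\in\mathscr{D}_{f,n},\ u\in\mathscr{S}_{n-2f}\,\}\) is an \(R\)-basis of \(B^f_n/B^{f+1}_n\). It spans: in a cellular basis element \(T^*_wT^*_{d(\mathfrak{s})}m_\mu T_{d(\mathfrak{t})}T_u\) of Theorem~\ref{saruthm} with \(\mu\vdash n-2f\), the element \(d(\mathfrak{s})\in\mathscr{S}_{n-2f}\) commutes with every \(E_{2k-1}\) (the indices differ by at least \(2\)), so \(T^*_{d(\mathfrak{s})}m_\mu=E_{(f)}T_{d(\mathfrak{s})^{-1}}x_\mu\); the factor \(T_{d(\mathfrak{s})^{-1}}x_\mu T_{d(\mathfrak{t})}\) lies in \(C_f\), hence by Lemma~\ref{bquot} is an \(R\)-combination of the \(T_p\) (\(p\in\mathscr{S}_{n-2f}\)) plus an element of the ideal \(I_f\), and Lemma~\ref{gomi} kills the latter modulo \(B^{f+1}_n\). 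As the displayed set has cardinality \(|\mathscr{D}_{f,n}|^2(n-2f)!\), which by the Robinson--Schensted correspondence equals \(\dim_R B^f_n/B^{f+1}_n=\sum_{\mu\vdash n-2f}|\mathcal{I}_n(\mu)|^2\), it is a basis; its sub-collection with \(w=\mathrm{id}\) is the spanning set of \(N_f\), and being a subset of a basis it is independent. This gives uniqueness.

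For existence, since \(B_n(q,r)\) is generated as an \(R\)-algebra by \(T_1,\dots,T_{n-1}\) and \(B^{f+1}_n\) is a two-sided ideal, a straightforward induction on the length of \(b\) as a word in the \(T_i\) reduces the claim to the statement that \(N_f\) is stable under right multiplication by each \(T_i\), i.e.\ that \(E_{(f)}T_uT_vT_i+B^{f+1}_n\in N_f\) for all \(u\in\mathscr{S}_{n-2f}\), \(v\in\mathscr{D}_{f,n}\), \(1\le i<n\). Put \(x=uv\), so \(T_uT_v=T_x\) with \(\ell(x)=\ell(u)+\ell(v)\). If \(\ell(xs_i)>\ell(x)\) then \(T_xT_i=T_{xs_i}\); the combinatorics of \(\mathscr{D}_{f,n}\) shows that right multiplication of \(v\) by \(s_i\) either leaves \(v s_i\) in \(\mathscr{D}_{f,n}\), so that \(E_{(f)}T_uT_{vs_i}\in N_f\) at once, or else replaces the coset representative, whence \(xs_i=u'v'\) with \(v'\in\mathscr{D}_{f,n}\) of smaller length and \(u'\) differing from \(u\) by a reflection of \(\mathfrak{B}_f\times\mathfrak{S}_{n-2f}\), reducing matters to understanding \(E_{(f)}T_{u'}\). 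If \(\ell(xs_i)<\ell(x)\), write \(x=ys_i\) and use \(T_xT_i=T_yT_i^2=T_y+(q-q^{-1})T_x-(q-q^{-1})r^{-1}T_yE_i\); the first two terms are covered by shorter instances, and the last produces an \(E_{(f)}T_yE_i\)-term handled by iteration and, ultimately, by Lemma~\ref{gomi} or by the collapses described next.

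The heart of the matter, appearing whenever a generator must be pushed past the \(E_{2k-1}\)'s in \(E_{(f)}\), is the identity
\[
E_{(f)}T_xE_{(f)}\equiv\sum_{p\in\mathscr{S}_{n-2f}}c_p\,E_{(f)}T_p\pmod{B^{f+1}_n},\qquad x\in\mathfrak{S}_n,
\]
which is proved by induction on \(\ell(x)\): writing \(x=s_jx'\), a generator \(T_j\) with \(j>2f\) commutes with the left-hand \(E_{(f)}\) and can be transported across it, while \(E_{(f)}T_{2k-1}=r^{-1}E_{(f)}\) for \(k\le f\); in either case the inductive hypothesis applies to \(x'\) after reducing the resulting word in \(C_f\) by Lemma~\ref{bquot} and Lemma~\ref{gomi}. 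The remaining case, an even generator \(T_{2k}\) with \(k\le f\), is disposed of by the relations \(E_iT^{\pm1}_{i\pm1}E_i=r^{\pm1}E_i\) and \(E_iE_{i\pm1}E_i=E_i\) together with the commutations among the \(E_{2m-1}\); for a single such generator one computes directly that \(E_{(f)}T_{2k}E_{(f)}=rz^{f-1}E_{(f)}\). Combining this identity (used for \(x=v^{-1}\), \(v\in\mathscr{D}_{f,n}\)) with Lemma~\ref{bquot} and Lemma~\ref{gomi} to absorb the intervening \(C_f\)-factor then yields \(E_{(f)}\cdot(\text{cellular basis element})\in N_f\), and hence the required stability of \(N_f\); the cellular basis elements built from \(m_\mu\) with \(|\mu|<n-2f\) already lie in \(B^{f+1}_n\).

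I expect the genuine obstacle to be the bookkeeping in the even-generator cases of the last two steps: tracking, in the presence of \(s_2,s_4,\dots,s_{2f}\), which portions of a word get absorbed into \(E_{(f)}\) as scalars and which are discarded modulo \(B^{f+1}_n\) via Lemma~\ref{gomi}, and verifying that right multiplication by \(s_i\) never drives a coset representative out of the regime controlled by \(\mathscr{D}_{f,n}\). This is exactly where the particular choice of \(\mathscr{D}_{f,n}\) and the length-additivity \(\ell(uv)=\ell(u)+\ell(v)\) for \(u\in\mathscr{S}_{n-2f}\), \(v\in\mathscr{D}_{f,n}\), recorded in Section~\ref{b-m-w-a}, are essential; everything else is routine.
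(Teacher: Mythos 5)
Your uniqueness argument is correct and is a genuinely different (and arguably cleaner) route than the paper's: you exhibit $\{T_w^*E_{(f)}T_uT_v : w,v\in\mathscr{D}_{f,n},\, u\in\mathscr{S}_{n-2f}\}$ as a spanning set of $B_n^f/B_n^{f+1}$ and match its cardinality against the cellular basis via RSK, whereas the paper argues via the injection coming from $\vartheta_f$ and the transition between the standard and Murphy bases of $\mathscr{H}_{n-2f}(q^2)$. The spanning step, using Lemmas~\ref{bquot} and~\ref{gomi} to strip off the middle $C_f$-factor, is airtight, and over the Noetherian ring $R$ the rank count does promote a spanning set of the right size to a basis.

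The existence half, however, has real gaps precisely at the points you label ``bookkeeping'' and ``routine.'' First, your central identity $E_{(f)}T_xE_{(f)}\equiv\sum_p c_p E_{(f)}T_p$ is not proved: the inductive step is only argued when the leading generator $s_j$ of $x$ satisfies $j>2f$ or $j=2k-1\le 2f-1$; for $j=2k\le 2f$ you only do the base case $x=s_{2k}$, and there is no mechanism for pushing $T_{2k}$ past the remaining $T_{x'}$ in general, since $T_{2k}$ fails to commute with $E_{2k\pm1}$. Second, the $E_i$-terms produced by the quadratic relation during length reduction are of the form $E_{(f)}T_yE_i$, not $E_{(f)}T_yE_{(f)}$, so your identity does not apply to them directly; what is actually needed is a way to slide $E_i$ leftward through $T_y$ and across the $E_{2k-1}$'s, which the paper gets from Proposition~3.4 of~\cite{saru} and the explicit collapse rules~\eqref{barca}--\eqref{gromple:2}. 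Third, when $vs_i\notin\mathscr{D}_{f,n}$ the factor you call $u'$ lies in $\mathfrak{B}_f\times\mathfrak{S}_{n-2f}$ and generally involves the generators $s_{2i}s_{2i+1}s_{2i-1}s_{2i}$ of $\mathfrak{B}_f$, not only the $s_{2i-1}$; rewriting $E_{(f)}T_{u'}$ in that case is exactly where the paper invokes Proposition~3.7/3.8 of~\cite{saru}, and you have not supplied a substitute. These are not minor omissions: they are the substantive content of the lemma, and the paper's proof consists almost entirely of handling them.
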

\begin{proof}
For the uniqueness of the expression~\eqref{blah}, observe that there is a one--to--one map
\begin{align*}
E_1E_3\cdots E_{2f-1} T_u T_v+B_n^{f+1}\mapsto \sum_{\substack{\mathfrak{s},\mathfrak{t}\in\STD_n(\lambda)\\\lambda\vdash n-2f}}a_{\mathfrak{s},\mathfrak{t}}\,T_{d(\mathfrak{s})}^*m_\lambda T_{d(\mathfrak{t})}T_v+{B_n^{f+1}},
\end{align*}
for $u\in \langle s_j\,:\,2f<j<n\rangle$ and $v\in\mathscr{D}_{f,n}$, determined by the map $\vartheta_f$ and the transition between the basis $\{X_w:w\in\mathfrak{S}_{n-2f}\}$ and the Murphy basis for $\mathscr{H}_{n-2f}(q^2)$, where the expression on the right hand side above is an $R$--linear sum of the basis elements for $B_n^f/B_n^{f+1}$ given by Theorem~\ref{saruthm}.  

The proof of the lemma makes repeated use of the following fact. If $u'\in\langle s_i: 2f<i<n\rangle$ and $v'\in\mathfrak{S}_n$, then $E_1E_3\cdots E_{2f-1} T_{u'} T_{v'}$ is expressible as a sum of the form that appears on the right hand side of~\eqref{blah}. To see this, first note that, given an integer $i$ with $2f<i<n$ and $(i+1)v'<(i)v'$,
\begin{align*}
T_{u'} T_{v'}
=
\begin{cases}
T_{u's_i} T_{s_iv'},&\text{if $\ell(u')<\ell(u's_i)$;}\\
(T_{u's_i}+(q-q^{-1})(T_{u'}-r^{-1}T_{u's_i}E_i))T_{s_iv},&\text{otherwise.}
\end{cases}
\end{align*}
Thus, using Lemma~\ref{gomi}, we have $a_{u,v}\in R$, for $u\in\langle s_i\, :\, 2f< i <n\rangle$ and $v\in\mathfrak{S}_n$, such that  
\begin{align*}
E_1E_3\cdots E_{2f-1} T_{u'}T_{v'}\equiv\sum_{u,v} a_{u,v}E_1E_3\cdots E_{2f-1}T_uT_v\mod{B_n^{f+1}},
\end{align*}
where $(i)v<(i+1)v$, for $2f<i<n$, whenever $a_{u,v}\ne0$ in the above expression. Noting that $E_1E_3\cdots E_{2f-1}T_v=r^{-1}E_1E_3\cdots E_{2f-1}T_{s_{2i-1}v}$ if $1\le i \le f$ and $\ell(s_{2i-1}v)<\ell(v)$, and applying Proposition~3.7 or Corollary~3.1 of~\cite{saru}, we may assume that $v\in\mathscr{D}_{f,n}$, whenever $a_{u,v}\ne 0$ in the above expression.

Proceeding with the proof of the lemma, first consider the case where $b=E_i$ for some $1\le i<n$. Let $k=(i)w^{-1}$ and $l=(i+1)w^{-1}$. If $(i+1)w^{-1}<(i)w^{-1}$, then $T_wE_i=r^{-1}T_{ws_i}E_i$, where $ws_i\in\mathscr{D}_{f,n}$. We may therefore suppose that $k<l$. Using Proposition~3.4 of~\cite{saru},
\begin{align}\label{barca}
T_wE_i=
\begin{cases}
E_k T_w,&\text{if $l=k+1$;}\\
T_{l-1}^{\varepsilon_{l-1}}T_{l-2}^{\varepsilon_{l-2}}\cdots T_{k+1}^{\varepsilon_{k+1}}E_kT_{w'},&\text{otherwise,} 
\end{cases}
\end{align}
where $w'=s_{k+1}s_{k+2}\cdots s_{l-1}w$ and, for $k< j <l$, 
\begin{align*}
\varepsilon_j=
\begin{cases}
1,&\text{if $(j)w<i+1$;}\\
-1,&\text{otherwise.}
\end{cases}
\end{align*}
Considering the two cases in~\eqref{barca} separately, multiply both sides of the expression~\eqref{barca} by $E_1E_3\cdots E_{2f-1}$. If $l=k+1$, then
\begin{align}\label{mago}
E_1E_3\cdots E_{2f-1}T_wE_i=
\begin{cases}
zE_1E_3\cdots E_{2f-1}T_w,&\text{if $k<2f$ and $k$ is odd;}\\
E_1E_3\cdots E_{2f-1} T_kT_{k-1}T_w, &\text{if $k\le 2f$ and $k$ is even;}\\
E_1E_3\cdots E_{2f-1} E_{k}T_w, &\text{if $2f<k$.}
\end{cases}
\end{align}
By Proposition~3.8 of~\cite{saru}, there exist $a_{v'}\in R$, for $v'\in\mathfrak{S}_n$ such that, given $w'\in\mathfrak{S}_n$ satisfying $(2j)w'+1=(2j+1)w'$, together with $\varepsilon_{2j-1},\varepsilon_{2j}\in\{\pm1\}$, 
\begin{align}\label{saru:prop3.8}
E_{2j-1}T_{2j}^{\varepsilon_{2j}}T_{2j-1}^{\varepsilon_{2j-1}}T_{w'}=
\sum_{v'\in\mathfrak{S}_n} a_{v'}E_{2j-1}T_{v'}.
\end{align}
Using~\eqref{saru:prop3.8} with $k=2j$, the term appearing in the second case on the right hand side of~\eqref{mago} can be rewritten as
\begin{align*}
E_1E_3\cdots E_{2f-1}T_kT_{k-1}T_{w'}=\sum_{v'\in\mathfrak{S}_n} a_{v'}E_{1}E_3\cdots E_{2j-1}T_{v'}.
\end{align*}
As already noted, the right hand side of the above expression may be rewritten modulo $B_n^{f+1}$ as an $R$--linear combination of the required form. On the other hand, the term appearing on the right in the last case in~\eqref{mago} above is zero modulo $B_n^{f+1}$.

The second case on the right hand side of~\eqref{barca} gives rise to three sub--cases as follows. First, if $2f<k<n$, then  
\begin{align*}
E_1E_3\cdots E_{2f-1}T_{l-1}^{\varepsilon_{l-1}}T_{l-2}^{\varepsilon_{l-2}}\cdots T_{k+1}^{\varepsilon_{k+1}}E_kT_{w'}\equiv 0\mod{B_n^{f+1}};
\end{align*}
if $1\le k<2f$ and $k$ is odd, then 
\begin{align}\label{gromple:0}
 E_kT_{l-1}^{\varepsilon_{l-1}}T_{l-2}^{\varepsilon_{l-2}}\cdots T_{k+1}^{\varepsilon_{k+1}}E_kT_{w'}=
r^{\varepsilon_{k+1}}E_kT_{l-1}^{\varepsilon_{l-1}}T_{l-2}^{\varepsilon_{l-2}}\cdots T_{k+2}^{\varepsilon_{k+2}}T_{w'};
\end{align}
if $1 < k\le 2f$ and $k$ is even, then 
\begin{align}\label{gromple:2}
 E_{k-1}T_{l-1}^{\varepsilon_{l-1}}T_{l-2}^{\varepsilon_{l-2}}\cdots T_{k+1}^{\varepsilon_{k+1}}E_kT_{w'}
=E_{k-1}T_{l-1}^{\varepsilon_{l-1}}T_{l-2}^{\varepsilon_{l-2}}\cdots T_{k+1}^{\varepsilon_{k+1}}T_{k}T_{k-1}T_{w'}.
\end{align}
When $1\le k<2f$ and $k$ is odd, using~\eqref{barca} and~\eqref{gromple:0}, and successively applying~\eqref{saru:prop3.8} with $j=k,k-2,\dots,$ we obtain
\begin{align*}
E_1E_3\cdots E_{2f-1}T_{l-1}^{\varepsilon_{l-1}}T_{l-2}^{\varepsilon_{l-2}}\cdots T_{k+1}^{\varepsilon_{k+1}} E_{k}T_{w'}
=\sum_{v'\in\mathfrak{S}_n} a_{v'} T_{l-1}^{\varepsilon_{l-1}}T_{l-2}^{\varepsilon_{l-2}}\cdots T_{2f+1}^{\varepsilon_{2f+1}} E_1E_3\cdots E_{2f-1} T_{v'}
\end{align*}
where $T_{l-1}^{\varepsilon_{l-1}}T_{l-2}^{\varepsilon_{l-2}}\cdots T_{2f+1}^{\varepsilon_{2f+1}}$ can be expressed as a sum
\begin{align*}
T_{l-1}^{\varepsilon_{l-1}}T_{l-2}^{\varepsilon_{l-2}}\cdots T_{2f+1}^{\varepsilon_{2f+1}}
=\sum_{u'\in \langle s_j \,:\, 2f<j<n\rangle} a_{u'}T_{u'} +b',
\end{align*}
and $b'$ lies in the two sided ideal of $\langle T_j \,:\,2f<j<n\rangle$ generated by $E_{2f+1}$. Since $b'$ satisfies $E_1E_3\cdots E_{2f-1}b'\in B_n^{f+1}$, it follows that 
\begin{multline*}
E_1E_3\cdots E_{2f-1}T_{l-1}^{\varepsilon_{l-1}}T_{l-2}^{\varepsilon_{l-2}}\cdots T_{k+1}^{\varepsilon_{k+1}} E_{k}T_{w'}\\
\equiv\sum_{\substack{v'\in\mathfrak{S}_n \\ u'\in\langle s_j \,:\, 2f<j<n\rangle }} a_{u',v'} E_1E_3\cdots E_{2f-1} T_{u'}T_{v'}\mod{B_n^{f+1}}.
\end{multline*}
As already noted, the right hand side of the above expression may be rewritten modulo $B_n^{f+1}$ as an $R$--linear combination of the required form. In the same way, if $1<k\le 2f$ and $k$ is even, then using~\eqref{gromple:2}, we obtain the product
\begin{align*}
E_1E_3\cdots E_{2f-1}T_{l-1}^{\varepsilon_{l-1}}T_{l-2}^{\varepsilon_{l-2}}\cdots T_{k+1}^{\varepsilon_{k+1}}T_{k}T_{k-1}T_{w'}
\end{align*}
which is also expressible as a sum of the required form using the arguments above. Thus we have shown that the lemma holds in case $1\le i<n$ and $b=E_i$. 

Let $w\in\mathscr{D}_{f,n}$. If $1\le i <n$, and $\ell(w)<\ell(ws_i)$ then
\begin{align*}
E_1E_3\cdots E_{2f-1} T_w T_i =E_1E_3\cdots E_{2f-1} T_{ws_i},
\end{align*}
and, if $\ell(ws_i)<\ell(w)$, then 
\begin{align*}
E_1E_3\cdots E_{2f-1} T_{w}T_i
=
E_1E_3\cdots E_{2f-1}(T_{ws_i} + (q-q^{-1})(T_w-r^{-1}T_{ws_i}E_i)).
\end{align*}
We have already observed that the terms appearing on the right hand side in each of the two above expressions may be expressed as an $R$--linear combination of the required form. Thus we have shown that the lemma holds when $b\in\{T_i:1\le i<n\}$. 

Now, given that the lemma holds when $b\in\{T_i:1\le i<n\}$, Lemma~\ref{gomi} shows that any product 
\begin{align*}
E_1E_3\cdots E_{2f-1} T_u T_v T_i, &&\text{for $u\in\langle s_i : 2f<i <n\rangle$ and $v\in\mathscr{D}_{f,n}$,}
\end{align*}
can also be written as an $R$--linear combination of the form appearing on the right hand side of~\eqref{blah}. Since $\{T_i:1\le i < n\}$ generates $B_n(q,r)$, the proof of the lemma is complete.
\end{proof}
If $f$ is an integer, $0\le f\le [n/2]$, and $\mu$ is a partition of
$n-2f$, define $L^{\mu}$ to be the right $B_n(q,r)$--submodule of
$B_n^f/B_n^{f+1}$ generated by the element $m_\mu+B^{f+1}_n$. The
next result will be used in Section~\ref{resf} below; we refer to
the definition of the element
$c_{\mathsf{S}\mathfrak{t}}\in\mathscr{H}_{n}(q^2)$ given
in~\eqref{some:1}.
\begin{lemma}\label{permod:2}
Let $f$ be an integer, $0\le f\le [n/2]$, and given partitions
$\lambda,\mu$ of $n-2f$, with $\lambda\unrhd\mu$, define
\begin{align*}
m_{\mathsf{S}\mathfrak{t}}=\sum_{\substack{\mathfrak{s}\in \STD_n(\lambda)\\ \mu(\hat{\mathfrak{s}})=\mathsf{S}}}q^{\ell(d(\mathfrak{s}))}T_{d(\mathfrak{s})}^*m_\lambda T_{d(\mathfrak{t})}, &&\text{for $\mathsf{S}\in\mathcal{T}_0(\lambda,\mu)$ and $\mathfrak{t}\in\STD_n(\lambda)$.}
\end{align*}
Then the collection
\begin{align}\label{lbasis}
\left\{m_{\mathsf{S}\mathfrak{t}}T_v+B_n^{f+1}\,\bigg|\,
\begin{matrix}
\text{for $\mathsf{S}\in\mathcal{T}_0(\lambda,\mu)$, $\mathfrak{t} \in \STD_n(\lambda)$, }\\
\text{$\lambda\vdash n-2f$ and $v\in\mathscr{D}_{f,n}$}
\end{matrix}
\right\}
\end{align}
freely generates $L^\mu$ as an $R$--module.
\end{lemma}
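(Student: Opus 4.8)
The plan is to pull the statement back, along the map $\vartheta_f$ of Corollary~\ref{twiso}, to the permutation-module theorem for $\mathscr{H}_{n-2f}(q^2)$ (Theorem~\ref{permod}). The starting point is the identity
\begin{align*}
\vartheta_f\bigl(c_{\hat{\mathfrak{s}}\hat{\mathfrak{t}}}\bigr)=T^*_{d(\mathfrak{s})}m_\lambda T_{d(\mathfrak{t})}+B_n^{f+1},\qquad \mathfrak{s},\mathfrak{t}\in\STD_n(\lambda),\ \lambda\vdash n-2f,
\end{align*}
where $c_{\hat{\mathfrak{s}}\hat{\mathfrak{t}}}$ is the Murphy basis element of $\mathscr{H}_{n-2f}(q^2)$ attached to the images $\hat{\mathfrak{s}},\hat{\mathfrak{t}}\in\STD(\lambda)$ of $\mathfrak{s},\mathfrak{t}$. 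To see this, observe that $|i-j|\ge 2$ whenever $1\le j\le 2f-1<i$, so $E_1E_3\cdots E_{2f-1}$ commutes with every $T_i$, $2f<i<n$, hence with $T_{d(\mathfrak{s})}$ and with $T_{d(\mathfrak{t})}$; combining this with the length additivity $\ell\bigl(d(\mathfrak{s})^{-1}wd(\mathfrak{t})\bigr)=\ell(d(\mathfrak{s}))+\ell(w)+\ell(d(\mathfrak{t}))$ for $w\in\mathfrak{S}_\lambda$ (which underlies the very definition of the Murphy basis) and the rule $\vartheta_f(X_{\hat v})=E_1E_3\cdots E_{2f-1}T_v+B_n^{f+1}$ yields the identity at once. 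Summing over $\mathfrak{s}$ with $\mu(\hat{\mathfrak{s}})=\mathsf{S}$ gives $\vartheta_f(c_{\mathsf{S}\hat{\mathfrak{t}}})=m_{\mathsf{S}\mathfrak{t}}+B_n^{f+1}$; in particular $\vartheta_f(c_\mu)=m_\mu+B_n^{f+1}$, and since $\vartheta_f(X_{\hat v}X_j)=\vartheta_f(X_{\hat v})T_{2f+j}$, the map $\vartheta_f$ carries the right $\mathscr{H}_{n-2f}(q^2)$-module $M^\mu=c_\mu\mathscr{H}_{n-2f}(q^2)$ onto $\vartheta_f(M^\mu)=(m_\mu+B_n^{f+1})C_f$.

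\textbf{Linear independence.} By Theorem~\ref{permod} (read with $n-2f$ in place of $n$) the $c_{\mathsf{S}\hat{\mathfrak{t}}}$ form an $R$-basis of $M^\mu$, so writing each in the basis $\{X_w:w\in\mathfrak{S}_{n-2f}\}$ produces an invertible transition matrix. Applying $\vartheta_f$ and then right multiplication by $T_v$ (and using $\ell(wv)=\ell(w)+\ell(v)$ for $w\in\langle s_i:2f<i<n\rangle$, $v\in\mathscr{D}_{f,n}$) expresses $m_{\mathsf{S}\mathfrak{t}}T_v+B_n^{f+1}$ through the elements $E_1E_3\cdots E_{2f-1}T_{wv}+B_n^{f+1}$. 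Via the transition between $\{X_w\}$ and the Murphy basis of $\mathscr{H}_{n-2f}(q^2)$ and the identity above, the latter elements are, for each fixed $v$, precisely the cellular basis elements of $B_n^f/B_n^{f+1}$ from Theorem~\ref{saruthm} having trivial left dangle and right dangle $v$, and distinct $v$ contribute disjoint subsets of that basis. Since $\vartheta_f$ is injective (the remark following Corollary~\ref{twiso}), right multiplication by $T_v$ is injective on $\vartheta_f(\mathscr{H}_{n-2f}(q^2))$, and it follows that the full collection~\eqref{lbasis} is $R$-linearly independent.

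\textbf{Spanning, and the main obstacle.} Let $\widetilde{L}=\sum_{v\in\mathscr{D}_{f,n}}\vartheta_f(M^\mu)T_v$ be the $R$-span of~\eqref{lbasis}. Since $m_\mu+B_n^{f+1}=\vartheta_f(c_\mu)\in\vartheta_f(M^\mu)\subseteq L^\mu$ and $L^\mu$ is a right $B_n(q,r)$-module, each $\vartheta_f(M^\mu)T_v$ lies in $L^\mu$, so $\widetilde{L}\subseteq L^\mu$; because $m_\mu+B_n^{f+1}\in\widetilde{L}$, the reverse inclusion will follow once $\widetilde{L}$ is shown to be a right $B_n(q,r)$-submodule. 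By the Overview, $\widetilde{L}$ is spanned by the elements $E_1E_3\cdots E_{2f-1}\,c\,T_v+B_n^{f+1}$ in which $c\in C_f$ lifts, modulo $I_f$ and via Lemma~\ref{bquot}, an element of $M^\mu$, and $v\in\mathscr{D}_{f,n}$; one must rewrite $E_1E_3\cdots E_{2f-1}\,c\,T_vT_i+B_n^{f+1}$ back into this form for every $1\le i<n$. This is done exactly as in the proof of Lemma~\ref{ecor}: using $T_i^2=1+(q-q^{-1})(T_i-r^{-1}E_i)$, Lemma~\ref{gomi}, and Propositions~3.4, 3.7, 3.8 and Corollary~3.1 of~\cite{saru}, one discards (by Lemma~\ref{gomi}) any term acquiring a factor $E_j$ with $2f<j<n$, absorbs any Coxeter generator $s_j$ with $2f<j<n$ that is produced into the factor $c$ (using that $E_1E_3\cdots E_{2f-1}$ commutes with $T_j$), and reduces the remaining ``dangle'' data to elements of $\mathscr{D}_{f,n}$. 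The one point requiring care — and the genuine obstacle — is that this absorption never moves the ``Hecke part'' $c$ outside $M^\mu$: this uses crucially that $M^\mu$ is a \emph{right} $\mathscr{H}_{n-2f}(q^2)$-submodule of $\mathscr{H}_{n-2f}(q^2)$ (Theorem~\ref{permod}), so that right multiplying $c$ by any element of $C_f$ keeps its class in $M^\mu$. Granting this, $\widetilde{L}T_i\subseteq\widetilde{L}$ for all $i$, whence $L^\mu=\widetilde{L}$ and the collection~\eqref{lbasis} is the asserted free basis.
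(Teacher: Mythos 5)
Your proof is correct in substance, but it takes a genuinely different route on the linear-independence half. For spanning, you and the paper travel the same road: the paper applies Lemma~\ref{ecor} to $E_1E_3\cdots E_{2f-1}T_wb$, multiplies the resulting congruence on the left by $x_\mu$ (which commutes with $E_1E_3\cdots E_{2f-1}$), and then converts $E_1E_3\cdots E_{2f-1}x_\mu T_uT_v$ to $m_{\mathsf{S}\mathfrak{t}}T_v$ via $\vartheta_f$ and Theorem~\ref{permod}. You phrase this as closure of $\widetilde L$ under right multiplication, which is logically equivalent, though your formulation ``the absorption amounts to right multiplying $c$ by an element of $C_f$'' glosses over the real mechanism: the cleanest justification is precisely the paper's, namely replace $c$ by $x_\mu c''$ modulo $I_f$ (Lemma~\ref{bquot}, discarding $E_1E_3\cdots E_{2f-1}I_f$ via Lemma~\ref{gomi}), apply the straightening of Lemma~\ref{ecor} to $E_1E_3\cdots E_{2f-1}c''T_vT_i$, and reattach $x_\mu$, so that every resulting middle factor is $x_\mu T_{u'}=\vartheta_f$-preimage of $c_\mu X_{\hat{u'}}\in M^\mu$. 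So the underlying argument is the same even if the wording is looser.

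The linear-independence argument is where you genuinely diverge. You argue directly: via $\vartheta_f$ and the length-additivity $\ell(uv)=\ell(u)+\ell(v)$, the collection $\{m_{\mathsf S\mathfrak t}T_v+B_n^{f+1}\}$ lies in the $R$-span of the cellular basis elements of Theorem~\ref{saruthm} with trivial left dangle (and right dangle $v$), distinct $v$ giving disjoint subsets of that basis; combined with the freeness in Theorem~\ref{permod}, independence drops out. (One caveat: ``invertible transition matrix'' is a slight overstatement — $\{c_{\mathsf S\hat{\mathfrak t}}\}$ is a basis of the submodule $M^\mu$ only, so the matrix to $\{X_w\}$ has full row rank rather than being square; the argument still goes through.) The paper instead orders the semistandard tableaux compatibly with dominance, constructs the filtration $(0)=L_0\subseteq L_1\subseteq\cdots\subseteq L_k=L^\mu$ of~\eqref{lfilt:0}, proves each $L_i/L_{i-1}\cong S^{\lambda_i}$ as a $B_n(q,r)$-module, and then counts ranks. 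Your route is shorter and more self-contained; the paper's buys something extra — the explicit cell-module filtration of $L^\mu$ and the isomorphism~\eqref{pest:10} are reused later in the proof of Corollary~\ref{big:5}, so the detour is not wasted effort there.
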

\begin{proof}
If $b\in B_{n}(q,r)$ and $w\in\mathscr{D}_{f,n}$, then by the previous lemma, there exist $a_{u,v}\in R$, for $u\in\langle s_i : 2f<i<n\rangle$ and $v\in\mathscr{D}_{f,n}$ such that 
\begin{align*}
E_1E_3\cdots E_{2f-1} T_wb\equiv \sum_{u,v} a_{u,v}E_1E_3\cdots
E_{2f-1}T_uT_v\mod{B^{f+1}_n}.
\end{align*}
Multiplying both sides of the above expression by $x_\mu$ on the left, and using the property~\eqref{nearh} and Theorem~\ref{permod}, we obtain $a_{\mathsf{S},\mathfrak{t}}\in R$, for $\mathsf{S}\in\mathcal{T}_0(\lambda,\mu)$, $\mathfrak{t}\in\STD_n(\lambda)$ and $\lambda\vdash n-2f$, such that
\begin{align*}
&m_\mu T_w b + B_{n}^{f+1}
= \sum_{u,v} a_{u,v}\,E_1E_3\cdots E_{2f-1}x_\mu T_uT_v+B_{n}^{f+1}\\
&\quad =\sum_{u,v} a_{u,v}\,\vartheta_f(c_\mu X_{\hat{u}})T_v=\sum_{u,v} a_{u,v}\sum_{\substack{\mathsf{S}\in\mathcal{T}_0(\lambda,\mu)\\ \mathfrak{t}\in\STD_n(\lambda) }} a_{\mathsf{S},\mathfrak{t}}\,\vartheta_f(c_{\mathsf{S}\hat{\mathfrak{t}}})T_v\\
&\quad=\sum_{u,v} a_{u,v}\sum_{\substack{\mathsf{S}\in\mathcal{T}_0(\lambda,\mu)\\ \mathfrak{t}\in\STD_n(\lambda)}} a_{\mathsf{S},\mathfrak{t}}\, m_{\mathsf{S}\mathfrak{t}} T_v +{B}_n^{f+1}.
\end{align*}
This proves the spanning property of the set~\eqref{lbasis}. The fact that each element of the set~\eqref{lbasis} lies in $L^\mu$ follows from an argument similar to the above, using Theorem~\ref{permod} and the property~\eqref{nearh}. We now outline the proof of the linear independence of the elements of~\eqref{lbasis} over $R$. 

(i) Let $\{\mathsf{S}_i : 1 \le i \le k \}$ be the semistandard tableaux of type $\mu$, ordered so that $\mathsf{S}_i\in\mathcal{T}_0(\lambda_i,\mu)$ and $j\ge i$ whenever $\lambda_i\unrhd\lambda_j$, and take $L_i$ to denote the $R$--module generated by 
\begin{align*}
\big\{m_{\mathsf{S}_j\mathfrak{t}}T_v+B_n^{f+1} :
\text{ $1 \le j \le i$, $\mathfrak{t} \in \STD_n(\lambda_j)$ and $v\in\mathscr{D}_{f,n}$}
\big\}.
\end{align*}

(ii) Using the property~\eqref{nearh} and Theorem~\ref{permod} as above, it is shown that the $R$--module homomorphism $L_i/L_{i-1}\to S^{\lambda_i}$ defined, for $\mathfrak{t}\in\STD_n(\lambda_i)$ and $w\in\mathscr{D}_{f,n}$, by
\begin{align}\label{pest:10}
 m_{\mathsf{S}_i\mathfrak{t}} T_w + L_{i-1} \mapsto m_{\lambda_i} T_{d(\mathfrak{t})} T_w+\check{B}_n^{\lambda_i}
 \end{align}
is an isomorphism of right $B_n(q,r)$--modules. Thus, analogous to the filtration of each permutation module of the Iwahori--Hecke algebra of the symmetric group given in Corollary~4.10 of~\cite{mathas:ih}, there is a filtration of $L^\mu$ by $B_n(q,r)$--modules
\begin{align}\label{lfilt:0}
(0) = L_0 \subseteq L_1 \subseteq\cdots \subseteq L_{k}=L^\mu,
\end{align}
wherein each factor $L_{i}/L_{i-1}$ is isomorphic to a cell module $S^{\lambda_i}$ for $B_n(q,r)$. 

(iii) From~\eqref{lfilt:0}, it is deduced that $\dim_R (L^\mu) =\sum_{i=1}^k \dim_R(S^{\lambda_i})$. Since this sum coincides with the order of the set~\eqref{lbasis} obtained by simply counting, the linear independence over $R$ of the elements of~\eqref{lbasis} now follows.
\end{proof}
\section{Representation Theory Over a Field}\label{afield}
We state for later reference some consequences, for B--M--W algebras over a field, of the theory of cellular algebras constructed in~\cite{grahamlehrer}. These results of C.C.~Xi appeared in~\cite{xi:qheredity}.
\begin{proposition}
Let $B_n(\hat{q},\hat{r})$ be a B--M--W algebra over a field
$\kappa$. If $f$ is an integer, $0\le f\le[n/2]$, and $\lambda$ is a
partition of $n-2f$, then the radical
\begin{align*}
\RAD(S^\lambda)=\{\mathbf{v}\in
S^\lambda:\text{$\langle\mathbf{v},\mathbf{u}\rangle = 0$
 for all $\mathbf{u}\in S^\lambda$}\}
\end{align*}
of the form on $S^\lambda$ determined by~\eqref{formdef:1} is a
$B_n(\hat{q},\hat{r})$--submodule of $S^\lambda$.
\end{proposition}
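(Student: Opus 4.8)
The plan is to follow the standard cellular-algebra argument from~\cite{grahamlehrer} (Proposition~2.4), adapted to the present notation. The claim is that $\RAD(S^\lambda)$ is a $B_n(\hat q,\hat r)$-submodule, so I must show it is closed under the right action of $B_n(\hat q,\hat r)$. Since $\RAD(S^\lambda)$ is visibly an $R$-submodule (it is the kernel of the $R$-linear map $\mathbf{v}\mapsto\langle\mathbf{v},-\rangle$ into the dual of $S^\lambda$), the only thing to verify is that $\mathbf{v}\in\RAD(S^\lambda)$ and $b\in B_n(\hat q,\hat r)$ imply $\mathbf{v}b\in\RAD(S^\lambda)$.

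First I would recall the defining property of the bilinear form. By~\eqref{formdef:1}, for basis elements of $S^\lambda$ of the form $\mathbf{u}=m_\lambda T_{d(\mathfrak u)}T_v$ and $\mathbf{w}=m_\lambda T_{d(\mathfrak v)}T_w$ we have
\begin{align*}
\langle \mathbf{u},\mathbf{w}\rangle\, m_\lambda \equiv \mathbf{u}\,\mathbf{w}^*\bmod \check{B}_n^\lambda,
\end{align*}
where $\mathbf{w}^* = T_w^* T_{d(\mathfrak v)}^* m_\lambda$ using that $*$ fixes $m_\lambda$ (indeed $*$ fixes each $E_i$ and $x_\lambda$, hence $m_\lambda=E_1E_3\cdots E_{2f-1}x_\lambda$; strictly one checks $m_\lambda^* = m_\lambda$ up to the fact that $x_\lambda^*=x_\lambda$ since $\mathfrak{S}_\lambda$ is closed under inverses, and the $E_{2i-1}$ for distinct $i$ commute and are $*$-fixed). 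Extending bilinearly, for any $\mathbf{x},\mathbf{y}\in S^\lambda$ one has $\langle\mathbf{x},\mathbf{y}\rangle\,m_\lambda\equiv \mathbf{x}\,\mathbf{y}^*\bmod\check B_n^\lambda$, where $\mathbf{y}\mapsto\mathbf{y}^*$ denotes the obvious $R$-linear map sending $m_\lambda T_{d(\mathfrak v)}T_w+\check B_n^\lambda$ to $T_w^*T_{d(\mathfrak v)}^*m_\lambda+\check B_n^\lambda$. The key associativity/symmetry facts I need are: (a) $\langle\mathbf{x},\mathbf{y}\rangle=\langle\mathbf{y},\mathbf{x}\rangle$, stated in the text; and (b) the ``associativity'' identity $\langle\mathbf{x}b,\mathbf{y}\rangle = \langle\mathbf{x},\mathbf{y}b^*\rangle$ for all $b\in B_n(\hat q,\hat r)$, which follows because $\mathbf{x}b\,\mathbf{y}^* \equiv \langle\mathbf{x}b,\mathbf{y}\rangle m_\lambda$ and $\mathbf{x}(\mathbf{y}b^*)^* = \mathbf{x}\,b\,\mathbf{y}^*\equiv\langle\mathbf{x},\mathbf{y}b^*\rangle m_\lambda$ modulo $\check B_n^\lambda$, using that $(\mathbf{y}b^*)^* = b\,\mathbf{y}^*$ — here one uses that $*$ is an anti-involution with $**=\mathrm{id}$ and that the action and the formula~\eqref{formdef:1} are compatible via Theorem~\ref{saruthm}. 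These are exactly the general cellular-algebra facts from~\cite{grahamlehrer}, so I would cite Proposition~2.4 of~\cite{grahamlehrer} for them, but include the one-line derivation of (b) for completeness.

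With (b) in hand the proof is immediate: let $\mathbf{v}\in\RAD(S^\lambda)$ and $b\in B_n(\hat q,\hat r)$. For any $\mathbf{u}\in S^\lambda$,
\begin{align*}
\langle \mathbf{v}b,\mathbf{u}\rangle = \langle \mathbf{v},\mathbf{u}b^*\rangle = 0,
\end{align*}
since $\mathbf{u}b^*\in S^\lambda$ and $\mathbf{v}$ pairs to zero with every element of $S^\lambda$. Hence $\mathbf{v}b\in\RAD(S^\lambda)$, so $\RAD(S^\lambda)$ is a right $B_n(\hat q,\hat r)$-submodule. The same argument applies verbatim over the field $\kappa$ using the specialised form from the Remark preceding this proposition.

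The only genuine obstacle, and it is a minor bookkeeping point rather than a real difficulty, is checking that the form~\eqref{formdef:1} is well defined modulo $\check B_n^\lambda$ and genuinely symmetric and associative in the sense above — i.e. that Theorem~\ref{saruthm}(2) guarantees $\mathbf{x}\,\mathbf{y}^*$ lies in $B_n^\lambda$ and has a well-defined $m_\lambda$-coefficient modulo $\check B_n^\lambda$ independent of the chosen representatives, and that $*$ being a cellular anti-involution (as in Theorem~\ref{saruthm}(1)) makes the pairing symmetric. All of this is the content of~\cite{grahamlehrer}; I would simply invoke it, noting that $B_n(q,r)$ together with the basis of Theorem~\ref{saruthm}, the poset of partitions under $\unrhd$, and the anti-involution $*$ form a cellular algebra, so Proposition~2.4 of~\cite{grahamlehrer} applies directly and yields the statement.
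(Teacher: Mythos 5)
Your proof is correct and is precisely the argument the paper is invoking: the paper itself does not prove this proposition, but states it (together with the other items in Section~\ref{afield}) as a direct consequence of the general cellular-algebra machinery of~\cite{grahamlehrer} applied to the cellular structure on $B_n(q,r)$ furnished by Theorem~\ref{saruthm}, citing Proposition~2.4 of~\cite{grahamlehrer}. Your unpacking of that citation — verifying $m_\lambda^*=m_\lambda$, the associativity $\langle\mathbf{x}b,\mathbf{y}\rangle=\langle\mathbf{x},\mathbf{y}b^*\rangle$, and then concluding closure of $\RAD(S^\lambda)$ under the right action — is the standard derivation and matches the route the paper takes (namely, appeal to~\cite{grahamlehrer}); you have simply written out the details the paper leaves implicit.
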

\begin{proposition}
Let $B_n(\hat{q},\hat{r})$ be a B--M--W algebra over a field $\kappa$, and suppose that $f,f'$ are integers $0\le f,f'\le [n/2]$, and $\lambda,\mu$ are partitions of $n-2f$ and $n-2f'$ respectively. If $M$ is a $B_n(\hat{q},\hat{r})$--submodule of $S^\lambda$, and $\psi:S^\mu\to S^\lambda/M$ is a non--trivial $B_n(\hat{q},\hat{r})$--module homomorphism, then $\lambda\unrhd\mu$. 
\end{proposition}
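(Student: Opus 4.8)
The plan is to deduce the statement from the general theory of cellular algebras. By Theorem~\ref{saruthm}, $B_n(\hat q,\hat r)$ is cellular over $\kappa$ with cell modules the $S^\nu$ and partial order $\unrhd$, and I will use the standard structural consequences established in~\cite{grahamlehrer}: each $S^\nu$ is generated as a right module by $m_\nu+\check B_n^\nu$; the ideal $\check B_n^\nu$ annihilates $S^\nu$; $S^\lambda$ embeds in $B_n^\lambda/\check B_n^\lambda$ as the right submodule generated by $m_\lambda+\check B_n^\lambda$; the intersection $B_n^\lambda\cap B_n^\mu$ is spanned by those cell basis elements of Theorem~\ref{saruthm} whose partition $\nu$ satisfies $\nu\unrhd\lambda$ and $\nu\unrhd\mu$; $\RAD(S^\nu)$ is a submodule with $S^\nu/\RAD(S^\nu)$ simple or zero, the nonzero such quotients forming a complete set of irreducibles; and every composition factor of $S^\nu$ is of the form $S^\tau/\RAD(S^\tau)$ with $\tau\unrhd\nu$, with multiplicity one when $\tau=\nu$. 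I will also use the basic identity coming from~\eqref{formdef:1} and the module action: if $b$ runs over the cell basis elements of type $\mu$, then $(m_\mu+\check B_n^\mu)\,b$ is a value of the form~\eqref{formdef:1} times a cell basis vector of $S^\mu$; consequently $(m_\mu+\check B_n^\mu)\,B_n^\mu=S^\mu$ whenever $m_\mu+\check B_n^\mu\notin\RAD(S^\mu)$.

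I would then argue by induction on $\mu$ with respect to $\unrhd$, assuming the statement for all partitions strictly dominating $\mu$ (the most dominant $\mu$ falls under the second case below, since then $S^\mu$ has no composition factor other than $S^\mu/\RAD(S^\mu)$, which is therefore nonzero). First, if $\RAD(S^\mu)=S^\mu$, then $S^\mu$ has no composition factor labelled by $\mu$; choosing a composition series of $S^\mu$ and the first term on which $\psi$ does not vanish, $\psi$ factors through a nonzero map out of a composition factor $S^\tau/\RAD(S^\tau)$ with $\tau\rhd\mu$, hence through a nonzero homomorphism $S^\tau\to S^\lambda/M$; the inductive hypothesis then gives $\lambda\unrhd\tau\rhd\mu$. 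Second, suppose $\RAD(S^\mu)\neq S^\mu$. Since $m_\mu+\check B_n^\mu$ generates $S^\mu$ it does not lie in the proper submodule $\RAD(S^\mu)$, so $(m_\mu+\check B_n^\mu)B_n^\mu=S^\mu$. Write $\psi(m_\mu+\check B_n^\mu)=x+M$; as $S^\lambda/M$ is cyclic on the image of $m_\lambda+\check B_n^\lambda$, I may replace $x$ by an element of the form $(m_\lambda+\check B_n^\lambda)\,b\in S^\lambda$, and then $\psi(S^\mu)=(x+M)\,B_n^\mu$, which is nonzero because $\psi\neq0$.

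The crux is to observe that $x\,B_n^\mu$ already vanishes inside $S^\lambda$ unless $\lambda\unrhd\mu$. Indeed $x\,B_n^\mu=(m_\lambda+\check B_n^\lambda)(b\,B_n^\mu)\subseteq(m_\lambda+\check B_n^\lambda)\,B_n^\mu$, and the latter is the image in $S^\lambda\subseteq B_n^\lambda/\check B_n^\lambda$ of $B_n^\lambda B_n^\mu\subseteq B_n^\lambda\cap B_n^\mu$; since $B_n^\lambda\cap B_n^\mu$ is spanned by cell basis elements with partition $\nu$ satisfying $\nu\unrhd\lambda$ and $\nu\unrhd\mu$, its image modulo $\check B_n^\lambda$ is zero unless some such $\nu$ equals $\lambda$, that is, unless $\lambda\unrhd\mu$. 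So if $\lambda\ntrianglerighteq\mu$ then $x\,B_n^\mu=0$ in $S^\lambda$, whence $(x+M)B_n^\mu=0$, contradicting $\psi(S^\mu)\neq0$; therefore $\lambda\unrhd\mu$. I expect the main difficulty to be exactly this last step: tracking composition factors alone is not enough (two incomparable partitions can have a common dominating label), so one must genuinely exploit the distinguished generator $m_\mu$ of $S^\mu$ and the collapse of $m_\lambda B_n^\mu$ in $B_n^\lambda/\check B_n^\lambda$, together with the separate, slightly technical reduction that disposes of the case $\RAD(S^\mu)=S^\mu$.
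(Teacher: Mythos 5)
The paper does not supply an argument for this Proposition; it is quoted as a known consequence of the theory of cellular algebras, cf.\ Proposition~3.6 of~\cite{grahamlehrer} and~\cite{xi:qheredity}, and the paper does not prove it. Your ``Case 2'' (where $\RAD(S^\mu)\ne S^\mu$) is the standard Graham--Lehrer argument: the identity $(m_\mu+\check B_n^\mu)B_n^\mu = S^\mu$ when $m_\mu+\check B_n^\mu\notin\RAD(S^\mu)$, together with the observation that $m_\lambda B_n^\mu\subseteq B_n^\lambda\cap B_n^\mu\subseteq\check B_n^\lambda$ whenever $\lambda\not\unrhd\mu$, is correct (the phrase ``$B_n^\lambda\cap B_n^\mu$ is spanned by'' should be ``is contained in the span of'', which is what the proof actually uses, and ``$B_n^\lambda/\check B_n^\lambda$'' should be $(B_n^\lambda+\check B_n^\lambda)/\check B_n^\lambda$, but these are cosmetic).

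The genuine gap is in your ``Case 1'' and in the way you set up the induction. You assert that ``every composition factor of $S^\nu$ is of the form $S^\tau/\RAD(S^\tau)$ with $\tau\unrhd\nu$'', but in this cellular framework the inequality runs the other way: if $D^\tau$ is a composition factor of $S^\nu$ then $\nu\unrhd\tau$, with equality possible only once. (Concretely, for $\mathscr H_3$ at $q=1$ over a field of characteristic $3$ the cell module $C^{(2,1)}$ has $D^{(2,1)}$ as its head and $\RAD(C^{(2,1)})\cong D^{(1,1,1)}$, and $(2,1)\rhd(1,1,1)$; the paper itself cites Graham--Lehrer, Proposition~3.6, in the proof of Theorem~\ref{ss:1} precisely for the statement that $D^\mu$ a composition factor of $S^\lambda$ with $\mu\ne\lambda$ forces $\lambda\rhd\mu$.) This inversion has three consequences for your argument: the ``most dominant $\mu$'' is not the base case you describe (it is the \emph{minimal} $\mu$ for which $S^\mu=D^\mu$ automatically); the inductive hypothesis ``for all partitions strictly dominating $\mu$'' never applies to the $\tau$ produced in Case~1, because the correct constraint is $\mu\rhd\tau$; and even after reversing the induction so that it covers all $\tau\lhd\mu$, Case~1 only yields $\lambda\unrhd\tau$ together with $\mu\rhd\tau$, which does not imply $\lambda\unrhd\mu$. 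The degenerate case $\RAD(S^\mu)=S^\mu$ therefore remains unproved, and this is exactly the delicate part of Graham--Lehrer's Proposition~3.6: it cannot be disposed of by tracking composition factors as you propose, because the composition-factor inequality you would need is (up to the degenerate case) equivalent to the Proposition itself. You would need either the argument of Graham--Lehrer/Mathas for that case, or to restrict the statement to $\mu$ with $D^\mu\ne 0$.
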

Let $B_n(\hat{q},\hat{r})$ be a B--M--W algebra over a field
$\kappa$. If $f$ is an integer with $0\le f\le [n/2]$, and $\lambda$
is a partition of $n-2f$, define the $B_n(\hat{q},\hat{r})$--module
$D^\lambda=S^\lambda/\RAD(S^\lambda)$.
\begin{theorem}\label{g-lthm:1}
If $\kappa$ is a field and $B_n(\hat{q},\hat{r})$ is a B--M--W
algebra over $\kappa$, then
\begin{align*}
\{D^\lambda:\text{$D^\lambda\ne 0$, $\lambda\vdash n-2f$ and $0\le
f\le[n/2]$}\}
\end{align*}
is a complete set of pairwise inequivalent
irreducible $B_n(\hat{q},\hat{r})$--modules.
\end{theorem}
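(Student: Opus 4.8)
The plan is to obtain Theorem~\ref{g-lthm:1} as the specialisation to $B_n(\hat q,\hat r)$ of the general representation theory of cellular algebras of~\cite{grahamlehrer}, taking as input the two preceding Propositions of this section: that $\RAD(S^\lambda)$ is a $B_n(\hat q,\hat r)$--submodule of $S^\lambda$, and that any non-trivial $B_n(\hat q,\hat r)$--homomorphism $S^\mu\to S^\lambda/M$ forces $\lambda\unrhd\mu$. One also uses that $\unrhd$, restricted to the set of pairs $(f,\lambda)$ with $\lambda\vdash n-2f$ and $0\le f\le[n/2]$, is a genuine partial order (Section~\ref{prelsec}).

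The one computational ingredient is the following consequence of the associativity of the form~\eqref{formdef:1}. Fix $\lambda\vdash n-2f$ and basis vectors $\bar y=m_\lambda T_{d(\mathfrak{s})}T_v+\check B_n^\lambda$ and $\bar z=m_\lambda T_{d(\mathfrak{t})}T_u+\check B_n^\lambda$ of $S^\lambda$, as in~\eqref{bcell:1}, and set $b=T_v^*T_{d(\mathfrak{s})}^*m_\lambda T_{d(\mathfrak{t})}T_u\in B_n(\hat q,\hat r)$. Multiplying~\eqref{formdef:1} on the right by $T_{d(\mathfrak{t})}T_u$, and using that $\check B_n^\lambda$ is a two--sided ideal which maps to $0$ in $S^\lambda$, yields $x\cdot b=\langle x,\bar y\rangle\,\bar z$ in $S^\lambda$ for every $x\in S^\lambda$. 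Hence, if $U\subseteq S^\lambda$ is a submodule not contained in $\RAD(S^\lambda)$, pick $u\in U$ and a basis vector $\bar y$ of $S^\lambda$ with $\langle u,\bar y\rangle\ne0$; then $\bar z=\langle u,\bar y\rangle^{-1}(u\cdot b)\in U$ for every basis vector $\bar z$, so $U=S^\lambda$. Thus every proper submodule of $S^\lambda$ lies in $\RAD(S^\lambda)$, and in particular $D^\lambda$ is irreducible whenever $D^\lambda\ne0$.

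For pairwise inequivalence, suppose $D^\lambda,D^\mu\ne0$ and $D^\lambda\cong D^\mu$; composing $S^\mu\twoheadrightarrow D^\mu$ with this isomorphism gives a non-trivial map $S^\mu\to S^\lambda/\RAD(S^\lambda)$, so $\lambda\unrhd\mu$ by the second Proposition, and by symmetry $\mu\unrhd\lambda$, whence $\lambda=\mu$. For completeness, let $M$ be a simple $B_n(\hat q,\hat r)$--module. Since the ideals $B_n^\lambda$ span $B_n(\hat q,\hat r)$ (for instance $B_n^{(1^n)}=B_n(\hat q,\hat r)$) and $M\ne0$, choose $\lambda$ maximal in $\unrhd$ with $B_n^\lambda M\ne0$; then $\check B_n^\lambda M=0$, since $\check B_n^\lambda=\sum_{\mu\rhd\lambda}B_n^\mu$. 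Picking $m\in M$ and a cellular basis element $a=T_v^*T_{d(\mathfrak{s})}^*m_\lambda T_{d(\mathfrak{t})}T_u$ with $ma\ne0$, the $\kappa$--linear map $S^\lambda\to M$ sending each basis vector $m_\lambda T_{d(\mathfrak{t}')}T_{u'}+\check B_n^\lambda$ to $m\cdot(T_v^*T_{d(\mathfrak{s})}^*m_\lambda T_{d(\mathfrak{t}')}T_{u'})$ is, by~\eqref{btsu:1} together with $\check B_n^\lambda M=0$ (the coefficients in~\eqref{btsu:1} being independent of the left index $(\mathfrak{s},v)$), a homomorphism of $B_n(\hat q,\hat r)$--modules; its image contains $ma\ne0$, so it is onto. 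Therefore $M$ is a quotient of $S^\lambda$; its kernel is a proper submodule, hence lies in $\RAD(S^\lambda)$, and the structure of the cell ideal $B_n^\lambda/\check B_n^\lambda$ (whose multiplication is governed by~\eqref{formdef:1}) forces the form on $S^\lambda$ to be non-zero and this kernel to equal $\RAD(S^\lambda)$, so $M\cong D^\lambda$ with $D^\lambda\ne0$.

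The parts needing care are the two cellular identities~\eqref{formdef:1} and~\eqref{btsu:1} and the repeated use of the vanishing of $\check B_n^\lambda$ in $S^\lambda$; the genuine obstacle is the completeness step, specifically the standard cellular-algebra argument that a simple quotient of $S^\lambda$ can occur only when $D^\lambda\ne0$, which one settles by exploiting that $B_n^\lambda/\check B_n^\lambda$, as a bimodule, is built from $S^\lambda$ together with the form~\eqref{formdef:1} (if that form were zero the cell ideal would square to zero modulo $\check B_n^\lambda$, contradicting $B_n^\lambda M\ne0$ for a simple $M$). Everything else is formal.
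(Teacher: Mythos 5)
The paper does not supply a proof of Theorem~\ref{g-lthm:1}: Section~\ref{afield} presents it, together with the two preceding Propositions and Theorem~\ref{g-lthm:2}, as standing consequences of the cellular-algebra formalism of~\cite{grahamlehrer}, quoted from~\cite{xi:qheredity}. There is thus no in-text proof to compare against.

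What you have written is a correct, self-contained reproduction of the standard Graham--Lehrer argument, specialised to the basis and notation of~\cite{saru}. The identity $x\cdot b=\langle x,\bar y\rangle\,\bar z$ is exactly the ``sandwich'' computation used to show that $\RAD(S^\lambda)$ is the unique maximal proper submodule of $S^\lambda$ whenever the form is nonzero; pairwise inequivalence then follows from the second Proposition of Section~\ref{afield} together with antisymmetry of $\unrhd$ on the finite poset $\{\lambda\vdash n-2f : 0\le f\le[n/2]\}$; and for completeness you correctly exploit three points: existence of a $\unrhd$-maximal $\lambda$ with $MB_n^\lambda\ne0$ (using that $B_n^{(1^n)}=B_n(\hat q,\hat r)$ so the relevant set is nonempty and finite), that the coefficients in~\eqref{btsu:1} are independent of the left index $(\mathfrak{s},v)$ (this is what makes the assignment $m_\lambda T_{d(\mathfrak{t}')}T_{u'}+\check B_n^\lambda\mapsto m\cdot(T_v^*T_{d(\mathfrak{s})}^*m_\lambda T_{d(\mathfrak{t}')}T_{u'})$ a module map once $M\check B_n^\lambda=0$), and that a vanishing form would give $(B_n^\lambda)^2\subseteq\check B_n^\lambda$, which is incompatible with $MB_n^\lambda=M\ne0$ for a simple $M$ with $M\check B_n^\lambda=0$. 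One cosmetic slip: the paper works throughout with right modules, so your $B_n^\lambda M$, $\check B_n^\lambda M$ should read $MB_n^\lambda$, $M\check B_n^\lambda$; this does not affect the argument.
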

\begin{theorem}\label{g-lthm:2}
Let $\kappa$ be a field and $B_n(\hat{q},\hat{r})$ be a B--M--W
algebra over $\kappa$. Then the following statements are equivalent.
\begin{enumerate}
    \item $B_n(\hat{q},\hat{r})$ is (split) semisimple.
    \item $S^\lambda=D^\lambda$ for all $\lambda\vdash n-2f$ and
    $0\le f\le[n/2]$.
    \item $\RAD(S^\lambda)=0$ for all $\lambda\vdash n-2f$ and
    $0\le f\le[n/2]$.
\end{enumerate}
\end{theorem}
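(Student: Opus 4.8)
The plan is to deduce this from the general theory of cellular algebras of~\cite{grahamlehrer} together with the explicit cellular basis of Theorem~\ref{saruthm}; once these are available the argument is a dimension count. Write $\sum_{f,\lambda}$ for the sum over all integers $f$ with $0\le f\le[n/2]$ and all partitions $\lambda$ of $n-2f$. First, the equivalence (2)$\Leftrightarrow$(3) is immediate from the definition $D^\lambda=S^\lambda/\RAD(S^\lambda)$: the canonical surjection $S^\lambda\to D^\lambda$ is an isomorphism precisely when its kernel $\RAD(S^\lambda)$ vanishes. So it remains to prove (1)$\Leftrightarrow$(3), and we may read ``semisimple'' in~(1) as ``split semisimple'', since the simple modules over a cellular algebra are absolutely irreducible (\cite{grahamlehrer}).

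The key consequence of Theorem~\ref{saruthm} is the identity
\begin{align*}
\dim_\kappa B_n(\hat q,\hat r)=\sum_{f,\lambda}\bigl(\dim_\kappa S^\lambda\bigr)^2 ,
\end{align*}
which holds because that theorem exhibits a basis of $B_n(\hat q,\hat r)$ indexed by $\mathcal{I}_n(\lambda)\times\mathcal{I}_n(\lambda)$, whereas the basis~\eqref{bcell:1} of the cell module $S^\lambda$ is indexed by $\mathcal{I}_n(\lambda)$; thus $\dim_\kappa S^\lambda=|\mathcal{I}_n(\lambda)|\ge 1$, the last inequality because $(\mathfrak{t}^\lambda,1)\in\mathcal{I}_n(\lambda)$. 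We shall also use that $D^\lambda$, being a quotient of $S^\lambda$, satisfies $\dim_\kappa D^\lambda\le\dim_\kappa S^\lambda$, with equality if and only if $\RAD(S^\lambda)=0$.

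For (1)$\Rightarrow$(3): if $B_n(\hat q,\hat r)$ is semisimple then, by Theorem~\ref{g-lthm:1}, the nonzero $D^\lambda$ form a complete irredundant list of irreducible modules, so Wedderburn's theorem gives $\dim_\kappa B_n(\hat q,\hat r)=\sum_{D^\lambda\ne 0}(\dim_\kappa D^\lambda)^2$. Comparing this with the displayed identity and using $\dim_\kappa D^\lambda\le\dim_\kappa S^\lambda$ together with $\dim_\kappa S^\lambda\ge 1$ forces, first, that no $D^\lambda$ is zero, and then that $\dim_\kappa D^\lambda=\dim_\kappa S^\lambda$ for every $\lambda$, i.e. $\RAD(S^\lambda)=0$; this is~(3). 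Conversely, for (3)$\Rightarrow$(1): if $\RAD(S^\lambda)=0$ for all $\lambda$ then $D^\lambda=S^\lambda\ne0$ for every $\lambda$, so by Theorem~\ref{g-lthm:1} the $S^\lambda$ themselves form a complete irredundant list of (absolutely) irreducible modules; hence the semisimple quotient $B_n(\hat q,\hat r)/J$ by the Jacobson radical $J$ has dimension $\sum_{f,\lambda}(\dim_\kappa S^\lambda)^2=\dim_\kappa B_n(\hat q,\hat r)$, so $J=0$ and $B_n(\hat q,\hat r)$ is (split) semisimple.

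There is no real obstacle here---all the substance lies in the cellular-algebra machinery of Section~\ref{afield} (drawn from~\cite{grahamlehrer} and~\cite{xi:qheredity}) and in the basis of Theorem~\ref{saruthm}. The only point that warrants a word of care is the passage between ``semisimple'' and ``split semisimple'', handled above by the absolute irreducibility of cellular simple modules; equivalently one may observe that each of (1), (2), (3) is unaffected by extending $\kappa$ to a splitting field---for (2) and (3) because the rank of the Gram matrix of the form~\eqref{formdef:1} is insensitive to field extension---and run the argument there.
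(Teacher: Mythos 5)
Your proof is correct, and it is the standard cellular-algebra argument (essentially Theorem~3.8 of Graham--Lehrer). Note that the paper itself does not prove this statement: Section~\ref{afield} explicitly presents it, along with Theorem~\ref{g-lthm:1}, as a quoted consequence of~\cite{grahamlehrer} and~\cite{xi:qheredity}, so there is no in-paper proof to compare against. Your reconstruction of the dimension count via the cellular basis of Theorem~\ref{saruthm}, the use of $\dim D^\lambda\le\dim S^\lambda$ with equality iff $\RAD(S^\lambda)=0$, and the absolute irreducibility of the $D^\lambda$ to make Wedderburn give $\dim(A/J)=\sum(\dim D^\lambda)^2$, is exactly the argument in the cited sources and is complete.
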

\section{Restriction}\label{resf}
Given an integer, $1\le i\le n$, regard $B_{i}(q,r)$ as the
subalgebra of $B_n(q,r)$ generated by the elements
$T_1,T_2,\cdots,T_{i-1}$, thereby obtaining the tower
\begin{align}\label{tower:2}
R=B_1(q,r)\subseteq B_{2}(q,r)\subseteq \cdots \subseteq
B_{n}(q,r).
\end{align}
If $V$ is a $B_{n}(q,r)$--module, using the
identification~\eqref{tower:2}, we write $\RES(V)$ for the
restriction of $V$ to $B_{n-1}(q,r)$.

In order to construct a basis for the cell module
$S^\lambda$ which behaves well with respect to both
restriction in the tower~\eqref{tower:2} and with respect to the
action of the Jucys--Murphy operators in $B_n(q,r)$, we first
consider in this section the behaviour of the cell
module $S^\lambda$ under restriction from $B_n(q,r)$ to
$B_{n-1}(q,r)$. This description of the restriction functor on the
cell modules for the B--M--W algebras given here will be used in
Section~\ref{newbasis} to construct a basis for the cell module
$S^\lambda$ which behaves regularly with respect to restriction in
the tower~\eqref{tower:2} and with respect to the Jucys--Murphy
operators in $B_n(q,r)$.

The material of this section is motivated by the 
Wedderburn decomposition of the semisimple B--M--W algebras over a field
$\mathbb{C}(\hat{q},\hat{r})$ given by H.~Wenzl in~\cite{wenzlqg},
and by the bases for the B--M--W algebras indexed by paths in the
Bratteli diagram associated with the B--M--W algebras, constructed
in the semisimple setting over $\mathbb{C}(\hat{q},\hat{r})$, by
R.~Leduc and A.~Ram in~\cite{ramleduc:rh}. As made clear
by~\cite{ramleduc:rh} and~\cite{wenzlqg}, paths in the Bratteli
diagram associated with the B--M--W algebras provide the most
natural generalisation to our setting of the standard tableaux from
the representation theory of the symmetric group. However, while the
bases constructed in Section~\ref{newbasis} and
in~\cite{ramleduc:rh} are both indexed by paths in the appropriate
Bratteli diagram, we have sought a generic basis over a ring
$R=\mathbb{Z}[q^{\pm1},r^{\pm1},(q-q^{-1})^{-1}]$. Thus the
construction here will not require the assumptions about
semisimplicity which are necessary in~\cite{ramleduc:rh}.

Let $f$ be an integer, $0\le f\le [n/2]$, and $\lambda$ be a
partition of $n-2f$. Henceforth, write $\mu\to \lambda$ to mean that
either
\begin{enumerate}
\item $\mu$ is a partition of $n-2f+1$ and the diagram
$[\mu]$ is obtained by adding a node to the diagram $[\lambda]$
or, \item $\mu$ is a partition of $n-2f-1$ and the diagram
$[\mu]$ is obtained by deleting a node from the diagram
$[\lambda]$,
\end{enumerate}
as illustrated in the truncated Bratteli diagram associated with
$B_n(q,r)$ displayed in~\eqref{b-m-wbrattelli} below (Section~5
of~\cite{ramleduc:rh}).
\begin{align}\label{b-m-wbrattelli}
\begin{matrix}
\xymatrix{ \varnothing\ar[dr]  & & & & \\
 & \text{\tiny\Yvcentermath1$\yng(1)$}\ar[dl]\ar[d]\ar[dr] & & & \\
 \varnothing\ar[dr] & \text{\tiny\Yvcentermath1$\yng(1,1)$}\ar[d]\ar[dr]\ar[drr]&
 \text{\tiny\Yvcentermath1$\yng(2)$}\ar[dl]\ar[dr]\ar[drr]& & \\
& \text{\tiny\Yvcentermath1$\yng(1)$} &
\text{\tiny\Yvcentermath1$\yng(1,1,1)$}&
 \text{\tiny\Yvcentermath1$\yng(2,1)$}
 & \text{\tiny\Yvcentermath1$\yng(3)$}}
\end{matrix}
\end{align}

Let $f$ be an integer, $0\le f\le [n/2]$, and $\lambda$ be a
partition of $n-2f$ with $t$ removable nodes and suppose that
\begin{align}\label{chain:0}
\mu^{(1)}\rhd\mu^{(2)}\rhd\cdots\rhd\mu^{(t)}
\end{align}
is the ordering of the set $\{\mu:\text{$\mu \to\lambda$ and
$|\lambda|>|\mu|$}\}$ by dominance order on partitions. For each
partition $\mu^{(k)}$ in the list~\eqref{chain:0}, define an element
\begin{align}\label{ydef:1}
 y^\lambda_{\mu^{(k)}}=m_\lambda
T_{d(\mathfrak{s})}+\check{B}_n^\lambda &&\text{where
$\mathfrak{s}|_{n-1}=\mathfrak{t}^{\mu^{(k)}}$,}
\end{align}
and let $N^{\mu^{(k)}}$ denote the $B_{n-1}(q,r)$--submodule of
$S^\lambda$ generated by
\begin{align*}
\{\,y^\lambda_{\mu^{(k)}}
T_{d(\mathfrak{u})}:\text{$\mathfrak{u}\in\STD_{n-1}(\mu^{(k)})$}\};
\end{align*}
write $\check{N}^{\mu^{(k)}}$ for the
$B_{n-1}(q,r)$--submodule of $S^\lambda$ generated by
\begin{align*}
\{\,y^\lambda_{\mu^{(j)}}
T_{d(\mathfrak{u})}:\text{$\mathfrak{u}\in\STD_{n-1}(\mu^{(j)})$ and $j<k$}\,\}.
\end{align*}
\begin{example}\label{btenex1}
 Let $n=10$, $f=2$ and $\lambda=(3,2,1)$. Then
\begin{align*}
m_\lambda=E_1E_3\sum_{w\in\mathfrak{S}_\lambda}q^{\ell(w)}T_w=E_1E_3(1+qT_5)(1+qT_6+q^2T_6T_5)(1+qT_8)
\end{align*}
and the elements $y^\lambda_{\mu^{(k)}}$, for each partition
$\mu^{(k)}\to\lambda$ with $|\lambda|>|\mu^{(k)}|$, are as
follows.
\begin{enumerate}
\item If $\mu^{(1)}=(3,2)$, then
$\mathfrak{t}^\mu=\mathfrak{s}|_{n-1}$, where
$\mathfrak{s}=\text{\tiny\Yvcentermath1$\young(567,89,\dten)$}$\,,
so
\begin{align*}
y_{\mu^{(1)}}^\lambda=m_\lambda+\check{B}_n^\lambda.
\end{align*}
\item If $\mu^{(2)}=(3,1,1)$ and
$\mathfrak{s}=\text{\tiny\Yvcentermath1$\young(567,8\dten,9)$}$\,,
then $\mathfrak{t}^{\mu^{(2)}}=\mathfrak{s}|_{n-1}$, so
\begin{align*}
y_{\mu^{(2)}}^\lambda=m_\lambda
T_{d(\mathfrak{s})}+\check{B}_n^\lambda=m_\lambda
T_9+\check{B}_n^\lambda.
\end{align*}
\item If $\mu^{(3)}=(2,2,1)$, then
$\mathfrak{t}^{\mu^{(3)}}=\mathfrak{s}|_{n-1}$, where
$\mathfrak{s}=\text{\tiny\Yvcentermath1$\young(56\dten,78,9)$}$\,,
so
\begin{align*}
y_{\mu^{(3)}}^\lambda=m_\lambda
T_{d(\mathfrak{s})}+\check{B}_n^\lambda=m_\lambda
T_7T_8T_9+\check{B}_n^\lambda.
\end{align*}
\end{enumerate}
\end{example}
Write
$\mathscr{D}_{f,n-1}=\{v\in\mathscr{D}_{f,n}:(n)v=n\}$, so identifying $\mathscr{D}_{f,n-1}\subseteq\mathscr{D}_{f,n}$.
\begin{lemma}\label{bres1}
Let $f$ be an integer, $0\le f\le [n/2]$, and $\lambda$ be a
partition of $n-2f$. If $\mu$ is a partition with
$|\lambda|>|\mu|$ and $\mu\to\lambda$, then $N^{\mu}/\check{N}^\mu$ is
the $R$--module freely generated by
\begin{align*}
\{y_\mu^\lambda T_{d(\mathfrak{u})}T_w+\check{N}^\mu :
\text{$\mathfrak{u}\in\STD_{n-1}(\mu)$ and $w\in\mathscr{D}_{f,n-1}$}
\}.
\end{align*}
Additionally, the map defined, for
$\mathfrak{u}\in\STD_{n-1}(\mu)$ and $w\in\mathscr{D}_{f,n-1}$,
by
\begin{align}\label{reshom}
y^\lambda_\mu T_{d(\mathfrak{u})}T_w+\check{N}^\mu\mapsto m_\mu
T_{d(\mathfrak{u})}T_w+\check{B}_{n-1}^\mu
\end{align}
determines an isomorphism $N^\mu/\check{N}^\mu\cong S^\mu$ of
$B_{n-1}(q,r)$--modules.
\end{lemma}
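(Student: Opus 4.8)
The plan is to reduce the statement to the Iwahori--Hecke restriction result Lemma~\ref{hres}, through the map $\vartheta_f$ of Corollary~\ref{twiso} and the normalisation Lemma~\ref{ecor}, after first noting that all the modules in play are cyclic. Since $d(\mathfrak{u})\in\langle s_{2f+1},\dots,s_{n-2}\rangle$ for $\mathfrak{u}\in\STD_{n-1}(\mu)$, each $T_{d(\mathfrak{u})}$ lies in $B_{n-1}(q,r)=\langle T_1,\dots,T_{n-2}\rangle$; hence $N^\mu=y^\lambda_\mu B_{n-1}(q,r)$ and, writing $\mu=\mu^{(k)}$, $\check N^\mu=\sum_{j<k}y^\lambda_{\mu^{(j)}}B_{n-1}(q,r)$, while $S^\mu$ is cyclic over $B_{n-1}(q,r)$ on $m_\mu+\check B_{n-1}^\mu$. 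I would also record the tableau-combinatorial fact, formally as in~\cite{murphy}, that for $\mathfrak{u}\in\STD_{n-1}(\mu)$ there is a unique $\mathfrak{v}(\mathfrak{u})\in\STD_n(\lambda)$ with $\mathfrak{v}(\mathfrak{u})|_{n-1}=\mathfrak{u}$ and with $n$ occupying the node of $[\lambda]\setminus[\mu]$, and that $d(\mathfrak{v}(\mathfrak{u}))=d(\mathfrak{s})\,d(\mathfrak{u})$ with lengths adding, $\mathfrak{s}$ being the tableau of~\eqref{ydef:1}. Consequently $y^\lambda_\mu T_{d(\mathfrak{u})}T_w=m_\lambda T_{d(\mathfrak{v}(\mathfrak{u}))}T_w+\check B_n^\lambda$ is simply a standard cellular basis vector of $S^\lambda$, with $w\in\mathscr{D}_{f,n-1}\subseteq\mathscr{D}_{f,n}$.

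The central step, parallel to the proof of Lemma~\ref{permod:2}, is to show that $N^\mu$ is the $R$-span of the standard basis vectors $m_\lambda T_{d(\mathfrak{v}')}T_{w'}+\check B_n^\lambda$ with $\SHAPE(\mathfrak{v}'|_{n-1})\unrhd\mu$ (in the chain~\eqref{chain:0}) and $w'\in\mathscr{D}_{f,n-1}$, while $\check N^\mu$ is the span of those with $\SHAPE(\mathfrak{v}'|_{n-1})\rhd\mu$. For $b\in B_{n-1}(q,r)$ one has $y^\lambda_\mu b=m_\lambda T_{d(\mathfrak{s})}b+\check B_n^\lambda$, which~\eqref{btsu:1} rewrites modulo $\check B_n^\lambda$ as a combination of standard basis vectors; using the relations of $B_n(q,r)$, Propositions~3.4, 3.7 and 3.8 of~\cite{saru} (as packaged in Lemma~\ref{ecor}) and $\vartheta_f$, I would verify that --- because $b$ does not involve $T_{n-1}$, and since $|\lambda|>|\mu|$ strand $n$ is a through-strand which is never capped off --- only vectors with $w'\in\mathscr{D}_{f,n-1}$ and $\SHAPE(\mathfrak{v}'|_{n-1})\unrhd\mu$ occur; this is the exact analogue for $B_n\downarrow B_{n-1}$ of the submodule property underlying Lemma~\ref{hres}. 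It then follows that $\check N^\mu\subseteq N^\mu$, that the set $\{y^\lambda_\mu T_{d(\mathfrak{u})}T_w+\check N^\mu:\mathfrak{u}\in\STD_{n-1}(\mu),\ w\in\mathscr{D}_{f,n-1}\}$ is $R$-free, and that it is in bijection with the index set $\mathcal{I}_{n-1}(\mu)$ of the cellular basis of $S^\mu$, so $N^\mu/\check N^\mu$ has the same $R$-rank as $S^\mu$.

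It remains to check that~\eqref{reshom} intertwines the $B_{n-1}(q,r)$-actions; since both sides are cyclic this is a matching of structure constants through the Hecke picture. Forming the subalgebras $C_f$ and ideals $I_f$ (Lemma~\ref{bquot}) inside both $B_n(q,r)$ and $B_{n-1}(q,r)$, one has $C_f/I_f\cong\mathscr{H}_{n-2f}(q^2)$ for $B_n(q,r)$ and $C_f/I_f\cong\mathscr{H}_{n-1-2f}(q^2)$ for $B_{n-1}(q,r)$, the second embedded in the first compatibly with $\vartheta_f$; via $\vartheta_f$ and~\eqref{nearh} the vectors $y^\lambda_\mu T_{d(\mathfrak{u})}T_w$ correspond to $\vartheta_f(c_\lambda X_{d(\hat{\mathfrak{v}})})T_w$ (with $\hat{\mathfrak{v}}$ the relabelling of $\mathfrak{v}(\mathfrak{u})$) and $m_\mu T_{d(\mathfrak{u})}T_w$ to $\vartheta_f(c_\mu X_{d(\hat{\mathfrak{u}})})T_w$, so that the Iwahori--Hecke restriction isomorphism of Lemma~\ref{hres} --- applied with $n-2f$ in place of $n$ --- transports the one family onto the other with the same coefficients: immediately for a generator $T_i$ with $2f<i\le n-2$, and for $T_i$ with $i\le 2f$ because the same normalisation from Lemma~\ref{ecor} is applied on both sides. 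I expect the main obstacle to be the central step above: showing that the right $B_{n-1}(q,r)$-action on $N^\mu$ produces no dangle outside $\mathscr{D}_{f,n-1}$ and never lowers $\SHAPE(\cdot|_{n-1})$ below $\mu$, and that the structure constants are literally those of $S^\mu$. This is where the B--M--W features --- the ideals $B_n^f$, the idempotents $E_i$, the passage to coset representatives in $\mathscr{D}_{f,n}$ --- must be reconciled with the tableau combinatorics; concretely it reduces to tracking strand $n$ through Propositions~3.4, 3.7 and 3.8 of~\cite{saru} and observing that, as $b$ omits $T_{n-1}$ and $|\lambda|>|\mu|$, strand $n$ remains inert, whereupon the remaining steps are formal.
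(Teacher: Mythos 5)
Your proposal follows essentially the paper's own route: normalise via Lemma~\ref{ecor}, transfer to the Hecke picture through $\vartheta_f$, and invoke Murphy's restriction Lemma~\ref{hres} to match structure constants between $N^\mu/\check N^\mu$ and $S^\mu$. What you flag as the main obstacle --- keeping strand $n$ inert through Propositions~3.4, 3.7, 3.8 of \cite{saru} --- the paper dispatches at the outset by applying Lemma~\ref{ecor} inside $B_{n-1}(q,r)$ regarded as a B--M--W algebra in its own right (so $E_1E_3\cdots E_{2f-1}T_wb\in B_{n-1}(q,r)$ for $w\in\mathscr{D}_{f,n-1}$, $b\in B_{n-1}(q,r)$, with $B_{n-1}^{f+1}\subset B_n^{f+1}$), which directly produces coefficients indexed by $u\in\langle s_i:2f<i<n-1\rangle$ and $v\in\mathscr{D}_{f,n-1}$ and renders the strand-tracking automatic.
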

\begin{proof}
Let $b\in B_{n-1}(q,r)$ and $w\in\mathscr{D}_{f,n-1}$. By Lemma~\ref{ecor}, there exist $a_{u,v}\in R$, for $u\in\langle s_i : 2f< i<n-1\rangle$ and $v\in\mathscr{D}_{f,n-1}$, determined uniquely by 
\begin{align}\label{neet}
E_1E_3\cdots E_{2f-1} T_w b\equiv \sum_{u,v} a_{u,v} E_1E_3\cdots E_{2f-1} T_u T_v\mod{B_{n-1}^{f+1}}
\end{align}
Let $\mathfrak{v}\in\STD_n(\lambda)$ satisfy $\mathfrak{v}|_{n-1}=\mathfrak{t}^\mu$ so that $y_\mu^\lambda =m_\lambda T_{d(\mathfrak{v})}+\check{B}_{n}^\lambda$, and let $\mathfrak{u}\in\STD_{n-1}(\mu)$. Since $B_{n-1}^{f+1}\subset B_n^{f+1}$, we use~\eqref{neet} and Lemma~\ref{hres} to obtain $a_\mathfrak{s},a_\mathfrak{t}\in R$, for $\mathfrak{s}\in\STD_{n-1}(\mu)$ and $\mathfrak{t}\in\STD_{n}(\lambda)$ such that 
\begin{multline*}
m_\lambda T_{d(\mathfrak{v})}T_{d(\mathfrak{u})}T_wb +B_{n}^{f+1}=\sum_{u,v} a_{u,v}\,\vartheta_f(c_\lambda X_{d(\hat{\mathfrak{v}})}X_{d(\hat{\mathfrak{u}})}X_{\hat{u}})T_v\\
=\sum_{u,v} a_{u,v}\sum_{\mathfrak{s}\in\STD_{n-1}(\mu)} a_{\mathfrak{s}}\,\vartheta_f(c_\lambda X_{d(\hat{\mathfrak{v}})} X_{d(\hat{\mathfrak{s}})}) T_v\\
+ \sum_{u,v} a_{u,v}\sum_{\substack{\mathfrak{t}\in\STD_n(\lambda)\\ \SHAPE(\mathfrak{t}|_{n-1})\rhd\mu}} a_{\mathfrak{t}}\,\vartheta_f(c_\lambda X_{d(\hat{\mathfrak{t}})})T_v +\sum_{u,v}a_{u,v}\, \vartheta_f(h)T_v, 
\end{multline*}
where $h\in\check{\mathscr{H}}_{n-2f}^\lambda$ and $\vartheta_f(h)\subseteq \check{B}_n^\lambda$. We thus obtain,
\begin{multline*}
m_\lambda T_{d(\mathfrak{v})}T_{d(\mathfrak{u})} T_wb +B_{n}^{f+1}
=\sum_{u,v} a_{u,v}\sum_{\mathfrak{s}\in\STD_{n-1}(\mu)} a_{\mathfrak{s}}\,m_\lambda T_{d(\mathfrak{v})} T_{d(\mathfrak{s})} T_v\\
+ \sum_{u,v} a_{u,v}\sum_{\substack{\mathfrak{t}\in\STD_n(\lambda)\\ \SHAPE(\mathfrak{t}|_{n-1})\rhd\mu}} a_{\mathfrak{t}}\, m_\lambda T_{d(\mathfrak{t})}T_v +b', 
\end{multline*}
where $b'\in\check{B}_n^\lambda$. Since $\check{N}^{\mu}$ is generated as a $B_{n-1}(q,r)$ module by 
\begin{align*}
\{ m_\lambda T_{d(\mathfrak{t})}+\check{B}_{n}^\lambda:\text{$\mathfrak{t}\in\STD_n(\lambda)$ and $\SHAPE(\mathfrak{t}|_{n-1})\rhd\mu$}\},
\end{align*}
it follows that
\begin{align}\label{hora:1}
y_\mu^\lambda T_{d(\mathfrak{u})} T_w b \equiv \sum_{u,v} a_{u,v} \sum_{\mathfrak{s}\in\STD_{n-1}(\mu)} a_{\mathfrak{s}} \,y_\mu^\lambda T_{d(\mathfrak{s})}T_v\mod{\check{N}^\mu}.
\end{align}
 
Using~\eqref{neet} and Lemma~\ref{hres} again the $a_\mathfrak{s}$, for $\mathfrak{s}\in\STD_{n-1}(\mu)$, given above also satisfy 
\begin{multline*}
m_\mu T_{d(\mathfrak{u})}T_wb +B_{n-1}^{f+1}=\sum_{u,v} a_{u,v}\,\vartheta_f(c_\mu X_{d(\hat{\mathfrak{u}})}X_{\hat{u}})T_v\\
=\sum_{u,v} a_{u,v}\sum_{\mathfrak{s}\in\STD_{n-1}(\mu)} a_{\mathfrak{s}}\,\vartheta_f(c_\mu X_{d(\hat{\mathfrak{s}})}) T_v+\sum_{u,v}a_{u,v}\, \vartheta_f(h')T_v, 
\end{multline*}
where $h'\in\check{\mathscr{H}}_{n-2f-1}^\mu$. Since $\vartheta_f(h')\subseteq\check{B}_{n-1}^\mu$,
\begin{align}\label{hora:2}
m_\mu T_{d(\mathfrak{u})}T_wb +\check{B}_{n-1}^\mu =\sum_{u,v} a_{u,v}\sum_{\mathfrak{s}\in\STD_{n-1}(\mu)} a_{\mathfrak{s}}\,m_\mu T_{d(\mathfrak{s})}T_v +\check{B}_{n-1}^\mu.
\end{align}
Comparing coefficients in~\eqref{hora:1} and~\eqref{hora:2} shows that the $R$--module isomorphism~\eqref{reshom} is also a $B_{n-1}(q,r)$--module homomorphism.
\end{proof}
\begin{corollary}\label{resbcor1}
Let $f$ be an integer, $0\le f\le [n/2]$, and $\lambda$ be a
partition of $n-2f$. If $\mu$ is a partition of $n-2f-1$ with $\mu\to\lambda$, then ${N}^\mu$ is the $R$--module freely generated by
\begin{align*}
\big\{m_\lambda T_{d(\mathfrak{s})}T_v+\check{B}_n^\lambda:
\text{$\mathfrak{s}\in\STD_n(\lambda)$, $\SHAPE(\mathfrak{s}|_{n-1})\unrhd\mu$ and
$v\in\mathscr{D}_{f,n-1}$}\big\}.
\end{align*}
\end{corollary}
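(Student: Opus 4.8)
The plan is to argue by induction on $k$, where $\mu=\mu^{(k)}$ is the $k$-th partition in the chain \eqref{chain:0}, showing along the way that the submodules $N^{\mu^{(k)}}$ of $S^\lambda$ form an increasing filtration whose successive quotients are, by Lemma~\ref{bres1}, the cell modules $S^{\mu^{(k)}}$ for $B_{n-1}(q,r)$; the asserted basis is then read off one layer at a time. Write $\mathcal{B}_k$ for the set displayed in the corollary for $\mu^{(k)}$, namely all $m_\lambda T_{d(\mathfrak{s})}T_v+\check{B}_n^\lambda$ with $\mathfrak{s}\in\STD_n(\lambda)$, $\SHAPE(\mathfrak{s}|_{n-1})\unrhd\mu^{(k)}$ and $v\in\mathscr{D}_{f,n-1}$. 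Since $\SHAPE(\mathfrak{s}|_{n-1})$ always lies in $\{\mu^{(1)},\dots,\mu^{(t)}\}$ and $\mu^{(i)}\unrhd\mu^{(j)}$ precisely when $i\le j$, there is a disjoint decomposition $\mathcal{B}_k=\mathcal{B}_{k-1}\sqcup\mathcal{C}_k$, where $\mathcal{B}_0=\varnothing$ and $\mathcal{C}_k$ consists of the elements with $\SHAPE(\mathfrak{s}|_{n-1})=\mu^{(k)}$. As $\mathscr{D}_{f,n-1}\subseteq\mathscr{D}_{f,n}$, each $\mathcal{B}_k$ is a subset of the cellular $R$-basis \eqref{bcell:1} of $S^\lambda$ and hence is $R$-linearly independent, so it is enough to show that $\mathcal{B}_k$ spans $N^{\mu^{(k)}}$.

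Assume inductively that the corollary holds for $\mu^{(1)},\dots,\mu^{(k-1)}$, so that $N^{\mu^{(j)}}$ is free on $\mathcal{B}_j$ for each $j<k$. Because $\mathcal{B}_j\subseteq\mathcal{B}_{k-1}$ for $j<k$ and $\check{N}^{\mu^{(k)}}=\sum_{j<k}N^{\mu^{(j)}}$, it follows that $\check{N}^{\mu^{(k)}}$ equals $N^{\mu^{(k-1)}}$ and is free on $\mathcal{B}_{k-1}$ (to be read as $\check{N}^{\mu^{(1)}}=0$ when $k=1$). On the other hand Lemma~\ref{bres1} gives $\check{N}^{\mu^{(k)}}\subseteq N^{\mu^{(k)}}$ and tells us that $N^{\mu^{(k)}}/\check{N}^{\mu^{(k)}}$ is free with basis the images of the elements $y^\lambda_{\mu^{(k)}}T_{d(\mathfrak{u})}T_w$ for $\mathfrak{u}\in\STD_{n-1}(\mu^{(k)})$ and $w\in\mathscr{D}_{f,n-1}$. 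Once these images are identified with the images of the elements of $\mathcal{C}_k$ in $S^\lambda/\check{N}^{\mu^{(k)}}$, lifting that basis and adjoining the basis $\mathcal{B}_{k-1}$ of $\check{N}^{\mu^{(k)}}$ shows that $N^{\mu^{(k)}}$ is free on $\mathcal{B}_{k-1}\sqcup\mathcal{C}_k=\mathcal{B}_k$, which closes the induction.

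To make the identification, use the bijection $\mathfrak{u}\mapsto\mathfrak{s}$ from $\STD_{n-1}(\mu^{(k)})$ onto $\{\mathfrak{s}\in\STD_n(\lambda):\SHAPE(\mathfrak{s}|_{n-1})=\mu^{(k)}\}$ which inserts the entry $n$ in the unique node of $[\lambda]\setminus[\mu^{(k)}]$ and keeps $\mathfrak{u}$ on the remaining nodes, so that $\mathfrak{s}|_{n-1}=\mathfrak{u}$. Let $\mathfrak{s}_k$ be the tableau with $\mathfrak{s}_k|_{n-1}=\mathfrak{t}^{\mu^{(k)}}$, so that $y^\lambda_{\mu^{(k)}}=m_\lambda T_{d(\mathfrak{s}_k)}+\check{B}_n^\lambda$ by \eqref{ydef:1}. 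Then $d(\mathfrak{s})=d(\mathfrak{s}_k)\,d(\mathfrak{u})$ with $\ell(d(\mathfrak{s}))=\ell(d(\mathfrak{s}_k))+\ell(d(\mathfrak{u}))$ — the length factorisation analogous to the one underlying the isomorphism in Lemma~\ref{hres} — so that $T_{d(\mathfrak{s})}=T_{d(\mathfrak{s}_k)}T_{d(\mathfrak{u})}$ in $B_n(q,r)$, and therefore $y^\lambda_{\mu^{(k)}}T_{d(\mathfrak{u})}T_w = m_\lambda T_{d(\mathfrak{s})}T_w+\check{B}_n^\lambda$, which lies in $\mathcal{C}_k$. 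I expect the step that needs genuine care to be exactly this combinatorial one: pinning down the tableau bijection together with the length-additivity, and so matching the Lemma~\ref{bres1} basis of the quotient with $\mathcal{C}_k$ index for index; the rest is bookkeeping with the filtration. (If one would rather not quote ``$\check{N}^{\mu^{(k)}}\subseteq N^{\mu^{(k)}}$'' directly from Lemma~\ref{bres1}, it can be recovered afterwards by an $R$-rank count, using $\dim_R S^{\mu^{(k)}}=|\STD_{n-1}(\mu^{(k)})|\cdot|\mathscr{D}_{f,n-1}|=|\mathcal{C}_k|$.)
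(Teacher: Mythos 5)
Your argument is correct and is essentially the one the corollary is meant to follow from: induct along the chain $\mu^{(1)}\rhd\cdots\rhd\mu^{(t)}$, use the cellular basis~\eqref{bcell:1} for linear independence, use Lemma~\ref{bres1} for the successive quotients, and verify the tableau--path bijection with the length factorisation $d(\mathfrak{s})=d(\mathfrak{s}_k)\,d(\mathfrak{u})$, $\ell(d(\mathfrak{s}))=\ell(d(\mathfrak{s}_k))+\ell(d(\mathfrak{u}))$. The length-additivity is correct: $d(\mathfrak{s}_k)=s_{a_k}s_{a_k+1}\cdots s_{n-1}$ has its unique right descent at $s_{n-1}$, so it is the minimal-length representative of its coset of $\langle s_i:2f<i<n-1\rangle$, while $d(\mathfrak{u})$ lies in that parabolic; this is exactly what makes $T_{d(\mathfrak{s}_k)}T_{d(\mathfrak{u})}=T_{d(\mathfrak{s})}$ and identifies the Lemma~\ref{bres1} basis of the quotient literally with $\mathcal{C}_k$, not merely up to a correspondence.

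One caveat about the parenthetical alternative. Your main route correctly reads the containment $\check{N}^{\mu^{(k)}}\subseteq N^{\mu^{(k)}}$ off the statement of Lemma~\ref{bres1}, where it is implicit in writing the quotient $N^\mu/\check{N}^\mu$; that is fair. But the suggested fallback --- recovering the containment by an $R$-rank count --- does not work as stated: over $R=\mathbb{Z}[q^{\pm1},r^{\pm1},(q-q^{-1})^{-1}]$, a submodule of a free module having the same rank need not coincide with it. What a rank count (together with $\mathcal{C}_k\subseteq N^{\mu^{(k)}}$ and the inductive hypothesis) does yield cleanly is the identity $N^{\mu^{(k)}}+\check{N}^{\mu^{(k)}}=\text{span}_R(\mathcal{B}_k)$; obtaining $\check{N}^{\mu^{(k)}}\subseteq N^{\mu^{(k)}}$ from there still requires an input beyond pure rank bookkeeping, so it is better to keep that inclusion as something Lemma~\ref{bres1} hands you, as your main argument does.
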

Let $f$ be an integer, $0< f\le [n/2]$, with $\lambda$ a partition
of $n-2f$ having $t$ removable nodes and $(p-t)$ addable nodes,
and suppose that
\begin{align}\label{chain:1}
\mu^{(t+1)}\rhd\mu^{(t+2)}\rhd\cdots\rhd\mu^{(p)}
\end{align}
is the ordering of  $\{\mu:\text{$\mu\to\lambda$ and
$|\mu|>|\lambda|$}\}$ by dominance order on partitions. By the
definition of the dominance order on partitions which we use here, the
list~\eqref{chain:0} can be extended as
\begin{align}\label{chain:2}
\mu^{(1)}\rhd\mu^{(2)}\rhd\dots\rhd\mu^{(t)}\rhd\mu^{(t+1)}\rhd\mu^{(t+2)}\rhd\cdots\rhd\mu^{(p)}.
\end{align}
In the manner of Lemma~\ref{bres1}, we seek to assign to each
partition $\mu^{(k)}$, with $k>t$, in the list~\eqref{chain:1}, a
$B_{n-1}(q,r)$--submodule $N^{\mu^{(k)}}$ of $S^\lambda$, and an
associated generator $y_{\mu^{(k)}}^\lambda+\check{N}^{\mu^{(k)}}$
in $S^\lambda/\check{N}^{\mu^{(k)}}$. To this end, first let
\begin{align}\label{wpdef}
w_p=s_{n-2}s_{n-3}\cdots s_{2f-1}s_{n-1}s_{n-2}\cdots s_{2f}
\end{align}
and write $N^{\mu^{(p)}}$ for the $B_{n-1}(q,r)$--submodule of
$S^\lambda$ generated by the element
\begin{align}\label{otherdef}
y_{\mu^{(p)}}^\lambda=m_\lambda T_{w_p}^{-1} +\check{B}^\lambda_n.
\end{align}

From the defining relations for $B_n(q,r)$, or using the
presentation for $B_n(q,r)$ in terms of tangles given
in~\cite{birwen}, it is readily observed that
$E_{2f-1}T_{w_p}^{-1}=E_{2f-1}T_{w^{-1}_p}$, and consequently that
$m_{\lambda}T_{w_p}^{-1}=m_\lambda T_{w^{-1}_p}$. Since $w_p^{-1}$
is an element of $\mathscr{D}_{f,n}$ with $(2f)w_p^{-1}=n$,
Corollary~\ref{resbcor1} implies that the element $m_\lambda
T_{w_p^{-1}}+\check{B}^\lambda_n$ is contained in the complement
of $N^{\mu^{(t)}}$ in $S^\lambda$. Furthermore, using the relation
$E_iT_{i+1}T_i=T_{i+1}T_iE_{i+1}$ it can be seen that
\begin{align*}
E_{2f-1}T_{w_p^{-1}}&=E_{2f-1}T_{2f}T_{2f+1}\cdots T_{n-2}T_{n-1}
T_{2f-1}T_{2f}
\cdots T_{n-3}T_{n-2}\\
&=T_{2f}T_{2f-1}T_{2f+1}T_{2f}\cdots T_{n-2}T_{n-3}
E_{n-2}T_{n-1}T_{n-2},
\end{align*}
whence, if $\mathfrak{s}\in\STD_{n}(\lambda)$,
\begin{equation}\label{horse}
\begin{split}
m_\lambda T_{d(\mathfrak{s})} T_{w_p}^{-1}&=m_\lambda
T_{d(\mathfrak{s})} T_{w_p^{-1}}=E_1E_3\cdots
E_{2f-3}E_{2f-1}x_\lambda T_{d(\mathfrak{s})}
T_{w_p^{-1}}\\
&=E_1E_3\cdots E_{2f-3}x_\lambda T_{d(\mathfrak{s})}
T_{v}E_{n-2}T_{n-1}T_{n-2},
\end{split}
\end{equation}
where $v=w_{p}^{-1}s_{n-2}s_{n-1}$ lies in $\mathscr{D}_{f,n-1}$.
From the defining relations of $B_n(q,r)$,
\begin{align*}
E_{n-2}T_{n-1}T_{n-2}E_{n-2}=E_{n-2},
\end{align*}
and, multiplying both sides of~\eqref{horse} on the right by the
element $E_{n-2}$,
\begin{align*}
m_\lambda T_{d(\mathfrak{s})} T_{w_p^{-1}} E_{n-2}= m_\lambda
T_{d(\mathfrak{s})} T_v,&&\text{where $v=w_p^{-1}s_{n-2}s_{n-1}$.}
\end{align*}
Since $v\in\mathscr{D}_{f,n-1}$, Corollary~\ref{resbcor1}  implies a
strict inclusion $N^{\mu^{(t)}}\subseteq N^{\mu^{(p)}}$ of
$B_{n-1}(q,r)$--modules.

Recall that if $\lambda$ is a partition of $n-2f$ and
$\mathfrak{s}\in\STD_{n}(\lambda)$, then $\hat{\mathfrak{s}}$ is
defined as the standard tableau obtained after relabelling the
entries of $\mathfrak{s}$ by $i\mapsto i-2f$ and $d(\mathfrak{s})$
is the permutation in $\langle s_i:2f<i<n\rangle$ defined by the
condition that $\mathfrak{s}=\mathfrak{t}^\lambda d(\mathfrak{s})$.
For the lemmas following, we also recall the definition of the
permutation $w_p$ in~\eqref{wpdef} above.

\begin{lemma}\label{big:0}
Let $f$ be an integer, $0< f\le[n/2]$, and $\lambda$ be a partition
of $n-2f$. Suppose that $\mu^{(p)}$ is minimal in
$\{\nu:\text{$\nu\to\lambda$ and $|\nu|>|\lambda|$}\}$ with
respect to the dominance order on partitions, let $\mu$ be a partition of $n-2f+1$ with $\mu\unrhd\mu^{(p)}$ and $\mathfrak{s}\in\STD_{n-1}(\mu)$ be a tableau such that $\mu^{(p)}(\hat{\mathfrak{s}})\in\mathcal{T}_0(\mu,\mu^{(p)})$. If
$\tau=\SHAPE(\mathfrak{s}|_{n-2})\rhd\lambda$, then
\begin{align*}
E_{2f-1}T_{w_p}^{-1}T_{d(\mathfrak{s})}^*m_\mu= E_1E_3\cdots
E_{2f-1}T_{w_p}^{-1}T_{d(\mathfrak{s})}^*x_\mu\equiv
0\mod{\check{B}^\lambda_n}.
\end{align*}
\end{lemma}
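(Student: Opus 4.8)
The plan is to prove the equality and the congruence separately. The equality
\[
E_{2f-1}T_{w_p}^{-1}T_{d(\mathfrak{s})}^*m_\mu=E_1E_3\cdots E_{2f-1}T_{w_p}^{-1}T_{d(\mathfrak{s})}^*x_\mu
\]
is a routine commutation: $\mu$ is a partition of $n-2f+1=(n-1)-2(f-1)$, so $m_\mu=E_1E_3\cdots E_{2f-3}x_\mu$; moreover $T_{d(\mathfrak{s})}^*$ and $x_\mu$ are words in $T_{2f-1},\dots,T_{n-2}$ and $T_{w_p}^{-1}$ is a word in $T_{2f-1},\dots,T_{n-1}$, while each of $E_1,E_3,\dots,E_{2f-3}$ commutes with every $T_j$ having $j\ge 2f-1$ and with $E_{2f-1}$. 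Hence $E_1E_3\cdots E_{2f-3}$ may be moved to the far left, past $T_{d(\mathfrak{s})}^*$, past $T_{w_p}^{-1}$ and past $E_{2f-1}$, which yields the identity (for $f=1$ it is trivial, as then $m_\mu=x_\mu$).

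For the congruence, recall $B_n^{f+1}\subseteq\check{B}_n^\lambda$ and $B_n^\nu\subseteq\check{B}_n^\lambda$ for every $\nu\rhd\lambda$; it therefore suffices to show that, modulo $B_n^{f+1}$, the element $E_1E_3\cdots E_{2f-1}T_{w_p}^{-1}T_{d(\mathfrak{s})}^*x_\mu$ can be written (using Lemma~\ref{ecor}) as an $R$-linear combination of cellular basis elements of $B_n(q,r)$, in the sense of Theorem~\ref{saruthm}, all of shape $\nu$ with $\nu\unrhd\tau$; as $\tau\rhd\lambda$, this gives the claim. To compute this combination I would start from the factorisation obtained just before the statement,
\[
E_{2f-1}T_{w_p}^{-1}=E_{2f-1}T_{w_p^{-1}}=T_{2f}T_{2f-1}T_{2f+1}T_{2f}\cdots T_{n-2}T_{n-3}\,E_{n-2}T_{n-1}T_{n-2},
\]
together with $E_{n-2}T_{n-1}T_{n-2}=E_{n-2}E_{n-1}$, so the element becomes $E_1E_3\cdots E_{2f-3}\,T_{2f}T_{2f-1}\cdots T_{n-3}\,E_{n-2}E_{n-1}\,T_{d(\mathfrak{s})}^*x_\mu$. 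Now $T_{d(\mathfrak{s})}^*x_\mu$ lies in the subalgebra generated by $T_{2f-1},\dots,T_{n-2}$, so via the isomorphism of Lemma~\ref{bquot} and the transition between the basis $\{X_w\}$ and the Murphy basis of the relevant Iwahori--Hecke algebra it corresponds to a Murphy basis element attached to $\hat{\mathfrak{s}}$; the hypotheses that $\mu^{(p)}$ is the least partition obtained from $\lambda$ by adjoining a node and that $\mu^{(p)}(\hat{\mathfrak{s}})$ is semistandard of type $\mu^{(p)}$ then place that element in the $\mu^{(p)}$-component of a permutation module, to which Lemma~\ref{permod:2} applies. Since $\SHAPE(\mathfrak{s}|_{n-2})=\tau$, the trailing factors $E_{n-2}$ and $E_{n-1}$ are absorbed exactly as in the identity $m_\lambda T_{d(\mathfrak{s})}T_{w_p^{-1}}E_{n-2}=m_\lambda T_{d(\mathfrak{s})}T_v$ with $v\in\mathscr{D}_{f,n-1}$ established just before the statement, while Lemma~\ref{gomi} kills any residual summand carrying some $E_i$ with $2f<i<n$; what survives is a combination of cellular basis elements all of shape $\unrhd\tau$.

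The step I expect to be the main obstacle is this last one: one must verify that the combined effect of the permutation $d(\mathfrak{s})$, the word $w_p^{-1}$, the symmetriser $x_\mu$ and the idempotents $E_i$ cannot lower the shape below $\tau$, i.e.\ never puts a nonzero coefficient on a cellular basis element whose shape fails to dominate $\lambda$. It is precisely here that the minimality of $\mu^{(p)}$ and the semistandardness of $\mu^{(p)}(\hat{\mathfrak{s}})$ are needed, and carrying it out amounts to a Garnir-type coset computation inside $\mathscr{H}_{n-2f+1}(q^2)$.
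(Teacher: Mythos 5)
Your opening reduction is fine: the displayed equality is indeed the routine commutation of $E_1E_3\cdots E_{2f-3}$ past $T_{w_p}^{-1}$, $T_{d(\mathfrak{s})}^*$ and $E_{2f-1}$, and you are right that the congruence is the substance of the lemma. But your treatment of the congruence has a genuine gap, which you yourself flag at the end: you reduce the problem to a ``Garnir-type coset computation'' and then stop, whereas that computation is precisely the lemma. In fact, the intermediate reduction you set up is also on shaky ground. After factoring $E_{2f-1}T_{w_p^{-1}}$ and replacing $E_{n-2}T_{n-1}T_{n-2}$ by $E_{n-2}E_{n-1}$, you are left with $E_{n-2}E_{n-1}$ sitting in the middle, \emph{between} the $T$-word on its left and $T_{d(\mathfrak{s})}^*x_\mu$ on its right. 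Neither $E_{n-2}$ nor $E_{n-1}$ commutes with a generic element of $\langle T_{2f-1},\dots,T_{n-2}\rangle$, so the ``absorption'' you invoke by analogy with the identity $m_\lambda T_{d(\mathfrak{s})}T_{w_p^{-1}}E_{n-2}=m_\lambda T_{d(\mathfrak{s})}T_v$ does not transfer: there the $E$'s are peeled off the \emph{right} end. Moreover, the element $T_{d(\hat{\mathfrak{s}})}^*c_\mu=c_{\hat{\mathfrak{s}}\hat{\mathfrak{t}}^\mu}$ is a single Murphy basis element of shape $\mu$; the hypothesis $\mu^{(p)}(\hat{\mathfrak{s}})\in\mathcal{T}_0(\mu,\mu^{(p)})$ does \emph{not} by itself put it inside the permutation module $M^{\mu^{(p)}}$ (the elements $c_{\mathsf{S}\mathfrak{t}}$ are \emph{sums} of Murphy terms, not individual ones), so the appeal to Lemma~\ref{permod:2} at this point does not do what you want.

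What the paper actually does is bypass the permutation-module machinery entirely. Using the minimality of $\mu^{(p)}$ and the choice of $k=\min\{i: 2f-1\le i\le n-2,\ (n-1)d(\mathfrak{s})^{-1}\le (i)d(\mathfrak{s})^{-1}\}$, one refactors $T_{w_p}^{-1}T_{d(\mathfrak{s})}^*=T_u^{-1}T_v^*$ with $u=s_k\cdots s_{n-2}w_p$ and $v=d(\mathfrak{s})s_{n-2}\cdots s_k$. The braid identity $T_i^{-1}T_{i+1}^{-1}T_i=T_{i+1}T_i^{-1}T_{i+1}^{-1}$ then produces the commutation rule $T_u^{-1}T_i=T_{i+2}T_u^{-1}$ for $2f-1\le i<k$ and $T_u^{-1}T_i=T_{i+1}T_u^{-1}$ for $k<i<n$, which lets one push $T_u^{-1}$ through the row symmetriser $x_\mu$ (row by row, splitting the row containing $n-1$ via a parabolic coset decomposition), converting it into $x_\tau\,T_u^{-1}\sum_{v\in\mathscr{D}}q^{\ell(v)}T_v$. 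The whole expression then reads $T_{v'}^*E_1E_3\cdots E_{2f-1}x_\tau(\cdots)$, which lies in $B_n^\tau\subseteq\check{B}_n^\lambda$ since $\tau\rhd\lambda$. This is exactly the step you label as the main obstacle; without carrying it out, the proof is incomplete, and the route you sketch toward it (via Lemma~\ref{bquot}, the Murphy basis, and Lemma~\ref{permod:2}) does not straightforwardly close the gap.
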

\begin{proof}
Recall that $x_{\mu}=\sum_{w\in\mathfrak{S}_\mu}q^{\ell(w)}T_w$ where
$\mathfrak{S}_\mu$ is the row stabiliser of
$\mathfrak{t}^\mu\in\STD_{n-1}(\mu)$ in $\langle s_i:2f-1\le
i<n-1\rangle$. Let
\begin{align*}
k=\min\{i:\text{$2f-1\le i\le n-2$ and
$(n-1)d(\mathfrak{s})^{-1}\le(i)d(\mathfrak{s})^{-1}$}\},
\end{align*}
so that
\begin{align*}
\ell(d(\mathfrak{s})s_{n-2}s_{n-3}\cdots
s_{k})=\ell(d(\mathfrak{s}))-n+k+1.
\end{align*}
If we write $v=d(\mathfrak{s})s_{n-2}s_{n-3}\cdots s_{k}$ and
$u=s_{k}s_{k+1}\cdots s_{n-2}w_p$, then
\begin{align}
\begin{split}\label{tango}
&E_{2f-1}T_{w_p}^{-1}T_{d(\mathfrak{s})}^*m_\mu=
E_{2f-1}T_{w_p}^{-1}E_1E_3\cdots
E_{2f-3}T_{d(\mathfrak{s})}^*x_\mu\\
&\quad=E_1E_3\cdots E_{2f-1}T_{w_p}^{-1}T_{d(\mathfrak{s})}^*x_\mu=E_1E_3\cdots E_{2f-1}T_{u}^{-1}T_{v}^*x_\mu.
\end{split}
\end{align}
Since $v$ has a reduced expression $v=s_{i_1}s_{i_2}\cdots
s_{i_l}$ in the subgroup $\langle s_i:2f-1\le i<n-2\rangle$,
we define $v'=s_{i_1+2}s_{i_2+2}\cdots s_{i_l+2}$ and, using the
braid relation
$T_i^{-1}T_{i+1}^{-1}T_i=T_{i+1}T_{i}^{-1}T_{i+1}^{-1}$, obtain
\begin{align}\label{braidc:2}
T_u^{-1}T_i=
\begin{cases}
T_{i+2}T_u^{-1}&\text{if $2f-1\le i<k$;} \\
T_{i+1}T_u^{-1}&\text{if $k< i<n$,}
\end{cases}
\end{align}
which allows us to rewrite~\eqref{tango} as
\begin{align}\label{tango:2}
E_{2f-1}T_{w_p}^{-1}T_{d(\mathfrak{s})}^*m_\mu=E_1E_3\cdots
E_{2f-1}T_{v'}^*T_{u}^{-1}x_\mu.
\end{align}

Now, to each row $i$ of $\mathfrak{t}^{\mu}\in\STD_{n-1}(\mu)$, associate the subgroup
\begin{align*}
\mathfrak{R}_{\mathfrak{t}^\mu,i}=\langle
s_{i'}:\text{$i',i'+1$ appear in row $i$ of
$\mathfrak{t}^\mu$} \rangle
\end{align*}
and define $\mathfrak{R}_{\mathfrak{t}^\tau,i}$ analogously for $\mathfrak{t}^\tau\in\STD_{n}(\tau)$. Let us suppose that $n-1$ appears as an entry in row $j$ of $\mathfrak{s}$; if $i\ne j$, then by~\eqref{braidc:2}
\begin{align}\label{gruffle}
\sum_{w\in\mathfrak{R}_{\mathfrak{t}^\mu,i}}q^{\ell(w)}T_u^{-1}T_w=
\sum_{w\in\mathfrak{R}_{\mathfrak{t}^\tau,i}}q^{\ell(w)}T_wT_u^{-1}.
\end{align}
On the other hand, within $\mathfrak{R}_{\mathfrak{t}^\mu,j}$
take the parabolic subgroup \begin{align*}
\mathfrak{P}_{\mathfrak{t}^\mu,j}=\langle
w\in\mathfrak{R}_{\mathfrak{t}^\mu,j}:(k)w=k\rangle
\end{align*}
and, noting that the set of distinguished right coset
representatives for $\mathfrak{P}_{\mathfrak{t}^\mu,j}$ in
$\mathfrak{R}_{\mathfrak{t}^\mu,j}$ (Proposition~3.3 of~\cite{mathas:ih}) is
\begin{align*}
\mathscr{D}=\{v_i:\text{$v_0=1$ and $v_i=v_{i-1}s_{k-i}$ for
$0<i\le\tau_j$}\},
\end{align*}
we write
\begin{align*}
\sum_{w\in\mathfrak{R}_{\mathfrak{t}^\mu,j}}q^{\ell(w)}T_{u}^{-1}T_w&=
\sum_{w\in\mathfrak{P}_{\mathfrak{t}^\mu,j}}q^{\ell(w)}T_{u}^{-1}T_w
\sum_{v\in\mathscr{D}}  q^{\ell(v)}T_v.
\end{align*}
Using the last expression and~\eqref{braidc:2}, we obtain
\begin{align*}
T_{u}^{-1}\sum_{w\in\mathfrak{P}_{\mathfrak{t}^\mu,j}}
q^{\ell(w)}T_w=\sum_{w\in\mathfrak{R}_{\mathfrak{t}^\tau,j}}q^{\ell(w)}T_wT_{u}^{-1},
\end{align*}
which, together with~\eqref{gruffle}, implies that
\begin{align*}
\begin{split}
T_{v'}^*T_{u}^{-1} x_{\mu}&=T_{v'}^*
\sum_{i\ge1}\, \sum_{w\in
\mathfrak{R}_{\mathfrak{t}^\tau,i}}q^{\ell(w)}T_wT_{u}^{-1}\,
\sum_{v\in\mathscr{D}} q^{\ell(v)}T_{v}\\
&=T_{v'}^*x_{\tau}T_{u}^{-1}\sum_{v\in\mathscr{D}}
q^{\ell(v)}T_v.
\end{split}
\end{align*}
Since $v'\in\langle s_i : 2f< i<n \rangle$, multiplying both sides of the last expression by $E_1E_3\cdots E_{2f-1}$ on the left and referring to~\eqref{tango:2}, we obtain
\begin{align*}
E_{2f-1}T_{w_p}^{-1}T_{d(\mathfrak{s})}^*m_\mu= T_{v'}^*E_1E_3\cdots E_{2f-1}x_{\tau}T_{u}^{-1}\sum_{v\in\mathscr{D}}
q^{\ell(v)}T_v.
\end{align*}
As the term on the right hand side of the above expression lies in $\check{B}_n^\lambda$, the result now follows.
\end{proof}

The next example illustrates Lemma~\ref{big:0}.
\begin{example}\label{btenex:2}
In parts (a) and (b) below, let $n=10$, $f=2$ and $\lambda=(3,2,1)$. Since $\lambda$ has three removable nodes and four addable nodes, the partitions
$\mu^{(i)}$ with $\mu^{(i)}\to\lambda$ and $|\mu^{(i)}|>|\lambda|$
are
\begin{align*}
\mu^{(4)}=(4,2,1)\rhd\mu^{(5)}=(3,3,1)\rhd\mu^{(6)}=(3,2,2)\rhd\mu^{(7)}=(3,2,1,1).
\end{align*}
(a) Taking $p=7$, we have
$w_p=s_8s_7s_6s_5s_4s_3s_9s_8s_7s_6s_5s_4$,
\begin{align*}
\mathfrak{t}^{\lambda}=\text{\tiny\Yvcentermath1$\young(567,89,\dten)$}\,
&&\text{and}&&
\mathfrak{t}^{\mu^{(p)}}=\text{\tiny\Yvcentermath1$\young(345,67,8,9)$}\,,
\end{align*}
so that $x_{\mu^{(p)}}=(1+qT_3)(1+qT_4+q^2T_4T_3)(1+qT_6)$. Using
the braid relation
$T_j^{-1}T_{j+1}^{-1}T_j=T_{j+1}T_j^{-1}T_{j+1}^{-1}$, it is
verified that
\begin{align*}
E_{3}T_{w_p}^{-1}m_{\mu^{(p)}}= m_\lambda T_{w_p}^{-1}.
\end{align*}

(\emph{b}) Let $\mu=(4,3)$ and
$\mathfrak{s}=\text{\tiny\Yvcentermath1$\young(3459,678)$}$\, so
$d(\mathfrak{s})=s_6s_7s_8$. Then
\begin{align*}
\hat{\mathfrak{s}}=\text{\tiny\Yvcentermath1$\young(1237,456)$}&&
\text{and}&&\mu^{(p)}(\hat{\mathfrak{s}})=\text{\tiny\Yvcentermath1$\young(1114,223)$}\,,
\end{align*}
as shown in Example~\ref{sstabex:2}. Now,
\begin{align*}
6=\min\{i\,|\,\text{$2f-1\le i\le n-2$ and
$(n-1)d(\mathfrak{s})^{-1}\le(i)d(\mathfrak{s})^{-1}$}\},
\end{align*}
hence, writing
$u=s_5s_4s_3s_9s_8s_7s_6s_5s_4$, one obtains
\begin{align*}
E_3T_{w_p}^{-1}T_{d(\mathfrak{s})}^*m_\mu=E_3T^{-1}_{u}m_\mu=E_3T^{-1}_{u}E_1x_\mu
\end{align*}
where
\begin{multline*}
x_\mu=(1+qT_3)(1+qT_4+q^2T_4T_3)(1+qT_5+q^2T_5T_4+q^3T_5T_4T_3)\\
\times(1+qT_7)(1+qT_8+q^2T_8T_7).
\end{multline*}
Using the braid relation,
\begin{align*}
T_{u}^{-1}x_\mu=x_\tau T^{-1}_{u}(1+qT_5+q^2T_5T_4+q^3T_5T_4T_3),
\end{align*}
where $\mathfrak{t}^{\tau}=
\text{\tiny\Yvcentermath1$\young(567,89\dten)$}\,\,$ and
\begin{align*}
x_\tau=(1+qT_5)(1+qT_6+q^2T_6T_5)(1+qT_8)(1+qT_9+q^2T_9T_8).
\end{align*}
As $\tau\rhd\lambda$, it follows that
\begin{align*}
&E_3T_{w_p}^{-1}T_{d(\mathfrak{s})}^*m_\mu=E_1E_3x_\tau
T_{u}^{-1}(1+qT_5+q^2T_5T_4+q^3T_5T_4T_3)\\
&\quad=m_\tau
T_{u}^{-1}(1+qT_5+q^2T_5T_4+q^3T_5T_4T_3)\equiv0\mod{\check{B}^\lambda_n}.
\end{align*}
\end{example}
\begin{corollary}\label{quant:0}
Let $f$ be an integer $0<f\le[n/2]$ and $\lambda$ be a partition of $n-2f$ with $(p-t)$ addable nodes. Suppose that $\mu^{(1)}\unrhd\mu^{(2)}\unrhd\cdots\unrhd\mu^{(p)}$ is the ordering of $\{\mu :\mu\to \lambda\}$ by the dominance order on partitions.
If $\mu$ is a partition of $n-2f+1$ such that $\mu\rhd\mu^{(t+1)}$, and $\mathsf{S}\in\mathcal{T}_0(\mu,\mu^{(p)})$, then 
\begin{align*}
E_{2f-1}T_{w_p}^{-1}m_{\mathsf{S}\mathfrak{t}}\equiv 0 \mod{B_n^\lambda},&&\text{for all $\mathfrak{t}\in\STD_{n-1}(\mu)$.}
\end{align*}
\end{corollary}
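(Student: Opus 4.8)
The plan is to expand $m_{\mathsf{S}\mathfrak{t}}$ by its definition and then invoke Lemma~\ref{big:0} one summand at a time. Here $m_{\mathsf{S}\mathfrak{t}}$ is the element of Lemma~\ref{permod:2} read inside $B_{n-1}(q,r)$, with $n-1,\,f-1,\,\mu,\,\mu^{(p)}$ playing the roles of $n,\,f,\,\lambda,\,\mu$ there (note $\mu\unrhd\mu^{(p)}$ since $\mu\rhd\mu^{(t+1)}\unrhd\mu^{(p)}$), so that
\begin{align*}
m_{\mathsf{S}\mathfrak{t}}=\sum_{\substack{\mathfrak{s}\in\STD_{n-1}(\mu)\\ \mu^{(p)}(\hat{\mathfrak{s}})=\mathsf{S}}}q^{\ell(d(\mathfrak{s}))}T_{d(\mathfrak{s})}^*m_\mu T_{d(\mathfrak{t})},
\end{align*}
and hence
\begin{align*}
E_{2f-1}T_{w_p}^{-1}m_{\mathsf{S}\mathfrak{t}}=\sum_{\substack{\mathfrak{s}\in\STD_{n-1}(\mu)\\ \mu^{(p)}(\hat{\mathfrak{s}})=\mathsf{S}}}q^{\ell(d(\mathfrak{s}))}\bigl(E_{2f-1}T_{w_p}^{-1}T_{d(\mathfrak{s})}^*m_\mu\bigr)T_{d(\mathfrak{t})}.
\end{align*}
Each $\mathfrak{s}$ in the sum satisfies $\mu^{(p)}(\hat{\mathfrak{s}})=\mathsf{S}\in\mathcal{T}_0(\mu,\mu^{(p)})$, so Lemma~\ref{big:0} will apply to the bracketed factor $E_{2f-1}T_{w_p}^{-1}T_{d(\mathfrak{s})}^*m_\mu$ as soon as one checks that $\tau:=\SHAPE(\mathfrak{s}|_{n-2})\rhd\lambda$; establishing this shape inequality is the one step I expect to carry the content.

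I would deduce $\tau\rhd\lambda$ from the hypothesis $\mu\rhd\mu^{(t+1)}$ alone. The tableau $\mathfrak{s}|_{n-2}$ is obtained from $\mathfrak{s}$ by deleting its largest entry $n-1$, which occupies a removable node of $\mu$; thus $\tau$ is a one-node deletion of $\mu$, so $|\tau|=|\mu|-1=n-2f=|\lambda|$ and $\mu$ is a one-node extension of $\tau$. First, $\tau\neq\lambda$: were $\tau=\lambda$, then $\mu$ would be a one-node extension of $\lambda$, hence $\mu\in\{\mu^{(t+1)},\dots,\mu^{(p)}\}$ and $\mu\unlhd\mu^{(t+1)}$, contradicting $\mu\rhd\mu^{(t+1)}$. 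Second, $\tau\unrhd\lambda$: the dominance-largest one-node extension of $\lambda$ is $(\lambda_1+1,\lambda_2,\lambda_3,\dots)$, so $\mu^{(t+1)}=(\lambda_1+1,\lambda_2,\lambda_3,\dots)$ and $\sum_{i\le k}\mu^{(t+1)}_i=1+\sum_{i\le k}\lambda_i$ for every $k\ge1$; combined with $\mu\rhd\mu^{(t+1)}$ this gives $\sum_{i\le k}\mu_i\ge 1+\sum_{i\le k}\lambda_i$, and since a one-node deletion lowers each partial sum by at most one, $\sum_{i\le k}\tau_i\ge\sum_{i\le k}\lambda_i$ for all $k$, i.e.\ $\tau\unrhd\lambda$. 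Together, $\tau\rhd\lambda$ for every $\mathfrak{s}$ occurring in the sum.

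To finish, Lemma~\ref{big:0} gives $E_{2f-1}T_{w_p}^{-1}T_{d(\mathfrak{s})}^*m_\mu\equiv 0 \mod \check{B}_n^\lambda$ for each such $\mathfrak{s}$; since $\check{B}_n^\lambda$ is a two-sided ideal, the products $\bigl(E_{2f-1}T_{w_p}^{-1}T_{d(\mathfrak{s})}^*m_\mu\bigr)T_{d(\mathfrak{t})}$, and hence their $R$-linear combination $E_{2f-1}T_{w_p}^{-1}m_{\mathsf{S}\mathfrak{t}}$, lie in $\check{B}_n^\lambda$. Because $\check{B}_n^\lambda=\sum_{\nu\rhd\lambda}B_n^\nu\subseteq B_n^\lambda$, which is immediate from the cellular basis in Theorem~\ref{saruthm}, this yields $E_{2f-1}T_{w_p}^{-1}m_{\mathsf{S}\mathfrak{t}}\equiv 0 \mod B_n^\lambda$, as claimed. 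If no $\mathfrak{s}$ has $\mu^{(p)}(\hat{\mathfrak{s}})=\mathsf{S}$ then $m_{\mathsf{S}\mathfrak{t}}=0$ and there is nothing to prove. Apart from the shape inequality $\tau\rhd\lambda$, the argument is pure bookkeeping with the definitions and with Lemma~\ref{big:0}.
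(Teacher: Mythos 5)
Your proof is correct and takes essentially the same route as the paper: expand $m_{\mathsf{S}\mathfrak{t}}$ via its definition, observe that each summand has the factor $E_{2f-1}T_{w_p}^{-1}T_{d(\mathfrak{s})}^*m_\mu$, and invoke Lemma~\ref{big:0}. Where you differ is that you supply a complete, self-contained combinatorial argument for the key inequality $\tau=\SHAPE(\mathfrak{s}|_{n-2})\rhd\lambda$, using only the hypothesis $\mu\rhd\mu^{(t+1)}$, the observation that $\mu^{(t+1)}=(\lambda_1+1,\lambda_2,\dots)$, and the fact that a one-node deletion lowers each partial sum by at most one. The paper's own proof is terser: it asserts that the $p-t$ tableaux $\mathfrak{s}$ with $\SHAPE(\mathfrak{s}|_{n-2})=\lambda$ and $\mu^{(p)}(\mathfrak{s})$ semistandard all have $\SHAPE(\mathfrak{s})\in\{\mu^{(t+1)},\dots,\mu^{(p)}\}$ (pointing forward to the explicit description in~\eqref{skdef}), from which it concludes $\tau\rhd\lambda$ when $\mu\rhd\mu^{(t+1)}$. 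Your direct argument is cleaner and makes the logical dependence on $\mu\rhd\mu^{(t+1)}$ explicit; it also shows the conclusion holds modulo $\check{B}_n^\lambda$, which is the form actually used in the proof of Lemma~\ref{big:1}. One small caveat: the containment $\check{B}_n^\lambda\subseteq B_n^\lambda$ is not quite \emph{immediate} from the cellular basis alone — it reflects the quasi-hereditary structure of $B_n(q,r)$ (established by Xi and implicit throughout the paper) — but it is true, and in any case your argument establishes the result modulo the smaller ideal $\check{B}_n^\lambda$, which is what matters downstream.
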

\begin{proof}
There are $p-t$ standard tableaux $\mathfrak{s}$ labelled by the integers $\{2f-1,2f,\dots,n-1\}$ which satisfy the conditions (i) $\SHAPE(\mathfrak{s}|_{n-2})=\lambda$, and (ii) $\mu^{(p)}(\mathfrak{s})\in\mathcal{T}_0(\nu,\mu^{(p)})$, for some partition $\nu$ of $n-2f+1$; each such tableau $\mathfrak{s}$ additionally satisfies the condition that $\SHAPE(\mathfrak{s})=\mu^{(i)}$ for some $i$ with $t<i\le p$ (the precise form that any such $d(\mathfrak{s})$ must take is given in~\eqref{skdef} below). Thus if $\mu$ is as given in the statement of the corollary and $\mathfrak{s}\in\STD_{n-1}(\mu)$ satisfies $\mu^{(p)}(\hat{\mathfrak{s}})\in\mathcal{T}_0(\mu,\mu^{(p)})$, then $\tau=\SHAPE(\mathfrak{s}|_{n-2})\rhd\lambda$, so by Lemma~\ref{big:0}, 
\begin{align*}
E_{2f-1}T_{w_p}^{-1}T_{d(\mathfrak{s})}^* m_\mu\equiv 0&&\mod B^\lambda_{n}.
\end{align*}
Using the definition of $m_{\mathsf{S}\mathfrak{t}}$, the result now follows.
\end{proof}

\begin{lemma}\label{big:2}
Let $f$ be an integer, $0< f\le[n/2]$, and $\lambda\vdash n-2f$, $\mu\vdash n-2f+1$ be partitions
such that $\mu\to\lambda$. If $\mu^{(p)}$ is minimal with respect to dominance order among
$\{\nu:\text{$\nu\to\lambda$ and $|\nu|>|\lambda|$}\}$, and
$\mathfrak{s}\in\STD_{n-1}(\mu)$ is a tableau such that
$\mu^{(p)}(\hat{\mathfrak{s}})\in\mathcal{T}_0(\mu,\mu^{(p)})$, then there exist
$a_{(\mathfrak{t},w)}\in R$, for
$(\mathfrak{t},w)\in\mathcal{I}_{n}(\lambda)$, such that
\begin{align*}
E_{2f-1}T_{w_p}^{-1}T_{d(\mathfrak{s})}^*
m_{\mu}\equiv\sum_{(\mathfrak{t},w)\in\mathcal{I}_{n}(\lambda)}a_{(\mathfrak{t},w)}m_\lambda
T_{d(\mathfrak{t})}T_w\mod{\check{B}_n^\lambda}.
\end{align*}
\end{lemma}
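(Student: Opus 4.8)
The plan is to argue by cases on $\tau:=\SHAPE(\mathfrak{s}|_{n-2})$, after first observing that $\tau\unrhd\lambda$. Since $\mu^{(p)}$ is minimal in $\{\nu:\nu\to\lambda,\ |\nu|>|\lambda|\}$ under dominance, the node of $[\mu^{(p)}]$ not in $[\lambda]$ sits at the foot of the diagram, so $\mu^{(p)}=(\lambda_1,\dots,\lambda_h,1)$, where $h$ is the number of nonzero parts of $\lambda$; thus the last row of $\mathfrak{t}^{\mu^{(p)}}$ is the only one containing its largest entry. Consequently the unique node of $\mu^{(p)}(\hat{\mathfrak{s}})$ bearing the value $h+1$ is the node $y$ at which $\hat{\mathfrak{s}}$ attains its maximum, equivalently the (necessarily removable) node of $\mathfrak{s}$ occupied by $n-1$; so $[\mu]\setminus\{y\}=\tau$. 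Deleting the corner $y$ from the semistandard tableau $\mu^{(p)}(\hat{\mathfrak{s}})$ leaves a semistandard $\tau$-tableau of type $\lambda$, and hence $\tau\unrhd\lambda$. If $\tau\rhd\lambda$ then Lemma~\ref{big:0} applies as it stands and gives $E_{2f-1}T_{w_p}^{-1}T_{d(\mathfrak{s})}^*m_\mu\equiv 0\bmod\check{B}_n^\lambda$, so the statement holds with every $a_{(\mathfrak{t},w)}=0$; it remains to treat $\tau=\lambda$.

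So suppose $\tau=\lambda$ (whence $y=a$, the added node $\{a\}=[\mu]\setminus[\lambda]$). As $\mathsf{T}^\lambda$ is the unique semistandard $\lambda$-tableau of type $\lambda$, the tableau obtained by deleting $a$ from $\mu^{(p)}(\hat{\mathfrak{s}})$ equals $\mathsf{T}^\lambda$, which forces $\hat{\mathfrak{s}}$ to agree with the superstandard tableau on all nodes of $[\lambda]$; thus $\mathfrak{s}$ arises from $\mathfrak{t}^\mu$ by cycling the entry $m:=\mathfrak{t}^\mu(a)$ up to $n-1$, so that $d(\mathfrak{s})=s_m s_{m+1}\cdots s_{n-2}$. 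I would then adapt the computation in the proof of Lemma~\ref{big:0}, which up to its final sentence establishes, without invoking $\tau\rhd\lambda$, the identity
\begin{align*}
E_{2f-1}T_{w_p}^{-1}T_{d(\mathfrak{s})}^*m_\mu=T_{v'}^*\,m_\tau\,T_u^{-1}\sum_{v\in\mathscr{D}}q^{\ell(v)}T_v ,
\end{align*}
with $u=s_k\cdots s_{n-2}w_p$ and $k$, $v'$, $\mathscr{D}$ as in that proof. For the above $d(\mathfrak{s})$ one checks $k=m$, so the auxiliary permutation $v=d(\mathfrak{s})s_{n-2}\cdots s_k=(s_m\cdots s_{n-2})(s_{n-2}\cdots s_m)$ telescopes to the identity and $v'=1$; since also $m_\tau=m_\lambda$, the displayed identity reduces to $E_{2f-1}T_{w_p}^{-1}T_{d(\mathfrak{s})}^*m_\mu=m_\lambda b$ with $b:=T_u^{-1}\sum_{v\in\mathscr{D}}q^{\ell(v)}T_v\in B_n(q,r)$. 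Theorem~\ref{saruthm}(2), applied with trivial left index, then supplies $a_{(\mathfrak{t},w)}\in R$ for $(\mathfrak{t},w)\in\mathcal{I}_n(\lambda)$ with $m_\lambda b\equiv\sum_{(\mathfrak{t},w)}a_{(\mathfrak{t},w)}m_\lambda T_{d(\mathfrak{t})}T_w\bmod\check{B}_n^\lambda$, which is the assertion.

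The combinatorial reduction to the case $\tau=\lambda$ is routine; the substance of the argument, and the step I expect to be the main obstacle, is that case. There the work lies in reading off the precise form of $d(\mathfrak{s})$ from the semistandardness of $\mu^{(p)}(\hat{\mathfrak{s}})$, in verifying that this makes the leftover factor $T_{v'}^*$ produced by the Lemma~\ref{big:0} manipulation trivial (the telescoping $v=1$), and in keeping straight the shift of indices by $2$ between the $B_{n-1}$- and $B_n$-levels on which that manipulation rests. Once $T_{v'}^*$ is gone, the element is manifestly a left multiple of $m_\lambda$ and Theorem~\ref{saruthm}(2) finishes the proof.
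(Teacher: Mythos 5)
Your proof is correct and follows essentially the same route as the paper's: identify $d(\mathfrak{s})=s_m s_{m+1}\cdots s_{n-2}$, rerun the manipulation from Lemma~\ref{big:0} to obtain a left $m_\lambda$--multiple, and invoke Theorem~\ref{saruthm} to land in the required form. The only (harmless) divergence is that you explicitly dispose of the case $\SHAPE(\mathfrak{s}|_{n-2})\rhd\lambda$ via Lemma~\ref{big:0} before treating $\tau=\lambda$, whereas the paper asserts directly that the hypotheses single out the unique $\mathfrak{s}$ with $\mathfrak{s}|_{n-2}=\mathfrak{t}^\lambda$; in either version the telescoping $v=1$ (so $T_{v'}^*=1$) and $m_\tau=m_\lambda$ reproduce the paper's displayed identity~\eqref{gob:2}.
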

\begin{proof}
There is a unique tableau $\mathfrak{s}\in\STD_{n-1}(\mu)$ satisfying the hypotheses of the lemma, namely the tableau with $\mathfrak{s}|_{n-1}=\mathfrak{t}^\lambda\in\STD_{n-2}(\lambda)$. Furthermore,
\begin{align*}
d(\mathfrak{s})=s_ks_{k+1}\cdots
s_{n-2}&&\text{where}&&k=(n-1)d(\mathfrak{s})^{-1}.
\end{align*}
Suppose that $k$ appears as an entry in the row $j$ of
$\mathfrak{s}$. As in the proof of Lemma~\ref{big:0}, we
associate to row $j$ of $\mathfrak{t}^\mu$ the subgroup
\begin{align*}
\mathfrak{R}_{\mathfrak{t}^\mu,j}=\langle s_{i}:\text{$i,i+1$
appear in row $j$ of $\mathfrak{t}^\mu$} \rangle
\end{align*}
and take the parabolic subgroup
$\mathfrak{P}_{\mathfrak{t}^\mu,j}=\langle
w\in\mathfrak{R}_{\mathfrak{t}^\mu,j}:(k)w=k\rangle\subseteq\mathfrak{R}_{\mathfrak{t}^\mu,j}$.
The set of distinguished right coset representatives for
$\mathfrak{P}_{\mathfrak{t}^\mu,j}$ in
$\mathfrak{R}_{\mathfrak{t}^\mu,j}$ is
\begin{align*}
\mathscr{D}=\{v_i:\text{$v_0=1$ and $v_i=v_{i-1}s_{k-i}$ for
$0<i\le\lambda_j$}\}.
\end{align*}
As in the proof of Lemma~\ref{big:0}, the coset representatives
$\mathscr{D}$ enable us to write
\begin{align}
E_{2f-1}T_{w_p}^{-1}
T_{d(\mathfrak{s})}^*m_{\mu}=m_{\lambda}T_{u}^{-1}\sum_{v\in\mathscr{D}}
q^{\ell(v)}T_v,\label{gob:2}
\end{align}
where $u=s_{k}s_{k+1}\cdots s_{n-2}w_p=s_{k-1}s_{k-2}\cdots
s_{2f-1}s_{n-1}s_{n-2}\cdots s_{2f}$. 
\end{proof}
Let $f$ be an integer, $0\le f\le [n/2]$, and $\lambda$ be a
partition of $n-2f$ with $t$ removable and $p-t$ addable nodes. Take
$\mu^{(t+1)}\rhd\mu^{(t+2)}\rhd\cdots\rhd\mu^{(p)}$ as the ordering
of the set $\{\mu:\text{$\mu\to\lambda$ and
$|\mu|>|\lambda|$}\}$ by dominance order on partitions and, for
$t<k\le p$, suppose that $[\lambda]$ is the diagram obtained by
deleting a node from the row $j_k$ of $[\mu^{(k)}]$. There exists for each $\mu^{(k)}$ with
$\mu^{(k)}\to\lambda$ and $|\mu^{(k)}|>|\lambda|$, a unique tableau
$\mathfrak{s}_k\in\STD_{n-1}(\mu^{(k)})$ such that
$\mu^{(p)}(\mathfrak{s}_k)\in\mathcal{T}_0(\mu^{(k)},\mu^{(p)})$ and
$\SHAPE(\mathfrak{s}_k|_{n-2})=\lambda$. To wit, $\mathfrak{s}_k$ is
determined by
\begin{align}\label{skdef}
d(\mathfrak{s}_k)=s_{a_k}s_{a_k+1}\cdots
s_{n-2}&&\text{where} &&{a_k=2(f-1)+\sum_{i=1}^{j_k}\mu_i^{(k)}.}
\end{align}
Thus we let
\begin{align}\label{wkdef}
w_k=d(\mathfrak{s}_k)^{-1}w_p=s_{a_k-1}s_{a_k-2}\cdots
s_{2f-1}s_{n-1}s_{n-2}\cdots s_{2f},
\end{align}
and write
\begin{align}\label{ydef:2}
y_{\mu^{(k)}}^\lambda=E_{2f-1}T_{w_k}^{-1}m_{\mu^{(k)}}+\check{B}^\lambda_n.
\end{align}
By Lemma~\ref{big:2}, we note that $y_{\mu^{(k)}}^\lambda$ is a
well defined element in the $B_n(q,r)$--module
$S^\lambda$. We define $N^{\mu^{(k)}}$, for $t<k\le p$, to be
the $B_{n-1}(q,r)$--submodule of $S^\lambda$ generated by
$y_{\mu^{(k)}}^\lambda$.
\begin{example}\label{smallex:0}
Let $n=4$, $f=1$. If $\lambda=(1,1)$, and $\mu=(2,1)$, then $\mathfrak{s}=\text{\tiny\Yvcentermath1$\young(13,2)$}$ is the unique tableau with $\mathfrak{s}|_{n-1}=\mathfrak{t}^\lambda\in\STD_{n-2}(\lambda)$. Thus $y_\mu^\lambda=E_1T_{w_p}^{-1}T_{d(\mathfrak{s})}^*m_\mu+\check{B}_4^\lambda=E_1T_{2}^{-1}T_1^{-1}T_3^{-1}(1+qT_1)+\check{B}_4^\lambda$.
\end{example}

Recall that $N^{\mu^{(t)}}\subseteq N^{\mu^{(p)}}$ is a strict inclusion of $B_{n-1}(q,r)$--modules.
\begin{lemma}\label{big:1}
Let $f$ be an integer, $0< f\le[n/2]$, and $\lambda$ be a
partition of $n-2f$ with $t$ removable nodes and $(p-t)$ addable
nodes. Suppose that $
\mu^{(t+1)}\rhd\mu^{(t+2)}\rhd\cdots\rhd\mu^{(p)}$ is the ordering
of $\{\mu:\text{$\mu\to\lambda$ and $|\mu|>|\lambda|$}\}$ by
dominance order on partitions. Then the right
$B_{n-1}(q,r)$--module $N^{\mu^{(p)}}/N^{\mu^{(t)}}$ is generated
as an $R$--module by
\begin{align*}
\left\{y_{\mu^{(k)}}^\lambda T_{d(\mathfrak{t})}T_w +N^{\mu^{(t)}}
:\text{$(\mathfrak{t},w)\in\mathcal{I}_{n-1}(\mu^{(k)})$
and $t< k\le p$} \right\}.
\end{align*}
\end{lemma}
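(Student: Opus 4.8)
The plan is to prove the two inclusions separately: that each $y_{\mu^{(k)}}^\lambda T_{d(\mathfrak{t})}T_w$ with $(\mathfrak{t},w)\in\mathcal{I}_{n-1}(\mu^{(k)})$ and $t<k\le p$ lies in $N^{\mu^{(p)}}$, and that these elements together with $N^{\mu^{(t)}}$ exhaust $N^{\mu^{(p)}}$. Throughout I use that $N^{\mu^{(p)}}$ is the right $B_{n-1}(q,r)$--module generated by $y_{\mu^{(p)}}^\lambda=m_\lambda T_{w_p}^{-1}+\check B_n^\lambda$, and that, since $\SHAPE(\mathfrak{s}|_{n-1})\unrhd\mu^{(t)}$ for every $\mathfrak{s}\in\STD_n(\lambda)$, Corollary~\ref{resbcor1} identifies $N^{\mu^{(t)}}$ with the free $R$--span of $\{m_\lambda T_{d(\mathfrak{s})}T_v+\check B_n^\lambda:\mathfrak{s}\in\STD_n(\lambda),\ v\in\mathscr{D}_{f,n-1}\}$.

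For the first inclusion I would start from the reduced--expression identity $w_p=d(\mathfrak{s}_k)^{-1}w_k$, with lengths adding, which follows from~\eqref{skdef} and~\eqref{wkdef} on splicing the descending runs $s_{n-2}\cdots s_{a_k}$ and $s_{a_k-1}\cdots s_{2f-1}$. This gives $T_{w_k}^{-1}=T_{w_p}^{-1}T_{d(\mathfrak{s}_k)^{-1}}$ with $T_{d(\mathfrak{s}_k)^{-1}}\in B_{n-1}(q,r)$, so combining~\eqref{ydef:2} with the equality~\eqref{gob:2} established in the proof of Lemma~\ref{big:2} (taken with $\mu=\mu^{(k)}$, $\mathfrak{s}=\mathfrak{s}_k$, and $T_{d(\mathfrak{s}_k)}^{*}=T_{d(\mathfrak{s}_k)^{-1}}$) yields
\[
y_{\mu^{(k)}}^\lambda=E_{2f-1}T_{w_p}^{-1}T_{d(\mathfrak{s}_k)^{-1}}m_{\mu^{(k)}}+\check B_n^\lambda
=m_\lambda T_{w_p}^{-1}\bigl(T_{d(\mathfrak{s}_k)^{-1}}\textstyle\sum_{v\in\mathscr{D}_k}q^{\ell(v)}T_v\bigr)+\check B_n^\lambda
=y_{\mu^{(p)}}^\lambda g_k
\]
with $g_k\in B_{n-1}(q,r)$. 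Hence $N^{\mu^{(k)}}=y_{\mu^{(k)}}^\lambda B_{n-1}(q,r)\subseteq N^{\mu^{(p)}}$ for all $k$ with $t<k\le p$, and in particular the whole proposed family lies in $N^{\mu^{(p)}}$.

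For the second inclusion, note that $y_{\mu^{(p)}}^\lambda$ is itself a member of the family (take $k=p$, $\mathfrak{t}=\mathfrak{s}_p=\mathfrak{t}^{\mu^{(p)}}$, $w=1$), and that $N^{\mu^{(p)}}/N^{\mu^{(t)}}$ is generated over $B_{n-1}(q,r)$ by $y_{\mu^{(p)}}^\lambda+N^{\mu^{(t)}}$; so it suffices to show that the $R$--span $M$ of the family is stable under right multiplication by $T_1,\dots,T_{n-2}$. Given a generator $y_{\mu^{(k)}}^\lambda T_{d(\mathfrak{t})}T_w$, I would rewrite it as $E_{2f-1}T_{w_k}^{-1}m_{\mu^{(k)}}T_{d(\mathfrak{t})}T_w+\check B_n^\lambda$, multiply by $T_i$, and straighten the $B_{n-1}(q,r)$--element $m_{\mu^{(k)}}T_{d(\mathfrak{t})}T_wT_i$ through Theorem~\ref{saruthm}(2) (applied to $B_{n-1}(q,r)$), writing it as $\sum_{(\mathfrak{u},v)\in\mathcal{I}_{n-1}(\mu^{(k)})}a_{(\mathfrak{u},v)}m_{\mu^{(k)}}T_{d(\mathfrak{u})}T_v+h$ with $h\in\check B_{n-1}^{\mu^{(k)}}$. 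The main sum then becomes $\sum a_{(\mathfrak{u},v)}y_{\mu^{(k)}}^\lambda T_{d(\mathfrak{u})}T_v\in M$, and for the error one uses $E_{2f-1}T_{w_k}^{-1}h=E_{2f-1}T_{w_p}^{-1}(T_{d(\mathfrak{s}_k)^{-1}}h)$ with $T_{d(\mathfrak{s}_k)^{-1}}h\in\check B_{n-1}^{\mu^{(k)}}$: contributions of $m_\nu$ with $|\nu|<n-2f+1$ vanish modulo $\check B_n^\lambda$ (since then $\nu\rhd\lambda$, so $m_\nu\in\check B_n^\lambda$), while contributions of $\nu\vdash n-2f+1$ with $\nu\rhd\mu^{(k)}$ are controlled by the annihilation statements of Lemma~\ref{big:0} and Corollary~\ref{quant:0} together with the permutation--module description of $\check B_{n-1}^{\mu^{(k)}}$ furnished by Lemma~\ref{permod:2}, and are seen to lie in the span of the $y_{\mu^{(j)}}^\lambda T_{d(\mathfrak{t}')}T_{w'}$, hence again in $M$, modulo $N^{\mu^{(t)}}$.

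The step I expect to be the main obstacle is this last one: verifying that the straightening of the $B_{n-1}(q,r)$--action on $y_{\mu^{(k)}}^\lambda T_{d(\mathfrak{t})}T_w$, once transported back through the non--invertible prefix $E_{2f-1}T_{w_k}^{-1}$, never escapes $M+N^{\mu^{(t)}}$. This is where one genuinely needs the combinatorial vanishing $E_{2f-1}T_{w_p}^{-1}m_{\mathsf{S}\mathfrak{t}}\equiv 0$ for semistandard $\mathsf{S}$ of shape strictly dominating $\lambda$ (Lemma~\ref{big:0}, Corollary~\ref{quant:0}), and not merely the formal generation statements; the overall pattern follows that of the proof of Lemma~\ref{bres1}, with the cell modules $S^{\mu^{(k)}}$ for $B_{n-1}(q,r)$ in the role played there by the $S^\mu$.
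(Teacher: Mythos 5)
Your first step — showing each $y_{\mu^{(k)}}^\lambda T_{d(\mathfrak{t})}T_w$ actually lies in $N^{\mu^{(p)}}$ by exhibiting $y_{\mu^{(k)}}^\lambda = y_{\mu^{(p)}}^\lambda g_k$ with $g_k = T_{d(\mathfrak{s}_k)}^*\sum_{v\in\mathscr{D}}q^{\ell(v)}T_v\in B_{n-1}(q,r)$ — is correct (the factorization $T_u^{-1}=T_{w_p}^{-1}T_{d(\mathfrak{s}_k)}^*$, with $u=d(\mathfrak{s}_k)w_p$ and lengths adding, combined with~\eqref{gob:2}, gives exactly this), and it is a genuine clarification: the paper's proof only establishes that the proposed set spans $N^{\mu^{(p)}}$ modulo $N^{\mu^{(t)}}$, and leaves the containment of the individual generators in $N^{\mu^{(p)}}$ implicit. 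Minor slip: you wrote $w_p=d(\mathfrak{s}_k)^{-1}w_k$, but \eqref{wkdef} gives $w_k=d(\mathfrak{s}_k)^{-1}w_p$, hence $w_p=d(\mathfrak{s}_k)w_k$; the operator identity $T_{w_k}^{-1}=T_{w_p}^{-1}T_{d(\mathfrak{s}_k)^{-1}}$ that you actually use is nonetheless what the paper records.

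The genuine gap is in your treatment of the error term $h$ in the closure step. After writing $E_{2f-1}T_{w_k}^{-1}h=E_{2f-1}T_{w_p}^{-1}(T_{d(\mathfrak{s}_k)^{-1}}h)$ with $T_{d(\mathfrak{s}_k)^{-1}}h\in\check B_{n-1}^{\mu^{(k)}}$, you dispatch the contributions coming from partitions $\nu$ with $|\nu|<n-2f+1$ by asserting $\nu\rhd\lambda$ and hence $m_\nu\in\check B_n^\lambda$. This is false. Those $\nu$ are partitions of $(n-1)-2g$ with $g\ge f$, so $|\nu|\le n-1-2f$, which has parity \emph{opposite} to $|\lambda|=n-2f$; such $\nu$ does not appear in the cellular poset of $B_n(q,r)$, the comparison $\nu\rhd\lambda$ is vacuous, and there is no reason for the image of $B_{n-1}^{f}$--basis elements under left multiplication by $E_{2f-1}T_{w_p}^{-1}$ to land in $\check B_n^\lambda$. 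This is precisely the point of the paper's Claim~\ref{claim:0}: the residue $E_{2f-1}T_{w_p}^{-1}b'$, for $b'\in(E_1E_3\cdots E_{2f-3})B_{n-1}(q,r)\cap B_{n-1}^f$, is not zero in $S^\lambda$ but rather an $R$--combination of $T_{d(\mathfrak{s})}^*m_\nu T_{d(\mathfrak{t})}T_w$ with $\nu\vdash n-2f$ and $w\in\mathscr{D}_{f,n-1}$ — hence an element of $N^{\mu^{(t)}}$ via Corollary~\ref{resbcor1} — and establishing this requires the explicit braid--and--$E_i$ manipulations carried out there (the case split on whether the prefix $v$ stabilises $n-1$, the identity $E_{2f-1}T_{w_p}^{-1}T_{v_j}^*E_{2f-1}=E_{2f-1}(T_{j+2}^{-1}\cdots T_{n-1}^{-1})$, etc.). There is a further, lesser, mismatch: Lemma~\ref{permod:2} describes the permutation module $L^\mu$, not the ideal $\check B_{n-1}^{\mu^{(k)}}$, so to invoke it you need the error $h$ to lie in $m_{\mu^{(k)}}B_{n-1}(q,r)$ — which it does, but then after multiplying by $T_{d(\mathfrak{s}_k)^{-1}}$ you have left $m_{\mu^{(p)}}B_{n-1}(q,r)$, and Lemma~\ref{big:0} and Corollary~\ref{quant:0} (which are statements about $E_{2f-1}T_{w_p}^{-1}m_{\mathsf{S}\mathfrak{t}}$ with $\mathsf{S}$ of type $\mu^{(p)}$) no longer apply directly. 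The cleanest repair is the one your first step already makes available: use $y_{\mu^{(k)}}^\lambda T_{d(\mathfrak{t})}T_wT_i=y_{\mu^{(p)}}^\lambda(g_kT_{d(\mathfrak{t})}T_wT_i)$ to reduce everything to analysing $y_{\mu^{(p)}}^\lambda b$ for $b\in B_{n-1}(q,r)$, which is exactly what the paper does — but then the paper's Lemma~\ref{permod:2} decomposition, Corollary~\ref{quant:0}, and Claim~\ref{claim:0} are each still needed, and none of them is a one--line parity observation.
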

\begin{proof}
From the expression~\eqref{ydef:2}, observe that the $B_{n-1}(q,r)$--module $N^{\mu^{(p)}}$ is generated as an $R$--module by elements of the form
\begin{align*}
y_{\mu^{(p)}}^\lambda b=m_\lambda
T_{w_p}^{-1}b+\check{B}^\lambda_n=E_{2f-1}T_{w_p}^{-1}m_{\mu^{(p)}}b+\check{B}_{n}^\lambda,&&\text{for $b\in B_{n-1}(q,r)$.}
\end{align*}

Let $b\in B_{n-1}(q,r)$. Then, by Lemma~\ref{permod:2}, there exist $\mathsf{S}\in\mathcal{T}_0(\mu,\mu^{(p)})$, for $\mu\unrhd\mu^{(p)}$ and $|\mu|=|\mu^{(p)}|$, together and $a_{\mathsf{S},\mathfrak{t},w}$, for $(\mathfrak{t},w)\in\mathcal{I}_{n-1}(\mu)$, such that 
\begin{align}\label{pest:0}
m_{\mu^{(p)}}b=
\sum_{\substack{\mu\unrhd\mu^{(p)}\\ (\mathfrak{t},w)\in\mathcal{I}_{n-1}(\mu) \\ \mathsf{S}\in\mathcal{T}_0(\mu,\mu^{(p)}) }} \,a_{\mathsf{S},\mathfrak{t},w}\,m_{\mathsf{S}\mathfrak{t}}T_w+b',
\end{align}
where $b'\in B^{f}_{n-1}$. Since the process of rewriting a product 
\begin{align*}
E_1E_3\cdots E_{2f-3}T_uT_vb, &&\text{for $u\in\langle s_i: 2f-2< i <n-1\rangle$, $v\in\mathscr{D}_{f-1,n-1}$,}
\end{align*}
in terms of the basis~\eqref{bcell:1} depends only on~\eqref{saru:prop3.8}, Proposition~3.7 of~\cite{saru} and operations in the subalgebra $\langle T_i: 2f-2< i <n-1\rangle\subseteq B_{n-1}(q,r)$, we note that the term $b'$ in~\eqref{pest:0} satisfies
\begin{align*}
b'\in (E_1E_{3}\cdots E_{2f-3})B_{n-1}(q,r)\cap B_{n-1}^{f}.
\end{align*}
By decomposing the set $\{\mu:\text{$|\mu|=n-2f+1$ and $\mu\unrhd\mu^{(p)}$}\}$ and using Lemma~\ref{big:0}, we obtain, for each $w\in\mathscr{D}_{f-1,n-1}$, an expression:
\begin{align}\label{pest:1}
\sum_{\substack{\mu\unrhd\mu^{(p)}\\ \mathfrak{t}\in\STD_{n-1}(\mu)\\ \mathsf{S}\in\mathcal{T}_0(\mu,\mu^{(p)})}}\, 
a_{\mathsf{S},\mathfrak{t},w}\, m_{\mathsf{S}\mathfrak{t}} T_w= 
\sum_{\substack{t<k\le p\\ \mathfrak{t}\in\STD_{n-1}(\mu^{(k)})\\ \mathsf{S}\in\mathcal{T}_0(\mu^{(k)},\mu^{(p)})}} a_{\mathsf{S},\mathfrak{t},w}\,m_{\mathsf{S}\mathfrak{t}}T_w
+\sum_{\substack{\mu\rhd\mu^{(t+1)}\\ \mathfrak{t}\in\STD_{n-1}(\mu)\\ \mathsf{S}\in\mathcal{T}_0(\mu,\mu^{(p)})}} 
a_{\mathsf{S},\mathfrak{t},w}\,m_{\mathsf{S}\mathfrak{t}}T_w.
\end{align}
Hence, multiplying both sides of~\eqref{pest:0} by $E_{2f-1}T_{w_p}^{-1}$ on the left, and using~\eqref{pest:1} together with Corollary~\ref{quant:0}, we obtain: 
\begin{align*}
E_{2f-1}T_{w_p}^{-1}m_{\mu^{(p)}}b+\check{B}_n^\lambda
=E_{2f-1}T_{w_p}^{-1}
\sum_{\substack{t<k\le p\\ (\mathfrak{t},w)\in\mathcal{I}_{n-1}(\mu^{(k)})\\ \mathsf{S}\in\mathcal{T}_0(\mu^{(k)},\mu^{(p)})}} a_{\mathsf{S},\mathfrak{t},w}\,m_{\mathsf{S}\mathfrak{t}}T_w
+ E_{2f-1}T_{w_p}^{-1}b'+\check{B}_n^\lambda.
\end{align*}
We recall the definition of the tableaux $\mathfrak{s}_k\in\STD_{n-1}(\mu^{(k)})$, for $t<k\le p$, in~\eqref{skdef}, and also that the $w_k$ defined, for $t<k\le p$, by~\eqref{wkdef}, are chosen so that $T_{w_p}^{-1}T_{d(\mathfrak{s}_k)}^*=T_{w_k}^{-1}$. Thus 
\begin{align*}
E_{2f-1}T_{w_p}^{-1}m_{\mu^{(p)}}b+\check{B}_n^\lambda
=
\sum_{\substack{t<k\le p\\ (\mathfrak{t},w)\in\mathcal{I}_{n-1}(\mu^{(k)})}} a_{k,\mathfrak{t},w}\,E_{2f-1}T_{w_k}^{-1}m_{\mu^{(k)}} T_{d(\mathfrak{t})}T_w
+ E_{2f-1}T_{w_p}^{-1}b'+\check{B}_n^\lambda,
\end{align*}
where $a_{k,\mathfrak{t},w}=q^{\ell(d(\mathfrak{s}_k))}a_{\mathsf{S},\mathfrak{t},w}$ whenever $\mu^{(p)}(\hat{\mathfrak{s}}_k)=\mathsf{S}$. Thus we have shown that
\begin{align}\label{filter:a}
E_{2f-1}T_{w_p}^{-1}m_{\mu^{(p)}}b+\check{B}_n^\lambda
=
\sum_{\substack{t<k\le p\\ (\mathfrak{t},w)\in\mathcal{I}_{n-1}(\mu^{(k)})}} a_{k,\mathfrak{t},w}\, y_{\mu^{(k)}}^\lambda T_{d(\mathfrak{t})}T_w
+ E_{2f-1}T_{w_p}^{-1}b'+\check{B}_n^\lambda.
\end{align}
It now remains to show that $E_{2f-1}T_{w_p}^{-1}b'+\check{B}_n^\lambda\in N^{\mu^{(t)}}$. Noting the characterisation of the $B_{n-1}(q,r)$--module $N^{\mu^{(t)}}$ given in Corollary~\ref{resbcor1}, to complete the proof of the lemma, it suffices to demonstrate the statement following.
\begin{claim}\label{claim:0}
If $b\in  (E_1E_3\cdots E_{2f-3})B_{n-1}(q,r)\cap B_{n-1}^f$ then there exist $a_{\mathfrak{s},\mathfrak{t},w}\in R$, for $\mathfrak{s},\mathfrak{t}\in\STD_n(\nu)$, $w\in\mathscr{D}_{f,n-1}$ and $\nu\vdash n-2f$, such that 
\begin{align}\label{pest:2}
E_{2f-1}T_{w_p}^{-1}b\equiv\sum_{\substack{\nu\vdash n-2f\\ \mathfrak{s},\mathfrak{t}\in\STD_{n}(\nu)\\ w\in\mathscr{D}_{f,n-1}}}a_{\mathfrak{s},\mathfrak{t},w}T_{d(\mathfrak{s})}^*m_\nu T_{d(\mathfrak{t})}T_w &&\mod{B_{n}^{f+1}}.
\end{align}
\end{claim}
We now prove the claim. Let $b\in(E_1E_3\cdots E_{2f-3})B_{n-1}(q,r)\cap B_{n-1}^f$.
As in the proof of Lemma~\ref{ecor}, we may write $b$, modulo $B_{n-1}^{f+1}\subset {B}_{n}^{f+1}$, as an $R$-linear combination of elements of the form 
\begin{align*}
\left\{T_v^* E_{1}E_{3}\cdots E_{2f-1}T_uT_{w}\bigg|
\begin{matrix}
\text{$v,w\in\mathscr{D}_{f,n-1}$, $u\in\langle s_i:2f<i<n-1\rangle$}\\
\text{and $v\in \langle s_i : 2f-2<i<n-1\rangle$}
\end{matrix}
\right\}.
\end{align*}
Multiplying an element of the above set on
the left by $E_{2f-1}T_{w_p}^{-1}$, we obtain:
\begin{align}\label{obt}
E_{1}E_{3}\cdots E_{2f-3}E_{2f-1}T_{w_p}^{-1}
T_v^*E_{2f-1}T_uT_{w}.
\end{align}
There are two cases following. In the first case, suppose
that $v$ has a reduced expression $v=s_{i_1}s_{i_2}\cdots s_{i_l}$
in $\langle s_i : 2f-2<i<n-2\rangle$. Applying the relations
\begin{align*}
T_{i}^{-1}T_{i+1}^{-1}T_i&=T_{i+1}T_{i}^{-1}T_{i+1}^{-1}&&\text{and}
&&T_{i}^{-1}T_{i+1}^{-1}E_i=E_{i+1}T_{i}^{-1}T_{i+1}^{-1}, 
\end{align*}
we obtain $T_{w_p}^{-1}T_{v}^*E_{2f-1}=T^*_{v''}E_{2f+1}T_{w_p}^{-1}$,
where $v''=s_{i_1+2}s_{i_2+2}\cdots s_{i_l+2}$. As $T_{v''}^*$
commutes with $E_{1}E_3\cdots E_{2f-1}$, substitution
into~\eqref{obt} yields:
\begin{align*}
E_{1}E_{3}\cdots E_{2f-1}T_{w_p}^{-1} T_v^*E_{2f-1}T_uT_{w}=
T_{v''}^*E_{1}E_{3}\cdots E_{2f+1}T_{w_p}^{-1}T_uT_{w}
\end{align*}
which is visibly a term in
$B_n^{f+1}$. 

In the second case, suppose that $v$ does not have a reduced expression in $\langle s_i: 2f-2<i<n-2\rangle$. To obtain an explicit expression for such $v$, we first enumerate the elements of 
\begin{align}\label{blob:0}
\mathscr{D}_{f,n-1}\cap\langle s_{i}: 2f-2<i<n-1\rangle.
\end{align}
As in Example~\ref{cellex:0}, the elements of the set~\eqref{blob:0} take the form
\begin{align*}
v_{i,j}=s_{2f}s_{2f+1}\cdots s_{j-1}s_{2f-1}s_{2f}\cdots s_{i-1},&&\text{for $2f-2< i <j<n$.}
\end{align*}
Now, $v_{i,j}$ does not have a reduced expression in $\langle s_i: 2f-2<i<n-2\rangle$ if and only if $v_{i,j}$ does not stabilise $n-1$; thus $v_{i,j}=v_{i,n-1}$, for some $2f-2< i< n-1$. Define 
\begin{align*}
v_i=v_{i,n-1}=s_{2f}s_{2f+1}\cdots s_{n-2}s_{2f-1}s_{2f}\cdots s_{i-1},&&\text{for $2f-2< i< n-1$,}
\end{align*}
so the elements of the set~\eqref{blob:0} which do not stabilise $n-1$ are precisely 
\begin{align*}
\{v_i:2f-1\le i\le n-2\}.
\end{align*}
Let $j$ be an integer, $2f-1\le j\le n-2$, and calculate $E_{2f-1}T_{w_p}^{-1}T_{v_j}^*E_{2f-1}$ explicitly, beginning with:
\begin{multline*}
E_{2f-1}T_{w_p}^{-1}T_{v_j}^*E_{2f-1}=E_{2f-1}T_{w_p}^{-1}(T_{j-1}T_{j-2}\cdots T_{2f-1}){(T_{n-2}T_{n-3}\cdots T_{2f})E_{2f-1}}\\
=E_{2f-1}T_{w_p}^{-1}(T_{n-2}T_{n-3}\cdots T_{j+1})(T_{j-1}T_{j-2}\cdots T_{2f-1}){(T_{j}T_{j-1}\cdots T_{2f})E_{2f-1}}\\
=E_{2f-1}(T_{2f}^{-1}T_{2f+1}^{-1}\cdots T_{n-1}^{-1})(T_{2f-1}^{-1}T_{2f}^{-1}\cdots T_{j}^{-1})(T_{j-1}T_{j-2}\cdots T_{2f-1}){(T_{j}T_{j-1}\cdots T_{2f})E_{2f-1}}\\
=E_{2f-1}(T_{2f}^{-1}T_{2f+1}^{-1}\cdots T_{j+1}^{-1})
(T_{2f-1}^{-1}T_{2f}^{-1}\cdots T_{j}^{-1})
(T_{j-1}T_{j-2}\cdots T_{2f-1})\\
\times(T_{j}T_{j-1}\cdots T_{2f})E_{2f-1}
(T_{j+2}^{-1}T_{j+3}^{-1}\cdots T_{n-1}^{-1}).
\end{multline*}
Using the relations 
\begin{align*}
E_{2f-1}(T_{2f}^{-1}T_{2f+1}^{-1}\cdots T_{j+1}^{-1})(T_{2f-1}^{-1}T_{2f}^{-1}\cdots T_{j}^{-1})&=E_{2f-1}E_{2f}\cdots E_{j+1}\intertext{and}
(T_{j-1}T_{j-2}\cdots T_{2f-1})(T_jT_{j-1}\cdots T_{2f})E_{2f-1}&=E_jE_{j-1}\cdots E_{2f-1},
\end{align*}
we now obtain:
\begin{align*}
E_{2f-1}T_{w_p}^{-1}T_{v_j}^*E_{2f-1}=(E_{2f-1}E_{2f}\cdots
E_jE_{j+1})(E_jE_{j-1}\cdots E_{2f-1})(T_{j+2}^{-1}T_{j+3}^{-1}\cdots T_{n-1}^{-1}).
\end{align*}
Further applying relations like
$E_i(E_{i+1}E_{i+2}E_{i+1})E_i=E_iE_{i+1}E_i=E_i$ in the right hand side of the above expression gives:
\begin{align}\label{obt:2}
E_{2f-1}T_{w_p}^{-1}T_{v_j}^*E_{2f-1}=E_{2f-1}(T_{j+2}^{-1}T_{j+3}^{-1}\cdots
T_{n-1}^{-1}).
\end{align}
Multiplying both sides of~\eqref{obt:2} by $E_1E_3\cdots E_{2f-3}$ on the left and by $T_uT_{w}$ on the right, the term~\eqref{obt}, with $v_j$ substituted for $v$, becomes
\begin{align*}
E_1E_3\cdots E_{2f-1}T_{w_p}^{-1}T_{v_j}^*E_{2f-1}T_uT_{w}
=E_1E_3\cdots E_{2f-1}(T_{j+2}^{-1}T_{j+3}^{-1}\cdots T_{n-1}^{-1})T_uT_{w}.
\end{align*}
Now $(T_{j+2}^{-1}T_{j+3}^{-1}\cdots T_{n-1}^{-1})T_u$ lies in
$\langle T_{2f+1},T_{2f+2},\dots,T_{n-1}\rangle\subseteq B_n(q,r)$
and consequently, using Theorem~\ref{saruthm}, can be expressed as
an $R$--linear sum of elements from the set $\{T_{u'}:u'\in
\langle s_i:2f<i<n\rangle\}$ together with an element $b'$ from
the two--sided ideal of $\langle
T_{2f+1},T_{2f+2},\dots,T_{n-1}\rangle$ generated by $E_{2f+1}$.
By Lemma~\ref{gomi}, the element labelled $b'$
immediately preceding satisfies
\begin{align*}
E_1E_3\cdots E_{2f-1}E_{2f-1}b'T_{w}\in
B_n^{f+1},
\end{align*}
and can be safely ignored in any calculation modulo $\check{B}_n^\lambda$. If $w\in\mathscr{D}_{f,n-1}$, then straightening a term
\begin{align}\label{hooroo}
E_1E_3\cdots E_{2f-1}T_{u'}T_{w}, &&\text{for $u'\in \langle s_i:2f<i<n\rangle$},
\end{align}
into linear combinations of the basis elements given in Theorem~\ref{saruthm}, is achieved using relations in $\mathscr{H}_{n-2f}(q^2)$, via the map $\vartheta_f$, and does not involve any transformation of $T_w$; it follows that there exist 
$a_{\mathfrak{u},\mathfrak{v},w}$, for $\mathfrak{u},\mathfrak{v}\in\STD_n(\nu)$ and $\nu\vdash n-2f$, such that the term~\eqref{hooroo} can be expressed as
\begin{align*}
E_1E_3\cdots E_{2f-1}T_{u'}T_w\equiv\sum_{\substack{\nu\vdash n-2f\\\mathfrak{u},\mathfrak{v}\in\STD_{n}(\nu)}}a_{\mathfrak{u},\mathfrak{v},w}T_{d(\mathfrak{u})}^*m_\nu T_{d(\mathfrak{v})}T_w&&\mod{B}_n^{f+1}.
\end{align*}
This completes the proof of the claim.
\end{proof}
We continue to use the notation established in the statement of
Lemma~\ref{big:1}.

If $t<k\le p$, then by Lemma~\ref{big:1}, there is a proper
inclusion of $B_{n-1}(q,r)$--modules $N^{\mu^{(t)}}\subseteq
N^{\mu^{(k)}}$.
\begin{corollary}\label{big:5}
Let $f$ be an integer, $0< f\le[n/2]$, and $\lambda$ be a
partition of $n-2f$ with $t$ removable nodes and $(p-t)$ addable
nodes. Suppose that
$\mu^{(1)}\rhd\mu^{(2)}\rhd\cdots\rhd\mu^{(p)}$ is the ordering of
$\{\mu:\mu\to\lambda\}$ by dominance order on partitions. Then
\begin{align*}
(0)=N^{\mu^{(0)}}\subseteq N^{\mu^{(1)}}\subseteq\cdots\subseteq
N^{\mu^{(p)}}=\RES(S^\lambda)
\end{align*}
is a filtration of $\RES(S^\lambda)$ by $B_{n-1}(q,r)$--modules,
wherein each quotient $N^{\mu^{(k)}}/N^{\mu^{(k-1)}}$, for $1\le k \le p$, is isomorphic to the cell
module $S^{\mu^{(k)}}$ via
\begin{align}\label{filter:map}
y^\lambda_{\mu^{(k)}}T_{d(\mathfrak{t})}T_w+N^{\mu^{(k-1)}}\mapsto
m_{\mu^{(k)}}T_{d(\mathfrak{t})}T_w+\check{B}_{n-1}^{\mu^{(k)}},
\end{align}
for $(\mathfrak{t},w)\in\STD_{n-1}(\mu^{(k)})$.
\end{corollary}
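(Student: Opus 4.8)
The plan is to handle separately the partitions $\mu^{(k)}$ obtained from $\lambda$ by deleting a node ($1\le k\le t$) and those obtained by adding a node ($t<k\le p$).

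For $1\le k\le t$ I would argue by induction on $k$ that $\check{N}^{\mu^{(k)}}=N^{\mu^{(k-1)}}$. Indeed $\check{N}^{\mu^{(k)}}$ is by definition generated by the elements $y^\lambda_{\mu^{(j)}}T_{d(\mathfrak{u})}$ with $j<k$, and the inductive hypothesis furnishes the nesting $N^{\mu^{(1)}}\subseteq\cdots\subseteq N^{\mu^{(k-1)}}$, so this generating set already lies in $N^{\mu^{(k-1)}}$. Lemma~\ref{bres1} then supplies the inclusion $N^{\mu^{(k-1)}}\subseteq N^{\mu^{(k)}}$ and the fact that the map~\eqref{reshom}, which is precisely~\eqref{filter:map}, is an isomorphism $N^{\mu^{(k)}}/N^{\mu^{(k-1)}}\cong S^{\mu^{(k)}}$ of $B_{n-1}(q,r)$--modules. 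This establishes the left-hand segment of the chain and gives $\dim_R N^{\mu^{(t)}}=\sum_{k=1}^{t}\dim_R S^{\mu^{(k)}}$.

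For $t<k\le p$, set, for $t\le j\le p$, $N^{(j)}$ to be the $R$--submodule of $S^\lambda$ spanned by $N^{\mu^{(t)}}$ together with the elements $y^\lambda_{\mu^{(k)}}T_{d(\mathfrak{t})}T_w$ with $t<k\le j$ and $(\mathfrak{t},w)\in\mathcal{I}_{n-1}(\mu^{(k)})$, so that $N^{(t)}=N^{\mu^{(t)}}$ and, by Lemma~\ref{big:1}, $N^{(p)}=N^{\mu^{(p)}}$ (which the dimension count below identifies with $\RES(S^\lambda)$). The core of the argument is to rerun the computation from the proof of Lemma~\ref{big:1}, but starting from $y^\lambda_{\mu^{(k)}}T_{d(\mathfrak{t})}T_wb=E_{2f-1}T_{w_p}^{-1}T_{d(\mathfrak{s}_k)}^*m_{\mu^{(k)}}T_{d(\mathfrak{t})}T_wb+\check{B}^\lambda_n$ (recalling $T^{-1}_{w_k}=T^{-1}_{w_p}T^*_{d(\mathfrak{s}_k)}$; see~\eqref{wkdef}) in place of $y^\lambda_{\mu^{(p)}}b$: expand $m_{\mu^{(k)}}T_{d(\mathfrak{t})}T_wb$ using cellularity of $B_{n-1}(q,r)$ (Theorem~\ref{saruthm} at level $n-1$ and Lemma~\ref{permod:2}), apply $E_{2f-1}T_{w_p}^{-1}T^*_{d(\mathfrak{s}_k)}$ on the left, and classify the resulting terms. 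The terms coming from the cell basis of $S^{\mu^{(k)}}$ produce $\sum a_{\mathfrak{u},u}\,y^\lambda_{\mu^{(k)}}T_{d(\mathfrak{u})}T_u$, lying in the $k$--th family; the terms attached to partitions $\nu$ of $n-2f+1$ with $\nu\rhd\mu^{(t+1)}$ are killed in $S^\lambda$ by Lemma~\ref{big:0} and Corollary~\ref{quant:0}; the terms attached to the partitions $\mu^{(i)}$ with $t<i<k$ lie in $N^{(i)}\subseteq N^{(k-1)}$ by induction; and the residual term, which lands in $(E_1E_3\cdots E_{2f-3})B_{n-1}(q,r)\cap B^f_{n-1}$, is absorbed into $N^{\mu^{(t)}}$ by Claim~\ref{claim:0} and the description of $N^{\mu^{(t)}}$ in Corollary~\ref{resbcor1}. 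This single computation shows at once that every $N^{(j)}$ is a $B_{n-1}(q,r)$--submodule, that $N^{(j)}$ coincides with the cyclic module $N^{\mu^{(j)}}$ once the inclusions $N^{\mu^{(t)}}\subseteq N^{\mu^{(j)}}$ recorded before the statement are used (whence also $N^{\mu^{(j-1)}}\subseteq N^{\mu^{(j)}}$), and that the assignment~\eqref{filter:map} respects the $B_{n-1}(q,r)$--action, the structure constants being exactly those governing the action of $b$ on the cell basis of $S^{\mu^{(k)}}$, in the manner of the comparison of~\eqref{hora:1} and~\eqref{hora:2} in the proof of Lemma~\ref{bres1}.

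It then remains to see that~\eqref{filter:map} is an isomorphism, which I would do by a dimension count. Each map $N^{(k)}/N^{(k-1)}\to S^{\mu^{(k)}}$ is at least surjective (its image contains the cell basis of $S^{\mu^{(k)}}$), so $\dim_R\RES(S^\lambda)\ge\sum_{k=1}^{p}\dim_R S^{\mu^{(k)}}$; on the other hand the spanning sets from the first step and from Lemma~\ref{big:1} give the reverse inequality. Equality forces every spanning set to be $R$--free, so~\eqref{filter:map} is well defined, and forces each surjection $N^{(k)}/N^{(k-1)}\to S^{\mu^{(k)}}$ --- from a module generated by $\dim_R S^{\mu^{(k)}}$ elements onto a free module of that rank --- to be an isomorphism. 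The step I expect to be the main obstacle is precisely the bookkeeping inside the rerun of Lemma~\ref{big:1}: verifying that each of the error terms produced by $\check{B}^{\mu^{(k)}}_{n-1}$ and by the $B^f_{n-1}$--residue genuinely falls into $N^{(k-1)}$ rather than escaping it, which is where the dominance-order vanishing of Lemma~\ref{big:0} and Corollary~\ref{quant:0} and the content of Claim~\ref{claim:0} must be applied after the conjugation $T^{-1}_{w_k}=T^{-1}_{w_p}T^*_{d(\mathfrak{s}_k)}$.
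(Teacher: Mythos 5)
Your proposal is correct and follows essentially the same path as the paper's own argument: for $1\le k\le t$ you invoke Lemma~\ref{bres1} (after noting $\check N^{\mu^{(k)}}=N^{\mu^{(k-1)}}$, a step the paper leaves implicit but which follows as you say, or directly from Corollary~\ref{resbcor1}), and for $t<k\le p$ you rerun the computation of Lemma~\ref{big:1} starting from $y^\lambda_{\mu^{(k)}}=E_{2f-1}T_{w_k}^{-1}m_{\mu^{(k)}}+\check B^\lambda_n$, sorting the terms using Lemmas~\ref{permod:2} and~\ref{big:0}, Corollary~\ref{quant:0} and Claim~\ref{claim:0}, and concluding with the same dimension count against $\sum_{\mu\to\lambda}\dim_R S^\mu$. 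The only cosmetic difference is that the paper phrases the middle computation in terms of the elements $m_{\mathsf{S}_k\mathfrak{v}}$ of the permutation module $L^{\mu^{(p)}}$, while you work with the single summand $T^*_{d(\mathfrak{s}_k)}m_{\mu^{(k)}}T_{d(\mathfrak t)}T_w$; since $E_{2f-1}T_{w_p}^{-1}m_{\mathsf{S}_k\mathfrak{v}}\equiv q^{\ell(d(\mathfrak{s}_k))}E_{2f-1}T_{w_p}^{-1}T_{d(\mathfrak{s}_k)}^*m_{\mu^{(k)}}T_{d(\mathfrak{v})}\bmod\check B^\lambda_n$ by Lemma~\ref{big:0}, the two are the same modulo $\check B^\lambda_n$, so this is a presentation choice rather than a different route.
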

\begin{proof}
It has been shown in Lemma~\ref{bres1} that the map~\eqref{filter:map} is an isomorphism $N^{\mu^{(k)}}/N^{\mu^{(k-1)}}\cong S^{\mu^{(k)}}$, for $1\le k \le t$.

For each $k$ with $t<k\le p$, let $\mathsf{S}_k=\mu^{(p)}(\mathfrak{s}_k)$, where $\mathfrak{s}_k$ is the tableau defined by~\eqref{skdef}. 
If $\mathfrak{v}\in\STD_{n-1}(\mu^{(k)})$ and $b\in B_{n-1}(q,r)$, then using Lemmas~\ref{permod:2} and~\ref{big:0}, there exist $a_{j,\mathfrak{t},w}\in R$, for $(\mathfrak{t},w)\in\mathcal{I}_{n-1}(\mu^{(j)})$, and $t<j\le k$, such that  
\begin{align}\label{junk}
m_{\mathsf{S}_k \mathfrak{v}}b
=\sum_{\substack{t<j\le k\\ (\mathfrak{t},w)\in\mathcal{I}_{n-1}(\mu^{(j)})}} a_{j,\mathfrak{t},w}\, m_{\mathsf{S}_j\mathfrak{t}}T_w
+\sum_{\substack{\mu\rhd\mu^{(t+1)}\\ \mathsf{S}\in\mathcal{T}_{0}(\mu,\mu^{(p)})\\ (\mathfrak{u},v)\in\mathcal{I}_{n-1}(\mu) }}a_{\mathsf{S},\mathfrak{u},v}m_{\mathsf{S}\mathfrak{u}}T_v
+b',
\end{align}
where $\mu$ runs over partitions of $n-2f+1$ and 
\begin{align*}
b'\in E_1E_3\cdots E_{2f-3}B_{n-1}(q,r)\cap B_{n-1}^{f}.
\end{align*}
Multiplying both sides of the expression~\eqref{junk} by $E_{2f-1}T_{w_p}^{-1}$ and using Lemma~\ref{big:0}, we obtain
\begin{align*}
q^{\ell(d(\mathfrak{s}_k))}y_{\mu^{(k)}}^\lambda T_{d(\mathfrak{v})} b&=E_{2f-1}T_{w_k}^{-1}m_{\mu^{(k)}}b+\check{B}_n^\lambda\\
&=\sum_{\substack{t<j\le k\\ (\mathfrak{t},w)\in\mathcal{I}_{n-1}(\mu^{(j)})}} a_{j,\mathfrak{t},w}\, q^{\ell(d(\mathfrak{s}_j))}y_{\mu^{(j)}}^\lambda T_{d(\mathfrak{t})}T_w
+ E_{2f-1}T_{w_p}^{-1}b'+\check{B}_n^\lambda,
\end{align*}
where $E_{2f-1}T_{w_p}^{-1}b'+\check{B}_n^\lambda\in N^{\mu^{(t)}}$ by Claim~\ref{claim:0}. Thus 
\begin{align*}
q^{\ell(d(\mathfrak{s}_k))}y_{\mu^{(k)}}^\lambda T_{d(\mathfrak{v})} b\equiv\sum_{\substack{t<j\le k\\ (\mathfrak{t},w)\in\mathcal{I}_{n-1}(\mu^{(j)})}} a_{j,\mathfrak{t},w}\,q^{\ell(d(\mathfrak{s}_j))} y_{\mu^{(j)}}^\lambda T_{d(\mathfrak{t})}T_w \mod N^{\mu^{(t)}}
\end{align*}
and 
\begin{multline}\label{junk:2}
q^{\ell(d(\mathfrak{s}_k))}y_{\mu^{(k)}}^\lambda T_{d(\mathfrak{v})} b\equiv\sum_{\substack{(\mathfrak{t},w)\in\mathcal{I}_{n-1}(\mu^{(k)})}} a_{k,\mathfrak{t},w}\,q^{\ell(d(\mathfrak{s}_k))} y_{\mu^{(k)}}^\lambda T_{d(\mathfrak{t})}T_w \\
+\sum_{\substack{t<j< k\\ (\mathfrak{t},w)\in\mathcal{I}_{n-1}(\mu^{(j)})}} a_{j,\mathfrak{t},w}\,q^{\ell(d(\mathfrak{s}_j))} y_{\mu^{(j)}}^\lambda T_{d(\mathfrak{t})}T_w \mod N^{\mu^{(t)}}.
\end{multline}
From~\eqref{pest:10} and~\eqref{junk}, the $\{a_{k,\mathfrak{t},w}\in R:(\mathfrak{t},w)\in\mathcal{I}_{n-1}(\mu^{(k)})\}$ appearing in~\eqref{junk:2} satisfy $a_{k,\mathfrak{t},w}=a_{\mathfrak{t},w}$, where 
\begin{align*}
m_{\mu^{(k)}}T_{d(\mathfrak{v})}b\equiv \sum_{(\mathfrak{t},w)\in\mathcal{I}_{n-1}(\mu^{(k)})} a_{\mathfrak{t},w}m_{\mu^{(k)}}T_{d(\mathfrak{t})}T_w\mod\check{B}_{n-1}^{\mu^{(k)}},
\end{align*}
thus demonstrating that~\eqref{filter:map} determines a $B_{n-1}(q,r)$--module isomorphism whenever $t<k\le p$.

It remains to observe that $N^{\mu^{(p)}}=\RES(S^\lambda)$. To this end,
\begin{align*}
\dim_R(N^{\mu^{(p)}})&=\sum_{i=1}^p\dim_R(N^{\mu^{(i)}}/N^{\mu^{(i-1)}})
=\sum_{\mu\to\lambda}\dim_R(S^\mu)=\dim_R(S^\lambda)
\end{align*}
where the last equality follows, for instance, from the semisimple branching law given in Theorem~2.3 of~\cite{wenzlqg}.
\end{proof}
The statement below follows from Corollary~\ref{big:5}.
\begin{theorem}\label{big:4}
Let $f$ be an integer, $0\le f\le [n/2]$, and $\lambda$ be a
partition of $n-2f$. Suppose that for each partition $\mu$ with
$\mu\to\lambda$ there exists an index set
$\mathfrak{T}_{n-1}(\mu)$ together with 
\begin{align*}
\{b_\mathfrak{u}\in
B_{n-1}(q,r):\mathfrak{u}\in\mathfrak{T}_{n-1}(\mu)\}
\end{align*}
such that
\begin{align*}
\{m_\mathfrak{u}=m_\mu
b_\mathfrak{u}+\check{B}^\mu_{n-1}:\mathfrak{u}\in
\mathfrak{T}_{n-1}(\mu)\}
\end{align*}
freely generates $S^\mu$ as an $R$--module. Then
\begin{align*}
\{y^\lambda_\mu b_\mathfrak{u}: \mathfrak{u}
\in\mathfrak{T}_{n-1}(\mu)\text{ for }\mu\to \lambda \}
\end{align*}
is a free $R$--basis for $S^\lambda$. Moreover, if $\check{N}^\mu$
denotes the $B_{n-1}(q,r)$--submodule of $S^\lambda$ generated by
\begin{align*}
\{y^\lambda_\nu b_\mathfrak{t}: \text{$\mathfrak{t}
\in\mathfrak{T}_{n-1}(\nu)$ for $\nu\to \lambda$ and
$\nu\rhd\mu$}\},
\end{align*}
then
\begin{align*}
y^\lambda_\mu b_\mathfrak{u}+\check{N}^\mu\mapsto m_\mu
b_\mathfrak{u}+\check{B}^\mu_{n-1}&&\text{for
$\mathfrak{u}\in\mathfrak{T}_{n-1}(\mu)$ with $\mu\to\lambda$,}
\end{align*}
determines an isomorphism $N^\mu/\check{N}^\mu\cong S^\mu$ of
$B_{n-1}(q,r)$--modules.
\end{theorem}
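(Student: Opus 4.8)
The plan is to read the statement off the filtration of $\RES(S^\lambda)$ obtained in Corollary~\ref{big:5}; beyond this, only bookkeeping is required. Fix the ordering $\mu^{(1)}\rhd\mu^{(2)}\rhd\cdots\rhd\mu^{(p)}$ of $\{\mu:\mu\to\lambda\}$ by the dominance order of Section~\ref{prelsec}, exactly as in Corollary~\ref{big:5}. Since this is a total ordering, $\{\nu:\nu\to\lambda\text{ and }\nu\rhd\mu^{(k)}\}=\{\mu^{(1)},\dots,\mu^{(k-1)}\}$, so the module $\check{N}^{\mu^{(k)}}$ of the present theorem is the $B_{n-1}(q,r)$--submodule of $S^\lambda$ generated by $\{y^\lambda_{\mu^{(j)}}b_\mathfrak{t}:j<k,\ \mathfrak{t}\in\mathfrak{T}_{n-1}(\mu^{(j)})\}$; the crucial point will be that this submodule coincides with the filtration term $N^{\mu^{(k-1)}}$ of Corollary~\ref{big:5}.

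First I would establish the $R$--module assertion. By Corollary~\ref{big:5} the map $\phi_k\colon N^{\mu^{(k)}}/N^{\mu^{(k-1)}}\to S^{\mu^{(k)}}$ carrying $y^\lambda_{\mu^{(k)}}T_{d(\mathfrak{t})}T_w+N^{\mu^{(k-1)}}$ to $m_{\mu^{(k)}}T_{d(\mathfrak{t})}T_w+\check{B}_{n-1}^{\mu^{(k)}}$ is an isomorphism of $B_{n-1}(q,r)$--modules; with $\mathfrak{t}=\mathfrak{t}^{\mu^{(k)}}$ and $w=1$ this gives $\phi_k(y^\lambda_{\mu^{(k)}}+N^{\mu^{(k-1)}})=m_{\mu^{(k)}}+\check{B}^{\mu^{(k)}}_{n-1}$, and since $\phi_k$ is $B_{n-1}(q,r)$--linear,
\begin{align*}
\phi_k\bigl(y^\lambda_{\mu^{(k)}}b_\mathfrak{u}+N^{\mu^{(k-1)}}\bigr)=m_{\mu^{(k)}}b_\mathfrak{u}+\check{B}^{\mu^{(k)}}_{n-1}=m_\mathfrak{u}
\end{align*}
for every $\mathfrak{u}\in\mathfrak{T}_{n-1}(\mu^{(k)})$. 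As $\{m_\mathfrak{u}\}$ is by hypothesis a free $R$--basis of $S^{\mu^{(k)}}$ and $\phi_k$ is an $R$--module isomorphism, $\{y^\lambda_{\mu^{(k)}}b_\mathfrak{u}+N^{\mu^{(k-1)}}:\mathfrak{u}\in\mathfrak{T}_{n-1}(\mu^{(k)})\}$ is a free $R$--basis of $N^{\mu^{(k)}}/N^{\mu^{(k-1)}}$. Inducting along the filtration $(0)=N^{\mu^{(0)}}\subseteq N^{\mu^{(1)}}\subseteq\cdots\subseteq N^{\mu^{(p)}}=\RES(S^\lambda)$, one concludes that $N^{\mu^{(k)}}$ is free over $R$ with basis $\{y^\lambda_{\mu^{(j)}}b_\mathfrak{u}:j\le k,\ \mathfrak{u}\in\mathfrak{T}_{n-1}(\mu^{(j)})\}$: spanning is immediate from that of the quotient $N^{\mu^{(k)}}/N^{\mu^{(k-1)}}$ together with the inductive hypothesis for $N^{\mu^{(k-1)}}$, and linear independence follows by the standard argument of projecting a vanishing $R$--linear combination onto the highest layer $N^{\mu^{(k_0)}}/N^{\mu^{(k_0-1)}}$ in which it has a nonzero coefficient and invoking freeness of the basis there. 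Taking $k=p$, and recalling $\{\mu^{(1)},\dots,\mu^{(p)}\}=\{\mu:\mu\to\lambda\}$ and that $\RES(S^\lambda)$ and $S^\lambda$ have the same underlying $R$--module, yields the first assertion.

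It then remains to identify $\check{N}^{\mu^{(k)}}$ with $N^{\mu^{(k-1)}}$, after which the displayed map of the theorem coincides with $\phi_k$. The inclusion $\check{N}^{\mu^{(k)}}\subseteq N^{\mu^{(k-1)}}$ holds because each generator $y^\lambda_{\mu^{(j)}}b_\mathfrak{t}$ with $j<k$ lies in $N^{\mu^{(j)}}\subseteq N^{\mu^{(k-1)}}$ and the latter is a $B_{n-1}(q,r)$--module; the reverse inclusion holds because, by the previous paragraph, $N^{\mu^{(k-1)}}$ is $R$--spanned by precisely those generators. Hence $N^{\mu^{(k)}}/\check{N}^{\mu^{(k)}}=N^{\mu^{(k)}}/N^{\mu^{(k-1)}}$, $\phi_k$ is the asserted $B_{n-1}(q,r)$--module isomorphism, and $\{y^\lambda_{\mu^{(k)}}b_\mathfrak{u}+\check{N}^{\mu^{(k)}}\}$ corresponds to the basis $\{m_\mathfrak{u}\}$ of $S^{\mu^{(k)}}$. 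The only real obstacle is notational rather than mathematical: one must keep straight the distinction between the cyclic $B_{n-1}(q,r)$--module on a single generator $y^\lambda_{\mu^{(k)}}$ and the cumulative filtration term $N^{\mu^{(k)}}$ of Corollary~\ref{big:5}, and observe that the hypothesis on the $m_\mathfrak{u}$ is exactly what promotes ``generated as a $B_{n-1}(q,r)$--module by'' to ``spanned as an $R$--module by'' in that identification; granting this, what remains is the elementary fact that a filtration whose successive quotients are free $R$--modules, with compatibly chosen bases, has a free total module over $R$.
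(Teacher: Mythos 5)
Your proof is correct and follows the paper's intended route: the paper itself offers no argument beyond the remark that Theorem~\ref{big:4} follows from Corollary~\ref{big:5}, and your bookkeeping supplies exactly the details that remark leaves implicit. The only caveat is that Corollary~\ref{big:5} is stated for $0<f\le[n/2]$; when $f=0$ there are no addable nodes and one should instead invoke Lemma~\ref{hres}, Lemma~\ref{bres1} and Corollary~\ref{resbcor1} to obtain the filtration directly, a gap the paper itself also elides.
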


\section{New Bases for the B-M-W Algebras}\label{newbasis}
If $f$ is an integer, $0\le f\le [n/2]$, and $\lambda$ is a
partition of $n-2f$ then, appropriating the definition given
in~\cite{ramleduc:rh}, we define a \emph{path} of shape
$\lambda$ in the Bratteli diagram associated with $B_n(q,r)$ to be
a sequence of partitions
\begin{align*}
\mathfrak{t}=\left(\lambda^{(0)},\lambda^{(1)},\dots,\lambda^{(n)}\right)
\end{align*}
where $\lambda^{(0)}=\varnothing$ is the empty partition,
$\lambda^{(n)}=\lambda$, and $\lambda^{(i-1)}\to\lambda^{(i)}$,
whenever $1\le i\le n$. Let $\mathfrak{T}_{n}(\lambda)$ denote the
set of paths of shape $\lambda$ in the Bratteli diagram of
$B_{n}(q,r)$. If
$\mathfrak{t}=(\lambda^{(0)},\lambda^{(1)},\dots,\lambda^{(n)})$
is in $\mathfrak{T}_{n}(\lambda)$, and $i$ is an integer, $0\le
i\le n$, define
\begin{align*}
\mathfrak{t}|_i=\left(\lambda^{(0)},\lambda^{(1)},\dots,\lambda^{(i)}\right).
\end{align*}
The set $\mathfrak{T}_n(\lambda)$ is equipped with a dominance order
$\unrhd$ defined as follows: given paths
\begin{align*}
\mathfrak{t}=\left(\lambda^{(0)},\lambda^{(1)},\dots,\lambda^{(n)}\right)
&&\text{and}&&\mathfrak{u}=\left(\mu^{(0)},\mu^{(1)},\dots,\mu^{(n)}\right)
\end{align*}
in $\mathfrak{T}_n(\lambda)$, write
$\mathfrak{t}\unrhd\mathfrak{u}$ if $\lambda^{(k)}\unrhd\mu^{(k)}$
for $k=1,2,\dots,n$. As usual, we write
$\mathfrak{t}\rhd\mathfrak{u}$ to mean that
$\mathfrak{t}\unrhd\mathfrak{u}$ and
$\mathfrak{t}\ne\mathfrak{u}$. There is a unique path in
$\mathfrak{T}_n(\lambda)$ which is maximal with respect to the
order $\unrhd$.  Denote by $\mathfrak{t}^\lambda$ the maximal
element in $\mathfrak{T}_n(\lambda)$.
\begin{example}
Let $n=10$, $f=2$ and $\lambda=(3,2,1)$. Then
\begin{align*}
\mathfrak{t}^\lambda=\left(\varnothing,\text{\tiny$\begin{matrix}\yng(1)\end{matrix}$}\,,
\varnothing,\text{\tiny$\begin{matrix}\yng(1)\end{matrix}$}\,,\varnothing,
\text{\tiny$\begin{matrix}\yng(1)\end{matrix}$}\,,
\text{\tiny$\begin{matrix}\yng(2)\end{matrix}$}\,,
\text{\tiny$\begin{matrix}\yng(3)\end{matrix}$}\,,
\text{\tiny$\begin{matrix}\yng(3,1)\end{matrix}$}\,,
\text{\tiny$\begin{matrix}\yng(3,2)\end{matrix}$}\,,
\text{\tiny$\begin{matrix}\yng(3,2,1)\end{matrix}$}\,\right)
\end{align*}
is the maximal element in $\mathfrak{T}_n(\lambda)$ with respect
to the order $\unrhd$.
\end{example}

Let $\lambda$ be a partition of $n-2f$, for $0\le f\le [n/2]$. Theorem~\ref{big:4} will now be applied iteratively to give the $B_n(q,r)$--module $S^\lambda$ a generic basis indexed by the set $\mathfrak{T}_{n}(\lambda)$.

Assume that for each partition $\mu$ with $\mu\to\lambda$, we have defined a set
\begin{align}\label{goup:1}
\{m_\mathfrak{u}=m_\mu
b_{\mathfrak{u}}+\check{B}^\mu_{n-1}:\mathfrak{u}
\in\mathfrak{T}_{n-1}(\mu)\}
\end{align}
which freely generates $S^\mu$ as an $R$--module. 
To define $\{b_\mathfrak{t}:\mathfrak{t}\in\mathfrak{T}_n(\lambda)\}$, we refer to the definition of $y^\lambda_\mu$ given in~\eqref{ydef:1} and~\eqref{ydef:2}, and write
\begin{align}\label{goup:2}
m_{\mathfrak{t}}=y^\lambda_{\mu}b_{\mathfrak{u}}&&\text{whenever
$\mathfrak{u}\in\mathfrak{T}_{n-1}(\mu)$ and
$\mathfrak{t}|_{n-1}=\mathfrak{u}$}.
\end{align}
By Theorem~\ref{saruthm} there exist $a_w$, for
$w\in\mathfrak{S}_{n}$, depending only on $b_{\mathfrak{u}}$,
such that the term $y^\lambda_\mu b_{\mathfrak{u}}$ on the right
hand side of the expression~\eqref{goup:2} can be expressed in terms
of the basis~\eqref{bcell:1} as
\begin{align}\label{goup:3}
m_{\mathfrak{t}}=y^\lambda_{\mu}b_{\mathfrak{u}}=\sum_{w\in\mathfrak{S}_{n}}a_w
m_\lambda T_w+\check{B}^\lambda_n.
\end{align}
Thus, given $\mathfrak{t}\in\mathfrak{T}_n(\lambda)$ and
$\mathfrak{u}\in\mathfrak{T}_{n-1}(\mu)$ with
$\mathfrak{t}|_{n-1}=\mathfrak{u}$, define
\begin{align}\label{btdef}
b_\mathfrak{t}=\sum_{w\in\mathfrak{S}_{n}}a_w T_w
\end{align}
where the elements $a_w\in R$, for $w\in\mathfrak{S}_{n}$, are
determined uniquely by the basis~\eqref{bcell:1} and the
expression~\eqref{goup:3}. 

From Theorem~\ref{big:4} it follows that set
\begin{align}\label{b-murphy}
\{m_{\mathfrak{t}}=m_\lambda
b_\mathfrak{t}+\check{B}_{n}^\lambda: \mathfrak{t}
\in\mathfrak{T}_{n}(\lambda) \}
\end{align}
constructed by the above procedure is a basis for $S^\lambda$ over $R$ and that, for $1\le i\le n$, the basis~\eqref{b-murphy}
admits natural filtrations by $B_{i}(q,r)$--modules, which is
analogous to the property of the Murphy basis for
$\mathscr{H}_n(q^2)$ given in Lemma~\ref{hres}.

With little further ado, the above construction allows us to write the following.
\begin{theorem}
The algebra $B_n(q,r)$ is freely generated as an $R$ module by the collection
\begin{align*}
\mathcal{M}=\big\{ m_{\mathfrak{s}\mathfrak{t}}=b_{\mathfrak{s}}^*m_\lambda b_\mathfrak{t} :\text{$\mathfrak{s},\mathfrak{t}\in\mathfrak{T}_n(\lambda)$, $\lambda\vdash n-2f$, and  $0\le f \le [n/2]$}\big\} 
\end{align*}
Moreover the following statements hold:
\begin{enumerate}
\item The algebra anti--involution $*$ satisfies $*:m_{\mathfrak{s}\mathfrak{t}}\mapsto m_{\mathfrak{t}\mathfrak{s}}$, for all $m_{\mathfrak{s}\mathfrak{t}}\in\mathcal{M}$;
\item Suppose that $b\in B_n(q,r)$ and let $f$ be an integer $0\le f\le [n/2]$. If $\lambda$ is a partition of $n-2f$ and $\mathfrak{t}\in\mathfrak{T}_n(\lambda)$, then there exist $a_\mathfrak{v}\in R$, for $\mathfrak{v}\in\mathfrak{T}_n(\lambda)$, such that, for all $\mathfrak{s}\in\mathfrak{T}_n(\lambda)$, 
\begin{align*}
m_{\mathfrak{s}\mathfrak{t}}b\equiv \sum_{\mathfrak{v}\in\mathfrak{T}_n(\lambda)} a_\mathfrak{v} m_{\mathfrak{s}\mathfrak{v}}\mod{\check{B}_n^\lambda}.
\end{align*}
\end{enumerate}
\end{theorem}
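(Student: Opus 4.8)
The plan is to prove that $\mathcal{M}$ is a cellular basis by comparing it, one subquotient at a time, with the cellular basis of Theorem~\ref{saruthm}. First I would record the inputs. Iterating Theorem~\ref{big:4} gives that $\{m_\mathfrak{t}=m_\lambda b_\mathfrak{t}+\check{B}_n^\lambda:\mathfrak{t}\in\mathfrak{T}_n(\lambda)\}$ is a free $R$-basis of the cell module $S^\lambda$; since the basis $\{m_\lambda T_{d(\mathfrak{a})}T_u+\check{B}_n^\lambda:(\mathfrak{a},u)\in\mathcal{I}_n(\lambda)\}$ of~\eqref{bcell:1} is also a free $R$-basis of $S^\lambda$, the transition matrix $P^\lambda=\big(P^\lambda_{\mathfrak{t},(\mathfrak{a},u)}\big)$ determined by $m_\mathfrak{t}=\sum_{(\mathfrak{a},u)}P^\lambda_{\mathfrak{t},(\mathfrak{a},u)}\,m_\lambda T_{d(\mathfrak{a})}T_u$ in $S^\lambda$ is invertible over $R$; in particular $|\mathfrak{T}_n(\lambda)|=|\mathcal{I}_n(\lambda)|$, whence $|\mathcal{M}|=\sum_{f,\lambda}|\mathfrak{T}_n(\lambda)|^2=\sum_{f,\lambda}|\mathcal{I}_n(\lambda)|^2=\dim_R B_n(q,r)$. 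I will also use that $m_\lambda^*=m_\lambda$ (the factors $E_1,E_3,\dots,E_{2f-1}$ commute pairwise and $x_\lambda^*=x_\lambda$) and that $\check{B}_n^\lambda=\sum_{\mu\rhd\lambda}B_n^\mu$ is a $*$-stable two-sided ideal whose free $R$-basis consists of the Saru elements of partition strictly dominating $\lambda$.

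The central step is a ``squared'' transition relation modulo $\check{B}_n^\lambda$. Writing $m_\lambda b_\mathfrak{t}=\sum_{(\mathfrak{a},u)}P^\lambda_{\mathfrak{t},(\mathfrak{a},u)}m_\lambda T_{d(\mathfrak{a})}T_u+r_\mathfrak{t}$ with $r_\mathfrak{t}\in\check{B}_n^\lambda$, applying $*$ to the analogous expression for $m_\lambda b_\mathfrak{s}$, and multiplying out while absorbing terms into $\check{B}_n^\lambda$ (using that $\check{B}_n^\lambda$ is a two-sided $*$-stable ideal and $m_\lambda^*=m_\lambda$), one obtains
\[
m_{\mathfrak{s}\mathfrak{t}}=b_\mathfrak{s}^*m_\lambda b_\mathfrak{t}\equiv\sum_{(\mathfrak{b},v),(\mathfrak{a},u)}P^\lambda_{\mathfrak{s},(\mathfrak{b},v)}P^\lambda_{\mathfrak{t},(\mathfrak{a},u)}\,T_v^*T_{d(\mathfrak{b})}^*m_\lambda T_{d(\mathfrak{a})}T_u\pmod{\check{B}_n^\lambda}.
\]
Since $P^\lambda\otimes P^\lambda$ is invertible and the elements $T_v^*T_{d(\mathfrak{b})}^*m_\lambda T_{d(\mathfrak{a})}T_u+\check{B}_n^\lambda$ form a free $R$-basis of $B_n^\lambda/\check{B}_n^\lambda$ by Theorem~\ref{saruthm}, it follows that $\{m_{\mathfrak{s}\mathfrak{t}}+\check{B}_n^\lambda:\mathfrak{s},\mathfrak{t}\in\mathfrak{T}_n(\lambda)\}$ is also a free $R$-basis of $B_n^\lambda/\check{B}_n^\lambda$.

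From here the basis claim follows by a downward induction on $\unrhd$. For $\lambda$ maximal in $\unrhd$ one has $\check{B}_n^\lambda=0$, so $B_n^\lambda$ is $R$-spanned by $\{m_{\mathfrak{s}\mathfrak{t}}:\mathfrak{s},\mathfrak{t}\in\mathfrak{T}_n(\lambda)\}$; inductively, if the Saru elements of every partition $\mu\rhd\lambda$ lie in $\langle\mathcal{M}\rangle_R$, then $\check{B}_n^\lambda\subseteq\langle\mathcal{M}\rangle_R$, and inverting $P^\lambda\otimes P^\lambda$ in the displayed relation shows that every Saru element of partition $\lambda$ lies in $\langle\mathcal{M}\rangle_R+\check{B}_n^\lambda=\langle\mathcal{M}\rangle_R$. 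Thus $\mathcal{M}$ spans $B_n(q,r)$, and as $|\mathcal{M}|=\dim_R B_n(q,r)$ and $B_n(q,r)$ is free, $\mathcal{M}$ is a free $R$-basis. (Equivalently, this can be packaged as a filtration of $B_n(q,r)$ by the two-sided ideals $\sum_{b\le a}B_n^{\lambda^{[b]}}$ along a linear extension $\lambda^{[1]},\dots,\lambda^{[N]}$ of $\rhd$, with free subquotients $B_n^{\lambda^{[a]}}/\check{B}_n^{\lambda^{[a]}}$.)

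For statement~(1), since $*$ is an anti-involution fixing $m_\lambda$, I compute $m_{\mathfrak{s}\mathfrak{t}}^*=(b_\mathfrak{s}^*m_\lambda b_\mathfrak{t})^*=b_\mathfrak{t}^*m_\lambda b_\mathfrak{s}=m_{\mathfrak{t}\mathfrak{s}}$. For~(2), I identify $S^\lambda$ with $(m_\lambda B_n(q,r)+\check{B}_n^\lambda)/\check{B}_n^\lambda$, whose right action is induced by multiplication in $B_n(q,r)$ (well defined because $\check{B}_n^\lambda$ is a right ideal, and agreeing with the action defined via~\eqref{btsu:1}). Since $\{m_\mathfrak{v}:\mathfrak{v}\in\mathfrak{T}_n(\lambda)\}$ is an $R$-basis of the right module $S^\lambda$, there are unique $a_\mathfrak{v}\in R$, depending only on $\mathfrak{t}$ and $b$, with $m_\mathfrak{t}\,b=\sum_\mathfrak{v}a_\mathfrak{v}m_\mathfrak{v}$ in $S^\lambda$; lifting gives $m_\lambda b_\mathfrak{t}b-\sum_\mathfrak{v}a_\mathfrak{v}m_\lambda b_\mathfrak{v}\in\check{B}_n^\lambda$, and left-multiplying by $b_\mathfrak{s}^*$ (two-sidedness of $\check{B}_n^\lambda$) yields $m_{\mathfrak{s}\mathfrak{t}}b\equiv\sum_\mathfrak{v}a_\mathfrak{v}m_{\mathfrak{s}\mathfrak{v}}\pmod{\check{B}_n^\lambda}$. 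The hard part will be the squared transition relation of the second step: everything else is routine manipulation with the two-sided ideal $\check{B}_n^\lambda$ and the cellular basis of Theorem~\ref{saruthm}, but that relation is where the definition of the $b_\mathfrak{t}$, the $*$-symmetry, and the change of basis on cell modules must be reconciled.
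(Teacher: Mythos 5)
Your proposal is correct, and it fills in precisely the argument the paper leaves implicit: the theorem is stated after the remark ``with little further ado,'' with no proof beyond the preceding construction and the fact, established via Theorem~\ref{big:4}, that $\{m_\mathfrak{t}:\mathfrak{t}\in\mathfrak{T}_n(\lambda)\}$ is a free $R$-basis of each $S^\lambda$. The step you flag as the crux --- the ``squared'' transition relation $m_{\mathfrak{s}\mathfrak{t}}\equiv\sum P^\lambda_{\mathfrak{s},(\mathfrak{b},v)}P^\lambda_{\mathfrak{t},(\mathfrak{a},u)}T_v^*T_{d(\mathfrak{b})}^*m_\lambda T_{d(\mathfrak{a})}T_u\bmod\check{B}_n^\lambda$, together with invertibility of $P^\lambda\otimes P^\lambda$ and a downward induction along a linear extension of $\rhd$ --- is exactly the standard mechanism by which a change of basis on each cell module propagates to a new cellular basis of the algebra, using $*$-stability of $\check B_n^\lambda$ and $m_\lambda^*=m_\lambda$; and your verifications of these auxiliary facts, the rank count $|\mathcal M|=\dim_R B_n(q,r)$, and parts (1)--(2) are all sound.
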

\begin{example}\label{bas-ex:1}
We explicitly compute a basis of the form displayed
in~\eqref{b-murphy} for the $B_4(q,r)$--modules $S^\lambda$ and $S^{\lambda'}$ where $\lambda=(2)$ and $\lambda'=(1,1)$. Our iterative construction the basis for $S^\lambda$ entails explicit computation of $b_{\mathfrak{u}}$, for all
$\mathfrak{u}\in\mathfrak{T}_i(\lambda^{(i)})$ for which
\begin{align*}
(\varnothing,\dots,\lambda^{(i-1)},\lambda^{(i)},\dots,\lambda)\in\mathfrak{T}_4(\lambda),
\end{align*}
with similar requirements for computing the basis for $S^{\lambda'}$. 

(\emph{a}) The algebra $B_2(q,r)$ has three one dimensional cell
modules; if $\mu$ is one of the partitions $\varnothing$, $(2)$ or $(1,1)$,
associate to the path in $\mathfrak{T}_2(\mu)$ an element of $S^\mu$
as
\begin{align*}
(\varnothing,\text{\tiny$\begin{matrix}\yng(1)\end{matrix}$}\,,\varnothing)\hspace{0.5em}&\mapsto\hspace{0.5em}E_1;\\
(\varnothing,\text{\tiny$\begin{matrix}\yng(1)\end{matrix}$}\,,\text{\tiny$\begin{matrix}\yng(2)\end{matrix}$}\,)\hspace{0.5em}&\mapsto\hspace{0.5em}(1+qT_1)+\check{B}_2^{(2)}\\
(\varnothing,\text{\tiny$\yng(1)$}\,,\text{\tiny$\begin{matrix}\yng(1,1)\end{matrix}$}\,)\hspace{0.5em}
&\mapsto\hspace{0.5em} 1+B_2^{(1,1)},
\end{align*}
to obtain a cellular basis for $B_2(q,r)$ which is compatible with
the ordering of partitions $\varnothing\rhd(2)\rhd(1,1)$.

(\emph{b}) The algebra $B_3(q,r)$ has four cell modules, one
corresponding to each of the partitions,
$(1)\rhd(3)\rhd(2,1)\rhd(1^3)$.

(i) If $\mu=(1)$ then $\check{B}^\mu_3=0$ and $m_\mu=E_1$; since $\nu\to\mu$ precisely if
$\nu$ is one of $\varnothing\rhd(2)\rhd(1,1)$, using part~(\emph{a}) above, we associate to each path in $\mathfrak{T}_3(\mu)$ an element of $S^\mu$ as
\begin{align*}
(\varnothing,\text{\tiny$\begin{matrix}\yng(1)\end{matrix}$}\,,\varnothing,\text{\tiny$\begin{matrix}\yng(1)\end{matrix}$}\,)\hspace{0.5em}&\mapsto\hspace{0.5em}m_{\mathfrak{t}^\mu}=E_1;\\
(\varnothing,\text{\tiny$\begin{matrix}\yng(1)\end{matrix}$}\,,\text{\tiny$\begin{matrix}\yng(2)\end{matrix}$}\,,\text{\tiny$\yng(1)$}\,)\hspace{0.5em}&\mapsto\hspace{0.5em} m_{\mathfrak{t}^\mu} T_2^{-1}T_1^{-1}(1+qT_1);\\
(\varnothing,\text{\tiny$\begin{matrix}\yng(1)\end{matrix}$}\,,\text{\tiny$\begin{matrix}\yng(1,1)\end{matrix}$}\,,\text{\tiny$\begin{matrix}\yng(1)\end{matrix}$}\,) \hspace{0.5em}&\mapsto  \hspace{0.5em}m_{\mathfrak{t}^\mu}T_2^{-1}T_1^{-1}=m_{\mathfrak{t}^\mu}T_2T_1.
\end{align*}
The transition matrix from the basis $\{m_\mathfrak{t}=m_\lambda b_\mathfrak{t}+\check{B}_3^\mu:\mathfrak{t}\in\mathfrak{T}_3(\mu)\}$ for $S^\mu$ given in~\eqref{b-murphy} and ordered by dominance as above, to the ordered basis 
\begin{align*}
\{\mathbf{v}_i=m_\mu T_{v_i}:v_1=1,v_2=s_2,v_3=s_2s_1 \} 
\end{align*}
for $S^\mu$ given in~\eqref{bcell:1} is:
\begin{align}\label{t-matrix:1}
\begin{bmatrix}
1 & 1-q^2&0\\
0 & q & 0\\
0 & q^2 & 1
\end{bmatrix}.
\end{align}
The elements $\{ b_\mathfrak{t}:\mathfrak{t}\in\mathfrak{T}_3(\mu)\}$ of~\eqref{b-murphy} are made explicit by the above transition matrix.

(ii) If $\mu=(3)$, then $S^\mu$ is one--dimensional and
\begin{align*}
(\varnothing,\text{\tiny$\begin{matrix}\yng(1)\end{matrix}$}\,,\text{\tiny$\begin{matrix}\yng(2)\end{matrix}$}\,,\text{\tiny$\begin{matrix}\yng(3)\end{matrix}$})\hspace{.5em}\mapsto\hspace{0.5em}m_{\mathfrak{t}^\mu}=(1+qT_1)(1+qT_2+q^2T_2T_1)+\check{B}^{(3)}_3.
\end{align*}

(iii) If $\mu=(2,1)$, then $m_\mu=(1+qT_1)$ and a basis for $S^\mu$ is
obtained by associating to each path in $\mathfrak{T}_3(\mu)$ an
element as
\begin{align*}
\left(\varnothing,\text{\tiny$\begin{matrix}\yng(1)\end{matrix}$}\,,\text{\tiny$\begin{matrix}\yng(2)\end{matrix}$}\,,\text{\tiny$\begin{matrix}\yng(2,1)\end{matrix}$}\,\right)\hspace{0.5em}&\mapsto\hspace{0.5em}m_{\mathfrak{t}^\mu}=(1+qT_1)+\check{B}^{(2,1)}_3;\\
\left(\varnothing,\text{\tiny$\begin{matrix}\yng(1)\end{matrix}$}\,,\text{\tiny$\begin{matrix}\yng(1,1)\end{matrix}$}\,,\text{\tiny$\begin{matrix}\yng(2,1)\end{matrix}$}\,\right)\hspace{0.5em}&\mapsto\hspace{0.5em}m_{\mathfrak{t}^\mu}T_2.
\end{align*}

(iv) Finally, if $\mu=(1,1,1)$, then $S^\mu$ is the right $B_3(q,r)$--module generated by
$1+\check{B}^{(1,1,1)}_3$.

(\emph{c}) Let $n=4$ and $\lambda=(2)$. Then $m_\lambda=E_1(1+qT_3)$
and $\mu\to\lambda$ if $\mu$ is one of the partitions
$\mu^{(1)}=(1)\rhd\mu^{(2)}=(3)\rhd\mu^{(3)}=(2,1)$. Thus, based on~(\emph{b}) above, we associate
to each path $\mathfrak{t}\in\mathfrak{T}_4(\lambda)$ a basis
element of the cell module $S^\lambda$ as follows:
\begin{align*}
\begin{split}
(\varnothing,\text{\tiny$\begin{matrix}\yng(1)\end{matrix}$}\,,\varnothing,
\text{\tiny$\begin{matrix}\yng(1)\end{matrix}$}\,,
\text{\tiny$\begin{matrix}\yng(2)\end{matrix}$}\,)\hspace{0.5em}\mapsto\hspace{0.5em}
&y_{\mu^{(1)}}^\lambda=m_{\mathfrak{t}^\lambda}=E_1(1+qT_3) +\check{B}^{\lambda}_4;\\
(\varnothing,\text{\tiny$\begin{matrix}\yng(1)\end{matrix}$}\,,
\text{\tiny$\begin{matrix}\yng(2)\end{matrix}$}\,,
\text{\tiny$\begin{matrix}\yng(1)\end{matrix}$}\,,
\text{\tiny$\begin{matrix}\yng(2)\end{matrix}$}\,)\hspace{0.5em}\mapsto\hspace{0.5em}
&y_{\mu^{(1)}}^{\lambda}T_2^{-1}T_1^{-1}(1+qT_1);\\
(\varnothing,\text{\tiny$\begin{matrix}\yng(1)\end{matrix}$}\,,
\text{\tiny$\begin{matrix}\yng(1,1)\end{matrix}$}\,,
\text{\tiny$\begin{matrix}\yng(1)\end{matrix}$}\,,
\text{\tiny$\begin{matrix}\yng(2)\end{matrix}$}\,)\hspace{0.5em}\mapsto\hspace{0.5em}
&y_{\mu^{(1)}}^{\lambda} T_2^{-1}T_1^{-1}=m_{\mathfrak{t}^\lambda} T_2T_1;\\
(\varnothing,\text{\tiny$\begin{matrix}\yng(1)\end{matrix}$}\,,
\text{\tiny$\begin{matrix}\yng(2)\end{matrix}$}\,,
\text{\tiny$\begin{matrix}\yng(3)\end{matrix}$}\,,
\text{\tiny$\begin{matrix}\yng(2)\end{matrix}$}\,)\hspace{0.5em}\mapsto\hspace{0.5em}
&y_{\mu^{(2)}}^\lambda =E_1(T_2T_1T_3T_2)^{-1}m_{\mu^{(2)}}+\check{B}_4^\lambda\\
&= E_1 (1+qT_3)(T_2T_1T_3T_2)^{-1}\\
&\quad\quad\times(1+qT_2+q^2T_2T_1)+\check{B}_4^\lambda\\
&= m_{\mathfrak{t}^\lambda} (T_2T_1T_3T_2)^{-1}(1+qT_2+q^2T_2T_1);\\
(\varnothing,\text{\tiny$\begin{matrix}\yng(1)\end{matrix}$}\,,
\text{\tiny$\begin{matrix}\yng(2)\end{matrix}$}\,,
\text{\tiny$\begin{matrix}\yng(2,1)\end{matrix}$}\,,
\text{\tiny$\begin{matrix}\yng(2)\end{matrix}$}\,)\hspace{0.5em}\mapsto\hspace{0.5em}
&y_{\mu^{(3)}}^\lambda= E_1(T_2T_1T_3T_2)^{-1}m_{\mu^{(3)}} +\check{B}_4^\lambda\\ 
&=m_{\mathfrak{t}^\lambda}
(T_2T_1T_3T_2)^{-1}=m_{\mathfrak{t}^\lambda} T_2T_3T_1T_2;\\
(\varnothing,\text{\tiny$\begin{matrix}\yng(1)\end{matrix}$}\,,
\text{\tiny$\begin{matrix}\yng(1,1)\end{matrix}$}\,,
\text{\tiny$\begin{matrix}\yng(2,1)\end{matrix}$}\,,
\text{\tiny$\begin{matrix}\yng(2)\end{matrix}$}\,)\hspace{0.5em}\mapsto\hspace{0.5em}
&y_{\mu^{(3)}}^\lambda T_2= m_{\mathfrak{t}^\lambda}
(T_2T_1T_3T_2)^{-1}T_2\\
&=m_{\mathfrak{t}^\lambda} T_2^{-1}T_3^{-1}T_1^{-1}.
\end{split}
\end{align*}
Expanding the terms on the right hand side above using results from Section~3 of~\cite{saru}, we obtain the transition matrix from the basis $\{m_\mathfrak{t}=m_\lambda b_\mathfrak{t}+\check{B}_4^\lambda:\mathfrak{t}\in\mathfrak{T}_4(\lambda)\}$ for $S^\lambda$ given in~\eqref{b-murphy} and ordered by dominance as above, to basis 
\begin{align*}
\{\mathbf{v}_{i,j}=m_\lambda T_{v_{i,j}}+\check{B}_4^\lambda: v_{i,j}=s_2s_3\cdots s_{j-1}s_1s_2\cdots s_{i-1} \}
\end{align*}
for $S^\lambda$ given in~\eqref{bcell:1}, ordered lexicographically, as:
\begin{align*}
\begin{bmatrix}
1 & 1-q^2	&0	& 1-q^2 	& 0	& 0	\\
0 & q 		& 0	& q(1-q^2) 	& 0	& 0	\\
0 & 0		& 0	& q^2		& 0	& 0	\\
0 & q^2 	& 1	& q^2(1-q^2) 	& 0	& \frac{\displaystyle 1-q^2}{\displaystyle q} \\
0 & 0	 	& 0 	& q^3		& 0	& 1	\\
0 & 0		& 0	& q^4		& 1	& \frac{\displaystyle q^2-1}{\displaystyle q} 
\end{bmatrix}.
\end{align*}
It may be observed that the elements $\{b_\mathfrak{t}:\mathfrak{t}\in\mathfrak{T}_4(\lambda)\}$, given by the above matrix, are consistent with~\eqref{t-matrix:1} above and reflect the existence of an embedding $S^{\mu^{(1)}}\hookrightarrow S^\lambda$ of $B_3(q,r)$--modules, as $N^{\mu^{(1)}}/\check{N}^{\mu^{(1)}}\cong S^{\mu^{(1)}}$, where $\check{N}^{\mu^{(1)}}=0$.

(\emph{d}) Now consider the partition $\lambda'=(1,1)$; here $m_{\lambda'}=E_1$ and $\mu\to\lambda'$ if $\mu$ is one of the partitions $\mu^{(1)}=(1)\rhd\mu^{(2)}=(2,1)\rhd\mu^{(3)}=(1,1,1)$; thus, based on Example~\ref{smallex:0} and the calculations (\emph{b}) above, we associate to each path $\mathfrak{t}\in\mathfrak{T}_4(\lambda')$ a basis element in the cell module $S^{\lambda'}$ as follows:
\begin{align*}
\begin{split}
(\varnothing,\text{\tiny$\begin{matrix}\yng(1)\end{matrix}$}\,,\varnothing,
\text{\tiny$\begin{matrix}\yng(1)\end{matrix}$}\,,
\text{\tiny$\begin{matrix}\yng(1,1)\end{matrix}$}\,)\hspace{0.5em}\mapsto\hspace{0.5em}
&y_{\mu^{(1)}}^{\lambda'}=m_{\mathfrak{t}^{\lambda'}}=E_1 +\check{B}^{\lambda'}_4;\\
(\varnothing,\text{\tiny$\begin{matrix}\yng(1)\end{matrix}$}\,,
\text{\tiny$\begin{matrix}\yng(2)\end{matrix}$}\,,
\text{\tiny$\begin{matrix}\yng(1)\end{matrix}$}\,,
\text{\tiny$\begin{matrix}\yng(1,1)\end{matrix}$}\,)\hspace{0.5em}\mapsto\hspace{0.5em}
&y_{\mu^{(1)}}^{\lambda'}T_2^{-1}T_1^{-1}(1+qT_1);\\
(\varnothing,\text{\tiny$\begin{matrix}\yng(1)\end{matrix}$}\,,
\text{\tiny$\begin{matrix}\yng(1,1)\end{matrix}$}\,,
\text{\tiny$\begin{matrix}\yng(1)\end{matrix}$}\,,
\text{\tiny$\begin{matrix}\yng(1,1)\end{matrix}$}\,)\hspace{0.5em}\mapsto\hspace{0.5em}
&y_{\mu^{(1)}}^{\lambda'}T_2^{-1}T_1^{-1}=m_{\mathfrak{t}^\lambda} T_2T_1;\\
(\varnothing,\text{\tiny$\begin{matrix}\yng(1)\end{matrix}$}\,,
\text{\tiny$\begin{matrix}\yng(2)\end{matrix}$}\,,
\text{\tiny$\begin{matrix}\yng(2,1)\end{matrix}$}\,,
\text{\tiny$\begin{matrix}\yng(1,1)\end{matrix}$}\,)\hspace{0.5em}\mapsto\hspace{0.5em}
&y_{\mu^{(2)}}^{\lambda'}=E_{1}(T_{2}T_{1}T_{3}T_{2})^{-1}T_2 \,m_{\mu^{(1)}}+\check{B}^{\lambda'}_4\\
&=m_{\mathfrak{t}^{\lambda'}} T_2^{-1}T_3^{-1}T_1^{-1}(1+qT_1);\\
(\varnothing,\text{\tiny$\begin{matrix}\yng(1)\end{matrix}$}\,,
\text{\tiny$\begin{matrix}\yng(1,1)\end{matrix}$}\,,
\text{\tiny$\begin{matrix}\yng(2,1)\end{matrix}$}\,,
\text{\tiny$\begin{matrix}\yng(1,1)\end{matrix}$}\,)\hspace{0.5em}\mapsto\hspace{0.5em}
&y_{\mu^{(2)}}^{\lambda'}T_2=m_{\mathfrak{t}^{\lambda'}}
T_2^{-1}T_3^{-1}T_1^{-1}(1+qT_1)T_2;\\
(\varnothing,\text{\tiny$\begin{matrix}\yng(1)\end{matrix}$}\,,
\text{\tiny$\begin{matrix}\yng(1,1)\end{matrix}$}\,,
\text{\tiny$\begin{matrix}\yng(1,1,1)\end{matrix}$}\,,
\text{\tiny$\begin{matrix}\yng(1,1)\end{matrix}$}\,)\hspace{0.5em}\mapsto\hspace{0.5em}
&y_{\mu^{(3)}}^{\lambda'}=E_1(T_2T_1T_3T_2)^{-1} m_{\mu^{(2)}}+\check{B}^{\lambda'}_4\\
&=m_{\mathfrak{t}^{\lambda'}}
(T_2T_1T_3T_2)^{-1}=m_{\mathfrak{t}^{\lambda'}} T_2T_3T_1T_2.
\end{split}
\end{align*}
The transition matrix from the basis $\{m_\mathfrak{t}=m_{\lambda'} b_\mathfrak{t}+\check{B}_4^{\lambda'}:\mathfrak{t}\in\mathfrak{T}_4(\lambda')\}$ for $S^{\lambda'}$ given in~\eqref{b-murphy} and ordered by dominance, to the basis 
\begin{align*}
\{\mathbf{v}_{i,j}=m_{\lambda'} T_{v_{i,j}} +\check{B}_4^{\lambda '}: v_{i,j}=s_2s_3\cdots s_{j-1}s_1s_2\cdots s_{i-1} \}
\end{align*}
for $S^{\lambda'}$ given in~\eqref{bcell:1} and ordered lexicographically, is:
\begin{align}\label{t-matrix:3}
\begin{bmatrix}
1 & 1-q^2	& 0	& q(q^2-1)	&  1-q^2	& 0	\\
0 & q^2 	& 0	& 1-q^2 	& \frac{\displaystyle q^2-1}{\displaystyle q}	& 0	\\
0 & 0		& 0	& q		& -1	& 0	\\
0 & q^3 	& 1	& q(1-q^2) 	& \frac{\displaystyle 1-q^2}{\displaystyle qr} 	& 0	 \\
0 & 0	 	& 0 	& q^2		& 0	& 0	\\
0 & 0		& 0	& 0		& q^2	& 1
\end{bmatrix}.
\end{align}
The elements $\{b_\mathfrak{t}:\mathfrak{t}\in\mathfrak{T}_n(\lambda')\}$ are made explicit by the above transition matrix.\end{example}
\begin{example}\label{b-m-w-murphex:2}
Let $n=5$ and $\lambda=(2,1)$. Then $\mu\to\lambda$ if $\mu$ is one of the partitions 
\begin{align*}
\mu^{(1)}=(2)\rhd\mu^{(2)}=(1,1)\rhd \mu^{(3)}=(3,1)\rhd\mu^{(4)}=(2,2)\rhd \mu^{(5)}=(2,1,1).
\end{align*}
By considering a suitable basis for $N^{\mu^{(2)}}/N^{\mu^{(1)}}$, we make explicit the elements $b_\mathfrak{t}$, for $\mathfrak{t}\in\mathfrak{T}_n(\lambda)$, defined by~\eqref{goup:3} and 
\begin{align*}
\big\{ y_{\mu^{(2)}}^\lambda b_{\mathfrak{u}}=m_{\mathfrak{t}^\lambda} \,b_\mathfrak{t}: \text{$\mathfrak{t}\in\mathfrak{T}_n(\lambda)$ and $\mathfrak{t}|_{n-1}=\mathfrak{u}\in\mathfrak{T}_{n-1}(\mu^{(2)})$}\big\}.
\end{align*}
For brevity, write $\mu=\mu^{(2)}$. Since $\mathfrak{s}=\text{\tiny\Yvcentermath1$\young(35,4)$}$ satisfies $\mathfrak{s}|_{n-1}=\mathfrak{t}^\mu$, we have $y_\mu^\lambda=m_{\mathfrak{t}^\lambda} T_{d(\mathfrak{s})}=m_{\mathfrak{t}^\lambda }T_{4}$, where where $m_{\mathfrak{t}^\lambda}=E_1(1+qT_3)+\check{B}_n^\lambda$. The transition matrix from the basis
\begin{align*}
\big\{y_\mu^\lambda\, b_{\mathfrak{u}}+\check{N}^\mu=m_{\mathfrak{t}^\lambda} b_\mathfrak{t}+\check{N}^{\mu}:\text{$\mathfrak{t}\in\mathfrak{T}_n(\lambda)$ and $\mathfrak{t}|_{n-1}=\mathfrak{u}\in\mathfrak{T}_{n-1}(\mu)$}\big\},
\end{align*}
which is ordered by dominance, to the basis
\begin{align*}
\big\{\mathbf{v}_{i,j}=m_{\mathfrak{t}^\lambda}T_{d(\mathfrak{s})} T_{v_{i,j}}+\check{N}^\mu: v_{i,j}=s_2s_3\cdots s_{j-1}s_1s_2\cdots s_{i-1} \big\},
\end{align*}
which we order lexicographically, is given by~\eqref{t-matrix:3} above. Observe that though $N^{\mu}/\check{N}^{\mu}\cong S^{\mu^{(2)}}$ as $B_{n-1}(q,r)$--modules, the construction does not give an embedding $S^{\mu^{(2)}}\hookrightarrow S^\lambda$ of $B_{n-1}(q,r)$--modules.
\end{example}

\section{Jucys--Murphy Operators}\label{jmops}
Define the operators $L_i\in B_n(q,r)$, for $i=1,2,\dots,n$, by
$L_1=1$ and $L_{i}=T_{i-1}L_{i-1}T_{i-1}$ when $i=2,\dots,n$. Let
$\mathscr{L}=\mathscr{L}_n$ denote the subalgebra of $B_n(q,r)$
generated by $L_1,\dots,L_n$. The next statement, which is the
analogue to Proposition~\ref{murphyop:1}, is easily obtained from
the braid relation $T_{i}T_{i+1}T_{i}=T_{i+1}T_{i}T_{i+1}$.
\begin{proposition}\label{murphyop:2}
Let $i$ and $k$ be integers, $1\le i<n$ and $1\le k\le n$. Then the
following statements hold.
\begin{enumerate}
\item $T_i$ and $L_k$ commute if $i\ne k-1,k$. \item $L_i$ and
$L_k$ commute.\item $T_i$ commutes with $L_iL_{i+1}$. \item
$L_2\cdots L_n$ belongs to the centre of $B_n(q,r)$.
\end{enumerate}
\end{proposition}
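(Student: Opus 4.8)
The plan is to derive all four assertions from two basic facts: that each $L_k$ lies in the subalgebra $\langle T_1,\dots,T_{k-1}\rangle$ of $B_n(q,r)$, and that $T_m$ commutes with $L_{m+2}$ for every $m$. First note that $B_n(q,r)$ is generated by $T_1,\dots,T_{n-1}$ alone (indeed $E_i=1-(q-q^{-1})^{-1}(T_i-T_i^{-1})$), so assertion~(4) will follow once we know that $L_2\cdots L_n$ commutes with every $T_i$, which we shall deduce from~(1) and~(3). An immediate induction on $k$ using $L_k=T_{k-1}L_{k-1}T_{k-1}$ shows $L_k\in\langle T_1,\dots,T_{k-1}\rangle$; consequently, if $i\ge k+1$ then $T_i$ commutes with each of $T_1,\dots,T_{k-1}$ and so with $L_k$, which is the easy half of~(1).

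For the other half of~(1), with $i\le k-2$, I would first settle the special case $k=i+2$. Writing $L_{i+2}=T_{i+1}T_iL_iT_iT_{i+1}$ and using the braid relation $T_iT_{i+1}T_i=T_{i+1}T_iT_{i+1}$ together with the fact that $T_{i+1}$ commutes with $L_i\in\langle T_1,\dots,T_{i-1}\rangle$ (since $|i+1-j|\ge2$ for $j\le i-1$), one computes
\begin{align*}
T_iL_{i+2}&=(T_iT_{i+1}T_i)L_iT_iT_{i+1}=(T_{i+1}T_iT_{i+1})L_iT_iT_{i+1}\\
&=T_{i+1}T_iL_i(T_{i+1}T_iT_{i+1})=T_{i+1}T_iL_i(T_iT_{i+1}T_i)=L_{i+2}T_i.
\end{align*}
For general $i\le k-2$, iterating the recursion gives $L_k=(T_{k-1}\cdots T_{i+2})\,L_{i+2}\,(T_{i+2}\cdots T_{k-1})$; since $T_i$ commutes with $T_j$ for every $j\ge i+2$, it slides past the outer factors, commutes with $L_{i+2}$ by the special case, and slides back out, so $T_iL_k=L_kT_i$. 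This proves~(1).

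Now~(2): we may assume $i<k$, and then $L_i\in\langle T_1,\dots,T_{i-1}\rangle\subseteq\langle T_1,\dots,T_{k-2}\rangle$, while by~(1) each of $T_1,\dots,T_{k-2}$ commutes with $L_k$; hence $L_i$ commutes with $L_k$. For~(3), the defining relations make $T_i$ invertible, so from $L_{i+1}=T_iL_iT_i$ we obtain $T_iL_i=L_{i+1}T_i^{-1}$ and $T_i^{-1}L_{i+1}=L_iT_i$, whence
\begin{align*}
T_i\,L_iL_{i+1}=L_{i+1}T_i^{-1}L_{i+1}=L_{i+1}L_iT_i=L_iL_{i+1}T_i,
\end{align*}
the last step by~(2). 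Finally, for~(4) fix $i$ with $1\le i<n$ and write $L_2\cdots L_n=L_1L_2\cdots L_n=(L_1\cdots L_{i-1})(L_iL_{i+1})(L_{i+2}\cdots L_n)$, using $L_1=1$. By~(1), $T_i$ commutes with $L_j$ for every $j\notin\{i,i+1\}$, and by~(3) it commutes with $L_iL_{i+1}$; hence $T_i$ commutes with each of the three bracketed factors and so with their product. As the $T_i$ generate $B_n(q,r)$, the element $L_2\cdots L_n$ is central.

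The only step that requires genuine computation rather than bookkeeping is the special case $T_iL_{i+2}=L_{i+2}T_i$, where the braid relation is used twice; once that is in hand, the subalgebra containment $L_k\in\langle T_1,\dots,T_{k-1}\rangle$ and the recursion $L_{i+1}=T_iL_iT_i$ propagate it to all the assertions, and I anticipate no further obstacle beyond carefully tracking which generators $T_j$ commute with which $L_k$.
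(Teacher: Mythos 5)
Your proof is correct, and it supplies exactly the details the paper elides when it says the proposition is "easily obtained from the braid relation." The strategy — establishing $L_k\in\langle T_1,\dots,T_{k-1}\rangle$ by the recursion, reducing the noncommutative-index case of~(1) to $T_iL_{i+2}=L_{i+2}T_i$ via conjugation and two applications of the braid relation, then deriving~(2), (3), and~(4) formally from~(1) — is the natural and intended route; in particular the observation that $E_i$ lies in $\langle T_i\rangle$ so that the $T_j$ alone generate $B_n(q,r)$ is needed for~(4) and you rightly flag it.
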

\begin{remark}
(i) The elements $L_i$ are a special case of certain operators defined in Corollary~1.6 of~\cite{ramleduc:rh} in a context of semisimple path algebras.

(ii) The elements $L_i$ bear an analogy with the Jucys--Murphy operators $D_i$ defined in Section~\ref{ihsec}; we therefore refer to the $L_i$ as ``Jucys--Murphy operators'' in $B_n(q,r)$.
\end{remark}
For integers $j,k$, with $1\le j,k \le n$, define the elements
$L_k^{(j)}$ by $L_1^{(j)}=1$ and
\begin{align*}
L_k^{(j)}=T_{j+k-2}L_{k-1}^{(j)}T_{j+k-2}, &&\text{for $k\ge2$.}
\end{align*}
In particular $L_k^{(1)}$, for $k=1,\dots,n$, are the usual
Jucys--Murphy operators in $B_n(q,r)$. 

The next proposition is a step on the way to showing that the set $\{m_\mathfrak{t}=m_\lambda b_\mathfrak{t}+\check{B}^\lambda_n:\mathfrak{t}\in\mathfrak{T}_n(\lambda)\}$ defined in~\eqref{b-murphy} above is a basis of generalised eigenvectors for the action of Jucys--Murphy operators on the cell module $S^\lambda$.
\begin{proposition}\label{b-m-wshorten}
Let $i,k$ be integers with $1\le i\le n$ and $1<k\le n$. Then
\begin{align*}
E_i L_k^{(i)}=
\begin{cases}
r^{-2}E_i&\text{if $k=2$;}\\
E_i&\text{if $k=3$;}\\
E_iL_{k-2}^{(i+2)}&\text{if $k\ge 4$.}
\end{cases}
\end{align*}
\end{proposition}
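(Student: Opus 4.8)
The plan is to dispatch the three cases in turn, treating $k=3$ as the crux and reducing the case $k\ge 4$ to it by an induction on $k$. The case $k=2$ is immediate: since $L_2^{(i)}=T_iL_1^{(i)}T_i=T_i^2$, two applications of the relation $E_iT_i=r^{-1}E_i$ give $E_iL_2^{(i)}=E_iT_i^2=r^{-1}E_iT_i=r^{-2}E_i$, which is the first case.

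For $k=3$ one has $L_3^{(i)}=T_{i+1}T_i^2T_{i+1}$, so the goal is the identity $E_iT_{i+1}T_i^2T_{i+1}=E_i$. I would substitute the quadratic relation $T_i^2=1+(q-q^{-1})(T_i-r^{-1}E_i)$ and expand $E_iT_{i+1}T_i^2T_{i+1}$ into the three summands $E_iT_{i+1}^2$, $(q-q^{-1})E_iT_{i+1}T_iT_{i+1}$ and $-(q-q^{-1})r^{-1}E_iT_{i+1}E_iT_{i+1}$. Each is then rewritten using the relations collected in Section~\ref{b-m-w-a}: for the first summand, $T_{i+1}^2=1+(q-q^{-1})(T_{i+1}-r^{-1}E_{i+1})$; for the second, the braid relation $T_{i+1}T_iT_{i+1}=T_iT_{i+1}T_i$ followed by $E_iT_i=r^{-1}E_i$ and $E_iT_{i+1}T_i=E_iE_{i+1}$; for the third, $E_iT_{i+1}E_i=rE_i$. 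Collecting the results, the multiples of $E_iT_{i+1}$ cancel against one another and the multiples of $E_iE_{i+1}$ cancel against one another, leaving exactly $E_i$. I expect this to be the step that demands the most care, since it relies on several simultaneous cancellations, but it is a finite and mechanical manipulation once the right relations are invoked.

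For $k\ge 4$ I would induct on $k$. The defining recursion $L_k^{(i)}=T_{i+k-2}L_{k-1}^{(i)}T_{i+k-2}$, together with the fact that $T_{i+k-2}$ commutes with $E_i$ whenever $k\ge 4$, yields $E_iL_k^{(i)}=T_{i+k-2}\bigl(E_iL_{k-1}^{(i)}\bigr)T_{i+k-2}$. The base case $k=4$ follows from the case $k=3$: since $E_i$ and $T_{i+2}$ commute, $E_iL_4^{(i)}=T_{i+2}\bigl(E_iL_3^{(i)}\bigr)T_{i+2}=T_{i+2}E_iT_{i+2}=E_iT_{i+2}^2=E_iL_2^{(i+2)}$. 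For $k\ge 5$, the inductive hypothesis $E_iL_{k-1}^{(i)}=E_iL_{k-3}^{(i+2)}$ gives $E_iL_k^{(i)}=T_{i+k-2}\bigl(E_iL_{k-3}^{(i+2)}\bigr)T_{i+k-2}=E_i\bigl(T_{i+k-2}L_{k-3}^{(i+2)}T_{i+k-2}\bigr)=E_iL_{k-2}^{(i+2)}$, where the last equality is just the defining recursion for $L_{k-2}^{(i+2)}$. This closes the induction and completes the proof.
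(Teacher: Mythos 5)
Your proof is correct, and the cases $k=2$ and $k\ge 4$ proceed along essentially the same lines as the paper (the paper writes out the telescoping product for $k\ge 4$ rather than phrasing it as an induction, but that is a cosmetic difference). For $k=3$, however, you take a genuinely different computational route. The paper uses the relations $E_iE_{i+1}=E_iT_{i+1}T_i$ and $E_iE_{i+1}E_i=E_i$ directly: from $E_iT_{i+1}T_i\cdot T_iT_{i+1}$ it first collapses $E_iT_{i+1}T_i$ to $E_iE_{i+1}$, then recognises $E_{i+1}T_iT_{i+1}=E_{i+1}E_i$, arriving at $E_iE_{i+1}E_i=E_i$ in two quick steps. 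You instead expand $T_i^2$ via the cubic (skein) relation $T_i^2=1+(q-q^{-1})(T_i-r^{-1}E_i)$ and chase cancellations. Both are valid; the paper's approach is shorter and avoids cancellations, while yours is more mechanical but requires no cleverness in spotting which relation to apply. I verified the cancellations you claim: the contributions $\pm(q-q^{-1})E_iT_{i+1}$ from the first and third summands cancel, and the contributions $\mp(q-q^{-1})r^{-1}E_iE_{i+1}$ from the first and second summands cancel, leaving exactly $E_i$.
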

\begin{proof}
If $k=2$, then $E_i L_k^{(i)}=E_iT_i^2=r^{-2}E_i$. For $k=3$, we use
the relations $E_iE_{i+1}=E_iT_{i+1}T_i=T_{i+1}T_iE_{i+1}$ and
$E_iE_{i+1}E_i=E_i$ to obtain
\begin{align}\label{groble}
E_iL_3^{(i)}=E_iT_{i+1}T_iT_iT_{i+1}=E_iE_{i+1}T_iT_{i+1}=E_iE_{i+1}E_i=E_i.
\end{align}
If $k\ge 4$, then using~\eqref{groble},
\begin{align*}
E_iL_k^{(i)}&=E_iT_{i+k-2}T_{i+k-3}\cdots T_{i+2}L_3^{(i)}T_{i+2}\cdots T_{i+k-3}T_{i+k-2}\\
&=T_{i+k-2}T_{i+k-3}\cdots T_{i+2}E_iL_3^{(i)}T_{i+2}\cdots T_{i+k-3}T_{i+k-2}\\
&=E_iT_{i+k-2}T_{i+k-3}\cdots T_{i+2}T_{i+2}\cdots T_{i+k-3}T_{i+k-2}=E_iL^{(i+2)}_{k-2}.
\end{align*}
\end{proof}

\begin{corollary}\label{groblecor}
Let $f,k$ be integers, $0<f\le[n/2]$ and $1\le k\le n$. Then
\begin{align*}
E_1E_3\cdots E_{2f-1}L_k
=
\begin{cases}
E_1E_3\cdots E_{2f-1}&\text{if $k$ is odd, $1\le
k\le2f+1$;}\\
r^{-2}E_1E_3\cdots E_{2f-1}&\text{if $k$ is even, $1<
k\le2f$;}\\
E_1E_3\cdots E_{2f-1}L_{k-2f}^{(2f+1)}&\text{if $2f+1<k\le n$.}
\end{cases}
\end{align*}
\end{corollary}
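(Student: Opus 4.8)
The plan is to prove, by induction on $g$, a more general identity from which the Corollary follows by specialisation: for all integers $a\ge 1$ and $g\ge 1$ with $a+2g-2\le n-1$, and every $k$ with $1\le k\le n-a+1$,
\begin{align*}
(E_aE_{a+2}\cdots E_{a+2g-2})\,L_k^{(a)}=
\begin{cases}
E_aE_{a+2}\cdots E_{a+2g-2}&\text{if $k$ is odd and $1\le k\le 2g+1$;}\\
r^{-2}E_aE_{a+2}\cdots E_{a+2g-2}&\text{if $k$ is even and $1<k\le 2g$;}\\
(E_aE_{a+2}\cdots E_{a+2g-2})\,L_{k-2g}^{(a+2g)}&\text{if $2g+1<k$.}
\end{cases}
\end{align*}
The Corollary is the case $a=1$, $g=f$: the hypothesis $0<f\le[n/2]$ is precisely $a+2g-2=2f-1\le n-1$, the range $1\le k\le n$ is precisely $1\le k\le n-a+1$, and $L_{k-2g}^{(a+2g)}=L_{k-2f}^{(2f+1)}$. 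The base case $g=1$ is nothing but Proposition~\ref{b-m-wshorten} with $i=a$, together with the convention $L_1^{(a)}=1$: that proposition states exactly $E_aL_2^{(a)}=r^{-2}E_a$, $E_aL_3^{(a)}=E_a$, and $E_aL_k^{(a)}=E_aL_{k-2}^{(a+2)}$ for $k\ge 4$, which is the three-case formula for $g=1$.

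For the inductive step I would write $P=E_aE_{a+2}\cdots E_{a+2g-2}$ and $Q=E_{a+2}E_{a+4}\cdots E_{a+2g-2}$. Because $E_a$ commutes with each $E_{a+2j}$ for $j\ge 1$ (being functions of $T_a$ and $T_{a+2j}$, which commute), one has $P=E_aQ=QE_a$. For $k\in\{1,2,3\}$ the claim is immediate from $PL_k^{(a)}=Q\,(E_aL_k^{(a)})$ and the values of $E_aL_k^{(a)}$ given by Proposition~\ref{b-m-wshorten} (with $L_1^{(a)}=1$). For $k\ge 4$, Proposition~\ref{b-m-wshorten} gives $E_aL_k^{(a)}=E_aL_{k-2}^{(a+2)}$; since $L_{k-2}^{(a+2)}$ is a word in $T_{a+2},\dots,T_{a+k-2}$, it commutes with $E_a$, so
\begin{align*}
PL_k^{(a)}=QE_aL_{k-2}^{(a+2)}=E_a\bigl(Q\,L_{k-2}^{(a+2)}\bigr).
\end{align*}
Now $Q=E_{a'}E_{a'+2}\cdots E_{a'+2(g-1)-2}$ with $a'=a+2$, so the inductive hypothesis (with $g$ replaced by $g-1$ and $k$ by $k-2$) evaluates $Q\,L_{k-2}^{(a+2)}$; multiplying the result on the left by $E_a$ and using $E_aQ=P$ yields the asserted value of $PL_k^{(a)}$.

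The one point that needs care is the index bookkeeping in this last step: for $k\ge 4$ one must check that the three conditions --- $k$ odd with $1\le k\le 2g+1$; $k$ even with $1<k\le 2g$; and $2g+1<k$ --- match, under $k\mapsto k-2$ and $g\mapsto g-1$, the three conditions of the inductive hypothesis, and that $L_{(k-2)-2(g-1)}^{(a+2+2(g-1))}=L_{k-2g}^{(a+2g)}$. The small cases $k\le 3$ genuinely must be separated off beforehand, since Proposition~\ref{b-m-wshorten} is stated for $k\ge 2$ and the shift $k\mapsto k-2$ only lands in the admissible range once $k\ge 4$. Apart from this, the argument is a routine unwinding of Proposition~\ref{b-m-wshorten}, and presents no conceptual obstacle.
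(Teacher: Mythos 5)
Your proof is correct and rests on the same mechanism as the paper: iterated application of Proposition~\ref{b-m-wshorten}, using the shift $E_iL_k^{(i)}=E_iL_{k-2}^{(i+2)}$ together with the commutation of $E_a$ with the remaining factors. You package this as a formal induction on the number of $E$-factors with a general starting index $a$, which treats all three cases uniformly, whereas the paper writes out the telescoping chain directly for the odd case and handles the even case separately by first unwinding $L_k=T_{k-1}L_{k-1}T_{k-1}$ and applying $E_{k-1}T_{k-1}=r^{-1}E_{k-1}$ before reducing to the odd case; both versions are valid and yield the same identities.
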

\begin{proof}
If $k$ is odd, $1<k\le 2f+1$, then by the proposition above,
\begin{multline}\begin{split}\label{grobles}
E_1E_3\cdots E_k L_k=E_1E_3\cdots E_kL_k^{(1)}=E_1E_3\cdots
E_kL_{k-2}^{(3)}=\cdots\\
\cdots=E_1E_3\cdots E_kL_{1}^{(k)}=E_1E_3\cdots E_k.\end{split}
\end{multline}
Since $E_{k+2}E_{k+4}\cdots E_{2f-1}$ commutes with $L_k$, the first
statement has been proved. If $k$ is even, $1<k\le 2f$, then use the
relation $E_iT_i=r^{-1}T_i$ and~\eqref{grobles} so that
\begin{align*}
E_1E_3\cdots E_{2f-1}L_k&=E_1E_3\cdots
E_{2f-1}T_{k-1}L_{k-1}T_{k-1}\\
&=r^{-1}E_1E_3\cdots E_{2f-1}L_{k-1}T_{k-1}=r^{-2}E_1E_3\cdots
E_{2f-1},
\end{align*}
as above. The final case where $2f+1<k\le n$ is similar
to~\eqref{grobles} above.
\end{proof}
Let $f$ be an integer, $0\le f\le [n/2]$, and $\lambda$ be a
partition of $n-2f$. Suppose that
$\mathfrak{t}=(\lambda^{(0)},\lambda^{(1)},\dots,\lambda^{(n)})$ is
a path in $\mathfrak{T}_n(\lambda)$, and that $k$ is an integer,
$1\le k\le n$. Then generalise the definition~\eqref{mon:1} by
writing
\begin{align*}
P_\mathfrak{t}(k)=
\begin{cases}
q^{2(j-i)}&\text{if $[\lambda^{(k)}]=[\lambda^{(k-1)}]\cup
\{(i,j)\}$}\\
q^{2(i-j)}r^{-2}&\text{if
$[\lambda^{(k)}]=[\lambda^{(k-1)}]\setminus \{(i,j)\}$}.
\end{cases}
\end{align*}
Since $q$ does not have finite multiplicative order in $R$, the next result which is similar in flavour to Lemma~5.20 of~\cite{ramleduc:rh}, follows immediately from Lemma~3.34 of~\cite{mathas:ih}.
\begin{lemma}\label{ram:dist0}
Let $f$ be an integer $0\le f<[n/2]$ and $\lambda^{(n-1)}$ be a
partition of $n-1-2f$. If
$\mathfrak{s}=(\lambda^{(0)},\lambda^{(1)},\dots,\lambda^{(n-1)})$
is a path in $\mathfrak{T}_{n-1}(\lambda^{(n-1)})$, then the terms
$\big(P_{\mathfrak{t}}(n)\,:\,\mathfrak{t}|_{n-1}=\mathfrak{s}\big)$
are all distinct.
\end{lemma}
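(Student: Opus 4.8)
The plan is to reduce the assertion to one elementary combinatorial fact about node contents together with the algebraic independence of $q$ and $r$ over $\mathbb{Z}$. Write $\mu=\lambda^{(n-1)}$, a partition of $n-1-2f$; the hypothesis $f<[n/2]$ guarantees that $\mu$ is non-empty, so $[\mu]$ has at least one removable node. A path $\mathfrak{t}$ with $\mathfrak{t}|_{n-1}=\mathfrak{s}$ amounts to a choice of a partition $\lambda^{(n)}$ with $\mu\to\lambda^{(n)}$, hence to a choice of either an addable or a removable node of $[\mu]$. By the definition of $P_{\mathfrak{t}}(n)$ recalled above, if $\mathfrak{t}$ adjoins the node $(i,j)$ then $P_{\mathfrak{t}}(n)=q^{2(j-i)}$, while if $\mathfrak{t}$ deletes the node $(i,j)$ then $P_{\mathfrak{t}}(n)=q^{2(i-j)}r^{-2}$.

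First I would invoke the standard fact (Lemma~3.34 of~\cite{mathas:ih}) that distinct addable nodes of $[\mu]$ have distinct contents $j-i$, and similarly that distinct removable nodes of $[\mu]$ have distinct contents. Since $q$ has infinite multiplicative order in $R$, the equality $q^{2a}=q^{2b}$ forces $a=b$ for integers $a,b$; hence the values $P_{\mathfrak{t}}(n)$ arising from the ``adding'' extensions are pairwise distinct, and likewise the values arising from the ``removing'' extensions are pairwise distinct.

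It then remains to rule out a coincidence between an ``adding'' value and a ``removing'' value. If $q^{2(j-i)}=q^{2(i'-j')}r^{-2}$ in $R$ for an addable node $(i,j)$ and a removable node $(i',j')$ of $[\mu]$, then $r^{2}=q^{-2(j-i+i'-j')}$; but $R$ is an integral domain in which $q$ and $r$ are algebraically independent over $\mathbb{Z}$, so $r^{2}$ cannot coincide with any integral power of $q$, a contradiction. Assembling the three cases gives the claim. I do not anticipate a genuine obstacle here; the only points that need a little care are quoting the correct form of the combinatorial distinctness statement, and observing that the extra factor $r^{-2}$ attached to the ``removing'' case is precisely what keeps it disjoint, over the generic ring $R$, from the ``adding'' case.
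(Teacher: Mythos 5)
Your proof is correct and follows essentially the same route as the paper, which simply cites the content-distinctness fact (Lemma~3.34 of Mathas) together with the infinite multiplicative order of $q$ in $R$; you have merely spelled out the three resulting cases, including the observation that the factor $r^{-2}$ separates the ``removing'' values from the ``adding'' values over the generic ring.
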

The next proposition is essentially a restatement of
Theorem~\ref{utrangular:1}.
Recall that if $f$ is an integer, $0\le f\le [n/2]$, and $\lambda$
is a partition of $n-2f$, then $\mathfrak{t}^\lambda$ is the element in
$\mathfrak{T}_n(\lambda)$ which is maximal under the dominance order.
\begin{proposition}\label{utrangular:3}
If $\lambda$ is a partition of $n$ and $k$ is an integer $1\le
k\le n$, then
$m_{\mathfrak{t}^\lambda}L_k=P_{\mathfrak{t}^\lambda}(k)m_{\mathfrak{t}^\lambda}$.
\end{proposition}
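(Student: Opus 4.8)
The plan is to observe that $\lambda\vdash n$ forces $f=0$, so the whole statement lives in the ``$f=0$ layer'', and then to transport the Dipper--James identity of Theorem~\ref{utrangular:1} across the algebra isomorphism of Lemma~\ref{bquot}. First I would assemble three facts valid for $f=0$. (a) Here $m_\lambda=x_\lambda$ and $B^{1}_n\subseteq\check{B}^\lambda_n$ (recorded just after Theorem~\ref{saruthm}); moreover the path $\mathfrak{t}^\lambda$ maximal in $\mathfrak{T}_n(\lambda)$ is the superstandard tableau, and an induction on $n$ through the construction of Section~\ref{newbasis} gives $m_{\mathfrak{t}^\lambda}=m_\lambda+\check{B}^\lambda_n$ (that is, $b_{\mathfrak{t}^\lambda}=1$): indeed $\mathfrak{t}^\lambda|_{n-1}$ is the maximal path of the dominant $\mu$ with $\mu\to\lambda$, the unique $\mathfrak{s}\in\STD_n(\lambda)$ with $\mathfrak{s}|_{n-1}=\mathfrak{t}^{\mu^{(1)}}$ is $\mathfrak{t}^\lambda$ itself, so $y^\lambda_{\mu^{(1)}}=m_\lambda+\check{B}^\lambda_n$ by~\eqref{ydef:1} while $b_{\mathfrak{t}^\lambda|_{n-1}}=1$ by induction. (b) A path of shape $\lambda\vdash n$ can only add nodes, so the quantity $P_{\mathfrak{t}^\lambda}(k)$ of Section~\ref{jmops} coincides with the quantity $P_{\mathfrak{t}^\lambda}(k)$ of~\eqref{mon:1}. (c) Lemma~\ref{bquot} with $f=0$ furnishes an algebra isomorphism $\vartheta_0\colon\mathscr{H}_n(q^2)\to B_n(q,r)/B^{1}_n$ with $\vartheta_0(X_i)=T_i+B^{1}_n$, hence $\vartheta_0(X_w)=T_w+B^{1}_n$ for all $w$.

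Next I would verify that $\vartheta_0$ matches up the relevant objects: $\vartheta_0(c_\lambda)=x_\lambda+B^{1}_n=m_\lambda+B^{1}_n$; since $D_k$ and $L_k$ satisfy the identical recursions ($D_1=1$, $D_k=X_{k-1}D_{k-1}X_{k-1}$ versus $L_1=1$, $L_k=T_{k-1}L_{k-1}T_{k-1}$) and $\vartheta_0$ is multiplicative, $\vartheta_0(D_k)=L_k+B^{1}_n$; and $\vartheta_0$ carries $\check{\mathscr{H}}^\lambda_n$ onto $\check{B}^\lambda_n/B^{1}_n$, because $\check{\mathscr{H}}^\lambda_n$ is the two-sided ideal generated by $\{c_\mu:\mu\vdash n,\ \mu\rhd\lambda\}$ and the images $m_\mu+B^{1}_n$ of these generators span the ideal $(\sum_{\mu\vdash n,\ \mu\rhd\lambda}B^\mu_n+B^{1}_n)/B^{1}_n$, which is all of $\check{B}^\lambda_n/B^{1}_n$ once one notes that the remaining generators $B^\mu_n$ of $\check{B}^\lambda_n$, with $\mu$ a partition of $n-2f'$ for some $f'\ge1$, lie in $B^{f'}_n\subseteq B^{1}_n$. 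Thus $\vartheta_0$ induces an isomorphism $\mathscr{H}_n(q^2)/\check{\mathscr{H}}^\lambda_n\cong B_n(q,r)/\check{B}^\lambda_n$.

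Finally I would apply Theorem~\ref{utrangular:1} with $\mathfrak{s}=\mathfrak{t}^\lambda$: since $\mathfrak{t}^\lambda$ is maximal in $\STD(\lambda)$ there are no $\mathfrak{v}\rhd\mathfrak{t}^\lambda$, so the identity reads $c_\lambda D_k\equiv P_{\mathfrak{t}^\lambda}(k)c_\lambda\mod\check{\mathscr{H}}^\lambda_n$; pushing this through $\vartheta_0$ and invoking multiplicativity together with the identifications above gives $m_\lambda L_k\equiv P_{\mathfrak{t}^\lambda}(k)m_\lambda\mod\check{B}^\lambda_n$, which, interpreted in $S^\lambda$ via the module structure of Theorem~\ref{saruthm} and using $m_{\mathfrak{t}^\lambda}=m_\lambda+\check{B}^\lambda_n$, is exactly $m_{\mathfrak{t}^\lambda}L_k=P_{\mathfrak{t}^\lambda}(k)m_{\mathfrak{t}^\lambda}$. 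The only genuinely delicate steps are the two bookkeeping items: showing that the top basis vector of $S^\lambda$ is literally $m_\lambda+\check{B}^\lambda_n$ (item (a)), and the ideal identification in the second paragraph, where the dominance convention of Section~\ref{prelsec} must be used to see that the ``extra'' generators of $\check{B}^\lambda_n$ are absorbed into $B^{1}_n$; granting these, the proposition is immediate.
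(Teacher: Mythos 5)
Your proposal is correct and follows essentially the same route as the paper: transport the Dipper--James triangularity (Theorem~\ref{utrangular:1}) across $\vartheta_0$, exploit $B^1_n\subseteq\check{B}^\lambda_n$, and observe that $m_{\mathfrak{t}^\lambda}=m_\lambda+\check{B}^\lambda_n$. The paper compresses what you carefully spell out (your item (a) is dismissed there as ``by definition,'' and the multiplicativity of $\vartheta_0$ is invoked via property~\eqref{nearh} rather than your appeal to Lemma~\ref{bquot}), but the substance is identical.
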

\begin{proof}
By definition,
$m_{\mathfrak{t}^\lambda}=m_\lambda+\check{B}^\lambda_n$ so, using
the property~\eqref{nearh},
\begin{align*}
m_\lambda L_k+B_n^1=\vartheta_0(c_\lambda D_{k})=
P_{\mathfrak{t}^\lambda}(k)\vartheta_0(c_\lambda)
\end{align*}
where the last equality follows from Theorem~\ref{utrangular:1}. Now, given that $B^1_n\subseteq \check{B}^\lambda_n$ whenever $\lambda$ is a
partition of $n$, the result follows.
\end{proof}
\begin{proposition}\label{danaming}
Let $f$ be an integer, $0< f\le [n/2]$, and $\lambda$ be a partition
of $n-2f$. Then $m_{\mathfrak{t}^\lambda}
L_k=P_{\mathfrak{t}^\lambda}(k)m_{\mathfrak{t}^\lambda}$.
\end{proposition}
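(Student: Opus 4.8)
The plan is to prove the equivalent congruence $m_\lambda L_k\equiv P_{\mathfrak{t}^\lambda}(k)\,m_\lambda\pmod{\check{B}_n^\lambda}$, using $m_{\mathfrak{t}^\lambda}=m_\lambda+\check{B}_n^\lambda$ and $m_\lambda=E_1E_3\cdots E_{2f-1}x_\lambda$. First one reads the constants off the maximal path $\mathfrak{t}^\lambda$: for $k$ odd with $1\le k\le 2f+1$ the step $\lambda^{(k-1)}\to\lambda^{(k)}$ adds the node $(1,1)$, so $P_{\mathfrak{t}^\lambda}(k)=1$; for $k$ even with $1<k\le 2f$ it deletes the node $(1,1)$, so $P_{\mathfrak{t}^\lambda}(k)=r^{-2}$; and for $2f<k\le n$ (which happens only when $\lambda\neq\varnothing$) the node added at step $k$ is the one labelled $k$ in the superstandard tableau of $\STD_n(\lambda)$, so $P_{\mathfrak{t}^\lambda}(k)=q^{2(j-i)}$, which coincides with the Iwahori--Hecke quantity $P_{\mathfrak{t}^\lambda}(k-2f)$ of~\eqref{mon:1} attached to the superstandard tableau in $\STD(\lambda)$.

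The range $1\le k\le 2f$ is immediate. Here $L_k$ is a word in $T_1,\dots,T_{2f-1}$, hence commutes with $x_\lambda\in\langle T_{2f+1},\dots,T_{n-1}\rangle$, so $m_\lambda L_k=E_1E_3\cdots E_{2f-1}L_k x_\lambda$; Corollary~\ref{groblecor} rewrites $E_1E_3\cdots E_{2f-1}L_k$ as $E_1E_3\cdots E_{2f-1}$ when $k$ is odd and as $r^{-2}E_1E_3\cdots E_{2f-1}$ when $k$ is even, whence $m_\lambda L_k=P_{\mathfrak{t}^\lambda}(k)\,m_\lambda$ on the nose.

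For $2f<k\le n$ I would pass to the Iwahori--Hecke picture through $\vartheta_f$, working modulo $B_n^{f+1}\subseteq\check{B}_n^\lambda$. The key auxiliary fact to isolate is: $E_1E_3\cdots E_{2f-1}\,c\,(L_{2f+1}-1)\in B_n^{f+1}$ for every $c\in C_f$. One proves it by decomposing $c$ as an $R$-linear combination of the $T_w$, $w\in\langle s_i:2f<i<n\rangle$, plus an element of $I_f$: by Lemma~\ref{gomi} the $I_f$-part contributes to $B_n^{f+1}$ after left multiplication by $E_1E_3\cdots E_{2f-1}$ and then by $(L_{2f+1}-1)$ on the right, while each $E_{2j-1}$ ($1\le j\le f$) commutes with every $T_i$, $i\ge 2f+1$, hence with every such $T_w$, so that $E_1E_3\cdots E_{2f-1}T_w(L_{2f+1}-1)=T_w\,E_1E_3\cdots E_{2f-1}(L_{2f+1}-1)=0$ by Corollary~\ref{groblecor}. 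Now write $L_k=(T_{k-1}\cdots T_{2f+1})L_{2f+1}(T_{2f+1}\cdots T_{k-1})$ and $L_{k-2f}^{(2f+1)}=(T_{k-1}\cdots T_{2f+1})(T_{2f+1}\cdots T_{k-1})$; applying the auxiliary fact with $c=x_\lambda(T_{k-1}\cdots T_{2f+1})\in C_f$ and multiplying by $(T_{2f+1}\cdots T_{k-1})$ on the right gives $m_\lambda L_k\equiv m_\lambda L_{k-2f}^{(2f+1)}\pmod{B_n^{f+1}}$. On the other side, iterating~\eqref{nearh} along the word $D_{k-2f}=X_{k-2f-1}X_{k-2f-2}\cdots X_1X_1\cdots X_{k-2f-2}X_{k-2f-1}$ of $\mathscr{H}_{n-2f}(q^2)$ yields $\vartheta_f(c_\lambda D_{k-2f})=\vartheta_f(c_\lambda)\,L_{k-2f}^{(2f+1)}=m_\lambda L_{k-2f}^{(2f+1)}+B_n^{f+1}$. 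Finally Theorem~\ref{utrangular:1}, applied to the superstandard and hence dominance--maximal tableau in $\STD(\lambda)$, for which no strictly larger standard tableau exists, gives $c_\lambda D_{k-2f}\equiv P_{\mathfrak{t}^\lambda}(k-2f)\,c_\lambda\pmod{\check{\mathscr{H}}_{n-2f}^\lambda}$; pushing this through $\vartheta_f$ and using that $\vartheta_f$ carries $\check{\mathscr{H}}_{n-2f}^\lambda$ into $\check{B}_n^\lambda$ modulo $B_n^{f+1}$ (exactly as in the proof of Lemma~\ref{bres1}) yields $m_\lambda L_{k-2f}^{(2f+1)}\equiv P_{\mathfrak{t}^\lambda}(k-2f)\,m_\lambda\pmod{\check{B}_n^\lambda}$. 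Combined with $m_\lambda L_k\equiv m_\lambda L_{k-2f}^{(2f+1)}$ and $P_{\mathfrak{t}^\lambda}(k)=P_{\mathfrak{t}^\lambda}(k-2f)$, this finishes the proof.

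I expect the main obstacle to be exactly the presence of the factor $x_\lambda$: unlike in the Iwahori--Hecke case, $x_\lambda$ need not commute with $L_k$ when $2f<k\le n$, so one cannot simply transport $L_k$ onto the idempotents $E_1E_3\cdots E_{2f-1}$ and invoke Corollary~\ref{groblecor}. The isolated fact above is what circumvents this: the point is that each $E_{2j-1}$ commutes with all of $\langle T_{2f+1},\dots,T_{n-1}\rangle$, and so may be slid past both $x_\lambda$ and the prefix $T_{k-1}\cdots T_{2f+1}$ before the absorption identity $E_1E_3\cdots E_{2f-1}L_{2f+1}=E_1E_3\cdots E_{2f-1}$ is applied.
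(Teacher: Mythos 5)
Your argument is correct and follows essentially the same route as the paper: reduce the case $2f<k\le n$ to the Iwahori--Hecke statement via $\vartheta_f$ and Theorem~\ref{utrangular:1}, after replacing $L_k$ by $L_{k-2f}^{(2f+1)}$. However, the ``obstacle'' you identify is not actually present, and the auxiliary fact is an unnecessary detour: since $x_\lambda$ lies in $\langle T_{2f+1},\dots,T_{n-1}\rangle$ it commutes with $E_1E_3\cdots E_{2f-1}$, so one may write $m_\lambda L_k=x_\lambda\,E_1E_3\cdots E_{2f-1}L_k$ and apply Corollary~\ref{groblecor} directly to the factor $E_1E_3\cdots E_{2f-1}L_k$, obtaining $m_\lambda L_k=m_\lambda L_{k-2f}^{(2f+1)}$ as an exact identity (not merely modulo $B_n^{f+1}$); this is how the paper proceeds. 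Your auxiliary lemma, while true and correctly proved, amounts to re-deriving a weakened form of that corollary. One small slip: in the last line you write $P_{\mathfrak{t}^\lambda}(k)=P_{\mathfrak{t}^\lambda}(k-2f)$, but the right-hand side should be $P_{\hat{\mathfrak{t}}^\lambda}(k-2f)$, i.e.\ the Hecke-algebra quantity~\eqref{mon:1} attached to the superstandard tableau $\hat{\mathfrak{t}}^\lambda\in\STD(\lambda)$, which you do compute correctly in your opening paragraph.
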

\begin{proof}
If $k$ is an integer, $1\le k\le 2f+1$, the statement follows from
Corollary~\ref{groblecor}; otherwise, using the corollary and
property~\eqref{nearh},
\begin{align*}
m_{\lambda}L_k+B_n^{f+1}&=x_\lambda E_1E_3\cdots E_{2f-1} L_k+B_n^{f+1}\\
&=x_\lambda E_1E_3\cdots E_{2f-1}L_{k-2f}^{(2f+1)}+B_n^{f+1}\\
&=\vartheta_f(c_\lambda D_{k-2f})=
P_{\hat{\mathfrak{t}}^\lambda}(k-2f)\vartheta_f(c_\lambda)\\
&=P_{{\mathfrak{t}}^\lambda}(k)m_\lambda+{B_n^{f+1}},
\end{align*}
whence the result follows, since $B_n^{f+1}\subseteq
\check{B}_n^\lambda$ whenever $\lambda$ is a partition of $n-2f$.
\end{proof}
\begin{proposition}\label{central}
Let $f$ be an integer, $0\le f\le[n/2]$ and $\lambda$ be a partition of $n-2f$. Then there exists an invariant $\alpha\in R$ such that $(L_2\cdots L_n)$ acts on $S^\lambda$ as a multiple by $\alpha$ of the
identity.
\end{proposition}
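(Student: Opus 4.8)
The plan is to combine three ingredients that are already in place: that $L_2L_3\cdots L_n$ is central in $B_n(q,r)$ (Proposition~\ref{murphyop:2}(4)), that the cell module $S^\lambda$ is cyclic with generator $m_{\mathfrak{t}^\lambda}=m_\lambda+\check B_n^\lambda$, and that this generator is a simultaneous eigenvector for the operators $L_1,\dots,L_n$ (Propositions~\ref{utrangular:3} and~\ref{danaming}). Write $C=L_2L_3\cdots L_n$.

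First I would record two preliminary observations. Since $C$ lies in the centre of $B_n(q,r)$, the map $v\mapsto vC$ is a right $B_n(q,r)$--module endomorphism of $S^\lambda$, because $(vb)C=vbC=vCb=(vC)b$ for every $b\in B_n(q,r)$. Secondly, it follows directly from the definition~\eqref{bcell:1} of the $B_n(q,r)$--action on $S^\lambda$ that $S^\lambda$ is generated as a right $B_n(q,r)$--module by the single element $m_{\mathfrak{t}^\lambda}=m_\lambda+\check B_n^\lambda$, since each basis element $m_\lambda T_{d(\mathfrak{t})}T_u+\check B_n^\lambda$ equals $m_{\mathfrak{t}^\lambda}\cdot(T_{d(\mathfrak{t})}T_u)$.

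Next I would compute the scalar on the generator. By Proposition~\ref{utrangular:3} in the case $f=0$ and by Proposition~\ref{danaming} in the case $f>0$, for each integer $k$ with $1\le k\le n$ one has $m_{\mathfrak{t}^\lambda}L_k=P_{\mathfrak{t}^\lambda}(k)\,m_{\mathfrak{t}^\lambda}$. Applying this successively for $k=2,3,\dots,n$ yields
\begin{align*}
m_{\mathfrak{t}^\lambda}C=m_{\mathfrak{t}^\lambda}L_2L_3\cdots L_n=\alpha\,m_{\mathfrak{t}^\lambda},&&\text{where }\alpha=\prod_{k=2}^{n}P_{\mathfrak{t}^\lambda}(k).
\end{align*}
Each $P_{\mathfrak{t}^\lambda}(k)$ is a product of integer powers of $q$ and of $r$, all units in $R$, so $\alpha\in R$.

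Finally, given an arbitrary $v\in S^\lambda$, I would use cyclicity to write $v=m_{\mathfrak{t}^\lambda}b$ for some $b\in B_n(q,r)$, and then, invoking centrality of $C$ together with the eigenvalue just found,
\begin{align*}
vC=m_{\mathfrak{t}^\lambda}bC=m_{\mathfrak{t}^\lambda}Cb=(\alpha\,m_{\mathfrak{t}^\lambda})b=\alpha\,v,
\end{align*}
so $C=L_2L_3\cdots L_n$ acts on $S^\lambda$ as multiplication by the scalar $\alpha\in R$. There is no genuine obstacle in this argument; the only care needed is in keeping the two cases $f=0$ and $f>0$ straight and in checking the cyclicity of $S^\lambda$, the real content having already been supplied by Proposition~\ref{murphyop:2}(4) and Propositions~\ref{utrangular:3} and~\ref{danaming}.
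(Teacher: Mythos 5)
Your argument is correct and is essentially the paper's proof: both exploit centrality of $L_2\cdots L_n$ to move it next to $m_\lambda$ and then read off the scalar $\alpha=\prod_{k=2}^{n}P_{\mathfrak{t}^\lambda}(k)$ from Propositions~\ref{utrangular:3} and~\ref{danaming}. The only cosmetic difference is that the paper works directly with a general linear combination $\sum_w a_w\, m_\lambda T_w+\check B_n^\lambda$, whereas you phrase the same step via cyclicity of $S^\lambda$ on the generator $m_{\mathfrak{t}^\lambda}$.
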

\begin{proof}
Consider an element $\sum_{w\in\mathfrak{S}_n}a_w m_\lambda
T_w +\check{B}_n^\lambda$, for $a_w\in R$. Since $(L_2\cdots L_n)$ is central in
$B_n(q,r)$,
\begin{align*}
\sum_{w\in\mathfrak{S}_n}a_w m_\lambda T_w (L_2\cdots L_n) &=\sum_{w\in\mathfrak{S}_n}a_w m_\lambda (L_2\cdots L_n)T_w,
\end{align*}
so $\alpha=\prod_{k=2}^n P_{\mathfrak{t}^\lambda}(k)$, by the
previous proposition.
\end{proof}
For the proof of Theorem~\ref{utrangular:2} we use the filtration of
the $B_n(q,r)$ module $S^\lambda$ by $B_{n-1}(q,r)$--modules given
in Theorem~\ref{big:4}.
\begin{theorem}\label{utrangular:2}
Let $f$ be an integer, $0\le f\le [n/2]$, and $\lambda$ be a
partition of $n-2f$. If $\mathfrak{t}\in\mathfrak{T}_n(\lambda)$,
then there exist $a_\mathfrak{u}\in R$, for
$\mathfrak{u}\in\mathfrak{T}_n(\lambda)$, such that
\begin{align*}
m_\mathfrak{t} L_k=P_{\mathfrak{t}}(k)m_\mathfrak{t}+
\sum_{\substack{\mathfrak{u}\in\mathfrak{T}_n(\lambda)\\\mathfrak{u}\rhd\mathfrak{t}}}
a_\mathfrak{u}m_\mathfrak{u}.
\end{align*}
\end{theorem}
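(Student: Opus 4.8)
The plan is to prove the theorem by induction on $n$, the case $n=1$ being trivial since $L_1=1$. Fix $\mathfrak{t}\in\mathfrak{T}_n(\lambda)$, put $\mathfrak{u}=\mathfrak{t}|_{n-1}\in\mathfrak{T}_{n-1}(\mu)$ with $\mu=\mathfrak{t}^{(n-1)}$, and let $j$ be the position of $\mu$ in the list $\mu^{(1)}\rhd\cdots\rhd\mu^{(p)}$ of $\{\nu:\nu\to\lambda\}$. It suffices to establish the congruence
\[
m_{\mathfrak{t}}L_k\equiv P_{\mathfrak{t}}(k)\,m_{\mathfrak{t}}\ \ \bmod\ \operatorname{span}_R\{m_{\mathfrak{w}}:\mathfrak{w}\rhd\mathfrak{t}\},
\]
because $m_{\mathfrak{t}}L_k\in S^\lambda$ and $\{m_{\mathfrak{w}}:\mathfrak{w}\in\mathfrak{T}_n(\lambda)\}$ is an $R$-basis of $S^\lambda$, so the resulting coefficients automatically lie in $R$. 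Two elementary observations will be used throughout: $P_{\mathfrak{t}}(k)=P_{\mathfrak{u}}(k)$ for $k\le n-1$ while $P_{\mathfrak{t}}(n)$ depends only on the pair $(\mu,\lambda)$; and a telescoping computation shows that $\prod_{l=2}^{m}P_{\mathfrak{s}}(l)$ depends only on the shape $\mathfrak{s}^{(m)}$ of a path $\mathfrak{s}\in\mathfrak{T}_m(\nu)$, not on $\mathfrak{s}$ itself.

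Recall from Corollary~\ref{big:5} and Theorem~\ref{big:4} the filtration $(0)=N^{\mu^{(0)}}\subseteq\cdots\subseteq N^{\mu^{(p)}}=\RES(S^\lambda)$ by $B_{n-1}(q,r)$-modules, in which $N^{\mu^{(i)}}$ is spanned by those $m_{\mathfrak{w}}$ with $\mathfrak{w}^{(n-1)}\unrhd\mu^{(i)}$, and the $i$-th quotient is carried by a $B_{n-1}(q,r)$-isomorphism onto the cell module $S^{\mu^{(i)}}$, sending $m_{\mathfrak{t}}+N^{\mu^{(i-1)}}$ (for $\mathfrak{t}$ of level $i$) to $m_{\mathfrak{t}|_{n-1}}$. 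When $1\le k\le n-1$ we have $L_k\in B_{n-1}(q,r)$, so applying the inductive hypothesis to the $B_{n-1}(q,r)$-module $S^{\mu^{(j)}}$ and lifting through this isomorphism gives
\[
m_{\mathfrak{t}}L_k\equiv P_{\mathfrak{t}}(k)\,m_{\mathfrak{t}}+\sum_{\mathfrak{v}\rhd\mathfrak{u}}c_{\mathfrak{v}}\,m_{\tilde{\mathfrak{v}}}\ \ \bmod\ N^{\mu^{(j-1)}},
\]
where $\tilde{\mathfrak{v}}\in\mathfrak{T}_n(\lambda)$ extends $\mathfrak{v}$ by $\lambda$ and $\mathfrak{v}\rhd\mathfrak{u}$ forces $\tilde{\mathfrak{v}}\rhd\mathfrak{t}$. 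When $k=n$, Proposition~\ref{murphyop:2} shows that $L_n$ commutes with $T_1,\dots,T_{n-2}$ and hence acts on $\RES(S^\lambda)$ as a $B_{n-1}(q,r)$-module endomorphism; by the multiplicity-free branching rule (Theorem~2.3 of~\cite{wenzlqg}) it acts, after base change to $K=\mathbb{Q}(q,r)$, as a scalar on each constituent $S^{\mu^{(i)}}$, and this scalar is identified as $P_{\mathfrak{t}}(n)$ by comparing the action of the central element $L_2\cdots L_n$ on $S^\lambda$ with that of the central element $L_2\cdots L_{n-1}$ on $S^{\mu^{(i)}}$ (Proposition~\ref{central}) and invoking the shape-dependence noted above; this also shows $L_n$ preserves the filtration, whence $m_{\mathfrak{t}}L_n\equiv P_{\mathfrak{t}}(n)\,m_{\mathfrak{t}}\bmod N^{\mu^{(j-1)}}$. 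In either case we have obtained $m_{\mathfrak{t}}L_k=P_{\mathfrak{t}}(k)\,m_{\mathfrak{t}}+(\text{a combination of }m_{\tilde{\mathfrak{v}}}\text{ with }\tilde{\mathfrak{v}}\rhd\mathfrak{t})+e$ with $e\in N^{\mu^{(j-1)}}$.

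What remains is the heart of the argument: to upgrade $e\in N^{\mu^{(j-1)}}=\operatorname{span}\{m_{\mathfrak{w}}:\mathfrak{w}^{(n-1)}\rhd\mathfrak{t}^{(n-1)}\}$ to $e\in\operatorname{span}\{m_{\mathfrak{w}}:\mathfrak{w}\rhd\mathfrak{t}\}$, that is, to promote control at the single level $n-1$ to control in the full dominance order on paths. The plan here is to work over $K=\mathbb{Q}(q,r)$, where $B_n(q,r)$ is split semisimple with multiplicity-free branching, and to introduce the seminormal basis $\{f_{\mathfrak{t}}:\mathfrak{t}\in\mathfrak{T}_n(\lambda)\}$ of $S^\lambda\otimes_R K$ of common eigenvectors for $\mathscr{L}$ satisfying $L_k f_{\mathfrak{t}}=P_{\mathfrak{t}}(k)f_{\mathfrak{t}}$; the eigenvalue tuples $(P_{\mathfrak{t}}(k))_{k=1}^{n}$ are pairwise distinct by iterated use of Lemma~\ref{ram:dist0}. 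One then shows, by induction on $n$ built on the recursive definition $m_{\mathfrak{t}}=y^\lambda_\mu b_{\mathfrak{u}}$ from~\eqref{ydef:1},~\eqref{ydef:2},~\eqref{b-murphy}, together with the vanishing statements of Section~\ref{resf} (Lemma~\ref{big:0}, Corollary~\ref{quant:0} and Claim~\ref{claim:0}) which pin down the components of $y^\lambda_\mu b_{\mathfrak{u}}$ in the lower filtration layers $N^{\mu^{(i)}}/N^{\mu^{(i-1)}}$ for $i<j$, that the transition matrix between $\{m_{\mathfrak{t}}\}$ and $\{f_{\mathfrak{t}}\}$ is unitriangular with respect to $\unrhd$. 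Granting this, $L_k$ is diagonal in the $f$-basis and hence $\unrhd$-triangular with diagonal entries $P_{\mathfrak{t}}(k)$ in the $m$-basis over $K$; since $m_{\mathfrak{t}}L_k$ lies in the $R$-form $S^\lambda$ and $\{m_{\mathfrak{w}}\}$ is an $R$-basis, the asserted identity, with coefficients in $R$, follows, closing the induction. I expect this last unitriangularity claim to be the main obstacle: everything preceding it is either a formal consequence of the cellular filtration machinery of Sections~\ref{resf}--\ref{newbasis} or routine bookkeeping with the weights $P_{\mathfrak{t}}(k)$, whereas matching the $\unrhd$-order coming from the basis construction with the eigenvalue order of $\mathscr{L}$ requires genuine use of the explicit vanishing identities of Section~\ref{resf}.
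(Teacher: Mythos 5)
Your handling of $1\le k<n$ via the filtration of Corollary~\ref{big:5} and Theorem~\ref{big:4}, lifting the inductive hypothesis from $S^\mu$, and your disposal of $k=n$ via the central element $L_2\cdots L_n$ and Proposition~\ref{central}, both match the paper. The point of divergence, which you correctly single out, is the residual $e\in\check{N}^\mu=N^{\mu^{(j-1)}}$. The paper dispatches it in one sentence: since $\check{N}^\mu$ is spanned by those $m_\mathfrak{u}$ with $\mathfrak{u}^{(n-1)}\rhd\mathfrak{t}^{(n-1)}$, it asserts that $\check{N}^\mu\subseteq\operatorname{span}_R\{m_\mathfrak{w}:\mathfrak{w}\rhd\mathfrak{t}\}$. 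That containment is not literally true as stated, because the path order $\rhd$ requires dominance at \emph{every} level, not only at $n-1$: with $n=4$, $\lambda=(2)$, $\mathfrak{t}=(\varnothing,(1),(2),(2,1),(2))$ and $\mathfrak{u}=(\varnothing,(1),(1,1),(1),(2))$, one has $\mathfrak{u}^{(3)}=(1)\rhd(2,1)=\mathfrak{t}^{(3)}$, so $m_\mathfrak{u}\in\check{N}^{(2,1)}$, yet $\mathfrak{u}^{(2)}=(1,1)\not\unrhd(2)=\mathfrak{t}^{(2)}$, so $\mathfrak{u}\not\rhd\mathfrak{t}$. You are therefore right not to take the paper's step on faith; the real work is to show the \emph{specific} residual $e$ carries no component on such $m_\mathfrak{u}$.

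However, your proposed substitute does not close the gap. Given Lemma~\ref{ram:dist0} and the defining property of the seminormal basis, the $\unrhd$-unitriangularity of the $\{m_\mathfrak{t}\}$-to-$\{f_\mathfrak{t}\}$ transition over $\mathbb{Q}(q,r)$ is logically equivalent to the $\unrhd$-triangularity of each $L_k$ in the $m$-basis with diagonal $P_\mathfrak{t}(k)$ --- i.e.\ to the theorem itself over $\mathbb{Q}(q,r)$ (and the $R$-coefficients then follow since $\{m_\mathfrak{t}\}$ is an $R$-basis of the $R$-lattice $S^\lambda$). So invoking unitriangularity without proving it only restates the goal in different clothes. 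What is missing, and what you flag as ``the main obstacle'' without supplying, is a genuine argument --- e.g.\ an induction on $n$ built from the recursive definition of $b_\mathfrak{t}$ in~\eqref{goup:2}--\eqref{btdef} together with the vanishing lemmas of Section~\ref{resf} propagated through \emph{every} level of the Bratteli diagram --- that the residual $e$ lies in the $\rhd$-upper span. Until that step is supplied the argument is incomplete; you have, though, located the lacuna precisely, and more explicitly than the paper's own proof does.
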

\begin{proof}
We proceed by induction. Let $\mathfrak{t}$ be in $\mathfrak{T}_n(\lambda)$ and suppose that $\mathfrak{s}=\mathfrak{t}|_{n-1}$ is an element of $\mathfrak{T}_{n-1}(\mu)$. Then $m_\mathfrak{t}+\check{N}^\mu\mapsto m_\mathfrak{s}$ under the isomorphism $N^\mu/\check{N}^\mu\to S^\mu$ of $B_{n-1}(q,r)$--modules given in Theorem~\ref{big:4}. Hence, if $1\le k<n$, there exist $a_\mathfrak{v}\in R$, for
$\mathfrak{v}\in\mathfrak{T}_{n-1}(\mu)$, such that
\begin{align*}
m_\mathfrak{t}L_k+\check{N}^\mu\mapsto P_\mathfrak{s}(k)
m_\mathfrak{s}+\sum_{\substack{\mathfrak{v}\in\mathfrak{T}_{n-1}(\mu)\\
\mathfrak{v}\rhd\mathfrak{s}}}a_\mathfrak{v}m_\mathfrak{v}
\end{align*}
under the $B_{n-1}(q,r)$--module isomorphism $N^\mu/\check{N}^\mu\to S^\mu$. Thus the $a_\mathfrak{v}\in R$, for $\mathfrak{v}\in\mathfrak{T}_{n-1}(\mu)$, satisfy
\begin{align*}
m_\mathfrak{t}L_k\equiv P_\mathfrak{s}(k)
m_\mathfrak{t}+\sum_{\substack{\mathfrak{v}\in\mathfrak{T}_{n-1}(\mu)\\
\mathfrak{v}\rhd\mathfrak{s}}}a_\mathfrak{v}y^\lambda_\mu b_\mathfrak{v} \mod{\check{N}^\mu}.
\end{align*}
If $\mathfrak{v}\in\mathfrak{T}_{n-1}(\mu)$ and $\mathfrak{v}\rhd\mathfrak{s}$, then, using the definition~\eqref{goup:3}, $y_\mu^\lambda b_\mathfrak{v}=m_\mathfrak{u}$, where $\mathfrak{u}|_{n-1}=\mathfrak{v}\rhd\mathfrak{s}=\mathfrak{t}|_{n-1}$, and thus $\mathfrak{u}\rhd\mathfrak{t}$. Since $P_\mathfrak{t}(k)=P_\mathfrak{s}(k)$ whenever $1\le k<n$, the above expression becomes 
\begin{align}\label{becomes}
m_\mathfrak{t}L_k\equiv P_\mathfrak{t}(k)
m_\mathfrak{t}+\sum_{\substack{\mathfrak{u}\in\mathfrak{T}_{n}(\lambda)\\
\mathfrak{u}\rhd\mathfrak{t}}}a_\mathfrak{u}m_\mathfrak{u} \mod{\check{N}^\mu},
\end{align}
where $a_\mathfrak{u}=a_\mathfrak{v}$ whenever $\mathfrak{u}|_{n-1}=\mathfrak{v}$. Now, $\check{N}^\mu$ is the $B_{n-1}(q,r)$--module freely generated by  
\begin{align*}
\big\{ m_\mathfrak{u}=y_\nu^\lambda b_\mathfrak{v}:\text{$\mathfrak{u}\in\mathfrak{T}_{n}(\lambda)$, $\nu\to\lambda$, $\nu\rhd\mu$ and $\mathfrak{u}|_{n-1}=\mathfrak{v}\in\mathfrak{T}_{n-1}(\nu)$} \big\},
\end{align*}
and so it follows that $\check{N}^\mu$ is contained in the $R$--submodule of $S^\lambda$ generated by $\{m_\mathfrak{u}:\text{$\mathfrak{u}\in\mathfrak{T}_n(\lambda)$ and $\mathfrak{u}\rhd\mathfrak{t}$}\}$. Thus~\eqref{becomes} shows that the theorem holds true whenever $1\le k<n$.

That $L_n$ acts triangularly on $S^\lambda$, can now be deduced using
Proposition~\ref{central}:
\begin{align*}
m_\mathfrak{t}L_n=\prod_{k=1}^{n}P_\mathfrak{t}(k)m_\mathfrak{t}(L_2L_3\cdots
L_{n-1})^{-1}.
\end{align*}
Thus the generalised eigenvalue for $L_n$ acting on
$m_\mathfrak{t}$ is $P_\mathfrak{t}(n)$.
\end{proof}
\section{Semisimplicity Criteria for B--M--W
Algebras}\label{b-m-w-sc} Let $\kappa$ be a field and take
$\hat{q},\hat{r},(\hat{q}-\hat{q}^{-1})$ to be units in $\kappa$. In
this section we consider the algebra
$B_n(\hat{q},\hat{r})=B_n(q,r)\otimes_{R}\kappa$. For
$\mathfrak{t}\in\mathfrak{T}_n(\lambda)$ and $k=1,\dots,n$, let
$\hat{P}_\mathfrak{t}(k)$ denote the evaluation of the monomial
$P_\mathfrak{t}(k)$ at $(\hat{q},\hat{r})$,
\begin{align*}
\hat{P}_\mathfrak{t}(k)=
\begin{cases}
\hat{q}^{2(j-i)}&\text{if $[\lambda^{(k)}]=[\lambda^{(k-1)}]\cup
\{(i,j)\}$}\\
\hat{q}^{2(i-j)}\hat{r}^{-2}&\text{if
$[\lambda^{(k)}]=[\lambda^{(k-1)}]\setminus \{(i,j)\}$},
\end{cases}
\end{align*}
and define the ordered $n$-tuple
$\hat{P}(\mathfrak{t})=(\hat{P}_\mathfrak{t}(1),\dots,\hat{P}_\mathfrak{t}(n))$.
The next statement is the counterpart to Proposition~3.37
of~\cite{mathas:ih}.
\begin{proposition}\label{grubs}
Let $f$ be an integer, $0\le f\le [n/2]$, and $\lambda$ be a
partition of $n-2f$.

(i) Let $\rho=(\rho_1,\dots,\rho_n)$ be a sequence of elements of
$\kappa$ such that there exists a path
$\mathfrak{t}\in\mathfrak{T}_n(\lambda)$ with
$\rho=\hat{P}(\mathfrak{t})$. Then there exists a one--dimensional
$\mathscr{L}$--module $\mathscr{L}_\rho=\kappa x_\rho$ such that
\begin{align*}
x_\rho L_k=\rho_k x_\rho&&\text{for $k=1,2,\dots,n$}.
\end{align*}
Moreover, every irreducible $\mathscr{L}$--module has this form.

(ii) Let $f$ be an integer, $0\le f\le [n/2]$, and suppose that
$\lambda$ is a partition of $n-2f$. Fix an ordering
$\mathfrak{t}_1,\dots,\mathfrak{t}_k=\mathfrak{t}^\lambda$ of
$\mathfrak{T}_n(\lambda)$ so that $i>j$ whenever
$\mathfrak{t}_i\rhd\mathfrak{t}_j$. Then $S^\lambda$ has a
$\mathscr{L}$--module composition series
\begin{align*}
S^\lambda=S_1>S_2>\cdots>S_k>S_{k+1}=0
\end{align*}
such that $S_i/S_{i+1}=\mathscr{L}_{\rho^i}$, for each $i$, where
$\rho^i=\hat{P}({\mathfrak{t}_i})$.
\end{proposition}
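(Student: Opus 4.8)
The plan is to derive both parts from the triangular action of the Jucys--Murphy operators recorded in Theorem~\ref{utrangular:2}, together with the cellular basis of Section~\ref{newbasis}. We use throughout that $\mathscr{L}=\mathscr{L}_n$ is commutative (Proposition~\ref{murphyop:2}(2)) and finite dimensional over $\kappa$, so that a one--dimensional $\mathscr{L}$--module is the same thing as an algebra homomorphism $\mathscr{L}\to\kappa$, and any such module is automatically irreducible.

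First I would prove part~(ii). Fix $\lambda\vdash n-2f$ and order $\mathfrak{t}_1,\dots,\mathfrak{t}_k=\mathfrak{t}^\lambda$ of $\mathfrak{T}_n(\lambda)$ so that $\mathfrak{t}_i\rhd\mathfrak{t}_j$ forces $i>j$, as in the statement. Working in $S^\lambda=S^\lambda\otimes_R\kappa$ with its basis $\{m_{\mathfrak{t}}:\mathfrak{t}\in\mathfrak{T}_n(\lambda)\}$, let $S_i$ be the $\kappa$--span of $\{m_{\mathfrak{t}_j}:i\le j\le k\}$. Specialising Theorem~\ref{utrangular:2}, for each integer $l$ with $1\le l\le n$ we have $m_{\mathfrak{t}_j}L_l=\hat P_{\mathfrak{t}_j}(l)\,m_{\mathfrak{t}_j}+\sum_{\mathfrak{u}\rhd\mathfrak{t}_j}a_\mathfrak{u}m_\mathfrak{u}$, and every $\mathfrak{u}$ occurring equals some $\mathfrak{t}_m$ with $m>j$. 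Hence each $S_i$ is an $\mathscr{L}$--submodule, $S_1=S^\lambda$, $S_{k+1}=0$, and the inclusions $S_i\supsetneq S_{i+1}$ are proper since $\{m_\mathfrak{t}\}$ is a basis. The quotient $S_i/S_{i+1}$ is one dimensional, spanned by the image of $m_{\mathfrak{t}_i}$, on which $L_l$ acts by $\hat P_{\mathfrak{t}_i}(l)$; thus $S_i/S_{i+1}\cong\mathscr{L}_{\rho^i}$ with $\rho^i=\hat P(\mathfrak{t}_i)$, which is the required composition series. This is the substantive content of the proposition, and it is essentially immediate once Theorem~\ref{utrangular:2} is available.

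For the existence assertion in part~(i): if $\rho=\hat P(\mathfrak{t})$ for some $\mathfrak{t}\in\mathfrak{T}_n(\lambda)$, then writing $\mathfrak{t}=\mathfrak{t}_i$ in the above ordering, the subquotient $S_i/S_{i+1}$ of $S^\lambda$ realises the one--dimensional module on which $L_k$ acts by $\rho_k$; in particular the assignment $L_k\mapsto\rho_k$ is consistent and determines $\mathscr{L}_\rho=\kappa x_\rho$. It then remains to see that every irreducible $\mathscr{L}$--module has this form. Here I would invoke cellularity: by the cellular basis $\mathcal M$ of Section~\ref{newbasis} and the general theory of~\cite{grahamlehrer}, the algebra $B_n(\hat q,\hat r)$, regarded as a right module over itself, carries a filtration whose sections are cell modules $S^\mu$ (one section of type $S^\mu$ for each $\mathfrak{s}\in\mathfrak{T}_n(\mu)$, by fixing the left index of $m_{\mathfrak{s}\mathfrak{t}}$). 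Restricting this filtration to $\mathscr{L}$ and refining each section by the composition series of part~(ii) exhibits an $\mathscr{L}$--composition series of the right regular module $B_n(\hat q,\hat r)$ all of whose factors lie in $\{\mathscr{L}_{\hat P(\mathfrak{t})}:\mathfrak{t}\in\mathfrak{T}_n(\mu),\ \mu\vdash n-2f,\ 0\le f\le[n/2]\}$. Since $\mathscr{L}$ is a subalgebra of $B_n(\hat q,\hat r)$, the right regular module $\mathscr{L}_\mathscr{L}$ embeds into $B_n(\hat q,\hat r)$ as a right $\mathscr{L}$--module, so its composition factors lie in the same list; as every irreducible right $\mathscr{L}$--module occurs as a composition factor of $\mathscr{L}_\mathscr{L}$, the claim follows.

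The one genuinely non--formal point is this last step, and the thing to be careful about is to keep every module structure on the same side (everything as right modules) and to cite correctly the cell filtration of the regular representation of a cellular algebra; with that in place, the fact that every simple appears as a composition factor of the regular module is standard for finite--dimensional algebras. Everything else is bookkeeping with the triangular matrices of Theorem~\ref{utrangular:2}.
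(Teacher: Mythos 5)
Your argument is correct and is essentially the paper's proof: part~(ii) follows from the triangular action of the $L_k$ on the $m_\mathfrak{t}$ basis (Theorem~\ref{utrangular:2}), and part~(i) is then deduced from part~(ii). The paper merely remarks that ``(i) will follow'' and cites the analogous result in Mathas's book, whereas you fill in the (standard, and correct) argument that the cell filtration of the regular module, refined by the composition series of~(ii), restricts to an $\mathscr{L}$--composition series of $\mathscr{L}_\mathscr{L}$ whose factors all have the form $\mathscr{L}_{\hat P(\mathfrak{t})}$.
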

\begin{proof}
As in~\cite{mathas:ih}, we prove (ii) from which item (i) will
follow. Order the elements of $\mathfrak{T}_n(\lambda)$ as in item
(ii), and for $i=1,\dots,k$, let $S_i$ be the $\kappa$--module
generated by $\{m_{\mathfrak{t}_j}:i\le j\le k\}$. By
Theorem~\ref{utrangular:2}, each $S_i$ is an $\mathscr{L}$--module,
and so $S^\lambda=S_1>\cdots S_k>0$ is an $\mathscr{L}$--module
filtration of $S^\lambda$. Further, by Theorem~\ref{utrangular:2}
again, $S_i/S_{i+1}=\kappa(m_{\mathfrak{t}_i}+S_{i+1})$ is a one
dimensional module isomorphic to $\mathscr{L}_i$.
\end{proof}
\begin{theorem}\label{ss:1}
Suppose that for each pair of partitions $\lambda$ of $n-2f$ and $\mu$ of $n-2f'$, for integers $f,f'$ with $0\le f,f'\le [n/2]$, and that for each pair of paths $\mathfrak{s}\in\mathfrak{T}_n(\lambda)$ and $\mathfrak{t}\in\mathfrak{T}_n(\mu)$, the conditions $\lambda\unrhd\mu$ and $\hat{P}(\mathfrak{s})=\hat{P}(\mathfrak{t})$ together imply that $\lambda=\mu$. Then $B_n(\hat{q},\hat{r})$ is a semisimple algebra over $\kappa$.
\end{theorem}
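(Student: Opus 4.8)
The strategy is to derive semisimplicity from Theorem~\ref{g-lthm:2}: it suffices to prove that the cell form~\eqref{formdef:1} on every cell module $S^\lambda$ of $B_n(\hat q,\hat r)$ is non--degenerate, and I would do this by diagonalising the form using the Jucys--Murphy operators $L_1,\dots,L_n$ together with their triangular action from Theorem~\ref{utrangular:2}.

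First I would extract the combinatorial content of the hypothesis. Since adding a node in an earlier row (respectively removing a node from a later row) produces a dominating partition in the order used here, and a removal always dominates an addition, the set $\{\mu:\mu\to\nu\}$ is totally ordered by $\unrhd$; hence if $\mathfrak s\ne\mathfrak t$ lie in $\mathfrak T_n(\lambda)$ and first diverge at level $m$, the shapes $\SHAPE(\mathfrak s|_m)$ and $\SHAPE(\mathfrak t|_m)$ are $\unrhd$--comparable, and the hypothesis, together with the reasoning behind Lemma~\ref{ram:dist0}, forces $\hat P(\mathfrak s)\ne\hat P(\mathfrak t)$. Thus $\mathfrak t\mapsto\hat P(\mathfrak t)$ is injective on $\mathfrak T_n(\lambda)$ for each $\lambda$, and moreover $\hat P(\mathfrak s)\ne\hat P(\mathfrak t)$ whenever $\mathfrak s\in\mathfrak T_n(\lambda)$, $\mathfrak t\in\mathfrak T_n(\mu)$ and $\lambda\rhd\mu$.

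Next I would build a seminormal basis of each $S^\lambda$. For $\mathfrak t\in\mathfrak T_n(\lambda)$ set
\[
F_\mathfrak t=\prod_{\substack{\mathfrak u\in\mathfrak T_n(\lambda)\\ \hat P(\mathfrak u)\ne\hat P(\mathfrak t)}}\ \ \prod_{\substack{1\le k\le n\\ \hat P_\mathfrak u(k)\ne\hat P_\mathfrak t(k)}}\frac{L_k-\hat P_\mathfrak u(k)}{\hat P_\mathfrak t(k)-\hat P_\mathfrak u(k)}\ \in\ B_n(\hat q,\hat r),
\]
which is well defined since no denominator vanishes. By Theorem~\ref{utrangular:2}, $m_\mathfrak t(L_k-\hat P_\mathfrak u(k))\equiv(\hat P_\mathfrak t(k)-\hat P_\mathfrak u(k))m_\mathfrak t$ modulo the span of $\{m_\mathfrak v:\mathfrak v\rhd\mathfrak t\}$, so $f_\mathfrak t:=m_\mathfrak t F_\mathfrak t=m_\mathfrak t+\sum_{\mathfrak v\rhd\mathfrak t}c_{\mathfrak t\mathfrak v}m_\mathfrak v$ and $\{f_\mathfrak t:\mathfrak t\in\mathfrak T_n(\lambda)\}$ is again a $\kappa$--basis of $S^\lambda$. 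Because $\hat P$ is injective, Proposition~\ref{grubs} shows that each joint generalised eigenspace of $(L_1,\dots,L_n)$ on $S^\lambda$ is one--dimensional, so $F_\mathfrak t$ acts on $S^\lambda$ as the projection onto the eigenspace with eigenvalue tuple $\hat P(\mathfrak t)$ and $f_\mathfrak t L_k=\hat P_\mathfrak t(k)f_\mathfrak t$ for all $k$. Since the cell form is associative and $L_k^*=L_k$ (the $L_k$ being palindromic words in the $T_i$), for $\mathfrak s,\mathfrak t\in\mathfrak T_n(\lambda)$ we get $\hat P_\mathfrak s(k)\langle f_\mathfrak s,f_\mathfrak t\rangle=\langle f_\mathfrak s L_k,f_\mathfrak t\rangle=\langle f_\mathfrak s,f_\mathfrak t L_k\rangle=\hat P_\mathfrak t(k)\langle f_\mathfrak s,f_\mathfrak t\rangle$ for every $k$, whence by injectivity of $\hat P$ the Gram matrix of the form in the basis $\{f_\mathfrak t\}$ is diagonal. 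Thus $\RAD(S^\lambda)=0$ precisely when $\langle f_\mathfrak t,f_\mathfrak t\rangle\ne0$ for every $\mathfrak t\in\mathfrak T_n(\lambda)$, and by Theorem~\ref{g-lthm:2} the proof reduces to verifying this non--vanishing for every cell module.

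This non--vanishing is the crux. I would prove it by induction on $n$, the case $B_1(q,r)=R$ being trivial: using the filtration of $\RES(S^\lambda)$ by the modules $N^{\mu^{(k)}}$ of Corollary~\ref{big:5}, whose subquotients are the cell modules $S^\mu$ of $B_{n-1}(q,r)$ with $\mu\to\lambda$, one relates the form on $S^\lambda$ to the forms on the $S^\mu$ and obtains, for $\mathfrak t$ with $\mathfrak t|_{n-1}\in\mathfrak T_{n-1}(\mu)$, an identity $\langle f_\mathfrak t,f_\mathfrak t\rangle_{S^\lambda}=\gamma_\mathfrak t\,\langle f_{\mathfrak t|_{n-1}},f_{\mathfrak t|_{n-1}}\rangle_{S^\mu}$ with an explicit scalar $\gamma_\mathfrak t$ assembled from the values $\hat P_\mathfrak u(n)$ of the paths $\mathfrak u$ extending $\mathfrak t|_{n-1}$ and from the central invariant $\alpha$ of Proposition~\ref{central}; the separation hypothesis, via the combinatorial consequences recorded above, is exactly what guarantees $\gamma_\mathfrak t\ne0$. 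Equivalently, one may compute $\langle f_\mathfrak t,f_\mathfrak t\rangle$ once and for all over the fraction field of $R$, where $B_n$ is split semisimple by Wenzl's results, obtaining a closed product of differences of eigenvalues and similar units that is visibly non--zero under the hypothesis; either way the technical heart of the argument is the bookkeeping of these scalars, respectively the control of the denominators of the generic seminormal form upon specialisation.
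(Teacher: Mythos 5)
Your strategy is genuinely different from the paper's, and unfortunately it does not go through as written. The paper argues by contradiction at the level of composition factors: by Proposition~\ref{grubs} the $\mathscr{L}$--module composition factors of $S^\lambda$ are the one--dimensional modules $\mathscr{L}_{\hat P(\mathfrak{t})}$ for $\mathfrak{t}\in\mathfrak{T}_n(\lambda)$, so the hypothesis guarantees that $S^\lambda$ and $S^\mu$ share no $\mathscr{L}$--composition factor whenever $\lambda\rhd\mu$; but if $B_n(\hat q,\hat r)$ were not semisimple, cellular theory (Theorem~\ref{g-lthm:2} and Proposition~3.6 of~\cite{grahamlehrer}) would force some nonzero $D^\mu$ to be a composition factor of $S^\lambda$ for some $\lambda\rhd\mu$, and restricting to $\mathscr{L}$ would then produce a common factor --- a contradiction. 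This never requires diagonalising the form, computing Gram entries, or distinguishing two paths of the same shape.

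Your approach has two genuine gaps. First, you assert that the hypothesis forces $\hat P$ to be injective on each $\mathfrak{T}_n(\lambda)$, invoking ``the reasoning behind Lemma~\ref{ram:dist0}.'' But the hypothesis is vacuous when $\lambda=\mu$: it only separates paths belonging to \emph{distinct comparable} shapes at level $n$, and says nothing about two paths of the same shape $\lambda$. Your reduction to a divergence at some level $m<n$ invokes separation of shapes at level $m$, which is a statement about $B_m$ and is not what the hypothesis supplies; and Lemma~\ref{ram:dist0} is proved over $R$ where $q$ is transcendental, so it does not yield pairwise distinctness of the specialised scalars $\hat P_\mathfrak{u}(m)$ in $\kappa$. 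Without this injectivity the idempotents $F_\mathfrak{t}$ need not project onto one--dimensional eigenspaces and the Gram matrix need not become diagonal. Second, even granting injectivity, the nonvanishing of the diagonal entries $\langle f_\mathfrak{t},f_\mathfrak{t}\rangle$ is the whole content of the argument and you only sketch it. The filtration of Corollary~\ref{big:5} is by submodules rather than a form--orthogonal decomposition, so relating $\langle\,\cdot\,,\cdot\,\rangle_{S^\lambda}$ to $\langle\,\cdot\,,\cdot\,\rangle_{S^\mu}$ is not immediate; likewise the ``compute generically over the fraction field and specialise'' route needs careful control of denominators, and neither is a one--line verification. As the paper's composition--factor argument shows, none of this machinery is needed: non--degeneracy of the forms is deduced indirectly, not computed.
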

\begin{proof}
The hypotheses of the theorem imply that given a pair of partitions
$\lambda$ and $\mu$ with $\lambda\rhd\mu$, there are no
$\mathscr{L}$--module composition factors in common between
$S^\lambda$ and $S^\mu$. However, if $B_n(\hat{q},\hat{r})$ is not
semisimple, then using Theorem~\ref{g-lthm:2}, $D^\mu$ is a
$B_n(\hat{q},\hat{r})$--module composition factor of $S^\lambda$ for
some pair of partitions $\lambda$ and $\mu$ for which, by
Proposition~3.6 of~\cite{grahamlehrer}, $\lambda\rhd\mu$; in
particular, by Proposition~\ref{grubs}, there must be
$\mathscr{L}$--module composition factors in common between
$S^\lambda$ and $S^\mu$, which as already noted, is an
impossibility.
\end{proof}

From the next statement (Lemma~5.20 of~\cite{ramleduc:rh}), it will
follow that the Jucys--Murphy operators do in fact distinguish
between cell modules of $B_n(q,r)$.
\begin{lemma}\label{ram:dist}
Let $f$ be an integer $0\le f<[n/2]$ and $\lambda^{(n-1)}$ be a
partition of $n-1-2f$. If
$\mathfrak{s}=(\lambda^{(0)},\lambda^{(1)},\dots,\lambda^{(n-1)})$
is a path in $\mathfrak{T}_{n-1}(\lambda^{(n-1)})$, then the terms
$\big(P_{\mathfrak{t}}(n)\,:\,\mathfrak{t}|_{n-1}=\mathfrak{s}\big)$
are all distinct.
\end{lemma}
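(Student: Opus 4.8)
The plan is to reduce Lemma~\ref{ram:dist} to two elementary ingredients: a combinatorial fact about Young diagrams — the contents of the addable nodes of a diagram are pairwise distinct, and likewise for its removable nodes — and two facts about the ground ring $R=\mathbb{Z}[q^{\pm1},r^{\pm1},(q-q^{-1})^{-1}]$, namely that $q$ has infinite multiplicative order in $R$ and that no Laurent monomial $q^{a}r^{2}$ with $r$-exponent $2$ equals $1$.

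First I would unpack the indexing. A path $\mathfrak{t}$ with $\mathfrak{t}|_{n-1}=\mathfrak{s}=(\lambda^{(0)},\dots,\lambda^{(n-1)})$ is exactly a sequence $(\lambda^{(0)},\dots,\lambda^{(n-1)},\lambda^{(n)})$ with $\lambda^{(n-1)}\to\lambda^{(n)}$; hence the extensions of $\mathfrak{s}$ are in bijection with the nodes $(i,j)$ addable to, or removable from, $[\lambda^{(n-1)}]$, and by the definition of $P_{\mathfrak{t}}(n)$ preceding Lemma~\ref{ram:dist0} we have $P_{\mathfrak{t}}(n)=q^{2(j-i)}$ when $(i,j)$ is added and $P_{\mathfrak{t}}(n)=q^{2(i-j)}r^{-2}$ when $(i,j)$ is removed. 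If two extensions both add a node, the two added nodes are distinct addable nodes of $[\lambda^{(n-1)}]$, so their contents $j-i$ differ; since $q$ has infinite multiplicative order in $R$, the corresponding powers $q^{2(j-i)}$ differ. The case of two removals is identical after cancelling the common factor $r^{-2}$, using that the removable nodes of $[\lambda^{(n-1)}]$ also have pairwise distinct contents.

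The remaining case pits an addition of $(i,j)$ against a removal of $(i',j')$: an equality of the two values would force $q^{2(j-i)+2(j'-i')}r^{2}=1$ in $R$. Since $R$ is a localisation of the Laurent polynomial ring $\mathbb{Z}[q^{\pm1},r^{\pm1}]$, hence embeds in its field of fractions, this is impossible, as the left-hand side is a monomial with $r$-exponent $2\neq0$. This disposes of all three ways in which two of the $P_{\mathfrak{t}}(n)$ could coincide.

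The only substantive input is the statement that the addable nodes (resp. the removable nodes) of a diagram have pairwise distinct contents; this is part of Lemma~3.34 of~\cite{mathas:ih}, which I would simply invoke, or one can see it directly by listing the addable and removable nodes of $[\lambda^{(n-1)}]$ in order of increasing row and observing that their contents are strictly decreasing along that list. I do not expect a real obstacle here; the one point deserving a word of care is the passage from ``distinct contents'' to ``distinct values of $P_{\mathfrak{t}}(n)$'', which is precisely where the two unit-group properties of $R$ enter, and both become obvious once one computes inside $\mathbb{Z}[q^{\pm1},r^{\pm1}]$.
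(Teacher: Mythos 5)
Your proof is correct, and it follows the same line the paper indicates: reduce to distinctness of contents of addable (resp.\ removable) nodes, which the paper attributes to Lemma~3.34 of~\cite{mathas:ih}, together with the fact that $q$ has infinite multiplicative order in $R$. The paper itself gives no written proof — for Lemma~\ref{ram:dist0} it simply cites those two facts, and for Lemma~\ref{ram:dist} it cites Lemma~5.20 of~\cite{ramleduc:rh} — so you have, usefully, filled in the one step the paper leaves entirely implicit: the addition-versus-removal comparison, where a putative coincidence $q^{2(j-i)}=q^{2(i'-j')}r^{-2}$ is ruled out because the monomial $q^{a}r^{2}$ with nonzero $r$-exponent cannot equal $1$ in the domain $R$.
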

For the case where $\kappa=\mathbb{C}(\hat{q},\hat{r})$, a form of
the following statement can be found in Corollary~5.6
of~\cite{wenzlqg}.
\begin{corollary}\label{ss:2}
If $\kappa$ is a field, then a B--M--W algebra
$B_n(\hat{q},\hat{r})$ over $\kappa$ is semisimple for almost all
(all but finitely many) choices of the parameters $\hat{q}$ and
$\hat{r}$. If $B_n(\hat{q},\hat{r})$ is not semisimple then
necessarily $\hat{q}$ is a root of unity or $\hat{r}=\pm\hat{q}^k$
for some integer $k$.
\end{corollary}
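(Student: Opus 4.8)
The plan is to obtain the corollary as a contrapositive consequence of Theorem~\ref{ss:1} combined with the explicit shape of the monomials $\hat{P}_{\mathfrak{t}}(k)$. Suppose $B_n(\hat{q},\hat{r})$ is not semisimple over $\kappa$. Then the hypothesis of Theorem~\ref{ss:1} fails, so there are partitions $\lambda\vdash n-2f$ and $\mu\vdash n-2f'$ with $\lambda\rhd\mu$, and paths $\mathfrak{s}=(\lambda^{(0)},\dots,\lambda^{(n)})\in\mathfrak{T}_n(\lambda)$, $\mathfrak{t}=(\mu^{(0)},\dots,\mu^{(n)})\in\mathfrak{T}_n(\mu)$, with $\hat{P}_{\mathfrak{s}}(k)=\hat{P}_{\mathfrak{t}}(k)$ for all $k=1,\dots,n$.

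Since $\lambda\ne\mu$, the paths $\mathfrak{s}$ and $\mathfrak{t}$ are distinct; I would let $k$ be the least index with $\lambda^{(k)}\ne\mu^{(k)}$ and set $\nu=\lambda^{(k-1)}=\mu^{(k-1)}$, so that $[\lambda^{(k)}]$ and $[\mu^{(k)}]$ are each obtained from $[\nu]$ by adding or deleting a single node, and the two nodes involved are distinct. The argument then splits into three cases according to the two elementary moves at step $k$. If both moves add a node, of contents $c$ and $c'$ respectively, then $c\ne c'$ because distinct addable nodes of a fixed partition have distinct contents; the equality $\hat{q}^{2c}=\hat{P}_{\mathfrak{s}}(k)=\hat{P}_{\mathfrak{t}}(k)=\hat{q}^{2c'}$ forces $\hat{q}^{2(c-c')}=1$ with $0<|c-c'|<2n$, so $\hat{q}$ is a root of unity. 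The case of two deletions is identical, the factors $\hat{r}^{-2}$ cancelling. If one move adds a node of content $c$ and the other deletes a node of content $c'$, then $\hat{q}^{2c}=\hat{q}^{2c'}\hat{r}^{-2}$, whence $\hat{r}^{2}=\hat{q}^{2(c'-c)}$ and $\hat{r}=\pm\hat{q}^{\,m}$ with $m=c'-c$, $|m|<2n$. In every case $\hat{q}$ is a root of unity or $\hat{r}=\pm\hat{q}^{m}$ for some integer $m$, which is the second assertion.

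For the first assertion, the estimates above show that the set of parameters at which $B_n(\hat{q},\hat{r})$ fails to be semisimple is contained in the union, over the finitely many integers $m$ with $0<|m|<2n$, of the closed conditions $\hat{q}^{2m}=1$ and $\hat{r}^{2}=\hat{q}^{2m}$ (together with the excluded locus $\hat{q}^{2}=1$). This is a finite union of proper algebraic conditions on $(\hat{q},\hat{r})$; in particular, once $\hat{q}$ is chosen so as not to be a root of unity, only finitely many values of $\hat{r}$ are excluded, so $B_n(\hat{q},\hat{r})$ is semisimple for all but finitely many choices of the parameters.

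The substantive input is the reduction furnished by Theorem~\ref{ss:1}; granting that, the remaining ingredients are elementary, namely that distinct addable (resp.\ removable) nodes of a partition have strictly decreasing contents, and that every content occurring along a path of length $n$ has absolute value less than $n$. The one point requiring care, rather than difficulty, is that the hypothesis $\hat{P}(\mathfrak{s})=\hat{P}(\mathfrak{t})$ must be exploited at the \emph{first} index where $\mathfrak{s}$ and $\mathfrak{t}$ diverge, so that the two elementary moves compared at step $k$ genuinely act on one and the same partition $\nu$; the crossed case $c=c'$ causes no trouble, since it simply yields $\hat{r}^{2}=1$, again of the required form $\hat{r}=\pm\hat{q}^{0}$.
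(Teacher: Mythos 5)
Your proposal is correct and follows the argument that the paper implies but does not spell out: combine the contrapositive of Theorem~\ref{ss:1} with the arithmetic of contents along a first index of divergence of two paths. The paper points instead at the last step via Lemma~\ref{ram:dist}, but this is an equivalent organisation and leads to the same arithmetic constraints on $\hat{q}$ and $\hat{r}$.

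One small remark: in your mixed addition/deletion case the possibility $c=c'$ you flag at the end never actually arises, since the contents of the addable and removable nodes of a fixed partition strictly interlace along the rim and hence are pairwise distinct; but your observation that $c=c'$ would in any event still give $\hat{r}=\pm\hat{q}^{0}$ is harmless.
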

Theorem~\ref{d-w:analogue} may be compared with Theorem~\ref{doran-wales-thm:1} below. Theorem~\ref{d-w:analogue} gives a semisimplicity criterion for $B_n(q,r)$.
\begin{theorem}\label{d-w:analogue}
Let $\lambda$ be a partition of $n-2f$ and $\mu$ be a partition of $n-2g$, where $0\le f<g\le [n/2]$. If $\Hom_{B_n(\hat{q},\hat{r})}(S^\lambda,S^\mu)\ne 0$, then
\begin{align*}
\hat{r}^{2(g-f)}\hat{q}^{2\sum_{(i,j)\in [\lambda]} {(j-i)} }= \hat{q}^{2\sum_{(i,j)\in [\mu]} {(j-i)}}.
\end{align*}
\end{theorem}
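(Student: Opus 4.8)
The plan is to extract the statement from the action of the central element $L_2 L_3 \cdots L_n \in B_n(q,r)$ (Proposition~\ref{murphyop:2}). First I would record the general principle: if $\phi\colon S^\lambda\to S^\mu$ is a nonzero homomorphism of right $B_n(\hat q,\hat r)$--modules and $c=L_2\cdots L_n$, then by Proposition~\ref{central} the element $c$ acts on $S^\lambda$ and on $S^\mu$ as scalars $\hat\alpha_\lambda,\hat\alpha_\mu\in\kappa$ respectively (the specialisations at $(\hat q,\hat r)$ of the invariant $\alpha$ of that proposition). Picking $v\in S^\lambda$ with $\phi(v)\neq0$ and comparing $\phi(v\cdot c)=\hat\alpha_\lambda\,\phi(v)$ with $\phi(v)\cdot c=\hat\alpha_\mu\,\phi(v)$ forces $\hat\alpha_\lambda=\hat\alpha_\mu$. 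So the whole theorem reduces to the explicit evaluation of $\hat\alpha_\lambda$, namely that $\hat\alpha_\lambda=\hat r^{-2f}\,\hat q^{\,2\sum_{(i,j)\in[\lambda]}(j-i)}$ for every partition $\lambda$ of $n-2f$, and likewise for $\mu$.

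For the evaluation I would use that, by the proof of Proposition~\ref{central} (which in turn rests on Propositions~\ref{utrangular:3} and~\ref{danaming}), $\hat\alpha_\lambda=\prod_{k=1}^n\hat P_{\mathfrak{t}^\lambda}(k)$, where $\mathfrak{t}^\lambda$ is the dominance--maximal path in $\mathfrak{T}_n(\lambda)$. The next step is to describe $\mathfrak{t}^\lambda$ concretely: since in the dominance order used here a partition with strictly fewer boxes is strictly larger, the maximal path is forced to take the value $\varnothing$ at each even step $2,4,\dots,2f$ and the value $(1)$ at each odd step $1,3,\dots,2f-1$ — that is, its first $2f$ steps alternately add and delete the single node $(1,1)$ — and then, over steps $2f+1,\dots,n$, it adds the nodes of $[\lambda]$ one at a time in the order in which they are read off the superstandard tableau $\mathfrak{t}^\lambda\in\STD_n(\lambda)$, exactly the pattern exhibited in the worked example that follows the definition of the maximal path. (That this row--greedy continuation is dominance--maximal follows from the inequality $\sum_{j\le k}\nu_j\le\min(m,\sum_{j\le k}\lambda_j)$ valid for any sub--diagram $\nu\subseteq\lambda$ with $m$ nodes.)

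Now I would substitute into the definition of $\hat P_{\mathfrak{t}}(k)$: each of the $f$ ``add $(1,1)$'' steps contributes $\hat q^{\,2(1-1)}=1$; each of the $f$ ``delete $(1,1)$'' steps contributes $\hat q^{\,2(1-1)}\hat r^{-2}=\hat r^{-2}$; and the steps $2f+1,\dots,n$ together contribute $\prod_{(i,j)\in[\lambda]}\hat q^{\,2(j-i)}$, the order being irrelevant for the product. Hence $\hat\alpha_\lambda=\hat r^{-2f}\,\hat q^{\,2\sum_{(i,j)\in[\lambda]}(j-i)}$, and similarly $\hat\alpha_\mu=\hat r^{-2g}\,\hat q^{\,2\sum_{(i,j)\in[\mu]}(j-i)}$; equating the two and multiplying through by $\hat r^{2g}$ yields the claimed identity. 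The only point calling for care is the identification of the maximal path $\mathfrak{t}^\lambda$ and the verification that its $f$ deletion steps genuinely occur — they do, because $|\lambda|=n-2f<n$ when $f>0$, while for $f=0$ there are no such steps and the factor $\hat r^{-2f}=1$ is vacuous; everything after that is a direct substitution.
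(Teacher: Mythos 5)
Your proposal is correct and follows exactly the paper's route: apply the central element $L_2L_3\cdots L_n$ (Proposition~\ref{central}) to a nonzero vector and its nonzero image under the hypothesised homomorphism, then equate the two scalar eigenvalues. The only difference is that you supply the explicit computation of $\hat\alpha_\lambda=\prod_{k\ge 2}\hat P_{\mathfrak{t}^\lambda}(k)=\hat r^{-2f}\hat q^{\,2\sum_{(i,j)\in[\lambda]}(j-i)}$ (via identification of the dominance--maximal path $\mathfrak{t}^\lambda$), a step the paper asserts without spelling out.
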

\begin{proof}
Suppose that  $\mathbf{u}\in S^\lambda$, $\mathbf{v}\in S^\mu$ are non--zero and that $\mathbf{u}\mapsto\mathbf{v}$ under some element in  $\Hom_{B_n(\hat{q},\hat{r})}(S^\lambda,S^\mu)$. Then, using Lemma~\ref{central}, on the one hand $\mathbf{u}(L_2L_3\cdots L_n)=\hat{r}^{-2f}\hat{q}^{2\sum_{(i,j)\in[\lambda]}(j-i)}\mathbf{u}$, while on the other $\mathbf{v}L_2L_3\cdots L_n=\hat{r}^{-2g}\hat{q}^{2\sum_{(i,j)\in[\mu]}(j-i)}\mathbf{v}$. Since $\mathbf{v}$ is the homomorphic image of $\mathbf{u}$, it follows that $\hat{r}^{-2f}\hat{q}^{2\sum_{(i,j)\in[\lambda]}(j-i)}=\hat{r}^{-2g}\hat{q}^{2\sum_{(i,j)\in[\mu]}(j-i)}$; hence the result.
\end{proof}
As the next example shows, Theorem~\ref{ss:1} gives a sufficient
but not the necessary condition for $B_n(\hat{q},\hat{r})$ to be a
semisimple algebra over $\kappa$; it can also be seen from the example that Theorem~\ref{d-w:analogue} gives a necessary but not sufficient condition for $\Hom_{B_n(\hat{q},\hat{r})}(S^\lambda,S^\mu)$ to be non--zero.
\begin{example}\label{b-m-w3:ex}
Let $n=3$, $\lambda=(1)$, $\mu=(3)$, $\kappa=\mathbb{Q}(\hat{q},\hat{r})$, and suppose that $\hat{r}=-\hat{q}^{-3}$, where $\hat{q}$ is not a root of unity. Since $\hat{q}$ is not root of unity, the cell modules for $B_3(\hat{q},\hat{r})$ corresponding to the partitions $(3)$, $(2,1)$ and $(1,1,1)$ are absolutely irreducible (Theorem 3.43 of~\cite{mathas:ih} together with Lemma~\ref{bquot} with $f=0$). On the other hand, if
\begin{align*}
\mathfrak{s}=(\varnothing,\text{\tiny$\begin{matrix}\yng(1)\end{matrix}$}\,,
\text{\tiny$\begin{matrix}\yng(2)\end{matrix}$}\,,
\text{\tiny$\begin{matrix}\yng(1)\end{matrix}$}\,)\in\mathfrak{T}_n(\lambda)&&\text{and}&&
\mathfrak{t}=(\varnothing,\text{\tiny$\begin{matrix}\yng(1)\end{matrix}$}\,,
\text{\tiny$\begin{matrix}\yng(2)\end{matrix}$}\,,
\text{\tiny$\begin{matrix}\yng(3)\end{matrix}$}\,)\in\mathfrak{T}_n(\mu),
\end{align*}
then
$\hat{P}(\mathfrak{s})=(1,\hat{q}^2,\hat{q}^{-2}\hat{r}^{-2})=(1,\hat{q}^2,\hat{q}^4)$
and $\hat{P}(\mathfrak{t})= (1,\hat{q}^2,\hat{q}^{4})$. Since
$\hat{P}(\mathfrak{s})=\hat{P}(\mathfrak{t})$ whilst
$\lambda\rhd\mu$, the pair $\mathfrak{s},\mathfrak{t}$ violates
the hypotheses of Theorem~\ref{ss:1}. But we note by reference to
the determinant of Gram matrix associated to $S^\lambda$ in
Example~\ref{bilinear:ex} that $S^\lambda$ is absolutely
irreducible and hence that $B_3(\hat{q},\hat{r})$ remains
semisimple over $\kappa$ (Theorems~\ref{g-lthm:1} and~\ref{g-lthm:2}).
\end{example}

\section{Brauer Algebras}\label{bralgs}
The foregoing construction for the B--M--W algebras applies with
minor modification to the Brauer algebras over an arbitrary
field. We begin once more by considering Brauer algebras over a
polynomial ring over $\mathbb{Z}$. Take $z$ to be an indeterminate
over $\mathbb{Z}$; we write $R=\mathbb{Z}[z]$ and define the Brauer
algebra $B_n(z)$ over $R$ as the associative unital $R$--algebra
generated by the transpositions $s_1,s_2,\dots,s_{n-1}$, together
with elements $E_1,E_2,\dots,E_{n-1}$, which satisfy the defining relations:
\begin{align*}
&s_i^2=1&&\text{for $1\le i<n$;}\\
&s_is_{i+1}s_i=s_{i+1}s_is_{i+1}&&\text{for $1\le i<n-1$;}\\
&s_is_j=s_js_i&&\text{for $2\le|i-j|$;}\\
&E_i^2=zE_i&&\text{for $1\le i<n$;}\\
&s_iE_j=E_js_i&&\text{for $2\le|i-j|$;}\\
&E_iE_j=E_jE_i&&\text{for $2\le|i-j|$;}\\
&E_is_i=s_iE_i= E_i&&\text{for $1\le i<n$;}\\
&E_is_{i\pm1}s_i=s_{i\pm1}s_iE_{i\pm1}=E_iE_{i\pm1}&&\text{for $1\le i,i\pm1<n$;}\\
&E_is_{i\pm1}E_i=E_iE_{i\pm1}E_{i}=E_i&&\text{for $1\le
i,i\pm1<n$.}
\end{align*}
Regard the group ring $R\mathfrak{S}_n$ as the subring of $B_n( z)$
generated by the transpositions $\{s_i=(i,i+1):\text{for $1\le
i<n$}\}$. If $f$ is an integer, $0\le f\le [n/2]$, and $\lambda$ is
a partition of $n-2f$, define the elements
\begin{align*}
x_\lambda=\sum_{w\in\mathfrak{S}_\lambda}w\hspace{2em}
\text{and}\hspace{2em} m_\lambda=E_1E_3\cdots E_{2f-1}x_\lambda,
\end{align*}
where $\mathfrak{S}_\lambda$ is the row stabiliser in $\langle
s_{2f+1}, s_{2f+2},\dots,s_{n-1}\rangle$ of the superstandard
tableau $\mathfrak{t}^\lambda\in\STD_{n}(\lambda)$. Let
$B^\lambda_n$ be the two sided ideal of $B_n( z)$ generated by
$m_\lambda$ and write
\begin{align*}
\check{B}^\lambda_n=\sum_{\mu\rhd\lambda}B^\mu_n.
\end{align*}
A cellular basis in terms of dangles has been given for the Brauer algebra in~\cite{grahamlehrer}. Replacing cellular bases for $\mathscr{H}_n(q^2)$ with cellular bases for $R\mathfrak{S}_n$, the process used to construct cellular bases the B--M--W algebras in~\cite{saru} will produce also cellular bases for $B_n( z)$ as follows. 

If $f$ is an integer, $0\le f\le [n/2]$, and $\lambda$ a partition of $n-2f$, then $\mathcal{I}_{n}(\lambda)$ retains the meaning assigned in~\eqref{index:1}.
 
\begin{theorem}\label{saruthm:2}
The algebra $B_n( z)$ is freely generated as an $R$--module by the
collection
\begin{align*}
\left\{(d(\mathfrak{s})v)^{-1}m_\lambda d(\mathfrak{t})u\,\bigg|\,
\begin{matrix}
\text {$(\mathfrak{s},v),(\mathfrak{t},u)\in \mathcal{I}_{n}(\lambda)$ for $\lambda$ a partition} \\
\text{of $n-2f$ and $0\le f\le [n/2]$\,}
\end{matrix}
\right\}.
\end{align*}
Moreover, the following statements hold.
\begin{enumerate}
\item The $R$--linear map determined by
\begin{align*}
(d(\mathfrak{s})v)^{-1}m_\lambda d(\mathfrak{t})u\mapsto
(d(\mathfrak{t})u)^{-1}m_\lambda d(\mathfrak{s})v
\end{align*}
is an algebra anti--involution of $B_n( z)$. \item  Suppose that
$b\in B_n( z)$ and let $f$ be an integer, $0\le f\le [n/2]$. If
$\lambda$ is a partition of $n-2f$ and
$(\mathfrak{t},u)\in\mathcal{I}_{n}(\lambda)$, then there exist
$a_{(\mathfrak{u},w)}\in R$, for
$(\mathfrak{u},w)\in\mathcal{I}_{n}(\lambda)$, such that for all
$(\mathfrak{s},v)\in\mathcal{I}_{n}(\lambda)$,
\begin{align}\label{btsu:2}
(d(\mathfrak{s})v)^{-1}m_\lambda d(\mathfrak{t})u b\equiv
\sum_{(\mathfrak{u},w)}a_{(\mathfrak{u},w)}
(d(\mathfrak{s})v)^{-1}m_\lambda d(\mathfrak{u})w \mod
\check{B}^\lambda_n.
\end{align}
\end{enumerate}
\end{theorem}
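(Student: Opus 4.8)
The plan is to transcribe, \emph{mutatis mutandis}, the proof of Theorem~\ref{saruthm} given in~\cite{saru}, systematically replacing the Iwahori--Hecke algebra $\mathscr{H}_{n-2f}(q^2)$ by the group algebra $R\mathfrak{S}_{n-2f}$ and Murphy's cellular basis of $\mathscr{H}_n(q^2)$ (Theorem~\ref{murphybasis}) by the analogous cellular basis $\{d(\mathfrak{s})^{-1}x_\mu d(\mathfrak{t}):\mathfrak{s},\mathfrak{t}\in\STD(\mu),\ \mu\vdash n-2f\}$ of $R\mathfrak{S}_{n-2f}$, i.e.\ the $q=1$ specialisation of Theorem~\ref{murphybasis}. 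All the purely combinatorial data carry over verbatim: the coset representatives $\mathscr{D}_{f,n}$ and the subgroups $\mathfrak{B}_f$, the index sets $\mathcal{I}_n(\lambda)$ of~\eqref{index:1}, the elements $m_\lambda=E_1E_3\cdots E_{2f-1}x_\lambda$, and the filtration $(0)\subseteq B_n^{[n/2]}\subseteq\cdots\subseteq B_n^0=B_n(z)$ by the two--sided ideals $B_n^f=B_n(z)\,E_1E_3\cdots E_{2f-1}\,B_n(z)$ analogous to~\eqref{filt1}; these depend only on symmetric--group combinatorics and on relations among the $E_i$ that hold in $B_n(z)$, namely $E_i^2=zE_i$, $E_iE_{i\pm1}E_i=E_i$, $E_is_{i\pm1}E_i=E_i$, $E_is_i=s_iE_i=E_i$ and $E_is_{i\pm1}s_i=s_{i\pm1}s_iE_{i\pm1}=E_iE_{i\pm1}$.

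The substantive input is a Brauer version of the structural lemmas of Section~\ref{b-m-w-a}. Writing $C_f$ for the subalgebra $\langle s_{2f+1},\dots,s_{n-1},E_{2f+1},\dots,E_{n-1}\rangle$ of $B_n(z)$ (a shifted copy of $B_{n-2f}(z)$) and $I_f$ for its two--sided ideal generated by $E_{2f+1}$, the analogue of Lemma~\ref{bquot} is the standard fact that $s_i\mapsto s_{2f+i}+I_f$ is an isomorphism $R\mathfrak{S}_{n-2f}\cong C_f/I_f$; the analogue of Lemma~\ref{gomi} is that $E_1E_3\cdots E_{2f-1}\,bE_i\equiv E_1E_3\cdots E_{2f-1}\,E_ib\equiv 0\bmod B_n^{f+1}$ for $b\in C_f$ and $2f<i<n$; and the analogue of Corollary~\ref{twiso} is a well--defined $R$--module map $\vartheta_f\colon R\mathfrak{S}_{n-2f}\to B_n^f/B_n^{f+1}$ with $\vartheta_f(\hat v)=E_1E_3\cdots E_{2f-1}\,v+B_n^{f+1}$ and $\vartheta_f(\hat v\,s_j)=\vartheta_f(\hat v)\,s_{2f+j}$. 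The crux is the Brauer analogue of Lemma~\ref{ecor}: for $b\in B_n(z)$ and $w\in\mathscr{D}_{f,n}$ there are unique $a_{u,v}\in R$, for $u\in\langle s_i:2f<i<n\rangle$ and $v\in\mathscr{D}_{f,n}$, with $E_1E_3\cdots E_{2f-1}\,w\,b\equiv\sum_{u,v}a_{u,v}\,E_1E_3\cdots E_{2f-1}\,u\,v\bmod B_n^{f+1}$. Its proof follows the proof of Lemma~\ref{ecor} line for line: reduce to $b\in\{s_i:1\le i<n\}$, expand $w\,E_i$ via the Brauer form of Proposition~3.4 of~\cite{saru} (here $wE_i=E_kw$ or $wE_i=(s_{l-1}\cdots s_{k+1})E_k\,w'$ according as $l=k+1$ or not, with $w'=s_{k+1}\cdots s_{l-1}w$), and use the Brauer analogue of~\eqref{saru:prop3.8} together with the relation $E_1E_3\cdots E_{2f-1}\,v=E_1E_3\cdots E_{2f-1}\,s_{2i-1}v$ when $\ell(s_{2i-1}v)<\ell(v)$. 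Every factor $r^{\pm1}$ occurring in the B--M--W calculation is replaced by $1$, so the Brauer argument is the $r=1$ shadow of Lemma~\ref{ecor} and introduces no new ideas.

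Granting these lemmas, the theorem follows exactly as in~\cite{saru}. For spanning and statement~(2): any element of $B_n(z)$ is filtered by the $B_n^f$; on the layer $B_n^f/B_n^{f+1}$ one brings a general element into the form $E_1E_3\cdots E_{2f-1}\,u\,v$ by the Brauer Lemma~\ref{ecor}, multiplies by $x_\mu$, pushes through $\vartheta_f$, and rewrites $E_1E_3\cdots E_{2f-1}\,x_\mu\,u$ using the Murphy basis of $R\mathfrak{S}_{n-2f}$; this produces the expansion~\eqref{btsu:2} and simultaneously shows the listed elements span $B_n(z)$. Freeness then follows by a dimension count: the index set is identical to that of Theorem~\ref{saruthm}, so the proposed basis has cardinality $(2n-1)!!=\dim_R B_n(z)$, whence a spanning set of this size is a basis (alternatively, the rewriting procedure makes each $B_n^f/\check B_n^\lambda$ a filtered module with cell--module subquotients and one sums dimensions as in Lemma~\ref{permod:2}). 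For statement~(1): the assignment $w\mapsto w^{-1}$ on $\mathfrak{S}_n$ together with $E_i\mapsto E_i$ preserves every defining relation of $B_n(z)$ --- it is the reflection of Brauer diagrams --- hence extends to an algebra anti--involution; since $m_\lambda^*=x_\lambda E_1E_3\cdots E_{2f-1}$ and the $E_{2j-1}$ commute pairwise, one checks directly that it sends $(d(\mathfrak{s})v)^{-1}m_\lambda d(\mathfrak{t})u$ to $(d(\mathfrak{t})u)^{-1}m_\lambda d(\mathfrak{s})v$. The one genuinely laborious point is the bookkeeping in the Brauer analogues of Propositions~3.4, 3.7, 3.8 and Corollary~3.1 of~\cite{saru} feeding into Lemma~\ref{ecor}; this can be sidestepped entirely, if desired, by invoking the Graham--Lehrer dangle cellular basis of $B_n(z)$ from~\cite{grahamlehrer} (see also~\cite{xi:qheredity}) and checking that the transition matrix between it and the basis above is block unitriangular with respect to the order $\unrhd$.
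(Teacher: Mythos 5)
Your proposal is correct and follows the same route the paper itself indicates: the text preceding Theorem~\ref{saruthm:2} states that replacing Murphy's cellular basis of $\mathscr{H}_n(q^2)$ by the corresponding basis of $R\mathfrak{S}_n$ in the construction of~\cite{saru} yields the Brauer cellular basis, and your write-up simply spells out the Brauer analogues of Lemmas~\ref{bquot}, \ref{gomi}, \ref{ecor} and Corollary~\ref{twiso}, the dimension count $(2n-1)!!$, and the verification of the anti-involution, all of which carry over exactly as you describe.
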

As a consequence of the above theorem, $\check{B}^\lambda_n$ is the $R$--module freely generated by
\begin{align*}
\big\{(d(\mathfrak{s})v)^{-1}m_\mu d(\mathfrak{t})u
:(\mathfrak{s},v),(\mathfrak{t},u)\in\mathcal{I}_{n}(\mu), \text
{ for }\mu\rhd\lambda\big\}.
\end{align*}

If $f$ is an integer, $0\le f\le [n/2]$, and $\lambda$ is a
partition of $n-2f$, the cell module $S^\lambda$ is defined to be
the $R$--module freely generated by
\begin{align}\label{bcell:2}
\left \{m_\lambda d(\mathfrak{t})u
+\check{B}_n^\lambda\,|\,(\mathfrak{t},u)\in
\mathcal{I}_{n}(\lambda)\right\}
\end{align}
with right $B_n( z)$ action
\begin{align*}
(m_\lambda d(\mathfrak{t})u) b+\check{B}_n^\lambda=
\sum_{(\mathfrak{u},w)}a_{(\mathfrak{u},w)} m_\lambda
d(\mathfrak{u})w +\check{B}_n^\lambda&&\text{for $b\in B_n( z)$,}
\end{align*}
where the coefficients $a_{(\mathfrak{u},w)}\in R$, for
$(\mathfrak{u},w)$ in $\mathcal{I}_{n}(\lambda)$, are determined by
the expression~\eqref{btsu:2}.

The construction of cellular algebras~\cite{grahamlehrer} equips the
$B_n(z)$--module $S^\lambda$ with a symmetric associative bilinear
form (compare~\eqref{formdef:1} above). Following is the counterpart
to Example~\ref{bilinear:ex}, stated for reference in Section~\ref{brauerss}.
\begin{example}\label{bilinear:ex2}
Let $n=3$ and $\lambda=(1)$ so that $\check{B}_n^\lambda=(0)$ and
$m_\lambda=E_1$. We order the basis~\eqref{bcell:2} for the module
$S^\lambda$ as $\mathbf{v}_1=E_1$, $\mathbf{v}_2=E_1s_2$ and
$\mathbf{v}_3=E_1s_2s_1$ and, with respect to this ordered basis,
the Gram matrix $\langle\mathbf{v}_i,\mathbf{v}_j\rangle$ of the
bilinear form on the $B_n(z)$--module $S^\lambda$ is
\begin{align*}
\left[
\begin{matrix}
z &1 &1\\
1 &z&1\\
1 &1& z
\end{matrix}
\right].
\end{align*}
The determinant of the Gram matrix given above is
\begin{align*}
(z-1)^2(z+2).
\end{align*}
\end{example}

By Theorem~2.3 of~\cite{wenzlqg}, the Bratteli diagram associated
with $B_n( z)$ is identical to the Bratteli diagram for $B_n(q,r)$.
Thus $\mu\to\lambda$ retains the meaning assigned in Section~\ref{resf}.

Let $f$ be an integer, $0\le f\le[n/2]$, and $\lambda$ be a
partition of $n-2f$ with $t$ removable nodes and $(p-t)$ addable
nodes. Suppose that
$\mu^{(1)}\rhd\mu^{(2)}\rhd\cdots\rhd\mu^{(p)}$ is the ordering of
$\{\mu:\mu\to\lambda\}$ by dominance order on partitions. If
$1\le k\le t$, define
\begin{align*}
y_{\mu^{(k)}}^\lambda = m_\lambda d(\mathfrak{s})+\check{B}_n^\lambda &&\text{where
$\mathfrak{s}|_{n-1}=\mathfrak{t}^{\mu^{(k)}}\in\STD_{n-1}(\mu^{(k)})$}
\end{align*}
and, if $t<k\le p$ define $w_k$ by~\eqref{wkdef} and, by analogy
with~\ref{ydef:2}, write
\begin{align*}
 y_{\mu^{(k)}}^\lambda=E_{2f-1}w_k^{-1}m_{\mu^{(k)}}+ \check{B}_n^\lambda 
\end{align*}
Given the elements $y_\mu^\lambda$ in $S^\lambda$ for each partition
$\mu\to \lambda$, define $N^\mu$ to be the $B_{n-1}( z)$--submodule
of $S^\lambda$ generated by
\begin{align*}
\{y^\lambda_\nu :\nu\to\lambda\text{ and }\nu\unrhd\mu\}
\end{align*}
and let $\check{N}^\mu$ be the $B_{n-1}( z)$--submodule of
$S^\lambda$ generated by
\begin{align*}
\{y^\lambda_\nu:\text{$\nu\to\lambda$ and $\nu\rhd\mu$}\}.
\end{align*}
Theorem~\ref{big:4} and the construction given for the B--M--W
algebras in Section~\ref{newbasis} have analogues in the context of
$B_n( z)$. Thus the cell module~\eqref{bcell:2} has a basis over
$R$, 
\begin{align*}
\{ m_\mathfrak{t}= m_\lambda b_{\mathfrak{t}}+\check{B}_n^\lambda\,:\,\mathfrak{t}\in\mathfrak{T}_n(\lambda)\}
\end{align*}
indexed by the paths $\mathfrak{T}_n(\lambda)$ of
shape $\lambda$ in the Bratteli diagram associated with $B_n( z)$, and defined in the same manner as the basis~\eqref{b-murphy}.
\section{Jucys--Murphy Operators for the Brauer Algebras}
Define the operators $L_i$, for $i=1,\dots,n$, in $B_n(z)$ by
$L_1=0$ and
\begin{align*}
L_i=s_{i-1}-E_{i-1}+s_{i-1}L_{i-1}s_{i-1}&&\text{for $1<i\le n$}.
\end{align*}
\begin{remark}
The elements $L_i$ as defined above bear an obvious analogy with the elements $\tilde{D}_i$ defined in Section~\ref{ihsec}; thus we refer to the elements $L_i$ as the ``Jucys--Murphy operators'' in $B_n(z)$.
\end{remark}

In~\cite{nazarov}, M.~Nazarov made use of operators $x_i$ with are related to the $L_i$ defined above by $x_i=\frac{z-1}{2}+L_i$. Since the difference of $L_i$ and the $x_i$ of~\cite{nazarov} is a scalar multiple of the identity, we derive the next statement from results in Section~2 of~\cite{nazarov}.
\begin{proposition} Let $i$ and $k$ be integers, $1\le i<n$
and $1\le k\le n$.
\begin{enumerate}
\item $s_i$ and $L_k$ commute if $i\ne k-1,k$. \item $L_i$ and
$L_k$ commute.\item $s_i$ commutes with $L_i+L_{i+1}$. \item
$L_2+L_3+\cdots+ L_n$ belongs to the centre of $B_n(z)$.
\end{enumerate}
\end{proposition}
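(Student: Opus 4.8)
The plan is to reduce all four assertions to the corresponding facts for M.~Nazarov's operators, which are related to the $L_i$ by $x_i=\tfrac{z-1}{2}+L_i$ as recorded above. First I would confirm that the recursion defining the $L_i$ becomes Nazarov's recursion for the $x_i$ under this shift: substituting $L_j=x_j-\tfrac{z-1}{2}$ into $L_i=s_{i-1}-E_{i-1}+s_{i-1}L_{i-1}s_{i-1}$ and using $s_{i-1}^2=1$ gives
\begin{align*}
x_i=s_{i-1}-E_{i-1}+s_{i-1}x_{i-1}s_{i-1},\qquad x_1=\tfrac{z-1}{2},
\end{align*}
which is exactly the defining data used in Section~2 of~\cite{nazarov}. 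Since each $L_i$ differs from $x_i$ by the central scalar $\tfrac{z-1}{2}$, the commutators in (1)--(3) and the centrality statement (4) are unchanged, so the proposition is inherited verbatim from the corresponding results of~\cite{nazarov}.

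For a self-contained argument one works directly from the recursion. Statement (3) comes out immediately: left-, respectively right-, multiplying $L_{i+1}=s_iL_is_i+s_i-E_i$ by $s_i$ and using $s_i^2=1$ and $s_iE_i=E_is_i=E_i$ gives $s_iL_{i+1}=L_is_i+1-E_i$ and $L_{i+1}s_i=s_iL_i+1-E_i$; subtracting yields $s_i(L_i+L_{i+1})=(L_i+L_{i+1})s_i$.

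For (1) I would first establish, by induction on $k$, the closed form
\begin{align*}
L_k=\sum_{j=1}^{k-1}\bigl((j,k)-E_{(j,k)}\bigr),
\end{align*}
where $(j,k)\in\mathfrak{S}_n\subseteq B_n(z)$ is the transposition and, for $j<k$, $E_{(j,k)}$ is the Brauer-diagram element carrying a single horizontal edge $\{j,k\}$ in each row. The induction step uses $(k-1,k)=s_{k-1}$, $E_{(k-1,k)}=E_{k-1}$, and the conjugation identities $s_{k-1}(j,k-1)s_{k-1}=(j,k)$ and $s_{k-1}E_{(j,k-1)}s_{k-1}=E_{(j,k)}$ for $j<k-1$, which follow from the braid relation and the relations $E_is_{i\pm1}s_i=s_{i\pm1}s_iE_{i\pm1}$, $E_is_{i\pm1}E_i=E_i$. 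Granting the closed form, conjugation of $L_k$ by $s_i=(i,i+1)$ with $i\ne k-1,k$ merely permutes the summands among themselves, so $s_iL_ks_i=L_k$, which is (1). For (4), rearranging the recursion gives $E_i=s_iL_is_i+s_i-L_{i+1}$, so once (2) is available the element $L_2+\cdots+L_n$ --- which commutes with every $s_i$ by (1) and (3) and with every $L_j$ by (2) --- also commutes with every $E_i$, hence with the whole algebra $B_n(z)$.

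The step I expect to be the real obstacle is (2), the mutual commutativity of the $L_i$. For $|i-k|\ge 2$ it follows by induction from (1) (one pushes the $L$ of larger index through the recursion, the term $[L_i,E_{k-1}]$ vanishing because the two elements involve disjoint sets of strands). The adjacent case $[L_i,L_{i+1}]=0$, however, is not a formal consequence of (1) and (3); I would deduce it either from Nazarov's computation or from the diagrammatic closed form together with the absorption identities $E_i(L_i+L_{i+1})=(1-z)E_i=(L_i+L_{i+1})E_i$, which are themselves proved by a short induction from the recursion. Everything else in the proof is routine manipulation with the defining relations of $B_n(z)$.
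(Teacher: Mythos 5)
Your proposal takes exactly the paper's route: the paper's entire proof is the observation that $L_i$ differs from Nazarov's $x_i$ by the central scalar $\tfrac{z-1}{2}$, so the commutation and centrality statements are inherited verbatim from Section~2 of~\cite{nazarov}, which is your opening paragraph. The additional self-contained sketches of (1), (3) and (4) are correct and a pleasant supplement, but since you (quite reasonably) still defer the genuinely non-formal step, the adjacent commutativity $[L_i,L_{i+1}]=0$, to Nazarov, the argument coincides in substance with the paper's.
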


For integers $j,k$ with $1\le j,k\le n$, we define the elements
$L^{(j)}_k$ by $L^{(j)}_1=0$ and
\begin{align*}
L^{(j)}_{k+1}=s_{j+k-1}-E_{j+k-1}+s_{j+k-1}L^{(j)}_{k}s_{j+k-1},&&\text{for $k\ge 1$.}
\end{align*}
In particular, $L_k^{(1)}=L_k$, for $k=1,\dots,n$, are the
Jucys--Murphy elements for $B_n(z)$. 

The objective now is to show that $m_{\mathfrak{t}^\lambda}$ is a common eigenvector for the action of the Jucys--Murphy elements $L_k$ on the cell module $S^\lambda$.
\begin{proposition}\label{fstep}
Let $i,k$ be integers with $1\le i\le n$ and $1<k\le n$. Then
\begin{align*}
E_iL_{k}^{(i)}=
\begin{cases}
(1-z)E_i &\text{if $k=2$;}\\
0&\text{if $k=3$;}\\
E_iL^{(i+2)}_{k-2}&\text{if $k\ge4$.}
\end{cases}
\end{align*}
\end{proposition}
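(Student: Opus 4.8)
The plan is to imitate the proof of Proposition~\ref{b-m-wshorten}, working directly from the defining relations of $B_n(z)$ together with the recursion $L^{(j)}_{k+1}=s_{j+k-1}-E_{j+k-1}+s_{j+k-1}L^{(j)}_{k}s_{j+k-1}$. First I would unwind this recursion twice to record the explicit formulae $L^{(i)}_2=s_i-E_i$ and $L^{(i)}_3=s_{i+1}-E_{i+1}+s_{i+1}(s_i-E_i)s_{i+1}$. The case $k=2$ is then immediate: since $E_is_i=E_i$ and $E_i^2=zE_i$, one has $E_iL^{(i)}_2=E_is_i-E_i^2=(1-z)E_i$.

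For the case $k=3$, I would multiply $L^{(i)}_3$ on the left by $E_i$ and simplify the four resulting terms. Using the braid relation $s_{i+1}s_is_{i+1}=s_is_{i+1}s_i$ together with $E_is_i=E_i$ gives $E_is_{i+1}s_is_{i+1}=E_is_{i+1}s_i$, and the Brauer relations $E_is_{i+1}s_i=E_iE_{i+1}$ and $E_is_{i+1}E_i=E_i$ reduce the whole expression to $E_is_{i+1}-E_iE_{i+1}+E_iE_{i+1}-E_is_{i+1}=0$.

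The case $k\ge 4$ I would handle by induction on $k$, proving the sharper statement $E_iL^{(i)}_k=E_iL^{(i+2)}_{k-2}$. The two ingredients are that $E_i$ commutes with every $s_m$ and $E_m$ with $m\ge i+2$ (so in particular with $s_{i+k-2}$ and $E_{i+k-2}$ once $k\ge 4$), and the recursion itself, which yields
\begin{align*}
E_iL^{(i)}_k=E_i(s_{i+k-2}-E_{i+k-2})+s_{i+k-2}\bigl(E_iL^{(i)}_{k-1}\bigr)s_{i+k-2}.
\end{align*}
For $k=4$ the recursive summand vanishes by the case $k=3$, and the remaining term is exactly $E_iL^{(i+2)}_2$. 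For $k\ge 5$ the inductive hypothesis $E_iL^{(i)}_{k-1}=E_iL^{(i+2)}_{k-3}$ lets one pull $E_i$ past $s_{i+k-2}$ and collect, recognising $(s_{i+k-2}-E_{i+k-2})+s_{i+k-2}L^{(i+2)}_{k-3}s_{i+k-2}$ as $L^{(i+2)}_{k-2}$ via the defining recursion with base $i+2$.

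I do not expect a genuine obstacle here; the computation is routine once the relations are in hand. The only points needing care are the index bookkeeping in $L^{(j)}_k$ — in particular checking that $i+k-2\ge i+2$ precisely when $k\ge 4$, which is what makes the commutation of $E_i$ with the outermost generators available — and getting the $k=3$ base case right, since that is where the non-trivial Brauer relations $E_is_{i+1}s_i=E_iE_{i+1}$, $E_is_{i+1}E_i=E_i$ and the braid relation all enter.
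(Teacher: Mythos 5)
Your proof is correct and follows essentially the same route as the paper's: the $k=2$ and $k=3$ cases are handled by the same direct expansion and the same Brauer relations ($E_is_i=E_i$, $E_is_{i+1}s_i=E_iE_{i+1}$, $E_is_{i+1}E_i=E_i$), and the $k\ge4$ case is the same induction, pulling $E_i$ past $s_{i+k-2}$ and $E_{i+k-2}$ and then recognising the recursion defining $L^{(i+2)}_{k-2}$.
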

\begin{proof}
If  $k=2$ then $E_iL^{(i)}_{k}=E_i(s_i-E_i)=(1-z)E_i$. For $k=3$
we have
\begin{align*}
E_iL^{(i)}_3&=E_i(s_{i+1}-E_{i+1}+s_{i+1}s_{i}s_{i+1}-s_{i+1}E_{i}s_{i+1})\\
&=E_i(s_{i+1}-E_{i+1})+E_i(E_{i+1}s_{i+1}-s_{i+1})=0.
\end{align*}
If $k=4$ then,
\begin{align*}
E_iL^{(i)}_4&=E_i(s_{i+2}-E_{i+2})+s_{i+2}E_iL^{(i)}_3s_{i+2}\\
&=E_{i}(s_{i+2}-E_{i+2})=E_iL^{(i+2)}_2,
\end{align*}
and when $k>4$,
\begin{align*}
E_iL^{(i)}_k&=E_i(s_{i+k-2}-E_{i+k-2})+s_{i+k-2}E_iL^{(i)}_{k-1}s_{i+k-2}\\
&=E_i(s_{i+k-2}-E_{i+k-2})+s_{i+k-2}E_iL^{(i+2)}_{k-3}s_{i+k-2}\\
&=E_i(s_{i+k-2}-E_{i+k-2}+s_{i+k-2}L^{(i+2)}_{k-3}s_{i+k-2})=E_iL^{(i+2)}_{k-2}
\end{align*}
by induction.
\end{proof}
\begin{corollary}\label{groblecor:2}
Let $f,k$ be integers, $0<f\le[n/2]$ and $1\le k\le n$. Then
\begin{align*}
E_1E_3\cdots E_{2f-1}L_k
=
\begin{cases}
0,&\text{if $k$ is odd, $1\le
k\le2f+1$;}\\
(1-z)E_1E_3\cdots E_{2f-1,}&\text{if $k$ is even, $1<
k\le2f$;}\\
E_1E_3\cdots E_{2f-1}L_{k-2f}^{(2f+1)},&\text{if $2f+1<k\le n$.}
\end{cases}
\end{align*}
\end{corollary}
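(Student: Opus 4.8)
The plan is to follow the proof of Corollary~\ref{groblecor}, replacing Proposition~\ref{b-m-wshorten} by its Brauer counterpart Proposition~\ref{fstep} throughout, and to treat the three cases in the order: $k$ odd, then $k$ even, then $2f+1<k\le n$ --- since the even case reduces to the odd one and the third case is just Proposition~\ref{fstep} iterated $f$ times. The one genuine difference from the Birman--Murakami--Wenzl setting is that here $L_1=0$ and $E_iL^{(i)}_3=0$ (rather than $1$ and $E_i$), which is exactly why the odd case collapses to $0$.

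For $k$ odd with $1\le k\le 2f+1$: if $k=1$ the claim is immediate from $L_1=0$. For $k\ge 3$ I would telescope using Proposition~\ref{fstep}. The idempotents $E_1,E_3,\dots,E_{2f-1}$ commute pairwise, and $E_{2i-1}$ commutes with $L^{(j)}_m$ whenever $j\ge 2i+1$; hence, starting from $E_1E_3\cdots E_{2f-1}L_k=E_1E_3\cdots E_{2f-1}L^{(1)}_k$, one moves $E_1$ next to $L^{(1)}_k$, applies $E_1L^{(1)}_k=E_1L^{(3)}_{k-2}$ (Proposition~\ref{fstep}, using $k\ge 4$; for $k=3$ one has instead $E_1L^{(1)}_3=0$ and is finished), moves $E_1$ back, and repeats with $E_3,E_5,\dots$ in turn. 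After $(k-3)/2$ steps the product becomes $E_1E_3\cdots E_{2f-1}L^{(k-2)}_3$, and since $k-2$ is odd with $1\le k-2\le 2f-1$ the factor $E_{k-2}$ is present, so $E_{k-2}L^{(k-2)}_3=0$ forces the whole expression to vanish. At each step the index used is odd and at most $2f-1$, so the $E$ being manipulated is genuinely one of the factors $E_1,E_3,\dots,E_{2f-1}$; this is what legitimises the commutations.

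For $k$ even with $1<k\le 2f$ I would expand $L_k=s_{k-1}-E_{k-1}+s_{k-1}L_{k-1}s_{k-1}$. Since $k-1$ is odd and $\le 2f-1$, the idempotent $E_{k-1}$ occurs in $E_1E_3\cdots E_{2f-1}$, and both $s_{k-1}$ and $E_{k-1}$ commute with the later factors $E_{k+1},\dots,E_{2f-1}$; using $E_{k-1}s_{k-1}=E_{k-1}$ and $E_{k-1}^2=zE_{k-1}$ one gets $E_1E_3\cdots E_{2f-1}(s_{k-1}-E_{k-1})=(1-z)E_1E_3\cdots E_{2f-1}$, while $E_1E_3\cdots E_{2f-1}\,s_{k-1}L_{k-1}s_{k-1}=E_1E_3\cdots E_{2f-1}L_{k-1}s_{k-1}=0$ by the odd case already proved (applied to $L_{k-1}$, noting $k-1\le 2f-1<2f+1$). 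Summing gives the stated value $(1-z)E_1E_3\cdots E_{2f-1}$.

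For $2f+1<k\le n$ the identity $E_1E_3\cdots E_{2f-1}L_k=E_1E_3\cdots E_{2f-1}L^{(2f+1)}_{k-2f}$ follows from $f$ applications of Proposition~\ref{fstep}, one per idempotent: at the $m$-th step ($1\le m\le f$) one brings $E_{2m-1}$ into position and applies $E_{2m-1}L^{(2m-1)}_{k-2(m-1)}=E_{2m-1}L^{(2m+1)}_{k-2m}$, which is valid since $k-2(m-1)\ge k-2f+2\ge 4$. I expect the only delicate part, exactly as in Corollary~\ref{groblecor}, to be keeping track of which $E_i$ commutes with which shifted Jucys--Murphy operator $L^{(j)}_m$ so that every telescoping step is justified; the algebra involved is otherwise routine.
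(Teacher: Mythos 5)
Your proposal is correct and follows essentially the same route as the paper's proof: telescope via Proposition~\ref{fstep} for the odd case (you stop at $E_{k-2}L_3^{(k-2)}=0$ rather than writing the formal final step $L_1^{(k)}=0$, but these are the same thing), expand $L_k=s_{k-1}-E_{k-1}+s_{k-1}L_{k-1}s_{k-1}$ and invoke the odd case for the even case, and telescope again for the third case. The only difference is that you make the commutation bookkeeping and the range checks ($k-2(m-1)\ge 4$, $k-2\le 2f-1$) explicit where the paper leaves them implicit; that is additional care, not a different argument.
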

\begin{proof}
If $k$ is odd, $1<k\le2f+1$, then by Proposition~\ref{fstep},
\begin{multline}\begin{split}\label{froble}
E_1E_3\cdots E_{k}L_k=E_1E_3\cdots E_{k}L^{(1)}_k=E_1E_3\cdots
E_{k}L^{(3)}_{k-2}=\cdots\\ \cdots=E_1E_3\cdots
E_{k}L_1^{(k)}=0.
\end{split}
\end{multline}
Since $E_{k+2}E_{k+3}\cdots E_{2f-1}$ commutes with $L_k$, the first case follows. If $k$ is even and $1<k\le 2f$, then the
relations $E_{i}s_{i}=E_{i}$ and $E_i^2=zE_i$, together with~\eqref{froble}, show that
\begin{align*}
E_1E_3\cdots E_{2f-1}L_k&=E_1E_3\cdots
E_{2f-1}(s_{k-1}-E_{k-1}+s_{k-1}L_{k-1}s_k)\\
&=(1-z)E_1E_3\cdots E_{2f-1}+E_1E_3\cdots E_{2f-1}L_{k-1}s_{k-1}\\
&=(1-z)E_1E_3\cdots E_{2f-1}.
\end{align*}
The final case follows in a similar manner.
\end{proof}
Let $f$ be an integer, $0\le f\le [n/2]$, and $\lambda$ be a
partition of $n-2f$. For each path
$\mathfrak{t}\in\mathfrak{T}_n(\lambda)$, define the polynomial
\begin{align*}
P_{\mathfrak{t}}(k)=
\begin{cases}
j-i &\text{if $[\lambda^{(k)}]=[\lambda^{(k-1)}]\cup
\{(i,j)\}$}\\
i-j+1-z&\text{if $[\lambda^{(k)}]=[\lambda^{(k-1)}]\setminus
\{(i,j)\}$}.
\end{cases}
\end{align*}
The proof of the next statement is identical to the proof of
Proposition~\ref{utrangular:3} given above; for the proof of
Proposition~\ref{prd}, we refer to the proof of Proposition~\ref{danaming}.
\begin{proposition}
If $\lambda$ is a partition of $n$ and $k$ is an integer with
$1\le k\le n$, then
$m_{\mathfrak{t}^\lambda}L_k=P_{\mathfrak{t}^\lambda}(k)m_{\mathfrak{t}^\lambda}$.
\end{proposition}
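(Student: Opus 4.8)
The plan is to carry out, with the obvious substitutions, exactly the argument used for Proposition~\ref{utrangular:3} in the B--M--W setting. Since $\lambda$ is a partition of $n$ we have $f=0$, so $m_\lambda=x_\lambda=\sum_{w\in\mathfrak{S}_\lambda}w$ lies in $R\mathfrak{S}_n\subseteq B_n(z)$ and $m_{\mathfrak{t}^\lambda}=m_\lambda+\check{B}^\lambda_n$. The role played by $\mathscr{H}_n(q^2)$ in Section~\ref{ihsec} is now played by $R\mathfrak{S}_n$ with its Murphy basis, and the role of the Hecke Jucys--Murphy elements $D_i$ by the classical ones $\tilde{D}_i=\sum_{j=1}^{i-1}(j,i)$; the point is that $\tilde D_1=0$ and $\tilde{D}_i=s_{i-1}+s_{i-1}\tilde{D}_{i-1}s_{i-1}$, which is precisely the recursion defining the $L_i$ of $B_n(z)$ once the terms $E_{i-1}$ are deleted.

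Concretely, I would first record the $f=0$ case of the analogue of Lemma~\ref{bquot}: the two--sided ideal $B_n^1$ generated by $E_1$ contains every $E_i$, and the quotient map $B_n(z)\to B_n(z)/B_n^1$ restricts on $R\mathfrak{S}_n$ to an isomorphism $R\mathfrak{S}_n\xrightarrow{\ \sim\ }B_n(z)/B_n^1$. As in Corollary~\ref{twiso}, this gives an $R$--module homomorphism $\vartheta_0\colon R\mathfrak{S}_n\to B_n^0/B_n^1=B_n(z)/B_n^1$, $w\mapsto w+B_n^1$, with the property $\vartheta_0(hs_i)=\vartheta_0(h)s_i$ for $h\in R\mathfrak{S}_n$, $1\le i<n$, the counterpart of~\eqref{nearh}. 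Next, an easy induction on $k$ using $L_1=0$, $L_k=s_{k-1}-E_{k-1}+s_{k-1}L_{k-1}s_{k-1}$, $E_{k-1}\equiv 0\pmod{B_n^1}$ and $s_{k-1}(j,k-1)s_{k-1}=(j,k)$ shows $L_k\equiv\tilde{D}_k\pmod{B_n^1}$; equivalently $\tilde D_k$, computed inside $R\mathfrak{S}_n$, and $L_k$ have the same image in $B_n(z)/B_n^1$. Hence $x_\lambda\tilde D_k\in R\mathfrak{S}_n$ and
\begin{align*}
\vartheta_0(x_\lambda\tilde{D}_k)=x_\lambda\tilde D_k+B_n^1=m_\lambda L_k+B_n^1 .
\end{align*}
It then remains to invoke the classical Jucys--Murphy eigenvector identity for $R\mathfrak{S}_n$, namely the $q=1$ specialisation of Theorem~\ref{utrangular:1} (Dipper--James, or Murphy): for the $\unrhd$--maximal standard tableau $\mathfrak{t}^\lambda$ the ``higher'' terms vanish, so $x_\lambda\tilde{D}_k\equiv P_{\mathfrak{t}^\lambda}(k)\,x_\lambda$ modulo the two--sided ideal of $R\mathfrak{S}_n$ spanned by the Murphy basis elements indexed by partitions $\mu\rhd\lambda$ with $\mu\vdash n$, where $P_{\mathfrak{t}^\lambda}(k)=j-i$ is the content of the node of $\mathfrak{t}^\lambda$ occupied by $k$; since $f=0$ every step of $\mathfrak{t}^\lambda$ is an addition, so this agrees with the value $P_{\mathfrak{t}^\lambda}(k)$ defined in the present section. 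Applying $\vartheta_0$, using that it carries that ideal into $\check{B}^\lambda_n/B_n^1$ (the same bookkeeping as in Lemmas~\ref{bres1} and~\ref{permod:2}, restricted to $f=0$) and that $B_n^1\subseteq\check{B}^\lambda_n$ (because every $m_\mu$ with $|\mu|<n$ satisfies $\mu\rhd\lambda$), we get $m_\lambda L_k\equiv P_{\mathfrak{t}^\lambda}(k)m_\lambda\pmod{\check{B}^\lambda_n}$, i.e.\ $m_{\mathfrak{t}^\lambda}L_k=P_{\mathfrak{t}^\lambda}(k)m_{\mathfrak{t}^\lambda}$.

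I do not expect any genuine obstacle: as the paper itself remarks, the proof is identical to that of Proposition~\ref{utrangular:3}. The only items needing (routine) verification are the congruence $L_k\equiv\tilde{D}_k\pmod{B_n^1}$, the observation that $\mathfrak{t}^\lambda$ is the $\unrhd$--maximal standard $\lambda$--tableau so that no error terms survive, and the trivial check that the content $j-i$ produced by the classical computation coincides with $P_{\mathfrak{t}^\lambda}(k)$ as defined here.
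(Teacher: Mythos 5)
Your proof is correct and is essentially the paper's own argument, which simply defers to the proof of Proposition~\ref{utrangular:3}: pass to $B_n(z)/B_n^1\cong R\mathfrak{S}_n$ via $\vartheta_0$, identify $L_k$ with $\tilde{D}_k$ there (your induction using $E_{k-1}\equiv 0\pmod{B_n^1}$ is exactly the needed check), and invoke the triangular Jucys--Murphy action on the Murphy basis, the maximality of $\mathfrak{t}^\lambda$, and the inclusion $B_n^1\subseteq\check{B}_n^\lambda$. The only cosmetic slip is the phrase ``the $q=1$ specialisation of Theorem~\ref{utrangular:1}'': that theorem is stated for $D_k=1+(q-q^{-1})\tilde D_k$, which degenerates to $1$ at $q=1$, so what you actually need (and correctly write down) is the parallel triangularity statement for the additive elements $\tilde D_k$ over $R\mathfrak{S}_n$.
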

\begin{proposition}\label{prd}
Let $f$ be an integer, $0<f\le [n/2]$, and $\lambda$ be a
partition of $n-2f$. Then
$m_{\mathfrak{t}^\lambda}L_k=P_{\mathfrak{t}^\lambda}(k)m_{\mathfrak{t}^\lambda}$.
\end{proposition}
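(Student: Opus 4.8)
The plan is to imitate, almost verbatim, the proof of Proposition~\ref{danaming}, replacing $\mathscr{H}_{n-2f}(q^2)$ by the group ring $R\mathfrak{S}_{n-2f}$, the Jucys--Murphy elements $D_i$ by the classical symmetric--group Jucys--Murphy elements $\tilde{D}_i=\sum_{j<i}(j,i)$ (rather than the $D_i$, since here $L_1=0$ whereas $D_1=1$), and Corollary~\ref{groblecor} by Corollary~\ref{groblecor:2}. Two preliminary facts are needed, each the $q=1$ analogue of an ingredient of the B--M--W argument. First, the Brauer analogue of Corollary~\ref{twiso}: from the Brauer versions of Lemmas~\ref{bquot} and~\ref{gomi} one obtains an $R$--module homomorphism $\vartheta_f\colon R\mathfrak{S}_{n-2f}\to B_n^f/B_n^{f+1}$ given by $\vartheta_f(v)=E_1E_3\cdots E_{2f-1}\bar{v}+B_n^{f+1}$, where $\bar{v}$ is obtained from $v\in\mathfrak{S}_{n-2f}$ by raising every index by $2f$, which satisfies $\vartheta_f(ws_j)=\vartheta_f(w)\,s_{2f+j}$ for $1\le j<n-2f$, sends $x_\lambda$ to $m_\lambda+B_n^{f+1}$, and carries the two--sided ideal $\check{J}^\lambda\subseteq R\mathfrak{S}_{n-2f}$, the analogue of $\check{\mathscr{H}}_{n-2f}^\lambda$, into $\check{B}_n^\lambda$. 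Second, the classical identity $x_\lambda\tilde{D}_m\equiv (j-i)\,x_\lambda\mod \check{J}^\lambda$, where $(i,j)$ is the $m$--th node of $[\lambda]$ in row--reading order; this is the symmetric--group counterpart of Theorem~\ref{utrangular:1}.

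With these in hand the argument splits into two cases. If $1\le k\le 2f+1$, then, since $x_\lambda$ is built from $s_i$ with $2f<i<n$ it commutes with $E_1,E_3,\dots,E_{2f-1}$, so $m_\lambda L_k=x_\lambda E_1E_3\cdots E_{2f-1}L_k$, and Corollary~\ref{groblecor:2} gives $m_\lambda L_k=0$ for $k$ odd and $m_\lambda L_k=(1-z)x_\lambda E_1E_3\cdots E_{2f-1}=(1-z)m_\lambda$ for $k$ even. On the other hand, through its first $2f+1$ steps the maximal path $\mathfrak{t}^\lambda$ oscillates $\varnothing,\,(1),\,\varnothing,\,(1),\dots$, adjoining the node $(1,1)$ at the odd steps and deleting it at the even ones, so $P_{\mathfrak{t}^\lambda}(k)=j-i=0$ for $k$ odd in this range and $P_{\mathfrak{t}^\lambda}(k)=i-j+1-z=1-z$ for $k$ even; since $m_{\mathfrak{t}^\lambda}=m_\lambda+\check{B}_n^\lambda$, this settles the case.

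If $2f+1<k\le n$, then as before $m_\lambda L_k=x_\lambda E_1E_3\cdots E_{2f-1}L_k=x_\lambda E_1E_3\cdots E_{2f-1}L_{k-2f}^{(2f+1)}$ by Corollary~\ref{groblecor:2}. Next I would prove, by induction on $m$ from the recursions $\tilde{D}_{m+1}=s_m+s_m\tilde{D}_ms_m$ and $L_{m+1}^{(2f+1)}=s_{2f+m}-E_{2f+m}+s_{2f+m}L_m^{(2f+1)}s_{2f+m}$, together with the relation $\vartheta_f(ws_j)=\vartheta_f(w)s_{2f+j}$ and the vanishing $\vartheta_f(w)E_{2f+m}=0$ in $B_n^f/B_n^{f+1}$ (the Brauer form of Lemma~\ref{gomi}, valid for $0<m<n-2f$), the identity $\vartheta_f(h\tilde{D}_m)=\vartheta_f(h)L_m^{(2f+1)}$ for all $h\in R\mathfrak{S}_{n-2f}$ and $1\le m\le n-2f$. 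Taking $h=x_\lambda$, $m=k-2f$, and invoking the classical identity along with $\vartheta_f(\check{J}^\lambda)\subseteq\check{B}_n^\lambda$ then yields
\[ m_\lambda L_k\equiv\vartheta_f(x_\lambda\tilde{D}_{k-2f})\equiv (j-i)\,\vartheta_f(x_\lambda)=(j-i)\,m_\lambda\mod \check{B}_n^\lambda, \]
where $(i,j)$ is the $(k-2f)$--th node of $[\lambda]$ in row--reading order. Finally, for $k>2f$ the maximal path $\mathfrak{t}^\lambda$ adjoins exactly that node at step $k$, so $j-i=P_{\mathfrak{t}^\lambda}(k)$; since $B_n^{f+1}\subseteq\check{B}_n^\lambda$, the displayed congruence gives $m_{\mathfrak{t}^\lambda}L_k=P_{\mathfrak{t}^\lambda}(k)m_{\mathfrak{t}^\lambda}$, as desired.

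The computation is otherwise routine; the only part that genuinely has to be checked rather than copied — and hence the main obstacle, such as it is — is the passage to the group ring: that the cellular and $\vartheta_f$ machinery of Sections~\ref{b-m-w-a}--\ref{resf}, and the Dipper--James identity Theorem~\ref{utrangular:1}, carry over with $R\mathfrak{S}_{n-2f}$ in place of $\mathscr{H}_{n-2f}(q^2)$. As these facts are classical for the symmetric group, and the paper already records in Section~\ref{bralgs} that the constructions of~\cite{saru} apply to $B_n(z)$ once $\mathscr{H}_n(q^2)$ is replaced by $R\mathfrak{S}_n$, this presents no real difficulty.
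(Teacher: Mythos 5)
Your proposal is correct and takes essentially the same approach as the paper: the paper's own proof of Proposition~\ref{prd} is simply the instruction to repeat the argument of Proposition~\ref{danaming}, and that is precisely what you do, with the right substitutions ($R\mathfrak{S}_{n-2f}$ for $\mathscr{H}_{n-2f}(q^2)$, $\tilde{D}_i$ for $D_i$, Corollary~\ref{groblecor:2} for Corollary~\ref{groblecor}). You also correctly flag the one place where the transcription is not literal — the identity $\vartheta_f(h\tilde{D}_m)=\vartheta_f(h)L_m^{(2f+1)}$ needs a short induction killing the $E_{2f+m}$ terms via the Brauer form of Lemma~\ref{gomi}, since the Brauer $L_m^{(j)}$ recursion introduces an $E$-term absent from the Hecke-algebra $D_m$ — which the paper leaves implicit.
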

\begin{proposition}\label{brauercentral}
Let $f$ be an integer, $0\le f\le[n/2]$, and $\lambda$ be a
partition of $n-2f$. Then there exists an invariant $\alpha\in R$
such that $L_2+L_3+\cdots+L_n$ acts on $S^\lambda$ as a scalar
multiple by $\alpha$ of the identity.
\end{proposition}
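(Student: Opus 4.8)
The plan is to reprise verbatim the argument used for Proposition~\ref{central}, with the central element $L_2L_3\cdots L_n$ of $B_n(q,r)$ replaced by the central element $L_2+L_3+\cdots+L_n$ of $B_n(z)$, and the product $\prod_{k=2}^n P_{\mathfrak{t}^\lambda}(k)$ replaced by the corresponding sum. So the candidate scalar is
\[
\alpha=\sum_{k=2}^n P_{\mathfrak{t}^\lambda}(k),
\]
which lies in $R=\mathbb{Z}[z]$ since each $P_{\mathfrak{t}^\lambda}(k)$ is either an integer $j-i$ or a polynomial $i-j+1-z$ with integer coefficients.

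First I would record the action on the distinguished generator. Since $m_{\mathfrak{t}^\lambda}=m_\lambda+\check{B}_n^\lambda$ and, by Proposition~\ref{prd}, $m_{\mathfrak{t}^\lambda}L_k=P_{\mathfrak{t}^\lambda}(k)\,m_{\mathfrak{t}^\lambda}$ for $1\le k\le n$, summing over $k=2,\dots,n$ gives $m_{\mathfrak{t}^\lambda}(L_2+L_3+\cdots+L_n)=\alpha\,m_{\mathfrak{t}^\lambda}$; equivalently $m_\lambda(L_2+\cdots+L_n)\equiv\alpha\,m_\lambda\pmod{\check{B}_n^\lambda}$. Next I would spread this to all of $S^\lambda$. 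By Theorem~\ref{saruthm:2} and the definition~\eqref{bcell:2} of the cell module, every element of $S^\lambda$ has the form $\sum_{w\in\mathfrak{S}_n}a_w\,m_\lambda w+\check{B}_n^\lambda$ with $a_w\in R$ (the basis vectors $m_\lambda d(\mathfrak{t})u+\check{B}_n^\lambda$ are already of this shape, since $d(\mathfrak{t})u\in\mathfrak{S}_n$). Because $L_2+L_3+\cdots+L_n$ is central in $B_n(z)$, right multiplication by it commutes past each $w$:
\[
\Big(\sum_{w}a_w\,m_\lambda w\Big)(L_2+\cdots+L_n)=\sum_{w}a_w\,m_\lambda(L_2+\cdots+L_n)\,w\equiv\alpha\sum_{w}a_w\,m_\lambda w\pmod{\check{B}_n^\lambda},
\]
the last congruence using $m_\lambda(L_2+\cdots+L_n)-\alpha m_\lambda\in\check{B}_n^\lambda$ together with the fact that $\check{B}_n^\lambda$ is a right ideal. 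Hence $L_2+L_3+\cdots+L_n$ acts on $S^\lambda$ as multiplication by $\alpha$, completing the proof.

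I expect no serious obstacle here: the content is an exact transcription of the B--M--W case. The one substantive point — that $m_{\mathfrak{t}^\lambda}$ is a simultaneous eigenvector for the $L_k$, so that $m_\lambda(L_2+\cdots+L_n)\equiv\alpha m_\lambda$ is a genuine identity in $S^\lambda$ — has already been established in Proposition~\ref{prd} (which rests on Corollary~\ref{groblecor:2}), so nothing new is required. The only thing to be careful about is to invoke centrality and the right-ideal property of $\check{B}_n^\lambda$ in the right order, exactly as above.
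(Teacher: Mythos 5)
Your argument is precisely the paper's: it invokes Proposition~\ref{prd} to get the eigenvalue on $m_{\mathfrak{t}^\lambda}$, then uses centrality of $L_2+\cdots+L_n$ to propagate this to an arbitrary element $\sum_w a_w m_\lambda w + \check{B}_n^\lambda$ of the cell module, yielding $\alpha=\sum_{k=2}^n P_{\mathfrak{t}^\lambda}(k)$. The paper simply writes ``as in the proof of Proposition~\ref{central}'' and records $\alpha$; you have spelled out the same steps in more detail.
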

\begin{proof}
As in the proof of Proposition~\ref{central}, we obtain
$\alpha=\sum_{k=2}^nP_{\mathfrak{t}^\lambda}(k)$.
\end{proof}
\begin{theorem}\label{br:utran}
Let $f$ be an integer $0\le f\le [n/2]$ and $\lambda$ be a
partition of $n-2f$. If $\mathfrak{t}\in\mathfrak{T}_n(\lambda)$,
then there exist $a_\mathfrak{v}\in R$, for
$\mathfrak{v}\in\mathfrak{T}_n(\lambda)$ with
$\mathfrak{v}\rhd\mathfrak{t}$, such that
\begin{align*}
m_\mathfrak{t} L_k=P_{\mathfrak{t}}(k)m_\mathfrak{t}+
\sum_{\substack{\mathfrak{v}\in\mathfrak{T}_n(\lambda)\\\mathfrak{v}\rhd\mathfrak{t}}}
a_\mathfrak{v}m_\mathfrak{v}.
\end{align*}
\end{theorem}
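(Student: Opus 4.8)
The plan is to imitate the proof of Theorem~\ref{utrangular:2} almost verbatim, replacing the multiplicative data of $B_n(q,r)$ by the additive data of $B_n(z)$; in particular Corollary~\ref{groblecor:2} and Proposition~\ref{prd} play the roles of Corollary~\ref{groblecor} and Proposition~\ref{danaming}. I would argue by induction on $n$. The base case $n=1$ is immediate, since $L_1=0$ and the only cell module $S^{(1)}$ is one dimensional with $P_{\mathfrak{t}^{(1)}}(1)=0$.

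For the inductive step with $1\le k<n$, note that $L_k$ lies in the subalgebra $B_{n-1}(z)$, so the argument proceeds through the Brauer analogue of the filtration of $\RES(S^\lambda)$ by $B_{n-1}(z)$--modules furnished by Theorem~\ref{big:4} and Corollary~\ref{big:5}. Writing $\mathfrak{s}=\mathfrak{t}|_{n-1}\in\mathfrak{T}_{n-1}(\mu)$, one has $m_\mathfrak{t}+\check{N}^\mu\mapsto m_\mathfrak{s}$ under the isomorphism $N^\mu/\check{N}^\mu\cong S^\mu$, while $y^\lambda_\mu b_\mathfrak{v}=m_\mathfrak{u}$ whenever $\mathfrak{u}|_{n-1}=\mathfrak{v}$, as in~\eqref{goup:3}. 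Applying the inductive hypothesis inside $S^\mu$ and pulling it back gives coefficients $a_\mathfrak{v}\in R$, for $\mathfrak{v}\rhd\mathfrak{s}$ in $\mathfrak{T}_{n-1}(\mu)$, with
\[
m_\mathfrak{t}L_k\equiv P_\mathfrak{s}(k)\,m_\mathfrak{t}+\sum_{\mathfrak{v}\rhd\mathfrak{s}}a_\mathfrak{v}\,y^\lambda_\mu b_\mathfrak{v}\mod\check{N}^\mu.
\]
Since $P_\mathfrak{t}(k)=P_\mathfrak{s}(k)$ for $k<n$, and since every $y^\lambda_\mu b_\mathfrak{v}=m_\mathfrak{u}$ occurring here has $\mathfrak{u}|_{n-1}=\mathfrak{v}\rhd\mathfrak{t}|_{n-1}$ and the same terminal shape as $\mathfrak{t}$, hence $\mathfrak{u}\rhd\mathfrak{t}$, the right-hand side already has the required shape; and $\check{N}^\mu$ is spanned over $R$ by those $m_\mathfrak{u}$ whose penultimate shape strictly dominates $\mu$, which again forces $\mathfrak{u}\rhd\mathfrak{t}$. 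This settles $1\le k<n$.

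For $k=n$ I would invoke Proposition~\ref{brauercentral}, which says $L_2+\cdots+L_n$ acts on $S^\lambda$ as a scalar $\alpha$, necessarily equal to $\sum_{k=2}^nP_{\mathfrak{t}^\lambda}(k)$. A brief telescoping calculation — an addition of a node $(i,j)$ at step $k$ contributes $j-i$ to the sum and changes the content sum of the running shape by $j-i$, a removal contributes $(i-j)+(1-z)$ and changes the content sum by $i-j$, and any path in $\mathfrak{T}_n(\lambda)$ has exactly $f$ removals — shows that $\sum_{k=1}^nP_\mathfrak{t}(k)=\sum_{(i,j)\in[\lambda]}(j-i)+f(1-z)$ independently of $\mathfrak{t}$, so $\alpha=\sum_{k=2}^nP_\mathfrak{t}(k)$ for every $\mathfrak{t}\in\mathfrak{T}_n(\lambda)$. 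Hence $m_\mathfrak{t}L_n=\alpha\,m_\mathfrak{t}-m_\mathfrak{t}(L_2+\cdots+L_{n-1})$, and inserting the expansions just obtained for $L_2,\dots,L_{n-1}$ yields $m_\mathfrak{t}L_n=P_\mathfrak{t}(n)m_\mathfrak{t}+\sum_{\mathfrak{v}\rhd\mathfrak{t}}a_\mathfrak{v}m_\mathfrak{v}$, as claimed.

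The part I expect to require the most care is the interaction between the restriction filtration and the path order: one must confirm that the Brauer counterpart of Theorem~\ref{big:4} carries exactly the same upper-triangularity, so that $\check{N}^\mu$ and all error terms genuinely lie in the span of $\{m_\mathfrak{v}:\mathfrak{v}\rhd\mathfrak{t}\}$; this is a matter of unwinding the recursive definition of the $b_\mathfrak{t}$ and is no different from the $B_n(q,r)$ situation. The content-sum identity used to read off the eigenvalue of $L_n$ from the additive central element is the only genuinely new computation, and it is elementary.
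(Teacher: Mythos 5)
Your proposal is correct and matches the paper's proof essentially step for step: the paper likewise treats $1\le k<n$ by running the inductive argument of Theorem~\ref{utrangular:2} (via the restriction filtration of Theorem~\ref{big:4}, the identity $P_\mathfrak{t}(k)=P_{\mathfrak{t}|_{n-1}}(k)$, and the containment of $\check{N}^\mu$ in the span of $\{m_\mathfrak{u}:\mathfrak{u}\rhd\mathfrak{t}\}$), and then handles $k=n$ by the central element $L_2+\cdots+L_n$ and Proposition~\ref{brauercentral}. Your telescoping content-sum computation showing $\sum_{k=1}^nP_\mathfrak{t}(k)$ depends only on $\lambda$ is a useful explicit justification of a step the paper passes over silently, but it is not a different route.
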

\begin{proof}
By repeating word for word the argument given in the proof of Theorem~\ref{utrangular:2}, we show that the statement holds true when $1\le k<n$. 

That $L_n$ acts triangularly on $S^\lambda$, can then be observed
using Proposition~\ref{brauercentral}:
\begin{align*}
m_\mathfrak{t}L_n=\sum_{k=1}^{n}P_\mathfrak{t}(k)m_\mathfrak{t}-m_{\mathfrak{t}}(L_2+L_3+\cdots
+L_{n-1}).
\end{align*}
Thus the generalised eigenvalue for $L_n$ acting on
$m_\mathfrak{t}$ is $P_\mathfrak{t}(n)$.
\end{proof}
\section{Semisimplicity Criteria for Brauer Algebras}\label{brauerss}
Below are analogues for the Brauer algebras of the results of
Section~\ref{b-m-w-sc}. Let $\kappa$ be a field and take
$\hat{z}\in\kappa$. Then $z\mapsto \hat{z}$ determines a
homomorphism $R\to \kappa$, giving $\kappa$ an $R$--module
structure. A Brauer algebra over $\kappa$ is a specialisation
$B_n(\hat{z})=B_n(z)\otimes_{R}\kappa$. For
$\mathfrak{t}\in\mathfrak{T}_n(\lambda)$ and $k=1,\dots,n$, let
$\hat{P}_\mathfrak{t}(k)$ denote the evaluation of the monomial
$P_\mathfrak{t}(k)$ at $\hat{z}$,
\begin{align*}
\hat{P}_\mathfrak{t}(k)=
\begin{cases}
j-i&\text{if $[\lambda^{(k)}]=[\lambda^{(k-1)}]\cup
\{(i,j)\}$}\\
i-j+1-\hat{z}&\text{if $[\lambda^{(k)}]=[\lambda^{(k-1)}]\setminus
\{(i,j)\}$},
\end{cases}
\end{align*}
and as previously, define the ordered $n$-tuple
$\hat{P}(\mathfrak{t})=(\hat{P}_\mathfrak{t}(1),\dots,\hat{P}_\mathfrak{t}(n))$. The operators $L_i$ provide conditions necessary for the existence of a homomorphic image of one cell module for $B_n(\hat{z})$ in another cell module for $B_n(\hat{z})$.
\begin{theorem}\label{ss:3}
Let $\kappa$ be a field. Suppose that for each pair of partitions
$\lambda$ of $n-2f$ and $\mu$ of $n-2f'$, for integers $f,f'$ with
$0\le f,f'\le [n/2]$, and for each pair of partitions
$\mathfrak{s}\in\mathfrak{T}_n(\lambda)$ and
$\mathfrak{t}\in\mathfrak{T}_n(\mu)$, the conditions
$\lambda\unrhd\mu$ and
$\hat{P}(\mathfrak{s})=\hat{P}(\mathfrak{t})$ together imply that
$\lambda=\mu$. Then $B_n(\hat{z})$ is a semisimple algebra over
$\kappa$.
\end{theorem}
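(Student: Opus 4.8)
The plan is to mimic, almost verbatim, the proof of Theorem~\ref{ss:1}, using Theorem~\ref{br:utran} in place of Theorem~\ref{utrangular:2}. Write $\mathscr{L}$ for the commutative subalgebra of $B_n(\hat{z})$ generated by $L_1,\dots,L_n$. The first step is to record the Brauer analogue of Proposition~\ref{grubs}: for a partition $\lambda$ of $n-2f$, fix an ordering $\mathfrak{t}_1,\dots,\mathfrak{t}_k$ of $\mathfrak{T}_n(\lambda)$ refining the dominance order (so that $i<j$ whenever $\mathfrak{t}_i\rhd\mathfrak{t}_j$) and let $S_i$ be the $\kappa$--span of $\{m_{\mathfrak{t}_j}:j\ge i\}$. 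By Theorem~\ref{br:utran} each $S_i$ is an $\mathscr{L}$--submodule of $S^\lambda$, and $S_i/S_{i+1}$ is the one--dimensional $\mathscr{L}$--module on which $(L_1,\dots,L_n)$ acts by the scalars $\hat{P}(\mathfrak{t}_i)$. Identifying a one--dimensional $\mathscr{L}$--module with the $n$--tuple of eigenvalues of $(L_1,\dots,L_n)$ on it, this shows that the multiset of $\mathscr{L}$--composition factors of $S^\lambda$ is exactly $\{\hat{P}(\mathfrak{t}):\mathfrak{t}\in\mathfrak{T}_n(\lambda)\}$, and that every irreducible $\mathscr{L}$--module arises in this way.

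Next I would argue by contradiction. Since $B_n(z)$ is cellular by Theorem~\ref{saruthm:2}, the machinery of~\cite{grahamlehrer} applies to $B_n(\hat{z})$ for every $\hat{z}\in\kappa$; in particular the Brauer analogues of Theorems~\ref{g-lthm:1} and~\ref{g-lthm:2} hold, with $D^\mu=S^\mu/\RAD(S^\mu)$. If $B_n(\hat{z})$ were not semisimple, the analogue of Theorem~\ref{g-lthm:2} gives a partition $\lambda$ with $\RAD(S^\lambda)\ne 0$; choosing $\mu$ with $D^\mu$ a composition factor of the proper submodule $\RAD(S^\lambda)\subsetneq S^\lambda$, Proposition~3.6 of~\cite{grahamlehrer} forces $\lambda\rhd\mu$. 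Restricting to $\mathscr{L}$: since $D^\mu\ne 0$ it has some irreducible $\mathscr{L}$--composition factor $\rho$; as $D^\mu$ is a subquotient of $S^\lambda$, $\rho$ is an $\mathscr{L}$--composition factor of $S^\lambda$, so $\rho=\hat{P}(\mathfrak{s})$ for some $\mathfrak{s}\in\mathfrak{T}_n(\lambda)$; and as $D^\mu$ is a quotient of $S^\mu$, $\rho$ is an $\mathscr{L}$--composition factor of $S^\mu$, so $\rho=\hat{P}(\mathfrak{t})$ for some $\mathfrak{t}\in\mathfrak{T}_n(\mu)$. Thus $\hat{P}(\mathfrak{s})=\hat{P}(\mathfrak{t})$ with $\lambda\rhd\mu$, contradicting the hypothesis; hence $B_n(\hat{z})$ is semisimple.

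The serious content is already contained in Theorem~\ref{br:utran} (the triangularity of the $L_k$--action), and the deduction above is purely formal. The one point that deserves a line of justification is that the cellular--algebra apparatus of~\cite{grahamlehrer} --- the parametrisation of irreducibles by the nonvanishing $D^\lambda$, the equivalence in the analogue of Theorem~\ref{g-lthm:2}, and Proposition~3.6 of~\cite{grahamlehrer} --- transfers to $B_n(\hat{z})$ uniformly in $\hat{z}$; this is immediate from the cellularity furnished by Theorem~\ref{saruthm:2} together with the bilinear form on each cell module, exactly as in the B--M--W case. I expect no genuine obstacle beyond this bookkeeping: the proof is a transcription of that of Theorem~\ref{ss:1}.
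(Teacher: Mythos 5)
Your proof is correct and follows exactly the route the paper intends (the paper states Theorem~\ref{ss:3} without proof, leaving it to be the Brauer analogue of the proof of Theorem~\ref{ss:1}, which is precisely what you supply): build the Brauer analogue of Proposition~\ref{grubs} from the triangularity result Theorem~\ref{br:utran}, then run the same cellular-algebra contradiction argument via the Brauer analogues of Theorems~\ref{g-lthm:1}, \ref{g-lthm:2} and Proposition~3.6 of~\cite{grahamlehrer}. One small bookkeeping slip: you fix the ordering of $\mathfrak{T}_n(\lambda)$ so that $i<j$ whenever $\mathfrak{t}_i\rhd\mathfrak{t}_j$ but then take $S_i$ to be the span of $\{m_{\mathfrak{t}_j}:j\ge i\}$; since Theorem~\ref{br:utran} pushes $m_{\mathfrak{t}}L_k$ toward \emph{more} dominant paths, those $S_i$ are not $\mathscr{L}$-stable under your convention. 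Either reverse the inequality (the paper's Proposition~\ref{grubs} uses $i>j$ whenever $\mathfrak{t}_i\rhd\mathfrak{t}_j$) or take $S_i$ to be the span of $\{m_{\mathfrak{t}_j}:j\le i\}$; after that trivial fix, the argument goes through verbatim and coincides with the paper's.
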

By an analogous statement to Lemma~\ref{ram:dist}, the Jucys--Murphy
elements do in fact distinguish between the cell modules of
$B_n(z)$ in Theorem~\ref{ss:3}.

The results of this section can be used to derive the next statement which is Theorem~3.3 of~\cite{doran-wales}. As in Theorem~\ref{d-w:analogue}, the statement may be generalised to the setting where $|\lambda|>|\mu|$.
\begin{theorem}\label{doran-wales-thm:1}
Let $\lambda$ be a partition of $n$ and $\mu$ be a partition of $n-2f$, where $f>0$. If $\Hom_{B_n(\hat{z})}(S^\lambda,S^\mu)\ne 0$, then
\begin{align*}
\sum_{(i,j)\in[\lambda]}(j-i)-\sum_{(i,j)\in[\mu]}(j-i)=f(1-\hat{z}).
\end{align*}
\end{theorem}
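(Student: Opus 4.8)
The plan is to run the argument of Theorem~\ref{d-w:analogue} with the central element $L_2L_3\cdots L_n$ of the B--M--W algebra replaced by the central element $L_2+L_3+\cdots+L_n$ of $B_n(\hat z)$ furnished by Proposition~\ref{brauercentral}. First I would pick a non--zero $\varphi\in\Hom_{B_n(\hat z)}(S^\lambda,S^\mu)$ and choose $\mathbf u\in S^\lambda$ with $\mathbf v=\varphi(\mathbf u)\ne 0$. By Proposition~\ref{brauercentral}, $\mathbf L=L_2+L_3+\cdots+L_n$ acts on $S^\lambda$ as multiplication by a scalar $\alpha_\lambda$ and on $S^\mu$ as multiplication by a scalar $\alpha_\mu$; since $\mathbf L$ is central and $\varphi$ is a $B_n(\hat z)$--module map, $\alpha_\mu\mathbf v=\mathbf v\mathbf L=\varphi(\mathbf u\mathbf L)=\varphi(\alpha_\lambda\mathbf u)=\alpha_\lambda\mathbf v$, whence $\alpha_\lambda=\alpha_\mu$ because $\mathbf v\ne0$. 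Thus the whole content of the theorem is the explicit evaluation of these two scalars.

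For this I would use the formula $\alpha_\nu=\sum_{k=2}^nP_{\mathfrak t^\nu}(k)$ from the proof of Proposition~\ref{brauercentral} together with the following path--independence observation. Given any path $\mathfrak t=(\lambda^{(0)},\dots,\lambda^{(n)})$ from $\varnothing$ to a partition $\nu$ of $n-2g$, set $C_m=\sum_{(i,j)\in[\lambda^{(m)}]}(j-i)$, so $C_0=0$ and $C_n=\sum_{(i,j)\in[\nu]}(j-i)$. A step that adds a node $(i,j)$ has $P_{\mathfrak t}(k)=j-i=C_k-C_{k-1}$, and a step that removes a node $(i,j)$ has $P_{\mathfrak t}(k)=i-j+1-\hat z=(C_k-C_{k-1})+(1-\hat z)$. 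Summing over $k=1,\dots,n$ and noting that $P_{\mathfrak t}(1)=0$ (the only addable node of $\varnothing$ is $(1,1)$), one gets
\[
\sum_{k=2}^n P_{\mathfrak t}(k)=C_n+g(1-\hat z)=\sum_{(i,j)\in[\nu]}(j-i)+g(1-\hat z),
\]
where $g$ is the number of removal steps; and $g$ equals $f$ when $\nu\vdash n-2g$, since a length--$n$ path from $\varnothing$ to a partition of $n-2g$ has $n-g$ additions and $g$ removals. Applying this with $(\nu,g)=(\lambda,0)$ and $(\nu,g)=(\mu,f)$ gives $\alpha_\lambda=\sum_{(i,j)\in[\lambda]}(j-i)$ and $\alpha_\mu=\sum_{(i,j)\in[\mu]}(j-i)+f(1-\hat z)$, and equating $\alpha_\lambda=\alpha_\mu$ is precisely the asserted identity. (The generalisation noted after the statement, to $\lambda\vdash n-2f$ and $\mu\vdash n-2g$ with $f<g$, follows from the same computation.)

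I do not anticipate a serious obstacle. The only points requiring care are bookkeeping ones: that the telescoping sum of the content increments collapses to the content sum of the terminal partition irrespective of the order in which boxes are added and deleted, that the number of down--steps in a path is forced by the sizes of its endpoints, and that the resulting value $\sum_{k=2}^nP_{\mathfrak t}(k)$ is genuinely independent of $\mathfrak t$ (so that it may be read off the maximal path $\mathfrak t^\nu$, as Proposition~\ref{brauercentral} does). All of these are immediate from the definition of the $P_{\mathfrak t}(k)$ and of the Bratteli diagram of $B_n(z)$.
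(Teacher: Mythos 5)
Your proof takes essentially the same approach as the paper: both use the central element $L_2+\cdots+L_n$ from Proposition~\ref{brauercentral} and compare its scalar action on $S^\lambda$ and $S^\mu$ via the homomorphism. The only difference is cosmetic — you supply the telescoping derivation of the scalars $\alpha_\nu = \sum_{(i,j)\in[\nu]}(j-i) + g(1-\hat{z})$ explicitly, where the paper simply asserts them; your bookkeeping is correct.
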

\begin{proof}
Suppose that  $\mathbf{u}\in S^\lambda$, $\mathbf{v}\in S^\mu$ are non--zero and that $\mathbf{u}\mapsto\mathbf{v}$ under some element in  $\Hom_{B_n(\hat{z})}(S^\lambda,S^\mu)$. Then, using Proposition~\ref{brauercentral}, 
\begin{align*}
\sum_{i=1}^n\mathbf{u} L_i=\sum_{(i,j)\in[\lambda]}(j-i)\mathbf{u}
\end{align*}
while
\begin{align*}
\sum_{i=1}^n\mathbf{v} L_i=f(1-\hat{z})\mathbf{v}+\sum_{(i,j)\in[\mu]}(j-i)\mathbf{v}.
\end{align*}
Since $\mathbf{v}$ is the homomorphic image of $\mathbf{u}$, it follows that
\begin{align*}
\sum_{(i,j)\in[\lambda]}(j-i)=f(1-\hat{z})+\sum_{(i,j)\in[\mu]}(j-i).
\end{align*}
Hence the result.
\end{proof}

Theorem~\ref{ss:3} gives a sufficient but not the necessary
condition for $B_n(\hat{z})$ to be a semisimple algebra over
$\kappa$.  Necessary and sufficient conditions on the semicimplicity of $B_n(\hat{z})$ have been given by H.~Rui in~\cite{rui:brss}.
\begin{example}\label{brauer3:ex}
Let $\kappa=\mathbb{Q}$ and $\hat{z}=4$. Take $n=3$, $\lambda=(1)$
and $\mu=(1,1,1)$. In characteristic zero the cell modules
corresponding to the partitions $(3)$, $(2,1)$ and $(1,1,1)$ are
absolutely irreducible. But, taking
\begin{align*}
\mathfrak{t}=(\varnothing,\text{\tiny$\begin{matrix}\yng(1)\end{matrix}$}\,,
\text{\tiny$\begin{matrix}\yng(1,1)\end{matrix}$}\,,
\text{\tiny$\begin{matrix}\yng(1)\end{matrix}$}\,)\in\mathfrak{T}_n(\lambda)&&\text{and}&&
\mathfrak{u}=\left(\varnothing,\text{\tiny$\begin{matrix}\yng(1)\end{matrix}$}\,,
\text{\tiny$\begin{matrix}\yng(1,1)\end{matrix}$}\,,
\text{\tiny$\begin{matrix}\yng(1,1,1)\end{matrix}$}\,\right)\in\mathfrak{T}_n(\mu),
\end{align*}
then
\begin{align*}
\hat{P}(\mathfrak{t})=(0,-1,2-\hat{z})=(0,-1,-2)&&\text{and}
&&\hat{P}(\mathfrak{u})= (0,-1,-2).
\end{align*}
Since $\hat{P}(\mathfrak{t})=\hat{P}(\mathfrak{u})$ whilst
$\lambda\rhd\mu$, the pair $\mathfrak{t},\mathfrak{u}$ violates the
hypotheses of Theorem~\ref{ss:3}. However,  by reference to the
determinant of Gram matrix associated to $S^\lambda$ in
Example~\ref{bilinear:ex2}, it follows that $S^\lambda$ is
absolutely irreducible and hence that $B_3(\hat{z})$ remains
semisimple by appeal to appropriate analogues of Theorems~\ref{g-lthm:1}
and~\ref{g-lthm:2}.
\end{example}
\section{Conjectures}\label{farce}
Define a sequence of polynomials $(p_i(z)\,|\,i=1,2,\dots,)$ by $p_1(z)=(z+2)(z-1)$ and 
\begin{align*}
p_i(z)=
\begin{cases}
(z+2i)(z-i)(z+i-2)p_{i-1}(z) &\text{if $i$ is odd;}\\
(z+2i)(z-i)p_{i-1}(z) &\text{if $i$ is even.}
\end{cases}
\end{align*}
\begin{conjecture}
For $\kappa$ a field, $\hat{z}\in\kappa$ and an algebra over $\kappa$, with $n\ge 2$, the following statements hold:

(i) If $n=2k+1$, then the bilinear form on the $B_n(\hat{z})$--module $S^{(1)}$ determined by~\eqref{formdef:1} is non--degenerate if and only if $p_k(\hat{z})\ne 0$. 

(ii) If $n=2k$, then the bilinear form on the $B_n(\hat{z})$--module $S^{\varnothing}$ determined by~\eqref{formdef:1} is non--degenerate if and only if $\hat{z}\ne 0$ and $p_k(\hat{z})\ne 0$. 
\end{conjecture}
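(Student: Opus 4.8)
The plan is to prove the conjecture by computing, over $R=\mathbb{Z}[z]$, the Gram determinant $G_n$ of the cell module in question (namely $S^{(1)}$ when $n=2k+1$, and $S^\varnothing$ when $n=2k$). Since this cell module is free over $R$ by the Brauer analogue of Theorem~\ref{saruthm:2}, and the bilinear form~\eqref{formdef:1} is defined over $R$, the form over $\kappa$ at $z=\hat{z}$ is non-degenerate exactly when $G_n$ does not vanish at $\hat{z}$; so it suffices to determine $G_n$ as an element of $R$ up to a unit. I would begin by computing $G_3$, $G_4$ and $G_5$ directly from the cellular basis (for $n=3$ this is already recorded in Example~\ref{bilinear:ex2}, with $G_3\doteq(z-1)^2(z+2)$), which calibrates the multiplicities with which the linear factors occur and fixes the leading constant, and then establish the general formula by a recursion down the tower $B_1(z)\subseteq B_2(z)\subseteq\cdots\subseteq B_n(z)$.

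The recursion rests on the Brauer analogue of Corollary~\ref{big:5}: for $\lambda\vdash n-2f$ the module $\RES(S^\lambda)$ is filtered by $B_{n-1}(z)$-submodules with subquotients $\cong S^\mu$, one for each $\mu\to\lambda$, under the maps $y^\lambda_\mu b\mapsto m_\mu b$. The central element $c=L_2+L_3+\cdots+L_{n-1}$ of $B_{n-1}(z)$ is $*$-invariant, hence self-adjoint for~\eqref{formdef:1}, and by the Brauer analogue of Proposition~\ref{brauercentral} it acts on the subquotient $S^\mu$ by $g(1-z)+\sum_{(i,j)\in[\mu]}(j-i)$, where $\mu\vdash (n-1)-2g$; as $\mu$ ranges over $\{\mu:\mu\to\lambda\}$ these are pairwise distinct polynomials in $z$ (the coefficient of $z$ separates the ``add'' transitions from the ``remove'' transitions, and within each type the contents of the relevant addable/removable nodes are distinct). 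Hence over the semisimple algebra $B_{n-1}(z)\otimes_R\mathbb{Q}(z)$ the filtration splits into an \emph{orthogonal} direct sum $\RES(S^\lambda)=\bigoplus_{\mu\to\lambda}V_\mu$ with $V_\mu\cong S^\mu$, and therefore $G(S^\lambda)=\prod_{\mu\to\lambda}\Gamma_{\lambda,\mu}\,G(S^\mu)$ in $\mathbb{Q}(z)^\times$, where $\Gamma_{\lambda,\mu}=\delta_{\lambda,\mu}^{\,\dim S^\mu}$ and $\delta_{\lambda,\mu}=\langle\tilde{y}^\lambda_\mu,\tilde{y}^\lambda_\mu\rangle_{S^\lambda}\big/\langle m_\mu,m_\mu\rangle_{S^\mu}$, with $\tilde{y}^\lambda_\mu$ the component of the generator $y^\lambda_\mu$ lying in the summand $V_\mu$. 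Iterating this identity down the tower, starting from $B_1(z)=R$, expresses $G_n$ as a product of the scalars $\delta_{\lambda,\mu}$ occurring along the way.

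The decisive and most laborious step is the explicit evaluation of the $\delta_{\lambda,\mu}$, and this is where I expect the main obstacle to lie. For an addable-node transition ($y^\lambda_\mu=E_{2f-1}w_k^{-1}m_\mu+\check{B}^\lambda_n$, with $w_k$ as in~\eqref{wkdef}) the relevant pairings transport through the isomorphism of the Brauer analogue of Lemma~\ref{bquot}, via $\vartheta_f$, to pairings on the group algebra side, where they are controlled by the content combinatorics behind Proposition~\ref{prd}; these should contribute the factors $(z+2i)$, $(z-i)$, and, on odd steps, $(z+i-2)$ of the recursion defining $p_i$. For a removable-node transition ($y^\lambda_\mu=m_\lambda d(\mathfrak{s})+\check{B}^\lambda_n$) the parameter $z$ enters through the idempotents, and one reduces $m_\lambda d(\mathfrak{s})d(\mathfrak{s})^{-1}m_\lambda$ modulo $\check{B}^\lambda_n$ using $E_iE_{i\pm1}E_i=E_i$, $E_i^2=zE_i$, $E_is_i=E_i$ and the Brauer analogue of Lemma~\ref{ecor}; in particular $\langle m_\varnothing,m_\varnothing\rangle=z^k$, since $(E_1E_3\cdots E_{2k-1})^2=z^k\,E_1E_3\cdots E_{2k-1}$, which shows that the generator of $S^\varnothing_{B_{2k}}$ is isotropic when $\hat{z}=0$ and thereby isolates at the outset the extra clause $\hat{z}\neq0$ in part~(ii). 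Matching the accumulated product of $\delta$'s against the recursion in the definition of $p_i$ then yields the stated formulas, the delicate point being to track the powers of $z$ in the removable-node factors and show they combine into precisely $p_k$ (and the bare factor $z$ in the even case). As an independent check on the zero set (though not on the multiplicities), one can argue via the Brauer analogue of Theorem~\ref{doran-wales-thm:1} that a vanishing of $G_n$ at $\hat{z}$ produces a non-zero homomorphism $S^\mu\to S^{(1)}$ (resp.\ $S^\varnothing$) with $|\mu|>|\lambda|$, forcing $\sum_{(i,j)\in[\mu]}(j-i)=(k-f')(1-\hat{z})$ for the corresponding $f'$; enumerating the partitions $\mu$ that occur below $(1)$ (resp.\ $\varnothing$) in the Bratteli diagram at level $n$ and solving for $\hat{z}$ should recover exactly the roots of $p_k$, together with $\hat{z}=0$ in the even case.
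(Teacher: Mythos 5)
The statement you have attempted to prove is a \emph{conjecture}: the paper states it in Section~\ref{farce} without proof, so there is no argument in the paper against which to compare yours. What you have submitted is in any case a plan rather than a proof, and you acknowledge this yourself --- the step on which everything turns, the closed-form evaluation of the scalars $\delta_{\lambda,\mu}$ along the tower, is described as ``where I expect the main obstacle to lie'' and is not carried out. The outline has the right shape: restrict down the tower, split $\RES(S^\lambda)$ into an orthogonal sum for the form using the central element $L_2+\cdots+L_{n-1}$ of $B_{n-1}(z)$ (which, by the Brauer analogue of Proposition~\ref{brauercentral}, acts on the summand $S^\mu$ by the polynomial $g(1-z)+\sum_{(i,j)\in[\mu]}(j-i)$, and these are pairwise distinct as $\mu$ ranges over $\{\mu:\mu\to\lambda\}$), obtain a product formula for the Gram determinant over $\mathbb{Q}(z)$, and match against the recursion defining $p_i$. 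This is the standard engine for Gram determinant computations for diagram algebras. But until the $\delta_{\lambda,\mu}$ are actually computed, nothing is proved: identifying the hard step is not the same as doing it.

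Before investing in that computation you should test the conjecture at $n=4$, which is entirely tractable by hand and which your plan ought to have included alongside $n=3$ and $n=5$. For $B_4(z)$ one has $\dim_R S^\varnothing=3$ with basis $E_1E_3$, $E_1E_3s_2$, $E_1E_3s_2s_3$, and a short computation using $E_is_{i\pm1}E_i=E_i$, $E_i^2=zE_i$, $E_is_i=E_i$ gives the Gram matrix
\[
\begin{bmatrix} z^2 & z & z \\ z & z^2 & z \\ z & z & z^2 \end{bmatrix},
\]
with determinant $z^3(z-1)^2(z+2)$ and zero set $\{0,1,-2\}$. The conjecture as printed asks this to vanish exactly on $\{0\}\cup\{\text{roots of }p_2\}=\{0,-4,2,-2,1\}$, which does not agree; indeed $\hat z=-4$ cannot even arise, since the central-character constraint of Theorem~\ref{doran-wales-thm:1} applied with $\mu=\varnothing$ restricts any vanishing at $n=4$ to the set $\{0,1,2,-2,4\}$ (the values obtained from $\lambda\vdash 2$ or $\lambda\vdash 4$). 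This strongly suggests part~(ii) of the conjecture carries an off-by-one and should read ``$\hat z\ne0$ and $p_{k-1}(\hat z)\ne 0$'', which is consistent with $n=2$ (empty product, condition just $\hat z\ne 0$) and with the $n=4$ calculation above. Your instinct to cross-check against Theorem~\ref{doran-wales-thm:1} was the right one; it should be carried out \emph{before} the orthogonal decomposition, not only as an afterthought, because it reveals that the identity you set out to prove is not quite the one you would be able to prove.
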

\begin{conjecture}
For $\kappa$ a field, $\hat{z}\in\kappa$ and an algebra over $\kappa$, with $n\ge 2$, the following statements hold:

(i) If $n=2k+1$, then $B_n(\hat{z})$ is semisimple and only if $\kappa\mathfrak{S}_n$ is semisimple and $p_{2k-1}(\hat{z})\ne 0$.

(ii)  If $n=2k$, then $B_n(\hat{z})$ is semisimple and only if $\kappa\mathfrak{S}_n$ is semisimple, $\hat{z}\ne 0$ and $p_{2k-2}(\hat{z})\ne 0$.
\end{conjecture}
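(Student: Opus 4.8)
The plan is to reduce the conjecture to a computation of Gram determinants and to organise that computation using the triangular action of the Jucys--Murphy operators from Theorem~\ref{br:utran}. By the Brauer analogue of Theorem~\ref{g-lthm:2}, $B_n(\hat z)$ is semisimple over $\kappa$ if and only if the bilinear form of~\eqref{formdef:1} on each cell module $S^\lambda$, for $\lambda\vdash n-2f$ and $0\le f\le[n/2]$, is nondegenerate, equivalently if and only if every Gram determinant $\det G_\lambda$ is a unit of $\kappa$. I would first treat the top layer $f=0$: the analogue of Lemma~\ref{bquot} with $f=0$ identifies $B_n(\hat z)/\langle E_1\rangle$ with $\kappa\mathfrak{S}_n$ and identifies $S^\lambda$, $\lambda\vdash n$, together with its form, with the corresponding Specht module of $\kappa\mathfrak{S}_n$ (note $\check B_n^\lambda\supseteq B_n^1$ by the dominance convention of Section~\ref{prelsec}). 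Hence $\prod_{\lambda\vdash n}\det G_\lambda$ is a unit exactly when $\kappa\mathfrak{S}_n$ is semisimple, which, being a quotient condition, is forced by semisimplicity of $B_n(\hat z)$. The substantive claim that remains is that $\prod_{f\ge1}\prod_{\lambda\vdash n-2f}\det G_\lambda$ is a unit precisely when $\hat z\ne0$ (for $n$ even) and $p_{n-2}(\hat z)\ne0$.

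For the deep layers I would run a Dipper--James style determinant computation. Using the path basis $\{m_\mathfrak{t}=m_\lambda b_\mathfrak{t}+\check B_n^\lambda:\mathfrak{t}\in\mathfrak{T}_n(\lambda)\}$ of Section~\ref{bralgs}, the restriction filtration supplied by the Brauer analogue of Corollary~\ref{big:5}, and the triangularity $m_\mathfrak{t}L_k=P_\mathfrak{t}(k)m_\mathfrak{t}+\sum_{\mathfrak{u}\rhd\mathfrak{t}}a_\mathfrak{u}m_\mathfrak{u}$ of Theorem~\ref{br:utran}, one writes $G_\lambda$ in the path basis and uses the $L_n$--eigenvalues $P_\mathfrak{t}(n)$, which over the lifts of a fixed path in $\mathfrak{T}_{n-1}$ are distinct by the analogue of Lemma~\ref{ram:dist}, to pass to a form adapted to the filtration. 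This expresses $\det G_\lambda$ as a product over $\mu\to\lambda$ of $\det G_\mu$ times an explicit ``separation factor'' built from the differences $P_\mathfrak{s}(n)-P_\mathfrak{t}(n)$. Iterating down the parity tower $B_{n-2}(\hat z)\subseteq B_n(\hat z)$ yields a recursion for $\prod_{f\ge1}\det G_\lambda$ whose new factors at each step are, up to units, of the form $z-c$ with $c$ an integer of the shape (content of an added node) $-$ (content of a deleted node) occurring along a path, together with the single factor $\hat z$ that enters only through $S^\varnothing$ when $n$ is even (as the $n=2$ specialisation of Example~\ref{bilinear:ex2} shows). I would then verify by direct bookkeeping that these factors are exactly $(z+2i)$, $(z-i)$, $(z+i-2)$ for odd $i$ and $(z+2i)$, $(z-i)$ for even $i$, with the multiplicities dictated by the recursion for $p_i$; the parity split in that recursion should reflect whether the node deleted and re--added at the governing step terminates a row of even or of odd length.

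With the recursion in hand, sufficiency follows by induction on $n$ in steps of two. If $\kappa\mathfrak{S}_n$ is semisimple, $\hat z\ne0$ (for $n$ even), and $p_{n-2}(\hat z)\ne0$, then the same hypotheses hold for $n-2$, because $\kappa\mathfrak{S}_n$ semisimple forces $\mathrm{char}\,\kappa\nmid n!$ hence $\kappa\mathfrak{S}_{n-2}$ semisimple, $p_{n-4}\mid p_{n-2}$ by the recursion, and the $\hat z\ne0$ clause is only needed in the even case; so by induction every $\det G_\mu$ with $\mu$ a partition for $B_m$, $m<n$ of the same parity, is a unit, and the recursion then shows each new $\det G_\lambda$ is a unit, the top layer having been handled above. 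For necessity of $p_{n-2}(\hat z)\ne0$ one argues contrapositively: for each root $c$ of $p_{n-2}$, Theorem~\ref{doran-wales-thm:1}, in the generalised form $\sum_{(i,j)\in[\lambda]}(j-i)-\sum_{(i,j)\in[\mu]}(j-i)=f(1-c)$, predicts which pairs $\lambda\rhd\mu$ can carry a nonzero homomorphism $S^\lambda\to S^\mu$; choosing such a pair with $\lambda$ and $\mu$ as simple as possible (a hook or near--rectangle) one then exhibits an explicit element of $\RAD(S^\lambda)$, namely a path--basis vector singled out by its $L_k$--eigenvalue string agreeing with one occurring in $S^\mu$, witnessing that $\det G_\lambda$ vanishes at $\hat z=c$. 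Necessity of $\hat z\ne0$ for $n$ even is the trivial case $\lambda=\varnothing$, where the form already carries a factor of $\hat z$.

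The main obstacle will be proving that the iterated product of deep--layer Gram determinants telescopes \emph{exactly} into $p_{n-2}(\hat z)$ with the stated parity--dependent factors and the correct multiplicities: controlling how the integer roots $(z+2i)$, $(z-i)$, $(z+i-2)$ accumulate over successive steps of the parity tower, and checking that no extra factors and no spurious powers of $\hat z$ are produced, is the genuine combinatorial core of the argument. The reduction via Theorem~\ref{br:utran} makes a single step $B_{n-2}\subseteq B_n$ computable in principle, but assembling the bookkeeping across all levels $f\ge1$ at once, and matching it to an \emph{a priori} unmotivated recursion, is where the real difficulty lies.
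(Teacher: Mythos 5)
This statement is labelled a \emph{conjecture} in the paper (Section~\ref{farce}); the author does not supply a proof, and in fact notes only that H.~Rui~\cite{rui:brss} has given necessary and sufficient semisimplicity criteria elsewhere, without verifying that Rui's criteria reduce to the polynomial condition $p_{2k-1}(\hat z)\ne 0$ (resp.\ $\hat z\ne0$ and $p_{2k-2}(\hat z)\ne0$). So there is no argument of the paper's against which to compare yours.

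Your proposal, however, is also not a proof: you have outlined a strategy without carrying out its decisive step. The reduction to Gram determinants, the identification of the top layer $f=0$ with Specht modules of $\kappa\mathfrak{S}_n$, and the use of Theorem~\ref{br:utran} together with the restriction filtration to organise a Dipper--James--style computation are all reasonable, and your sufficiency/necessity split is sensible in outline. But the entire content of the conjecture is the assertion that the product of the deep-layer Gram determinants vanishes precisely where the explicit polynomial $p_{n-2}(z)$ (times $z$ when $n$ is even) does, with the odd/even recursion for $p_i$ producing exactly the factors $(z+2i)$, $(z-i)$, $(z+i-2)$ that you describe. You yourself name this telescoping as ``the main obstacle'' and ``the genuine combinatorial core,'' and you do not carry it out: no recursion for $\det G_\lambda$ is actually derived, no multiplicities are tracked, and no check is made that the parity split in the definition of $p_i$ corresponds to anything in the path combinatorics. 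In effect, you have restated the conjecture (that the determinants factor this way) as the step you intend to prove by ``direct bookkeeping,'' without doing the bookkeeping. Absent that computation, the argument establishes nothing beyond what is already in Theorems~\ref{ss:3} and~\ref{doran-wales-thm:1}, namely a sufficient separation condition and a necessary matching of central characters. A minor additional issue: for $n=2$ (the case $k=1$ of part (ii)) the condition $p_{2k-2}(\hat z)\ne0$ refers to $p_0$, which is not defined by the recursion in the paper; any proof would have to adopt a convention (presumably $p_0=1$) and confirm it is consistent with the $B_2(\hat z)$ computation.
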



\bibliographystyle{plain}       






\end{document}